\newlist{steps}{enumerate}{1}
\setlist[steps, 1]{label = Step \arabic*:}
\DeclareRobustCommand\widecheck[1]{{\mathpalette\@widecheck{#1}}}
\def\@widecheck#1#2{%
   \setbox\z@\hbox{\m@th$#1#2$}%
   \setbox\tw@\hbox{\m@th$#1%
      \widehat{%
         \vrule\@width\z@\@height\ht\z@
         \vrule\@height\z@\@width\wd\z@}$}%
   \dp\tw@-\ht\z@
   \@tempdima\ht\z@ \advance\@tempdima2\ht\tw@ \divide\@tempdima\thr@@
   \setbox\tw@\hbox{%
      \raise\@tempdima\hbox{\scalebox{1}[-1]{\lower\@tempdima\box\tw@}}}%
   {\ooalign{\box\tw@ \cr \box\z@}}}
\theoremstyle{plain}
\newtheorem{thm}{Theorem}[section]
\crefname{thm}{Theorem}{Theorems}
\Crefname{thm}{Theorem}{Theorems}
\newtheorem{prop}[thm]{Proposition}
\crefname{prop}{Proposition}{Propositions}
\Crefname{prop}{Proposition}{Propositions}
\newtheorem{lem}[thm]{Lemma}
\crefname{lem}{Lemma}{Lemmas}
\Crefname{lem}{Lemma}{Lemmas}
\newtheorem{cor}[thm]{Corollary}
\crefname{cor}{Corollary}{Corollaries}
\Crefname{cor}{Corollary}{Corollaries}
\crefname{claim}{Claim}{Claims}
\Crefname{claim}{Claim}{Claims}
\crefname{property}{Property}{Properties}
\Crefname{property}{Property}{Properties}
\crefname{problem}{Problem}{Problems}
\Crefname{problem}{Problem}{Problems}
\newtheorem{ques}[thm]{Question}
\crefname{ques}{Question}{Questions}
\Crefname{ques}{Question}{Questions}
\theoremstyle{definition}
\newtheorem{defn}[thm]{Definition}
\crefname{defn}{Definition}{Definitions}
\Crefname{defn}{Definition}{Definitions}
\crefname{notation}{Notation}{Notations}
\Crefname{notation}{Notation}{Notations}
\crefname{convention}{Convention}{Conventions}
\Crefname{convention}{Convention}{Conventions}
\crefname{cond}{Condition}{Conditions}
\Crefname{cond}{Condition}{Conditions}
\crefname{assum}{Assumption}{Assumptions}
\Crefname{assum}{Assumption}{Assumptions}
\crefname{conj}{Conjecture}{Conjectures}
\Crefname{conj}{Conjecture}{Conjectures}
\theoremstyle{remark}
\newtheorem{rem}[thm]{Remark}
\crefname{rem}{Remark}{Remarks}
\Crefname{rem}{Remark}{Remarks}
\crefname{ex}{Example}{Examples}
\Crefname{ex}{Example}{Examples}
\crefname{section}{Section}{Sections}
\Crefname{section}{Section}{Sections}
\crefname{subsection}{Subsection}{Subsections}
\Crefname{subsection}{Subsection}{Subsections}
\crefname{figure}{Figure}{Figures}
\Crefname{figure}{Figure}{Figures}
\newtheoremstyle{restate-thm}
    {\topsep}{\topsep} 
    {\itshape}         
    {}                 
    {\bfseries}        
    {.}                
    { }                
    {\thmname{#1} \ref{#3} {\normalfont(Restated)}}
\theoremstyle{restate-thm}
    \newtheorem{restate-thm}{Theorem}
    \newtheorem{restate-prop}{Proposition}
\newcommand{\Z}{\mathbb{Z}}
\newcommand{\Q}{\mathbb{Q}}
\newcommand{\calG}{{\mathcal G}}
\newcommand{\Frac}{\mathrm{Frac}}
\newcommand{\Ker}{\mathop{\mathrm{Ker}}\nolimits}
\newcommand{\im}{\operatorname{Im}}
\newcommand{\rank}{\mathop{\mathrm{rank}}\nolimits}
\newcommand{\Hom}{\mathop{\mathrm{Hom}}\nolimits}
\newcommand{\id}{\mathrm{id}}
\newcommand{\C}{\mathbb{C}}
\newcommand{\R}{\mathbb R}
\newcommand{\ctext}[1]{\raise0.2ex\hbox{\textcircled{\scriptsize{#1}}}}
\def\ker{\operatorname{Ker}}
\def\dim{\operatorname{dim}}
\def\rank{\operatorname{rank}}
\def\Hom{\operatorname{Hom}}
\def\id{\operatorname{Id}}
\newcommand{\mbar}[1]{{\ooalign{\hfil#1\hfil\crcr\raise.167ex\hbox{--}}}}
\def\bd{\mathbf{d}}
\renewcommand{\a}{\mathfrak{a}}
\renewcommand{\b}{\mathfrak{b}}
\def\gr{\operatorname{gr}}
\def\bu{\mathbf{u}}
\def\wt{\widetilde}
\def\bbF{\mathbb{F}}
\DeclareMathOperator{\Tor}{Tor}
\newcommand{\CKh}{\mathit{CKh}}
\newcommand{\Kh}{\mathit{Kh}}
\newcommand{\rCKh}{\widetilde{\CKh}{}}
\newcommand{\Cob}{\mathrm{Cob}}
\DeclareMathOperator{\Mdeg}{Mdeg}
\newcommand{\low}{\mathrm{low}}
\newcommand{\bal}{\mathrm{bal}}
\newcommand{\hi}{\mathrm{hi}}
\def\bv{\mathbf{v}}
\renewcommand{\emptyset}{\varnothing}
\title{Cobordism maps in Khovanov homology and singular instanton homology II}
\author{Hayato Imori, Taketo Sano, Kouki Sato, and Masaki Taniguchi}
\newcommand{\printaddresses}{{
  \setlength{\parindent}{0pt}
  \bigskip
  Hayato Imori, \textsc{Department of Mathematical Sciences, KAIST, Daejeon, Republic of Korea}\\
  \textit{E-mail address}: \url{himori@kaist.ac.kr}\par
  \medskip
  Taketo Sano, \textsc{RIKEN iTHEMS, Wako, Saitama 351-0198, Japan }\\
  \textit{E-mail address}: \url{taketo.sano@riken.jp}\par
  \medskip
  Kouki Sato, \textsc{Meijo University, Tempaku, Nagoya 468-8502, Japan}\\
  \textit{E-mail address}: \url{satokou@meijo-u.ac.jp}\par
  \medskip
  Masaki Taniguchi, \textsc{Department of Mathematics, Graduate School of Science, Kyoto University, Kitashirakawa Oiwake-cho,
Sakyo-ku, Kyoto 606-8502, Japan}\par
  \textit{E-mail address}: \url{taniguchi.masaki.7m@kyoto-u.ac.jp}
}}
\begin{document}

\begin{abstract}
This paper is a continuation of our previous work \cite{ISST1}, where we showed that the embedded cobordism maps on Khovanov homology and singular instanton homology are compatible under Kronheimer and Mrowka's spectral sequence. 
In this paper, we extend this construction to immersed cobordisms, where we define an immersed cobordism map on Khovanov homology and prove that it is compatible with the immersed cobordism map on singular instanton homology. We give two applications: (i) For any smooth, oriented concordance $C$ from a two-bridge torus knot, the induced map $\mathit{Kh}(C)$ on Khovanov homology is injective, and its left inverse is given by the reversal of $C$. (ii) Any pair of relatively exotic surfaces in $D^4$ that are detected by the embedded cobordism map in $\mathit{Kh}$ remain exotic even after applying any number of positive twist moves. 
\end{abstract}
\maketitle

\setcounter{tocdepth}{2}

\section{Introduction}
\label{sec:intro}

\subsection{A Khovanov--Floer type statement for immersed cobordisms}

In \cite{KM11u}, Kronheimer and Mrowka introduced the \textit{singular instanton knot Floer homology} for a link $L$, and constructed a spectral sequence having \textit{Khovanov homology} (over $\Z$) as its $E^2$ term and abutting to singular instanton knot Floer homology\footnote{
    $L^*$ denotes the mirror of $L$. This is necessary from conventional reasons.
}
\[
    \Kh (L^*)  \Rightarrow I^\sharp (L),
\]
which led to the proof that \textit{Khovanov homology detects the unknot}. The spectral sequence is obtained from the \textit{instanton cube complex} $\CKh^\sharp (L)$ whose homology gives singular instanton knot Floer homology $I^\sharp(L)$, together with the \textit{instanton homological filtration} such that the Khovanov complex $\CKh(L^*)$ naturally arise in the $E^1$ term of the induced spectral sequence. Lately in \cite{KM14}, Kronheimer and Mrowka also introduced the \textit{instanton quantum filtration} on $\CKh^\sharp(L)$ and proved that Khovanov homology $\Kh(L^*)$ also arise in the $E^1$ term of the induced spectral sequence. 

In \cite{ISST1}, the authors constructed an embedded cobordism map for the instanton cube complex that recovers the cobordism maps both in Khovanov homology and singular instanton theory. 
Namely, let $L, L'$ be links with diagram $D, D'$, and $S$ a link cobordism in $[0,1]\times \R^3$ between $L$ and $L'$, represented as a sequence of elementary moves. Then there is a doubly filtered chain map between the instanton cube complexes: 
\[
    \phi^\sharp_S : \CKh^\sharp (D) \to \CKh^\sharp (D')
\]
of order
\[
    \geq \left(\frac{1}{2}(S \cdot S),\ \chi(S)+\frac{3}{2}(S \cdot S)\right)
\]
whose induced map on the $E^2$ term with respect to the homological filtration (resp.\ the $E^1$ term with respect to the quantum filtration) coincides up to sign with the cobordism map of Khovanov homology\footnote{
    $S^*$ denotes the image of $S$ under $\id \times r$, where $r$ is the reflection on $\R^3$ that gives the mirroring of links. 
}
\[
    \Kh(S^*): \Kh(L^*) \to \Kh(L'^*)
\]
and whose induced map on homology coincides with the cobordism map of the singular instanton knot Floer homology
\[
    I^\sharp(S) : I^\sharp (L) \to I^\sharp (L').
\]

In this paper, we extend this construction to \textit{normally immersed cobordisms}. Here, a \textit{normally immersed cobordism} $S$ between (oriented) links $L, L'$ is a smoothly immersed (possibly non-orientable) surface $S$ in $[0,1]\times \R^3$ with boundary $\{0\} \times L \sqcup \{1\} \times L'$, that has only transverse double points. If $S$ is oriented, then we also assume that it respects the orientations of the boundary links $L, L'$. 
Suppose we are given such normally immersed link cobordism $S\colon L \to L'$. Starting from Kronheimer's work \cite{kronheimer1997obstruction} on singular Donaldson theory, the blowing-up construction has been used to define immersed cobordism maps. 
In particular, we focus on Kronheimer--Mrowka's immersed cobordism map \cite{KM11}, 
\[
I^\sharp(S) : I^\sharp(L) \to I^\sharp(L').
\]
On the Khovanov side, we first give a combinatorial description of the \textit{immersed cobordism map of lowest homological degree}
\[
    \Kh^{\low}(S) : \Kh(L) \to \Kh(L'),
\]
and then prove that the above two maps are compatible under Kronheimer and Mrowka's spectral sequence. Namely,

\begin{thm}
\label{mainKF}
\label{thm:induced-map}
For any links $L, L'$ with diagrams $D, D'$, and normally immersed cobordism $S$ between $L$ and $L'$, there exists a doubly filtered chain map between the instanton cube complexes
\[
    \phi^\sharp_S:   \CKh^\sharp(D) \to \CKh^\sharp(D')
\]
of order
\[
    \geq \left(-2s_+ + \frac{1}{2} (S\cdot S),\ 
    \chi(S) + \frac{3}{2} (S\cdot S) - 6s_+ \right)
\] 
whose induced map on $E^2$-term with respect to the $h$-filtration coincides up to sign with 
\[
    \Kh^{\low}(S^*) : \Kh (L^*) \to \Kh(L'{}^*),
\] 
and whose induced map on homology coincides with the cobordism map of singular instanton knot Floer homology
\[
    I^\sharp(S) : I^\sharp (L) \to I^\sharp (L').
\]
Here, $\chi(S)$ denotes the Euler characteristic, $S \cdot S$ the normal Euler number (precisely defined in \cref{normal_Euler}), and $s_+$ the number of positive double points of $S$.  
\end{thm}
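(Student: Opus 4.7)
The strategy is to reduce the immersed case to the embedded cobordism construction of \cite{ISST1} via a blow-up at each positive double point. First, I present $S$ as a movie: by a standard general position argument, $S$ admits a decomposition into a finite sequence of elementary moves comprising Reidemeister moves, Morse moves (births, deaths, saddles), and finger moves that introduce a single transverse double point in a fixed local model. After isotopy rel boundary I may assume that the $s_+$ positive double points lie in pairwise disjoint 4-ball neighborhoods.

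Next I carry out the blow-up. For each positive double point $p$, replace a small 4-ball neighborhood of $p$ by $\overline{\CP^2}$ minus an open ball, producing an ambient 4-manifold $\tilde X \cong ([0,1] \times \R^3) \# s_+ \overline{\CP^2}$. Since at a positive double point the two sheets of $S$ can be modeled as a pair of complex lines in $\C^2$, the proper transform $\tilde S \subset \tilde X$ is smooth and embedded, with each sheet meeting the exceptional $\CP^1$ transversely once. A direct local computation yields $\chi(\tilde S) = \chi(S)$ (where $\chi(S)$ denotes the Euler characteristic of the abstract source surface, which is preserved by resolving nodes) and $\tilde S \cdot \tilde S = S \cdot S - 4 s_+$. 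Applying the embedded result of \cite{ISST1} to $\tilde S$ yields a doubly filtered chain map $\phi^\sharp_{\tilde S} \colon \CKh^\sharp(D) \to \CKh^\sharp(D')$ of order $\geq \bigl(\tfrac{1}{2}(\tilde S \cdot \tilde S),\, \chi(\tilde S) + \tfrac{3}{2}(\tilde S \cdot \tilde S)\bigr)$, which upon substitution matches the filtration bound stated in \cref{mainKF}. I define $\phi^\sharp_S := \phi^\sharp_{\tilde S}$.

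I then identify the two induced maps. Compatibility on homology with $I^\sharp(S)$ is essentially by design, since Kronheimer--Mrowka's immersed cobordism map is constructed in \cite{KM11} via the same blow-up procedure, and the induced map from $\phi^\sharp_{\tilde S}$ on homology matches their construction. For the $E^2$-page identification with $\Kh^{\low}(S^*)$, I first give a combinatorial definition of $\Kh^{\low}$ on a movie presentation by assigning to each finger move the lowest homological degree term of the Khovanov map induced by the corresponding crossing change, and then verify that this matches the $E^2$-map computed from the blow-up model through a local analysis near each positive double point. Finally, I show that $\phi^\sharp_S$ is independent of the chosen movie presentation, up to filtered chain homotopy of appropriate degree, via a movie-moves argument for normally immersed cobordisms.

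The main obstacle is the $E^2$-compatibility with $\Kh^{\low}(S^*)$. The blow-up construction is geometric and lives naturally on the singular instanton side, whereas $\Kh^{\low}$ is purely combinatorial; bridging the two on the $E^2$-page requires a careful term-by-term comparison reducing to a local calculation near each positive double point, coupled with invariance of $\Kh^{\low}$ under the movie moves for immersed cobordisms. This identification and the associated invariance statements are expected to constitute the bulk of the technical work.
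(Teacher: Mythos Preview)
Your plan has a genuine gap at its core. The result of \cite{ISST1} constructs cube complex maps $\phi^\sharp_S$ only for embedded cobordisms $S$ lying in $[0,1]\times\R^3$; the construction proceeds by decomposing $S$ into Reidemeister and Morse moves between link diagrams and assigning a filtered chain map to each. Your proper transform $\tilde S$ lives in $([0,1]\times\R^3)\# s_+\overline{\CP^2}$, and there is no movie presentation of $\tilde S$ by elementary diagrammatic moves, hence no black-box input to \cite{ISST1}. On the level of $I^\sharp$ one does have cobordism maps over blown-up $4$-manifolds, but the cube complex $\CKh^\sharp(D)$ is a specific filtered model built from resolutions of $D$, and producing a filtered map on it requires something diagrammatic. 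Separately, you blow up only the $s_+$ positive double points and never say what happens at the negative ones; after your blow-up the surface is still immersed if $s_->0$.

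The paper's route is different in exactly the way needed to close this gap. A normally immersed cobordism is decomposed so that every double point is realized by a single diagrammatic \emph{crossing change}. For Reidemeister and Morse pieces one quotes \cite{ISST1}. For a crossing change, rather than blowing up the double point globally, the paper localizes the blow-up to the creation of a small Hopf link: one defines $\phi^\sharp_{\sqcup H}\colon \CKh^\sharp(D)\to\CKh^\sharp(D\sqcup H)$ by counting instantons over $([0,1]\times S^3)\#\overline{\CP^2}$ with the surface $[0,1]\times D_v$ disjoint from a standard blown-up pair of disks $D_H$ bounding $H$, composed with Kronheimer--Mrowka's comparison maps $\Phi'_D\circ\Phi_D$ identifying the auxiliary cube complex with $\CKh^\sharp(D\sqcup H)$. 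After this, the crossing change is completed by two \emph{embedded} band surgeries and a Reidemeister~II move in $[0,1]\times\R^3$, for which \cite{ISST1} applies directly. This treats both signs of double point uniformly (the degree shift is $(-2,-6)$ or $(0,0)$ according to the sign), and the $E^2$ identification with $\Kh^{\low}(S^*)$ is then reduced, via an excision argument, to the case $D=\varnothing$, where it becomes a concrete instanton computation for the Hopf link.
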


\begin{rem}
    In the first version of this paper, there was an analogous statement of \Cref{thm:induced-map} for the reduced theory. However, there was a gap in the construction of the map, hence has been removed. The later stated \Cref{thm:left-inverse,thm:T2q} have been modified accordingly for the unreduced theory.
\end{rem}

\subsection{Immersed cobordism maps on Khovanov homology}

The immersed cobordism map $\Kh^{\low}(S)$ introduced above is a special case of a more general construction we give in this paper. We define various crossing change maps and immersed cobordism maps for Khovanov homology and its deformed versions over an arbitrary ring $R$. Here, we let $\Kh_{h, t}$ denote the generalized Khovanov homology over $R$, where $h, t$ are fixed elements in $R$. The special case $h = t = 0$ gives the original theory. 

In previous works, various crossing change maps in Khovanov homology and its variants has been defined; by Alishahi~\cite{Alishahi:2017}, Alishahi--Dowlin~\cite{Alishahi:2018} and Ito--Yoshida~\cite{Ito-Yoshida:2021}. We shall see in \Cref{sec:kh-x-ch} that each of these maps corresponds to an element of $\Kh_{h, t}(H)$, i.e.\ the Khovanov homology of the Hopf link $H$, and conversely, any element of $\Kh_{h, t}(H)$ give rise to a crossing change map.\footnote{
    To be precise, we additionally need to specify a \textit{direction} to determine the crossing change map. However, if $z$ is homogeneous, directions will only contribute to signs. This is also true for immersed cobordism maps. 
}

Crossing change maps can be composed with the standard embedded cobordism map to give immersed cobordism maps. To be precise, suppose $S$ is a normally immersed cobordism with $s_+$ positive, $s_-$ negative double points. Further suppose that $S$ is decomposed into elementary pieces, each of which contains at most one double point. Then, together with an $(s_+ + s_-)$-tuple of elements in $\Kh_{h, t}(H)$
\[
    \mathbf{z} = (z^+_1, \ldots, z^+_{s_+}; z^-_1, \ldots, z^-_{s_-}),
\]
we obtain an immersed cobordism map 
\[
    \Kh_{h, t}(S; \mathbf{z})\colon \Kh_{h, t}(L) \to \Kh_{h, t}(L')
\] 
by the composition of elementary cobordism maps and the corresponding crossing change maps. In particular, let $\zeta_0, \zeta_1$ be two generators of $\Kh_{h, t}(H)$ whose (relative) bigradings are given by $(0, 2)$ and $(2, 6)$ respectively (see \Cref{fig:zeta01} in \Cref{subsect: Geometric description}). The immersed cobordism map of \textit{lowest homological degree} $\Kh^{\low}_{h, t}(S)$ is given by choosing $\zeta_0$ for all double points,
\[
    \Kh^{\low}_{h, t}(S) = \Kh_{h, t}(S; \zeta_0, \ldots, \zeta_0; \zeta_0, \ldots, \zeta_0).
\]
Another natural choice would be to choose $\zeta_0$ for all positive double points, and $\zeta_1$ for all negative double points, 
\[
    \Kh^{\bal}_{h, t}(S) = \Kh_{h, t}(S; \zeta_0, \ldots, \zeta_0; \zeta_1, \ldots, \zeta_1)
\]
which is called the immersed cobordism map of \textit{balanced homological degree}. The name comes from the fact that, when $S$ is oriented, the map has homological degree $0$. In general, we say $\Kh_{h, t}(S; \mathbf{z})$ is \textit{homogeneous} if each element $z^\pm_i$ is homogeneous in $\Kh_{h, t}(H)$. Although there are various choices of immersed cobordism maps, that $\Kh^{\low} = \Kh^{\low}_{0, 0}$ particularly appears in \Cref{thm:induced-map} is due to the fact that the $E^1$-term with respect to the $h$-filtration only captures the lowest homological degree part of a possibly non-homogeneous map. 

S.\ Carter, B.\ Cooper, M.\ Khovanov, and V.\ Krushkal have been independently studying immersed cobordism maps in Khovanov homology, and their paper \cite{CCKK:2025-immersed} appeared soon after we posted the first version of this paper. They give another homogeneous immersed cobordism map on Khovanov homology, which can be described using our notation as
\[
    \Kh_{0, 0}(S; \zeta_1, \ldots, \zeta_1; \zeta_0, \ldots, \zeta_0).
\]
Furthermore, they prove that the map is invariant up to sign under smooth isotopies of $S$ rel boundary, by extending Carter and Saito's \textit{movie moves} to immersed surfaces \cite[Theorem 4.1]{CCKK:2025-immersed}, and proving that the above map is indeed invariant under the additional moves. 

Since \cite[Theorem 4.1]{CCKK:2025-immersed} is a general result, it can be applied to our construction to prove that any homogeneous immersed cobordism map in Khovanov homology is invariant up to sign under smooth isotopies, thus extending the main theorem of \cite{CCKK:2025-immersed}. 

\begin{thm}
\label{thm:isotopy-invariance}
    Suppose $S$ is a normally immersed cobordism between links $L$, $L'$ with $s_+$ positive, $s_-$ negative double points. Given any $(s_+ + s_-)$-tuple $\mathbf{z}$ of homogeneous elements of $\Kh_{h, t}(H)$, the immersed cobordism map
    \[
        \Kh_{h, t}(S; \mathbf{z})\colon \Kh_{h, t}(L) \to \Kh_{h, t}(L')
    \]
    is invariant up to sign under smooth isotopies of $S$ rel boundary, and is projectively functorial with respect to composition of cobordisms. Furthermore, if we restrict to oriented immersed cobordisms, then the sign of the map $\Kh^{\bal}_{h, t}(S)$ can be fixed, so that it is strictly invariant under isotopies of $S$ and hence is strictly functorial. 
\end{thm}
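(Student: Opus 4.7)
The plan is to reduce the statement to the extended movie-move theorem of Carter--Cooper--Khovanov--Krushkal, namely \cite[Theorem 4.1]{CCKK:2025-immersed}. That theorem asserts that any two decompositions of a smoothly isotopic normally immersed cobordism $S$ into elementary pieces are connected by a finite sequence of (a) the classical Carter--Saito movie moves for embedded surfaces, and (b) an additional finite list of ``immersed moves'' which insert, delete, permute, or reverse the direction of double points relative to neighboring elementary events. Thus it suffices to verify, for each homogeneous tuple $\mathbf{z}$, that the composite $\Kh_{h, t}(S; \mathbf{z})$ is invariant up to a global sign under every move of type (a) and (b), and to check projective functoriality under stacking.

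For the moves in (a), invariance up to sign follows by exactly the same arguments used for the embedded cobordism map: each classical movie move is a local identity between compositions of Reidemeister, saddle, cup, and cap maps in the deformed Bar-Natan category over $R[h, t]$, and these identities are already recorded in the literature (Bar-Natan, Jacobsson, Morrison--Walker) and carry over verbatim to the $(h, t)$-deformation. Since the double points and their associated crossing change maps lie in isolated bands separating the elementary pieces, the presence of the factors $z^\pm_i$ does not interfere with any of these local verifications.

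For the moves in (b), I would argue move-by-move. Each such move is supported in a small ball around one or two double points together with the neighboring elementary event, so verification reduces to a finite list of local identities in the Khovanov category of tangle cobordisms. Here the key structural input is that the crossing-change map at a double point is, by construction, the Bar-Natan cobordism map obtained by cutting open a small neighborhood of the double point and inserting the class $z \in \Kh_{h, t}(H)$. The two classes $\zeta_0, \zeta_1$ generate $\Kh_{h, t}(H)$ freely in their respective bigradings, so after rescaling one may assume $z \in \{\zeta_0, \zeta_1\}$ at each double point. Each type-(b) move then becomes an identity asserting that a local composition of embedded cobordism maps with one or two insertions of $\zeta_0$ or $\zeta_1$ agrees up to sign with another such composition, and these identities can be checked directly using the Frobenius structure on the Khovanov arc algebra.

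The main obstacle I anticipate is the move that reverses the local ``direction'' of a double-point resolution, since a priori this could produce a nontrivial factor coming from the action of $X - h$ on the two components of the Hopf link, or mix $\zeta_0$ with $\zeta_1$. Controlling this ambiguity is really a bigrading argument: homogeneity of $z$ plus the fact that the reversal preserves the bigrading of the inserted class forces any discrepancy to be a unit in $R$, and explicit computation shows it is $\pm 1$. For the final sentence, when $S$ is oriented and one takes the balanced choice $\Kh^{\bal}_{h, t}(S)$, every double point carries a canonical sign, each local surgery disk inherits a canonical orientation, and the insertion of $\zeta_0$ at positive and $\zeta_1$ at negative double points can be normalized coherently once and for all. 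With these normalizations the sign ambiguity in every type-(b) move is eliminated, and strict functoriality under composition of oriented cobordisms follows because the orientation data on elementary pieces glues along shared boundaries.
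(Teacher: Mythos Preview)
Your approach to the first part (projective invariance) is essentially the same as the paper's: reduce to the extended movie moves MM1--MM18 of \cite{CCKK:2025-immersed}, handle MM1--MM15 by the classical embedded theory, and verify the immersed moves MM16--MM18 directly. The paper is more specific here than you are: MM16 and MM17 are \emph{exactly} the commutativity statements of \Cref{prop:cc-rm2} and \Cref{prop:cc-rm3} (crossing change commutes with R2 and R3), which it proves explicitly beforehand, and MM18 is trivial at the chain level. Your vaguer plan to ``check local identities using the Frobenius structure'' would eventually arrive at the same computations, but you should be aware that these specific propositions already do the work.

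Your argument for the second part (strict functoriality of $\Kh^{\bal}_{h,t}$) has a genuine gap. You write that once orientations fix canonical directions at each double point, ``the sign ambiguity in every type-(b) move is eliminated''. But the sign ambiguity is \emph{not} localized at the double points: the classical embedded movie moves MM1--MM15 themselves only hold up to sign in Khovanov homology (this is Jacobsson's ambiguity), and coherently normalizing the crossing-change insertions does nothing to resolve that. The paper's method is quite different and more global: it passes to the localized $U(1)$-equivariant theory over $\Z[H^{\pm}]$ and tracks the \emph{Lee class} $[\alpha(D)]$, a canonical nonzero homology class of homological degree $0$. One checks that $f_1^-$ sends $\alpha(D)\mapsto\alpha(D')$ and $f_0^+$ sends $\alpha(D)\mapsto\pm H\,\alpha(D')$, then adjusts the sign of each elementary map so that Lee classes correspond positively. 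Now if $S\simeq S'$, projective invariance gives $\Kh^{\bal}(S)=\pm\Kh^{\bal}(S')$, but both send $[\alpha(D)]$ to an element with \emph{positive} leading coefficient on $[\alpha(D')]$, forcing the sign to be $+1$. This Lee-class argument, borrowed from \cite{Sano:2020-b}, is the missing ingredient in your proposal.
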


\subsection{Applications}

Hereafter, we only consider oriented link cobordisms. 

\subsubsection{Injectivity from concordance}
Gordon \cite{Gor81} proposed a conjecture stating that the existence of a {\it ribbon concordance} defines a partial order on the set of isotopy classes of knots. Here, a ribbon concordance is a smooth concordance 
\(
    S\colon K \to K'
\)
in \([0,1]\times \R^3\) without local maxima with respect to the projection 
\([0,1]\times \R^3 \to [0,1]\).
Recently, Agol \cite{agol2022ribbon} proved this conjecture using \(SO(n)\)-character varieties of knots. 
Here, we write \(K \leq K'\) if there exists a ribbon concordance from \(K\) to \(K'\). The following question is still open.

\begin{ques}[\text{\cite[Question~6.1]{Gor81}}]\label{ques:ribbon}
    Let \(K_0\) be a minimal element with respect to \(\leq\).
    If a knot \(K\) is smoothly concordant to \(K_0\), does it follow that
    $
    K_0 \leq K$ ? 
\end{ques}

In particular, when \(K_0\) is the unknot, \cref{ques:ribbon} is precisely the {\it slice--ribbon conjecture}. 
Note that any torus knot is known to be minimal \cite{Gor81}. 
When \(K_0\) is a torus knot, an analogous question has been raised in \cite{DS20}, and affirmative evidence has been provided in terms of $SU(2)$-character varieties in \cite{DS20, imori2024instanton}. Also, see \cite{AT24} for a partial answer to \cref{ques:ribbon} for torus knots.  

From the perspective of knot homology theories, it has been shown that a ribbon concordance induces injections on various knot homologies, whose left inverses are given by the reversed cobordism. Examples of such homology theories include Heegaard--Floer knot homology, Khovanov homology, and singular instanton knot homology 
\cite{zemke2019knot, levine2019khovanov, daemi2022ribbon, kang2022link}. 
On the other hand, in (equivariant) knot instanton Floer theory, Daemi and Scauto proved that such \textit{injectivity property}---a smooth concordance from a specific knot induces an injective map, together with a specific left inverse---holds in some case even when the concordance is not ribbon \cite[Theorem 4.45]{DS20}. 

Here, we combine \cref{thm:induced-map} with the results of \cite{DS20,daemi2022instantons} to prove that such an injectivity property holds in Khovanov homology, for arbitrary concordances starting from negative two-bridge torus knots, which can be seen as an algebraic affirmative evidence of \cref{ques:ribbon}. 

\begin{thm}\label{thm:left-inverse}
For any smooth knot concordance $C$ from a negative two-bridge torus knot, the induced map on $\Kh$ is injective, with a left inverse given by the reversal of \(C\).
\end{thm}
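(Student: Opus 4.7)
The plan is to deduce the Khovanov statement from its instanton counterpart via the filtered chain map of \Cref{thm:induced-map}. By the (projective) functoriality of the Khovanov cobordism map for oriented embedded cobordisms, it suffices to show that the composed concordance $\overline{C} \circ C \colon K \to K$ induces $\pm \id$ on $\Kh(K)$. Applying \Cref{thm:induced-map} to $\overline{C} \circ C$: since $C$ and $\overline{C}$ are smooth embedded concordances between knots, we have $s_+ = s_- = 0$, $\chi = 0$, and the normal Euler number vanishes. The theorem therefore yields a doubly filtered chain endomorphism $\phi^\sharp$ of $\CKh^\sharp(D_K)$ of order $(0,0)$ whose induced map on the $E^2$-page of the $h$-filtration spectral sequence equals $\pm \Kh(\overline{C}^* \circ C^*)$, and whose induced map on total homology $H_*(\CKh^\sharp) = I^\sharp(K)$ equals $I^\sharp(\overline{C}) \circ I^\sharp(C)$.

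The crucial instanton input comes from Daemi--Scaduto~\cite{DS20} (together with its refinement in \cite{daemi2022instantons}): for a negative two-bridge torus knot $K$ and any smooth concordance $C \colon K \to K'$, the map $I^\sharp(C)$ is injective with left inverse $I^\sharp(\overline{C})$, so that $I^\sharp(\overline{C}) \circ I^\sharp(C) = \id$ on $I^\sharp(K)$. Consequently $\phi^\sharp$ induces the identity on total homology.

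It then remains to transfer this identity on $H_*$ to the identity on $E^2 = \Kh(K^*)$, which by mirror symmetry yields the desired conclusion for $\Kh(K)$. For any filtered chain endomorphism, inducing the identity on $H_*$ forces the induced map on the associated graded, and hence on $E^\infty$, to be the identity; whenever the Kronheimer--Mrowka spectral sequence collapses at $E^2$, the induced map on $E^2$ must therefore be the identity as well. Since $K$ is two-bridge (hence alternating, hence quasi-alternating), the $h$-filtration spectral sequence collapses at $E^2$ over $\mathbb{Q}$ by the dimension count $\dim_{\mathbb{Q}} \Kh(K;\mathbb{Q}) = \det(K) = \dim_{\mathbb{Q}} I^\sharp(K)$, so the argument goes through rationally.

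The main obstacle is upgrading this rational argument to integer coefficients, as $\Kh(T(2,q);\mathbb{Z})$ carries nontrivial $2$-torsion and integral collapse of the Kronheimer--Mrowka spectral sequence is not automatic. I expect this to be handled either by a direct integral collapse argument specific to the two-bridge torus knots $T(2,q)$, or by exploiting the full doubly filtered structure (using also the quantum filtration of \cite{KM14}) together with a chain-level argument sensitive to torsion.
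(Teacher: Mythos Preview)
Your overall architecture—reduce to a self-concordance, show it induces the identity on $I^\sharp$, then use collapse of the Kronheimer--Mrowka spectral sequence to transfer this to $\Kh$—is exactly what the paper does. The transfer step via \Cref{lem:collapse} and \Cref{lem:degeneration-map} is correct as you wrote it.

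However, your step~3 contains a genuine gap. The statement that $I^\sharp(\overline{C})\circ I^\sharp(C)=\id$ for concordances out of $T_{2,2k+1}$ is \emph{not} available as a black box from \cite{DS20,daemi2022instantons}; the paper proves it as \Cref{thm:self-concordance-sharp} and \Cref{thm:left-inverse-sharp}, and this is where most of the work lies. What \cite{DS20} supplies is the explicit $\mathcal{S}$-complex of $T_{2,2k+1}$ and the height formalism, which only determines the induced $\mathcal{S}$-morphism of a cobordism up to higher-order ambiguity (\Cref{lem:family-of-cob,lem:family-of-cob2}); the paper itself notes in a footnote that for the injectivity results of \cite{DS20} ``we do not know whether a left inverse can be obtained as its reversal cobordism.'' To eliminate the ambiguity and pin the self-concordance map down to the identity on $I^\sharp$, the paper uses the $h$-filtration constraints on \emph{immersed} cobordism maps coming from \Cref{thm:induced-map}: one constructs auxiliary immersed cobordisms $S_i\colon U_1\to T_{2,2k+1}$ and $S'_i\colon T_{2,3}\to T_{2,2k+1}$ with prescribed $(s_+,g)$, shows via the $h$-degree shift $\ge -2s_+$ that their images land in specific filtration levels (\Cref{thm:h-filt-basis,thm:h-filt-basis-odd}), and combines this with the $\mathcal{S}$-complex triangularity to obtain the uniqueness statements (\Cref{cor:uniqueness,cor:uniqueness-trefoil}) that force $I^\sharp_C=\id$. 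So the instanton left-inverse is not an input to the Khovanov argument but a parallel output of the same filtered machinery.

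On your acknowledged obstacle: the integral collapse is handled in \Cref{lem:collapse}, which applies the general criterion \Cref{thm:degeneration}. One checks from the explicit description of $\CKh(T^*_{2,2k+1})$ in \Cref{cor:CKh-T_2q} and the computation of $I^\sharp(T_{2,2k+1};\Z)$ in \Cref{lem:T2q-sharp} that both $\rank_\Z$ and $|\Tor|$ agree between $E^2$ and $H_*$, so the spectral sequence must degenerate at $E^2$ over $\Z$. Your rational dimension-count is the first half of this; the torsion count is the missing piece.
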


\begin{rem}
    \Cref{thm:left-inverse} can be extended to any generalized Khovanov homology $\Kh_{h,t}$ over a PID $R$, including Bar-Natan homology and Lee homology (\Cref{thm:left-inverse-general}). Moreover, an analogous statement on $I^\sharp$ also holds (\Cref{thm:left-inverse-sharp}).
\end{rem}

The proof of \cref{thm:left-inverse} is based on the following structure theorem on the Khovanov homology for immersed cobordism maps from the unknot $U_1$ and the negative trefoil $T^*_{2, 3}$ to a negative two-bridge torus knot. 

\begin{thm}
\label{thm:T2q}
    Let $k$ be a positive integer and $s_-$ an integer such that $0 \leq s_- \leq k$. 
    \begin{enumerate}
        \item Let $U_1$ denote the unknot, and $S$ be an immersed cobordism from $U_1$ to $T^*_{2, 2k + 1}$ with genus $g = k - s_-$ and $s_-$ negative double points. Then the immersed cobordism map on Khovanov homology
        \[
            \Kh^{\low}(S)\colon \Kh(U_1) \to \Kh(T^*_{2, 2k + 1})
        \]
        restricts to a surjection
        \[
            \Kh^{\low}(S)\colon \Kh^0(U_1) \cong \Z^2 \to \Kh^{-2s_-}(T^*_{2, 2k + 1}). 
        \]

        \item Let $S$ be an immersed cobordism from $T^*_{2, 3}$ to $T^*_{2, 2k + 1}$ with genus $g = k - s_- - 1$ and $s_-$ negative double points. Then the immersed cobordism map on Khovanov homology
        \[
            \Kh^{\low}(S)\colon \Kh(T^*_{2, 3}) \to \Kh(T^*_{2, 2k + 1})
        \]
        restricts to a bijection
        \[
            \Kh^{\low}(S)\colon \Kh^{-3}(T^*_{2, 3}) \cong \Z \to \Kh^{-2s_- - 3}(T^*_{2, 2k + 1}).
        \]
    \end{enumerate}
\end{thm}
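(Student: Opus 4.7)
The plan is to exploit the isotopy invariance and projective functoriality of $\Kh^{\low}$ established in \Cref{thm:isotopy-invariance}, so as to reduce to a standard model cobordism, and then to analyze the resulting composition by combining Lee-theoretic arguments for the embedded factor with the spectral-sequence comparison of \Cref{thm:induced-map} and the instanton input of Daemi--Scauto for the crossing-change factor.

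First I would replace $S$ by a model of the form $S = S_c \circ S_g$ (for part~(1)) or $S = S_g \circ S_c$ (for part~(2)), where $S_c$ is a concatenation of $s_-$ elementary crossing-change annuli (each contributing one negative double point) passing through consecutive two-bridge torus knots $T^*_{2, 2j+1}$, and $S_g$ is a minimum-genus embedded oriented cobordism. Concretely, for part~(1), $S_g\colon U_1 \to T^*_{2, 2(k - s_-) + 1}$ is of genus $k - s_-$ and $S_c\colon T^*_{2, 2(k - s_-) + 1} \to T^*_{2, 2k+1}$; for part~(2), $S_c\colon T^*_{2, 3} \to T^*_{2, 2s_- + 3}$ and $S_g\colon T^*_{2, 2s_- + 3} \to T^*_{2, 2k+1}$ has genus $k - s_- - 1$. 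By projective functoriality, $\Kh^{\low}(S)$ factors as the composition of $\Kh(S_g)$ and $\Kh^{\low}(S_c)$ (in the corresponding order), so the task reduces to analyzing each factor separately.

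The embedded factor $\Kh(S_g)$ I would analyze via the saturation of Rasmussen's $s$-invariant genus bound, using $s(U_1) = 0$ and $s(T^*_{2, 2m+1}) = -2m$. A standard Lee-theoretic argument then shows that $\Kh(S_g)$ restricts to an isomorphism between the rank-$2$ free parts of $\Kh^0$ in the prescribed $q$-gradings, which settles part~(1) in the case $s_- = 0$. For part~(2), where the source and target of $\Kh(S_g)$ sit at the higher $h$-grading $-3 - 2s_-$, the Lee argument does not apply directly, and I would instead transport the Lee-theoretic conclusion for the reversed mirror cobordism back to the original degree via universal coefficients, exploiting the rank-one structure of the relevant free summand.

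For the crossing-change factor $\Kh^{\low}(S_c)$, I would extract non-vanishing from \Cref{thm:induced-map}: the induced map on the $E^2$-page of the Kronheimer--Mrowka spectral sequence agrees up to sign with $\Kh^{\low}(S_c^*)$, while $I^\sharp(S_c)$ is non-trivial on the corresponding classes by the equivariant instanton computations of Daemi--Scauto \cite{DS20, daemi2022instantons}. Primitivity---that the image generates, rather than a proper finite-index subgroup of, the rank-one target $\Z$---would then be obtained by tracking the $\zeta_0$-insertion through an explicit cube-of-resolutions description of $\Kh(T^*_{2, 2j + 1})$, whose free part has rank at most one in each relevant $h$-grading. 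The hard part will be this last step: non-vanishing follows comparatively painlessly from the spectral-sequence comparison and the instanton input, but upgrading to primitivity demands a delicate chain-level analysis of the $\zeta_0$-insertion map between consecutive torus knots, most naturally by induction on $s_-$.
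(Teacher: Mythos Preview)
Your first step contains a genuine gap. \Cref{thm:isotopy-invariance} gives invariance of $\Kh^{\low}$ under isotopy rel boundary, but two normally immersed cobordisms in $[0,1]\times \R^3$ with the same boundary, genus, and signed double-point count need not be isotopic rel boundary---they are only \emph{homotopic} rel boundary, and the homotopy may pass through finger moves or negative twists, under which $\Kh^{\low}$ vanishes outright by \Cref{formula_of_moves}. So you cannot replace an arbitrary $S$ by your model $S_c \circ S_g$, and establishing the statement for the model does not establish the theorem. (Your Lee-theoretic analysis of the embedded piece also does not directly give a surjection onto $\Kh^0 \cong \Z^2$---it controls the Lee class, not both generators---but this is secondary to the reduction gap.)

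The paper's argument does not attempt such a reduction. It works with an arbitrary $S$ on the instanton side from the start. From the explicit $\mathcal{S}$-complex of $T_{2,2k+1}$ computed in \cite{DS20} one shows (\Cref{thm:h-filt-basis}, \Cref{thm:h-filt-basis-odd}) that for \emph{any} family of cobordisms $\{S_i\colon U_1 \to T_{2,2k+1}\}$ with $s_+(S_i)=i$ and $g(S_i)=k-i$, the images $\{I^\sharp_{S_i}(u_\pm)\}_{i\le p}$ generate the filtration step $F_{-2p}I^\sharp_{[0]}(T_{2,2k+1})$, and moreover any other $S$ with $s_+(S)=p$ induces the \emph{same} $I^\sharp$-map as $S_p$. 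A rank and torsion count (\Cref{lem:collapse}, using \Cref{cor:CKh-T_2q}) forces the Kronheimer--Mrowka spectral sequence for $T_{2,2k+1}$ to degenerate at $E^2$, so the associated graded of $I^\sharp$ is precisely $\Kh(T^*_{2,2k+1})$; \Cref{thm:induced-map} then identifies $\Kh^{\low}(S)$ with the map induced by $I^\sharp_{S^*}$ on this associated graded, and surjectivity (resp.\ bijectivity) follows immediately. The independence from the specific cobordism---which you tried to invoke at the outset---is a \emph{consequence} of the argument, and no separate Lee-theoretic or chain-level primitivity analysis is required.
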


Note that the homological grading of $\Kh(T^*_{2, 2k + 1})$ ranges from $-2k - 1$ to $0$. Thus \Cref{thm:T2q} implies that $\Kh(T^*_{2, 2k + 1})$ is covered by the images of immersed cobordism maps from $U_1$ and $T^*_{2, 3}$, by varying the genus and the number of negative double points of $S$. 


It is interesting to ask whether analogous statements of \Cref{thm:left-inverse} hold for other knot homology theories. 

\begin{ques}
    Given a knot homology theory, for which classes of knots do analogous statements of \Cref{thm:left-inverse} hold?  
\end{ques}
For singular instanton theory, an injectivity holds for any concordance starting from any torus knot, which is again proven in \cite{DS20}. Also see \cite{imori2024instanton} for the corresponding statements for singular instanton theory with general holonomy parameters. \footnote{We do not know whether a left inverse can be obtained as its reversal cobordism. }
In instanton theory, there are two important classes of knots that contain the torus knots: {\it instanton L-space knots} and {\it I-basic knots}. 
\textit{Instanton L-space knots} were introduced in \cite[Definition 1.13]{baldwin2022instantons} as an instanton counterpart of the L-space knots in Heegaard--Floer knot homology, and is conjectured that the two are equal. The \textit{I-basic knots} were introduced in \cite[Section 1.4]{daemi2024unoriented} as a natural generalization of torus knots in terms of the behavior of equivariant singular instanton homology. Thus, it is interesting to ask: 
\begin{ques}
    Do analogous statements of \Cref{thm:left-inverse} hold for instanton L-space knots and I-basic knots with framed singular instanton Floer homology \cite{KM11}, sutured instanton Floer homology \cite{kronheimer2010knots}, and equivariant singular instanton Floer homology \cite{DS19}? 
\end{ques}

After posting the first version of this paper, Zhenkun Li personally informed us that the analogous statement of \Cref{thm:left-inverse} holds for L-space knots in Heegaard--Floer knot homology, see \cite[Theorem 1.4 and 1.7]{Z19}.

Another possible way to prove such a statement would be to combine \Cref{thm:left-inverse} with spectral sequences from Khovanov homology to various homology theories, including: the (involutive) Heegaard Floer homology of branched covers \cite{OS05, alishahi2023khovanov}, the plane Floer homology \cite{Da15}, the (involutive) monopole Floer homology of branched covers \cite{B11, lin2019bar}, the framed instanton homology of branched covers \cite{Sca15}, the Heegaard--Floer knot homology \cite{Do24, nahm2025spectral} and the real monopole Floer homology \cite{Li24}. For a formal treatment of cobordism maps of such spectral sequences, see \cite{BHL19}. 

\subsubsection{Detection of exotic slice surfaces}

It was shown in \cite[Section~1.6, Proposition]{freedman2014topology} that any two smoothly and normally immersed surfaces in $[0,1]\times \R^3$ that are homotopic rel boundary can be related by a finite sequence of smooth ambient isotopies rel boundary, together with the following three moves and their inverses:
\begin{itemize}
    \setlength{\itemsep}{.25em}
    \item \emph{Positive twist move}, which locally introduces a positive double point;
    \item \emph{Negative twist move}, which locally introduces a negative double point;
    \item \emph{Finger move}, which locally introduces a canceling pair of negative and positive double points.
\end{itemize}
Here, we consider Khovanov homology $\Kh_{h, t}$ in its generalized form, and describe the behaviors of the two specific immersed cobordism maps $\Kh^{\low}_{h, t}$ and $\Kh^{\bal}_{h, t}$ under these moves. 
\begin{prop}
\label{formula_of_moves}
    Let $S$ be a normally immersed link cobordism in $[0,1]\times \R^3$ from $L$ to $L'$, and $S'$ be another cobordism obtained from $S$ by applying one of the above three moves. 
    \begin{itemize}
        \item If $S'$ is obtained from $S$ by adding a positive twist, then
        \begin{align*}
            \Kh^{\low}_{h, t}(S) &= \Kh^{\low}_{h, t}(S'),\\
            \Kh^{\bal}_{h, t}(S) &= \Kh^{\bal}_{h, t}(S').            
        \end{align*}
        \item If $S'$ is obtained from $S$ by adding a negative twist or by applying a finger move, then 
        \[
            \Kh^{\low}_{h, t}(S') = 0.
        \]
        Moreover, if $\Kh^{\bal}_{h, t}(S)$ is nonzero, then $\Kh^{\bal}_{h, t}(S')$ is also generally nonzero, with its degree shifted by $(0, -2)$. (Detailed descriptions are given in \Cref{prop:twist-move-map,prop:finger-move-map}.)
    \end{itemize}
\end{prop}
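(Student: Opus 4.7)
The proof strategy rests on the \emph{locality} of the immersed cobordism map $\Kh_{h, t}(S; \mathbf{z})$. Since $S$ is decomposed into elementary pieces, with each double point contributing a crossing change map determined by a chosen element of $\Kh_{h, t}(H)$, a local modification of $S$ only affects the elementary pieces supported in a small ball around the modification. Thus, for each of the three moves, it suffices to analyze the local cobordism map induced by the modified piece and verify how it composes with the unchanged global parts.

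First I would analyze the \textbf{positive twist move}. Locally, the move replaces an embedded disk $D$ by an immersed disk $D_+$ with one positive double point, both having the same boundary. The cobordism map for $D$ is the identity on the local Khovanov complex. I claim that the map for $D_+$, with $\zeta_0$ assigned to the double point, is also the identity. I would verify this by decomposing $D_+$ into a Reidemeister-1-type piece that introduces a small crossing, a double-point piece equipped with the crossing change map determined by $\zeta_0$, and standard (Morse-theoretic) pieces, then computing the resulting composition explicitly on the local Khovanov complex. This establishes invariance under positive twists for both $\Kh^{\low}_{h, t}$ and $\Kh^{\bal}_{h, t}$, since each assigns $\zeta_0$ to positive double points.

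Next I would address the \textbf{negative twist move}, whose local change replaces $D$ by an immersed disk $D_-$ with one negative double point. Using an analogous local decomposition but with the reversed crossing sign, the composition with $\zeta_0$ at the double point yields the zero map, essentially because $\zeta_0$ has the lower bigrading and fails to produce a nontrivial map through the crossing change at a negative double point; this proves the $\Kh^{\low}_{h, t}$ statement. For $\Kh^{\bal}_{h, t}$, replacing $\zeta_0$ by $\zeta_1$ at the negative double point produces a generally nontrivial composite whose net bidegree shift works out to $(0, -2)$, matching \Cref{prop:twist-move-map}. Finally, the \textbf{finger move} introduces a canceling pair of one positive and one negative double point; by the positive twist step, the positive double point with $\zeta_0$ acts as the identity on the local complex, so the local effect reduces to that of a single negative twist, yielding \Cref{prop:finger-move-map}.

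The main obstacle I anticipate is the explicit local verification in the positive twist step, namely that the composition realizing the cobordism map for $D_+$ with $\zeta_0$ equals the identity on the local Khovanov complex. This requires concretely identifying $\zeta_0$ and $\zeta_1$ as elements of $\Kh_{h, t}(H)$ together with their cycle representatives, tracing the saddle and Reidemeister-type elementary maps that arise in the local decomposition, and then algebraically checking the resulting composite with careful sign bookkeeping. Once this identification is in place, the negative twist vanishing, the bidegree shift for $\Kh^{\bal}_{h, t}$, and the finger move reduction all follow by parallel local computations.
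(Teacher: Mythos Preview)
Your approach to the twist moves is essentially the paper's: both reduce to the local computation in \Cref{prop:twist-move-map}, which shows that the composite $R1_+^{-1}\circ f_0^+ \circ R1_-$ is the identity (positive twist) and that $R1_-^{-1}\circ f_0^- \circ R1_+ = 0$ while $R1_-^{-1}\circ f_1^- \circ R1_+ = \pm(2X-h)$ (negative twist). Two small points: there are no Morse pieces in the local model of a twist, only two R1 moves flanking a single crossing change; and your appeal to ``locality'' is really an appeal to isotopy invariance (\Cref{prop:isotopy-inv-up-to-sign}), which is what lets you replace the given decomposition of $S'$ by one in which the modification appears in this standard form.

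Your finger move argument, however, has a genuine gap. The local model of a finger move is not built from R1 moves: it is
\[
D \xrightarrow{\ R2\ } D' \xrightarrow{\ (f_i,\, f_j)\ } D'' \xrightarrow{\ R2^{-1}\ } D,
\]
where two crossings are created simultaneously by an R2 and both are then changed. The positive twist computation only tells you that $R1_+^{-1}\circ f_0^+\circ R1_- = \id$; in the finger move the crossing change $f_0^+$ is not sandwiched between R1 moves, so you cannot conclude it ``acts as the identity'' here. Nor is there an isotopy reducing a finger move (double points between \emph{two distinct} sheets) to a pair of twists (self-intersections of \emph{one} sheet). The finger move therefore requires its own local computation, which is exactly \Cref{prop:finger-move-map}: for $(i,j)=(0,0)$ the composite is $0$, giving $\Kh^{\low}(S')=0$, and for $(i,j)=(0,1)$ or $(1,0)$ it is $\pm\Phi$ or $\pm\Psi$. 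Note in particular that the local $\Kh^{\bal}$ map for a finger move is $\pm\Phi$ or $\pm\Psi$, not $\pm(2X-h)$ as your reduction would predict --- confirming that the two cases are genuinely different computations. To complete your argument you must carry out this R2-based computation directly, as the paper does.
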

The above proposition is proved within Khovanov homology and independent from instanton theory. Analogous statements have been proven for singular Donaldson polynomial invariants \cite{kronheimer1997obstruction}, singular instanton Floer theory \cite{KM13, DS20, imori2024instanton} and $\Z_2$-equivariant Seiberg--Witten Floer theory \cite{iida2024monopoles}. 

Recently, there have been various studies of relatively exotic surfaces using link cobordism maps in Khovanov theory, see \cite{Sundberg-Swan:2022,hayden2024khovanov, banerjee2025spanning, hayden2025seifert}.  
Here, a pair of proper and smooth surface embeddings $S$ and $S'$ in $D^4$ with $\partial S= \partial S' \subset S^3 = \partial D^4$ is {\it relatively exotic} if there is no smooth isotopy between $S$ and $S'$ rel boundary but there is such a topological isotopy. 
We note that any pair of properly immersed surfaces in $D^4$ with the same boundary knot and with the same genera are homotopic, thus, in particular, any relatively exotic surfaces in $D^4$ are related by the above stated moves. 
The following is immediate from \Cref{formula_of_moves} combined with \Cref{thm:isotopy-invariance}. 
\begin{thm}\label{thm:exotic}
    Suppose $S$ and $S'$ are relatively exotic surfaces in $D^4$ that induce (projectively) distinct maps on Khovanov homology (or any of its deformed versions). Then the immersed cobordism maps, either using $\Kh^\low$ or $\Kh^\bal$, remain distinct under any number of positive twist moves on both surfaces, and hence the surfaces remain relatively exotic.
\end{thm}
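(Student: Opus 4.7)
The plan is a direct combination of \Cref{formula_of_moves} and \Cref{thm:isotopy-invariance}. Let $\tilde S$ and $\tilde S'$ denote the immersed surfaces obtained from $S$ and $S'$, respectively, by applying positive twist moves. By the first bullet of \Cref{formula_of_moves}, both $\Kh^\low_{h,t}$ and $\Kh^\bal_{h,t}$ are invariant under the addition of a positive twist, so iterating gives
\[
    \Kh^\low_{h,t}(\tilde S) = \Kh^\low_{h,t}(S), \qquad \Kh^\low_{h,t}(\tilde S') = \Kh^\low_{h,t}(S'),
\]
and likewise for $\Kh^\bal_{h,t}$. For an embedded cobordism, each of $\Kh^\low_{h,t}$ and $\Kh^\bal_{h,t}$ reduces to the ordinary embedded cobordism map, which by hypothesis projectively distinguishes $S$ from $S'$. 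Hence $\tilde S$ and $\tilde S'$ continue to be projectively distinguished by either of these immersed cobordism maps.

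Next I would invoke isotopy invariance. Since the generators $\zeta_0$ and $\zeta_1$ of $\Kh_{h,t}(H)$ are homogeneous, both $\Kh^\low_{h,t}$ and $\Kh^\bal_{h,t}$ are homogeneous immersed cobordism maps in the sense of \Cref{thm:isotopy-invariance}, and are therefore invariant up to sign under smooth isotopy rel boundary. Combined with the previous paragraph, this rules out any smooth isotopy rel boundary between $\tilde S$ and $\tilde S'$.

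Finally, one needs to check that $\tilde S$ and $\tilde S'$ remain topologically isotopic rel boundary, so that the conclusion of being relatively exotic is meaningful. This is a local matter: each positive twist move is supported inside a small $4$-ball disjoint from the boundary, so the chosen twist loci on $S$ can be transported along a fixed topological ambient isotopy from $S$ to $S'$; after arranging matching numbers of twists on the two sides, one obtains a topological isotopy rel boundary from $\tilde S$ to $\tilde S'$. The substantive content of the theorem is entirely packaged in \Cref{formula_of_moves,thm:isotopy-invariance}, and I expect no serious obstacle beyond this routine local verification in the topological category.
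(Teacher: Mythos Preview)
Your proposal is correct and follows exactly the approach the paper indicates: the paper states the theorem is ``immediate from \Cref{formula_of_moves} combined with \Cref{thm:isotopy-invariance},'' and you have spelled out precisely that combination. Your additional paragraph on preserving topological isotopy after positive twisting is a reasonable elaboration of a point the paper leaves implicit.
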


We can always replace neighborhoods of immersed points by twice-punctured tori to get an embedded surface, but this process loses the information about their signs of the immersed points. 
Summing trivially embedded tori to an embedded surface is called the {\it inner stabilization} and it is also known as an unknotting operation for surface knots in 4-manifolds \cite{BS16}. For inner stabilizations of surfaces, there are various studies, for example, see \cite{guth2024exotic, hayden2023atomic, guth2023doubled} for exotic surfaces in $D^4$.

\medskip

\noindent
\textbf{Organization}.
The paper is organized as follows. \Cref{sec:kh-x-ch,sec:Kh-structure} are entirely within Khovanov theory. In \Cref{sec:kh-x-ch}, we define crossing change maps and assemble them to define immersed cobordism maps on Khovanov homology. \Cref{thm:isotopy-invariance,thm:exotic} are proved therein. 
In \Cref{sec:Kh-structure}, we describe the Khovanov homology of the negative $(2, 2k+1)$-torus knot $T^*_{2,2k+1}$ and prove the combinatorial part of \Cref{thm:T2q}. 
In Section~4, we construct immersed cobordism maps on instanton cube complexes and prove the Khovanov–Floer type compatibility: the induced map on the $E^2$–term (with respect to the h–filtration) agrees with the Khovanov cobordism map.
In Section~5, using equivariant instanton theory, we compute and constrain the maps for two–bridge torus knots, leading to injectivity with an explicit left inverse for concordances starting at $T^*_{2,q}$.
An appendix collects homological–algebra background on filtered complexes, spectral sequences, filtered maps, and tensor products.

\medskip

\noindent
\textbf{Acknowledgement}. 
We would like to thank JungHwan Park and Zenkun Li for helpful discussions. 
HI was partially supported by the Samsung Science and Technology Foundation (SSTF-BA2102-02) and the Jang Young Sil Fellowship from KAIST.
TS was supported by JSPS KAKENHI Grant Number 23K12982 and academist crowdfunding Project No.\ 121. 
MT was partially supported by JSPS KAKENHI Grant Number 22K13921, and RIKEN iTHEMS Program. 
\section{Immersed cobordism maps on Khovanov homology}
\label{sec:kh-x-ch}

Throughout this paper, we work in the smooth category and assume all objects and maps to be smooth. Knots and links are assumed to be oriented, whereas link cobordisms are not necessarily oriented or orientable. We assume that the reader is familiar with the construction of Khovanov homology and its equivariant versions \cite{Khovanov:2000,BarNatan:2004,Khovanov:2004}.

Let $R$ be a commutative ring with unity and $A_{h, t}$ the Frobenius algebra given by $A_{h, t} = R[X]/(X^2 - hX - t)$ with $h, t \in R$ and $\epsilon(1) = 0,\ \epsilon(X) = 1$. 
For a link diagram $D$, let $\CKh_{h, t}(D)$ denote the Khovanov chain complex of $D$ obtained from the Frobenius algebra $A_{h, t}$, and $\Kh_{h, t}(D)$ its homology. This includes the \textit{universal Khovanov homology} (or the $U(2)$-equivariant Khovanov homology), given by $R = \Z[h, t]$ and $A_{h, t} = R[X]/(X^2 - hX - t))$. Other variants are obtained by specializations of this theory; for example, the original construction of the Khovanov complex \cite{Khovanov:2000} is given by setting $(h, t) = (0, 0)$.

If $R$ is graded and $\deg h = -2,\ \deg t = -4$ (including the case $h = 0$ or $t = 0$), then $\CKh_{h, t}$ admits a secondary grading, called the \textit{quantum grading}, which is preserved by the differential $d$. Otherwise, if $R$ is non-graded and $\deg h = \deg t = 0$, then $d$ is quantum grading non-decreasing and thus $\CKh_{h, t}$ admits a filtration, called the \textit{quantum filtration}. In this section, we assume that the first assumption holds (later in \Cref{sec:computation}, we consider the filtered case). Hereafter, we make the ground ring $R$ and $(h, t)$ implicit and omit them from the notations, unless stating results specific to the Frobenius extension.

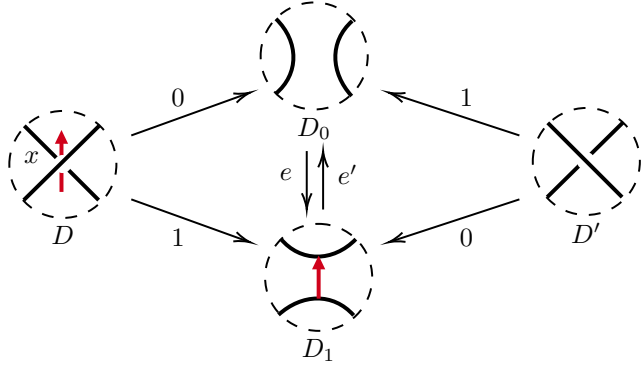
\begin{figure}[t]
    \centering
    \tikzset{every picture/.style={line width=0.75pt}} 

\begin{tikzpicture}[x=0.75pt,y=0.75pt,yscale=-.75,xscale=.75]

\draw [color={rgb, 255:red, 208; green, 2; blue, 27 }  ,draw opacity=1 ][line width=1.5]    (53.07,92.81) -- (53.07,130.49) ;
\draw [shift={(53.07,88.81)}, rotate = 90] [fill={rgb, 255:red, 208; green, 2; blue, 27 }  ,fill opacity=1 ][line width=0.08]  [draw opacity=0] (9.29,-4.46) -- (0,0) -- (9.29,4.46) -- cycle    ;
\draw  [dash pattern={on 4.5pt off 4.5pt}] (18,112) .. controls (18,92.12) and (34.12,76) .. (54,76) .. controls (73.88,76) and (90,92.12) .. (90,112) .. controls (90,131.88) and (73.88,148) .. (54,148) .. controls (34.12,148) and (18,131.88) .. (18,112) -- cycle ;
\draw [color={rgb, 255:red, 0; green, 0; blue, 0 }  ,draw opacity=1 ][line width=1.5]    (28.07,86) -- (78.07,136) ;
\draw  [draw opacity=0][fill={rgb, 255:red, 255; green, 255; blue, 255 }  ,fill opacity=1 ] (47.71,111.71) .. controls (47.71,108.36) and (50.43,105.64) .. (53.79,105.64) .. controls (57.14,105.64) and (59.86,108.36) .. (59.86,111.71) .. controls (59.86,115.07) and (57.14,117.79) .. (53.79,117.79) .. controls (50.43,117.79) and (47.71,115.07) .. (47.71,111.71) -- cycle ;
\draw [color={rgb, 255:red, 0; green, 0; blue, 0 }  ,draw opacity=1 ][line width=1.5]    (196.93,64.56) .. controls (213,51.34) and (213,29.34) .. (197.29,14) ;
\draw [color={rgb, 255:red, 0; green, 0; blue, 0 }  ,draw opacity=1 ][line width=1.5]    (248.21,66) .. controls (234,50.34) and (233,29.34) .. (248.57,15.44) ;
\draw  [dash pattern={on 4.5pt off 4.5pt}] (187,39) .. controls (187,19.12) and (203.12,3) .. (223,3) .. controls (242.88,3) and (259,19.12) .. (259,39) .. controls (259,58.88) and (242.88,75) .. (223,75) .. controls (203.12,75) and (187,58.88) .. (187,39) -- cycle ;

\draw  [dash pattern={on 4.5pt off 4.5pt}] (370,110) .. controls (370,90.12) and (386.12,74) .. (406,74) .. controls (425.88,74) and (442,90.12) .. (442,110) .. controls (442,129.88) and (425.88,146) .. (406,146) .. controls (386.12,146) and (370,129.88) .. (370,110) -- cycle ;
\draw [color={rgb, 255:red, 0; green, 0; blue, 0 }  ,draw opacity=1 ][line width=1.5]    (430.43,84) -- (379.71,134) ;
\draw  [draw opacity=0][fill={rgb, 255:red, 255; green, 255; blue, 255 }  ,fill opacity=1 ] (399.71,109.71) .. controls (399.71,106.36) and (402.43,103.64) .. (405.79,103.64) .. controls (409.14,103.64) and (411.86,106.36) .. (411.86,109.71) .. controls (411.86,113.07) and (409.14,115.79) .. (405.79,115.79) .. controls (402.43,115.79) and (399.71,113.07) .. (399.71,109.71) -- cycle ;
\draw [color={rgb, 255:red, 0; green, 0; blue, 0 }  ,draw opacity=1 ][line width=1.5]    (380.07,84) -- (420.04,123.96) -- (430.07,134) ;
\draw    (100,92.49) -- (178.12,64.28) ;
\draw [shift={(180,63.6)}, rotate = 160.14] [color={rgb, 255:red, 0; green, 0; blue, 0 }  ][line width=0.75]    (10.93,-3.29) .. controls (6.95,-1.4) and (3.31,-0.3) .. (0,0) .. controls (3.31,0.3) and (6.95,1.4) .. (10.93,3.29)   ;
\draw    (274.9,163.78) -- (361,135.51) ;
\draw [shift={(273,164.4)}, rotate = 341.82] [color={rgb, 255:red, 0; green, 0; blue, 0 }  ][line width=0.75]    (10.93,-3.29) .. controls (6.95,-1.4) and (3.31,-0.3) .. (0,0) .. controls (3.31,0.3) and (6.95,1.4) .. (10.93,3.29)   ;
\draw    (100,135.51) -- (178.12,163.72) ;
\draw [shift={(180,164.4)}, rotate = 199.86] [color={rgb, 255:red, 0; green, 0; blue, 0 }  ][line width=0.75]    (10.93,-3.29) .. controls (6.95,-1.4) and (3.31,-0.3) .. (0,0) .. controls (3.31,0.3) and (6.95,1.4) .. (10.93,3.29)   ;
\draw    (274.9,64.22) -- (361,92.49) ;
\draw [shift={(273,63.6)}, rotate = 18.18] [color={rgb, 255:red, 0; green, 0; blue, 0 }  ][line width=0.75]    (10.93,-3.29) .. controls (6.95,-1.4) and (3.31,-0.3) .. (0,0) .. controls (3.31,0.3) and (6.95,1.4) .. (10.93,3.29)   ;
\draw [color={rgb, 255:red, 0; green, 0; blue, 0 }  ,draw opacity=1 ][line width=1.5]    (200.44,161.93) .. controls (213.66,178) and (235.66,178) .. (251,162.29) ;
\draw [color={rgb, 255:red, 0; green, 0; blue, 0 }  ,draw opacity=1 ][line width=1.5]    (199,213.21) .. controls (214.66,199) and (235.66,198) .. (249.56,213.57) ;
\draw  [dash pattern={on 4.5pt off 4.5pt}] (226,152) .. controls (245.88,152) and (262,168.12) .. (262,188) .. controls (262,207.88) and (245.88,224) .. (226,224) .. controls (206.12,224) and (190,207.88) .. (190,188) .. controls (190,168.12) and (206.12,152) .. (226,152) -- cycle ;

\draw [color={rgb, 255:red, 0; green, 0; blue, 0 }  ,draw opacity=1 ][line width=0.75]    (218,103) -- (218,140.34) ;
\draw [shift={(218,142.34)}, rotate = 270] [color={rgb, 255:red, 0; green, 0; blue, 0 }  ,draw opacity=1 ][line width=0.75]    (10.93,-3.29) .. controls (6.95,-1.4) and (3.31,-0.3) .. (0,0) .. controls (3.31,0.3) and (6.95,1.4) .. (10.93,3.29)   ;
\draw [color={rgb, 255:red, 0; green, 0; blue, 0 }  ,draw opacity=1 ][line width=0.75]    (229,105) -- (229,142.34) ;
\draw [shift={(229,103)}, rotate = 90] [color={rgb, 255:red, 0; green, 0; blue, 0 }  ,draw opacity=1 ][line width=0.75]    (10.93,-3.29) .. controls (6.95,-1.4) and (3.31,-0.3) .. (0,0) .. controls (3.31,0.3) and (6.95,1.4) .. (10.93,3.29)   ;
\draw [color={rgb, 255:red, 0; green, 0; blue, 0 }  ,draw opacity=1 ][line width=1.5]    (78.43,86) -- (27.71,136) ;
\draw [color={rgb, 255:red, 208; green, 2; blue, 27 }  ,draw opacity=1 ][line width=1.5]    (226,177.31) -- (226,202.31) ;
\draw [shift={(226,173.31)}, rotate = 90] [fill={rgb, 255:red, 208; green, 2; blue, 27 }  ,fill opacity=1 ][line width=0.08]  [draw opacity=0] (9.29,-4.46) -- (0,0) -- (9.29,4.46) -- cycle    ;

\draw (54,151.4) node [anchor=north] [inner sep=0.75pt]    {$D$};
\draw (406,149.4) node [anchor=north] [inner sep=0.75pt]    {$D'$};
\draw (223,78.4) node [anchor=north] [inner sep=0.75pt]    {$D_{0}$};
\draw (226,227.4) node [anchor=north] [inner sep=0.75pt]    {$D_{1}$};
\draw (138,153.35) node [anchor=north east] [inner sep=0.75pt]    {$1$};
\draw (138,74.65) node [anchor=south east] [inner sep=0.75pt]    {$0$};
\draw (319,153.35) node [anchor=north west][inner sep=0.75pt]    {$0$};
\draw (319,74.65) node [anchor=south west] [inner sep=0.75pt]    {$1$};
\draw (216,118.67) node [anchor=east] [inner sep=0.75pt]  [color={rgb, 255:red, 0; green, 0; blue, 0 }  ,opacity=1 ]  {$e\ $};
\draw (231,118.67) node [anchor=west] [inner sep=0.75pt]  [color={rgb, 255:red, 0; green, 0; blue, 0 }  ,opacity=1 ]  {$\textcolor[rgb]{0.82,0.01,0.11}{\ } e'$};
\draw (26,101.4) node [anchor=north west][inner sep=0.75pt]    {$x$};

\end{tikzpicture}
    \caption{Diagrams $D$, $D'$ and their resolutions $D_0, D_1$ at $x$. }
    \label{fig:D01}
\end{figure}

\subsection{Crossing change maps}
\label{subsec:x-ch-map}

First, we define the crossing change maps combinatorially. Suppose $D$, $D'$ are diagrams related by a crossing change at a crossing $x$ of $D$, either from positive to negative or the other way, as depicted in \Cref{fig:D01}. Let $D_0, D_1$ be the $0$-, $1$-resolved diagram of $D$ at $x$ respectively. Furthermore, suppose that the crossing $x$ is associated a \textit{direction}, as indicated by the red arrow, which determines the order of the two arcs appearing the $1$-resolved diagram $D_1$ of $D$. Two chain maps $f_0, f_1$ are defined as follows: 
\[
    \begin{tikzcd}[row sep=3em]
    \CKh(D) \arrow[d, "f_0"', dashed, shift right] \arrow[d, "f_1", shift left] & = & \{0 \arrow[r] & \CKh(D_0) \arrow[rd, "I", pos=.75] \arrow[r, "e"] & \CKh(D_1) \arrow[r] \arrow[ld, "\Phi"', dashed, pos=.75] & 0\} \\
    \CKh(D')                                                            & = & \{0 \arrow[r]   & \CKh(D_1) \arrow[r, "e'"]                   & \CKh(D_0) \arrow[r]                                & 0\}.  
    \end{tikzcd}
\]
The the dashed arrows indicate $f_0$ and solid arrows indicate $f_1$. On the right side, $\CKh(D)$ is regarded as the mapping cone of the saddle map $e$, and $\CKh(D')$ as the cone of the saddle map $e'$, as indicated by the two vertical arrows in \Cref{fig:D01}. The diagonal $I$ indicates the identity map on $\CKh(D_0)$, and $\Phi$ is the endomorphism on $\CKh(D_1)$ defined as
\begin{center}
    \resizebox{!}{4em}{
    \tikzset{every picture/.style={line width=0.75pt}} 

\begin{tikzpicture}[x=0.75pt,y=0.75pt,yscale=-1,xscale=1]

\draw [color={rgb, 255:red, 208; green, 2; blue, 27 }  ,draw opacity=1 ][line width=2.25]    (163,38.32) -- (163,60.68) ;
\draw [shift={(163,33.32)}, rotate = 90] [fill={rgb, 255:red, 208; green, 2; blue, 27 }  ,fill opacity=1 ][line width=0.08]  [draw opacity=0] (10,-4.8) -- (0,0) -- (10,4.8) -- cycle    ;
\draw [color={rgb, 255:red, 208; green, 2; blue, 27 }  ,draw opacity=1 ][line width=2.25]    (46,38.32) -- (46,60.68) ;
\draw [shift={(46,33.32)}, rotate = 90] [fill={rgb, 255:red, 208; green, 2; blue, 27 }  ,fill opacity=1 ][line width=0.08]  [draw opacity=0] (10,-4.8) -- (0,0) -- (10,4.8) -- cycle    ;
\draw  [fill={rgb, 255:red, 0; green, 0; blue, 0 }  ,fill opacity=1 ] (162.95,59.25) .. controls (165.44,59.25) and (167.45,61.26) .. (167.45,63.75) .. controls (167.45,66.24) and (165.44,68.25) .. (162.95,68.25) .. controls (160.46,68.25) and (158.45,66.24) .. (158.45,63.75) .. controls (158.45,61.26) and (160.46,59.25) .. (162.95,59.25) -- cycle ;
\draw  [fill={rgb, 255:red, 0; green, 0; blue, 0 }  ,fill opacity=1 ] (46,28) .. controls (48.49,28) and (50.5,30.01) .. (50.5,32.5) .. controls (50.5,34.99) and (48.49,37) .. (46,37) .. controls (43.51,37) and (41.5,34.99) .. (41.5,32.5) .. controls (41.5,30.01) and (43.51,28) .. (46,28) -- cycle ;
\draw [color={rgb, 255:red, 0; green, 0; blue, 0 }  ,draw opacity=1 ][line width=1.5]    (21.19,21.18) .. controls (41,36.81) and (52,36.81) .. (71.75,21.54) ;
\draw [color={rgb, 255:red, 0; green, 0; blue, 0 }  ,draw opacity=1 ][line width=1.5]    (19.75,72.46) .. controls (37,59.81) and (55,57.81) .. (70.31,72.82) ;
\draw  [dash pattern={on 4.5pt off 4.5pt}] (10,46) .. controls (10,26.12) and (26.12,10) .. (46,10) .. controls (65.88,10) and (82,26.12) .. (82,46) .. controls (82,65.88) and (65.88,82) .. (46,82) .. controls (26.12,82) and (10,65.88) .. (10,46) -- cycle ;
\draw [color={rgb, 255:red, 0; green, 0; blue, 0 }  ,draw opacity=1 ][line width=1.5]    (138.19,21.18) .. controls (158,36.81) and (169,36.81) .. (188.75,21.54) ;
\draw [color={rgb, 255:red, 0; green, 0; blue, 0 }  ,draw opacity=1 ][line width=1.5]    (136.75,72.46) .. controls (154,59.81) and (172,57.81) .. (187.31,72.82) ;
\draw  [dash pattern={on 4.5pt off 4.5pt}] (127,46) .. controls (127,26.12) and (143.12,10) .. (163,10) .. controls (182.88,10) and (199,26.12) .. (199,46) .. controls (199,65.88) and (182.88,82) .. (163,82) .. controls (143.12,82) and (127,65.88) .. (127,46) -- cycle ;

\draw (100,35.4) node [anchor=north west][inner sep=0.75pt]    {$-$};

\end{tikzpicture}
    }
\end{center}
Here, a black dot indicates multiplying $X$ on the circle that it lies on, and the direction is used to fix the sign of $\Phi$. Alternatively, the map $\Phi$ can be described in the form of cobordisms as: 
\begin{center}
    \resizebox{!}{6em}{
    \tikzset{every picture/.style={line width=0.75pt}} 

\begin{tikzpicture}[x=0.75pt,y=0.75pt,yscale=-.8,xscale=.8]

\draw [color={rgb, 255:red, 0; green, 0; blue, 0 }  ,draw opacity=1 ][line width=1.5]    (104.96,30.56) .. controls (105,29.25) and (109,32.75) .. (114.14,24.74) ;
\draw [color={rgb, 255:red, 0; green, 0; blue, 0 }  ,draw opacity=1 ][line width=1.5]    (96.5,60.56) .. controls (100.5,60.75) and (103.23,59.24) .. (109.32,74.52) ;
\draw  [dash pattern={on 4.5pt off 4.5pt}] (105.5,9.33) .. controls (113.05,13.59) and (117.83,32.78) .. (116.18,52.2) .. controls (114.52,71.62) and (107.06,83.92) .. (99.5,79.66) .. controls (91.95,75.41) and (87.17,56.21) .. (88.82,36.79) .. controls (90.48,17.37) and (97.94,5.07) .. (105.5,9.33) -- cycle ;
\draw [color={rgb, 255:red, 0; green, 0; blue, 0 }  ,draw opacity=1 ][line width=1.5]    (19.96,13.56) .. controls (25.84,31.28) and (30.32,34.82) .. (39.14,24.74) ;
\draw [color={rgb, 255:red, 0; green, 0; blue, 0 }  ,draw opacity=1 ][line width=1.5]    (15.14,63.34) .. controls (23.59,57.65) and (28.23,59.24) .. (34.32,74.52) ;
\draw  [dash pattern={on 4.5pt off 4.5pt}] (30.5,9.33) .. controls (38.05,13.59) and (42.83,32.78) .. (41.18,52.2) .. controls (39.52,71.62) and (32.06,83.92) .. (24.5,79.66) .. controls (16.95,75.41) and (12.17,56.21) .. (13.82,36.79) .. controls (15.48,17.37) and (22.94,5.07) .. (30.5,9.33) -- cycle ;
\draw [line width=0.75]    (114.14,24.74) -- (39.14,24.74) ;
\draw [line width=0.75]    (25.5,63.34) -- (15.14,63.34) ;
\draw [line width=0.75]    (96.5,60.56) -- (24.96,60.56) ;
\draw [line width=0.75]    (109.32,74.52) -- (34.32,74.52) ;
\draw [line width=0.75]    (104.96,30.56) -- (29.96,30.56) ;
\draw [line width=0.75]    (94.96,13.56) -- (19.96,13.56) ;
\draw [line width=0.75]    (61.46,30.75) -- (61.46,60.31) ;
\draw [line width=0.75]    (69.96,30.75) -- (69.96,60.31) ;
\draw [color={rgb, 255:red, 0; green, 0; blue, 0 }  ,draw opacity=1 ][line width=1.5]    (94.96,13.56) .. controls (96.15,17.14) and (97.28,20.14) .. (98.4,22.57) ;
\draw [line width=0.75]    (191.17,59.79) .. controls (193.5,46.81) and (210.22,45.83) .. (212.56,60.11) ;
\draw [line width=0.75]    (185.33,59.79) .. controls (186.89,42.26) and (213.33,36.75) .. (218,60.11) ;

\draw [color={rgb, 255:red, 0; green, 0; blue, 0 }  ,draw opacity=1 ][line width=1.5]    (241.96,30.56) .. controls (242,29.25) and (246,32.75) .. (251.14,24.74) ;
\draw [color={rgb, 255:red, 0; green, 0; blue, 0 }  ,draw opacity=1 ][line width=1.5]    (233.5,60.56) .. controls (237.5,60.75) and (240.23,59.24) .. (246.32,74.52) ;
\draw  [dash pattern={on 4.5pt off 4.5pt}] (242.5,9.33) .. controls (250.05,13.59) and (254.83,32.78) .. (253.18,52.2) .. controls (251.52,71.62) and (244.06,83.92) .. (236.5,79.66) .. controls (228.95,75.41) and (224.17,56.21) .. (225.82,36.79) .. controls (227.48,17.37) and (234.94,5.07) .. (242.5,9.33) -- cycle ;
\draw [color={rgb, 255:red, 0; green, 0; blue, 0 }  ,draw opacity=1 ][line width=1.5]    (156.96,13.56) .. controls (162.84,31.28) and (167.32,34.82) .. (176.14,24.74) ;
\draw [color={rgb, 255:red, 0; green, 0; blue, 0 }  ,draw opacity=1 ][line width=1.5]    (152.14,63.34) .. controls (160.59,57.65) and (165.23,59.24) .. (171.32,74.52) ;
\draw  [dash pattern={on 4.5pt off 4.5pt}] (167.5,9.33) .. controls (175.05,13.59) and (179.83,32.78) .. (178.18,52.2) .. controls (176.52,71.62) and (169.06,83.92) .. (161.5,79.66) .. controls (153.95,75.41) and (149.17,56.21) .. (150.82,36.79) .. controls (152.48,17.37) and (159.94,5.07) .. (167.5,9.33) -- cycle ;
\draw [line width=0.75]    (251.14,24.74) -- (176.14,24.74) ;
\draw [line width=0.75]    (162.5,63.34) -- (152.14,63.34) ;
\draw [line width=0.75]    (233.5,60.56) -- (161.96,60.56) ;
\draw [line width=0.75]    (246.32,74.52) -- (171.32,74.52) ;
\draw [line width=0.75]    (241.96,30.56) -- (166.96,30.56) ;
\draw [line width=0.75]    (231.96,13.56) -- (156.96,13.56) ;
\draw [color={rgb, 255:red, 0; green, 0; blue, 0 }  ,draw opacity=1 ][line width=1.5]    (231.96,13.56) .. controls (233.15,17.14) and (234.28,20.14) .. (235.4,22.57) ;
\draw [color={rgb, 255:red, 208; green, 2; blue, 27 }  ,draw opacity=1 ][line width=1.5]    (26.5,34) -- (26.5,59) ;
\draw [shift={(26.5,30)}, rotate = 90] [fill={rgb, 255:red, 208; green, 2; blue, 27 }  ,fill opacity=1 ][line width=0.08]  [draw opacity=0] (9.29,-4.46) -- (0,0) -- (9.29,4.46) -- cycle    ;
\draw [color={rgb, 255:red, 208; green, 2; blue, 27 }  ,draw opacity=1 ][line width=1.5]    (163.5,34) -- (163.5,59) ;
\draw [shift={(163.5,30)}, rotate = 90] [fill={rgb, 255:red, 208; green, 2; blue, 27 }  ,fill opacity=1 ][line width=0.08]  [draw opacity=0] (9.29,-4.46) -- (0,0) -- (9.29,4.46) -- cycle    ;

\draw (126.5,36.4) node [anchor=north west][inner sep=0.75pt]    {$-$};

\end{tikzpicture}
    }
\end{center}
Using the notation of \cite{Khovanov:2022}, $\Phi$ can also be expressed as a tube with a \textit{defect circle} along its meridian,
\begin{center}
    \tikzset{every picture/.style={line width=0.75pt}} 

\begin{tikzpicture}[x=0.75pt,y=0.75pt,yscale=-1,xscale=1]

\draw [color={rgb, 255:red, 0; green, 0; blue, 0 }  ,draw opacity=1 ][line width=1.5]    (161.96,34.56) .. controls (162,33.25) and (166,36.75) .. (171.14,28.74) ;
\draw [color={rgb, 255:red, 0; green, 0; blue, 0 }  ,draw opacity=1 ][line width=1.5]    (153.5,64.56) .. controls (157.5,64.75) and (160.23,63.24) .. (166.32,78.52) ;
\draw  [dash pattern={on 4.5pt off 4.5pt}] (162.5,13.33) .. controls (170.05,17.59) and (174.83,36.78) .. (173.18,56.2) .. controls (171.52,75.62) and (164.06,87.92) .. (156.5,83.66) .. controls (148.95,79.41) and (144.17,60.21) .. (145.82,40.79) .. controls (147.48,21.37) and (154.94,9.07) .. (162.5,13.33) -- cycle ;
\draw [color={rgb, 255:red, 0; green, 0; blue, 0 }  ,draw opacity=1 ][line width=1.5]    (76.96,17.56) .. controls (82.84,35.28) and (87.32,38.82) .. (96.14,28.74) ;
\draw [color={rgb, 255:red, 0; green, 0; blue, 0 }  ,draw opacity=1 ][line width=1.5]    (72.14,67.34) .. controls (80.59,61.65) and (85.23,63.24) .. (91.32,78.52) ;
\draw  [dash pattern={on 4.5pt off 4.5pt}] (87.5,13.33) .. controls (95.05,17.59) and (99.83,36.78) .. (98.18,56.2) .. controls (96.52,75.62) and (89.06,87.92) .. (81.5,83.66) .. controls (73.95,79.41) and (69.17,60.21) .. (70.82,40.79) .. controls (72.48,21.37) and (79.94,9.07) .. (87.5,13.33) -- cycle ;
\draw [line width=0.75]    (171.14,28.74) -- (96.14,28.74) ;
\draw [line width=0.75]    (82.5,67.34) -- (72.14,67.34) ;
\draw [line width=0.75]    (153.5,64.56) -- (81.96,64.56) ;
\draw [line width=0.75]    (166.32,78.52) -- (91.32,78.52) ;
\draw [line width=0.75]    (161.96,34.56) -- (86.96,34.56) ;
\draw [line width=0.75]    (151.96,17.56) -- (76.96,17.56) ;
\draw [line width=0.75]    (116.46,34.75) -- (116.46,64.31) ;
\draw [line width=0.75]    (132.96,34.75) -- (132.96,64.31) ;
\draw [color={rgb, 255:red, 0; green, 0; blue, 0 }  ,draw opacity=1 ][line width=1.5]    (151.96,17.56) .. controls (153.15,21.14) and (154.28,24.14) .. (155.4,26.57) ;
\draw  [draw opacity=0][line width=1.5]  (132.96,45.3) .. controls (132.96,45.34) and (132.96,45.38) .. (132.96,45.42) .. controls (132.96,47.9) and (129.31,49.9) .. (124.8,49.9) .. controls (120.34,49.9) and (116.71,47.93) .. (116.65,45.48) -- (124.8,45.42) -- cycle ; \draw  [color={rgb, 255:red, 208; green, 2; blue, 27 }  ,draw opacity=1 ][line width=1.5]  (132.96,45.3) .. controls (132.96,45.34) and (132.96,45.38) .. (132.96,45.42) .. controls (132.96,47.9) and (129.31,49.9) .. (124.8,49.9) .. controls (120.34,49.9) and (116.71,47.93) .. (116.65,45.48) ;  
\draw [color={rgb, 255:red, 208; green, 2; blue, 27 }  ,draw opacity=1 ][line width=1.5]    (125.29,49.17) -- (125.29,54.57) ;


\end{tikzpicture}
\end{center}
That the two definitions are equal can be checked by the \textit{neck-cutting relation} (see \Cref{sec:Kh-structure}). One can easily verify that $\Phi e = 0$ and $e' \Phi = 0$, hence the two maps $f_0, f_1$ are indeed chain maps. Hereafter, any linear combination of the maps $f_0, f_1$ is called a \textit{crossing change map}. We make the direction implicit whenever the sign of $f_0$ is irrelevant. 

The bidegrees of $f_0, f_1$ depend on the sign of the crossing $x$. We write $f_1^-$ (resp.\ $f_1^+$) to indicate that $x$ is positive (resp.\ negative) and $D \to D'$ is a positive-to-negative (resp.\ negative-to-positive) crossing change. Then we have 
\begin{align*}
    \deg f_0^- &= (-2, -6), &\deg f_0^+ &= (0, 0), \\
    \deg f_1^-    &= (0, -2),  &\deg f_1^+    &= (2, 4),
\end{align*}
In either case, we have 
\[
    \deg f_1 - \deg f_0 = (2, 4). 
\]
Any linear combination of the maps $f^-_0, f^-_1$ (resp.\ $f^+_0, f^+_1$) is called a \textit{positive-to-negative} (resp.\ \textit{negative-to-positive}) \textit{crossing change map}. 

\begin{rem}
    The map $\Phi$ appears in \cite{Ito-Yoshida:2021}, as the negative-to-positive crossing change map. In \cite{Alishahi:2017} and \cite{Alishahi:2018}, crossing change maps
    \[
    \begin{tikzcd}
        \CKh(D^+) \arrow[r, "f^-", shift left] & \CKh(D^-) \arrow[l, "f^+", shift left]
    \end{tikzcd}
    \]
    for the (bigraded) Bar-Natan complex $(h, t) = (h, 0)$ over $R = \bbF_2[h]$ and the (bigraded) Lee complex $(h, t) = (0, t)$ over $R = \Q[t]$ are given, both of which can be described using our crossing change maps as $f^- = f_0^- + f_1^-$ and $f^+ = f_0^+ + f_1^+$. Note that the two crossing change maps $f^\pm$ are not homogeneous, but the compositions $f^+ f^-$ and $f^- f^+$ are both homogeneous with bigrading $(0, -2)$. 
\end{rem}

The following proposition relates the above defined maps with cobordism maps obtained from \textit{embedded} cobordisms that realize the crossing changes in both directions. 

\begin{prop}
\label{prop:f0-explicit}
    Suppose $D^+$, $D^-$ are diagrams related by a positive-to-negative crossing change at a crossing $x$. Consider the following sequence of elementary moves:
    \vspace{.5em}
    \begin{center}
        \resizebox{0.8\textwidth}{!}{
            \input{tikzpictures/emb-xch}
        }
    \end{center}
    \vspace{.5em}
    Then the chain map obtained from the sequence of moves from $D^+$ to $D^-$ coincides with 
    \[
        f_1^-: \CKh(D^+) \rightarrow \CKh(D^-).
    \]
    The chain map corresponding to the reversed sequence of moves from $D^-$ to $D^+$ coincides with 
    \[
        f_0^+ f_1^- f_0^+: \CKh(D^-) \rightarrow \CKh(D^+).
    \]
    Both of these maps have bidegree $(0, -2)$. 
\end{prop}

\begin{proof}
    Immediate from the explicit descriptions of the maps given in \cite{BarNatan:2004}. 
\end{proof}

\Cref{prop:f0-explicit} shows that the positive-to-negative crossing change map $f_1^-$ can be realized by an embedded cobordism, but the negative-to-positive $f_0^+$ cannot. This asymmetry will be essential throughout the paper. 

\subsection{Geometric description} 
\label{subsect: Geometric description}

\begin{figure}[t]
    \centering
    \input{tikzpictures/ckh-hopf}
    \caption{}
    \label{fig:ckh-hopf}
    \vspace{2.5em}
    \input{tikzpictures/zeta01}
    \caption{}
    \label{fig:zeta01}
\end{figure}

Next, we give geometric descriptions for the above defined crossing change maps. Consider the diagram $H$ of the Hopf link depicted in \Cref{fig:ckh-hopf}, which we call the \textit{standard Hopf link diagram}. First, we ignore the orientations on $H$, and consider relative bigradings on $\Kh(H)$. The cube of resolutions for $H$ is described in the right of \Cref{fig:ckh-hopf}. It can be computed directly that $\Kh(H)$ is free of rank $2$, with the relative bigrading given by 
\[
    \Kh(H) \cong R\{0, 0\} \oplus R\{0, 2\} \oplus R\{2, 4\} \oplus R\{2, 6\}.
\]
Define elements $\zeta_0, \zeta_1$ in $\CKh(H)$ as depicted in \Cref{fig:zeta01}. Obviously, these two elements are cycles, and it can be shown that the four cycles
\begin{align*}
    \zeta_0  &\in \CKh^{0, 2}(H), &\zeta_1  &\in \CKh^{2, 6}(H),\\
    \bar{X}\zeta_0 &\in \CKh^{0, 0}(H), &\bar{X}\zeta_1 &\in \CKh^{2, 4}(H)
\end{align*}
generate $\Kh(H)$. Here, $\bar{X} \zeta_i$ indicates multiplying $X \in A$ on one of the two components of $\zeta_i$. 

Let $H^\pm$ denote the standard Hopf link diagram equipped with an orientation indicated by the superscript. For the positive diagram $H^+$, the above relative bigrading is exactly the absolute bigrading on $\CKh(H^+)$. For the negative diagram $H^-$, there is an identification
\[
    \CKh(H^-) = \CKh(H^+)\{-2, -6\}
\]
and we have 
\[
    \zeta_0 \in \CKh^{-2, -4}(H^-), \quad \zeta_1 \in \CKh^{0, 0}(H^-).
\]

\begin{figure}[t]
    \centering
    \tikzset{every picture/.style={line width=0.75pt}} 

\begin{tikzpicture}[x=0.75pt,y=0.75pt,yscale=-.75,xscale=.75]

\draw [color={rgb, 255:red, 0; green, 0; blue, 0 }  ,draw opacity=1 ][line width=1.5]    (14.74,19.33) -- (111.64,116.23) ;
\draw  [draw opacity=0][fill={rgb, 255:red, 255; green, 255; blue, 255 }  ,fill opacity=1 ] (52.81,69.17) .. controls (52.81,62.67) and (58.08,57.4) .. (64.57,57.4) .. controls (71.07,57.4) and (76.34,62.67) .. (76.34,69.17) .. controls (76.34,75.67) and (71.07,80.93) .. (64.57,80.93) .. controls (58.08,80.93) and (52.81,75.67) .. (52.81,69.17) -- cycle ;
\draw [color={rgb, 255:red, 0; green, 0; blue, 0 }  ,draw opacity=1 ][line width=1.5]    (112.33,19.33) -- (14.05,116.23) ;
\draw  [draw opacity=0][line width=1.5]  (118.16,67.79) .. controls (120.08,65.57) and (121.25,62.67) .. (121.25,59.5) .. controls (121.25,52.49) and (115.57,46.81) .. (108.56,46.81) .. controls (101.55,46.81) and (95.87,52.49) .. (95.87,59.5) .. controls (95.87,66.5) and (101.55,72.18) .. (108.56,72.18) .. controls (110.04,72.18) and (111.46,71.93) .. (112.78,71.46) -- (108.56,59.5) -- cycle ; \draw  [line width=1.5]  (118.16,67.79) .. controls (120.08,65.57) and (121.25,62.67) .. (121.25,59.5) .. controls (121.25,52.49) and (115.57,46.81) .. (108.56,46.81) .. controls (101.55,46.81) and (95.87,52.49) .. (95.87,59.5) .. controls (95.87,66.5) and (101.55,72.18) .. (108.56,72.18) .. controls (110.04,72.18) and (111.46,71.93) .. (112.78,71.46) ;  
\draw  [draw opacity=0][line width=1.5]  (98.96,72.14) .. controls (97.04,74.37) and (95.87,77.27) .. (95.87,80.45) .. controls (95.87,87.46) and (101.56,93.15) .. (108.57,93.15) .. controls (115.58,93.15) and (121.27,87.46) .. (121.27,80.45) .. controls (121.27,73.43) and (115.58,67.75) .. (108.57,67.75) .. controls (107.09,67.75) and (105.67,68) .. (104.35,68.46) -- (108.57,80.45) -- cycle ; \draw  [line width=1.5]  (98.96,72.14) .. controls (97.04,74.37) and (95.87,77.27) .. (95.87,80.45) .. controls (95.87,87.46) and (101.56,93.15) .. (108.57,93.15) .. controls (115.58,93.15) and (121.27,87.46) .. (121.27,80.45) .. controls (121.27,73.43) and (115.58,67.75) .. (108.57,67.75) .. controls (107.09,67.75) and (105.67,68) .. (104.35,68.46) ;  
\draw [color={rgb, 255:red, 208; green, 2; blue, 27 }  ,draw opacity=1 ][line width=1.5]    (100,31.67) -- (108.33,47) ;
\draw [color={rgb, 255:red, 208; green, 2; blue, 27 }  ,draw opacity=1 ][line width=1.5]    (108,92.33) -- (101.67,106.33) ;
\draw [color={rgb, 255:red, 0; green, 0; blue, 0 }  ,draw opacity=1 ][line width=4.5]    (278.2,92.98) -- (272.2,104.18) ;
\draw [color={rgb, 255:red, 0; green, 0; blue, 0 }  ,draw opacity=1 ][line width=4.5]    (270.4,32.78) -- (278.6,47.38) ;
\draw [color={rgb, 255:red, 0; green, 0; blue, 0 }  ,draw opacity=1 ][line width=1.5]    (186.74,19.77) -- (283.64,116.67) ;
\draw  [draw opacity=0][fill={rgb, 255:red, 255; green, 255; blue, 255 }  ,fill opacity=1 ] (224.81,69.6) .. controls (224.81,63.1) and (230.08,57.83) .. (236.57,57.83) .. controls (243.07,57.83) and (248.34,63.1) .. (248.34,69.6) .. controls (248.34,76.1) and (243.07,81.37) .. (236.57,81.37) .. controls (230.08,81.37) and (224.81,76.1) .. (224.81,69.6) -- cycle ;
\draw [color={rgb, 255:red, 0; green, 0; blue, 0 }  ,draw opacity=1 ][line width=1.5]    (284.33,19.77) -- (186.05,116.67) ;
\draw  [draw opacity=0][line width=1.5]  (290.16,68.23) .. controls (292.08,66) and (293.25,63.1) .. (293.25,59.93) .. controls (293.25,52.92) and (287.57,47.24) .. (280.56,47.24) .. controls (273.55,47.24) and (267.87,52.92) .. (267.87,59.93) .. controls (267.87,66.93) and (273.55,72.62) .. (280.56,72.62) .. controls (282.04,72.62) and (283.46,72.36) .. (284.78,71.9) -- (280.56,59.93) -- cycle ; \draw  [line width=1.5]  (290.16,68.23) .. controls (292.08,66) and (293.25,63.1) .. (293.25,59.93) .. controls (293.25,52.92) and (287.57,47.24) .. (280.56,47.24) .. controls (273.55,47.24) and (267.87,52.92) .. (267.87,59.93) .. controls (267.87,66.93) and (273.55,72.62) .. (280.56,72.62) .. controls (282.04,72.62) and (283.46,72.36) .. (284.78,71.9) ;  
\draw  [draw opacity=0][line width=1.5]  (270.96,72.57) .. controls (269.04,74.8) and (267.87,77.7) .. (267.87,80.88) .. controls (267.87,87.89) and (273.56,93.58) .. (280.57,93.58) .. controls (287.58,93.58) and (293.27,87.89) .. (293.27,80.88) .. controls (293.27,73.86) and (287.58,68.18) .. (280.57,68.18) .. controls (279.09,68.18) and (277.67,68.43) .. (276.35,68.9) -- (280.57,80.88) -- cycle ; \draw  [line width=1.5]  (270.96,72.57) .. controls (269.04,74.8) and (267.87,77.7) .. (267.87,80.88) .. controls (267.87,87.89) and (273.56,93.58) .. (280.57,93.58) .. controls (287.58,93.58) and (293.27,87.89) .. (293.27,80.88) .. controls (293.27,73.86) and (287.58,68.18) .. (280.57,68.18) .. controls (279.09,68.18) and (277.67,68.43) .. (276.35,68.9) ;  
\draw [color={rgb, 255:red, 0; green, 0; blue, 0 }  ,draw opacity=1 ][line width=1.5]    (372.74,19.77) -- (417,68.59) ;
\draw [color={rgb, 255:red, 0; green, 0; blue, 0 }  ,draw opacity=1 ][line width=1.5]    (470.33,19.77) -- (456.72,33.38) ;
\draw [color={rgb, 255:red, 255; green, 255; blue, 255 }  ,draw opacity=1 ][line width=1.5]    (279.6,90.37) -- (269.4,109.78) ;
\draw [color={rgb, 255:red, 255; green, 255; blue, 255 }  ,draw opacity=1 ][line width=1.5]    (268.2,28.98) -- (282,53.58) ;
\draw  [draw opacity=0][fill={rgb, 255:red, 255; green, 255; blue, 255 }  ,fill opacity=1 ] (443.89,38.89) -- (455.39,32.41) -- (470.25,58.76) -- (458.76,65.25) -- cycle ;
\draw [color={rgb, 255:red, 0; green, 0; blue, 0 }  ,draw opacity=1 ][line width=1.5]    (456.8,32.3) -- (465.13,47.63) ;
\draw [color={rgb, 255:red, 0; green, 0; blue, 0 }  ,draw opacity=1 ][line width=1.5]    (464.8,92.97) -- (458.47,106.97) ;
\draw [color={rgb, 255:red, 0; green, 0; blue, 0 }  ,draw opacity=1 ][line width=1.5]    (415.4,73.31) -- (372.05,116.67) ;
\draw [color={rgb, 255:red, 0; green, 0; blue, 0 }  ,draw opacity=1 ][line width=1.5]    (458.47,106.97) -- (469.64,116.67) ;
\draw  [draw opacity=0][fill={rgb, 255:red, 255; green, 255; blue, 255 }  ,fill opacity=1 ] (443.97,59.5) -- (466.02,59.55) -- (465.98,77.36) -- (443.93,77.31) -- cycle ;
\draw [color={rgb, 255:red, 0; green, 0; blue, 0 }  ,draw opacity=1 ][line width=1.5]    (415.4,73.31) -- (473.5,73.31) ;
\draw [color={rgb, 255:red, 0; green, 0; blue, 0 }  ,draw opacity=1 ][line width=1.5]    (417,68.59) -- (466.5,68.59) ;
\draw  [draw opacity=0][line width=1.5]  (478.8,66.22) .. controls (479.88,64.31) and (480.5,62.11) .. (480.5,59.77) .. controls (480.5,52.46) and (474.47,46.52) .. (467.04,46.52) .. controls (466.05,46.52) and (465.1,46.63) .. (464.17,46.82) -- (467.04,59.77) -- cycle ; \draw  [line width=1.5]  (478.8,66.22) .. controls (479.88,64.31) and (480.5,62.11) .. (480.5,59.77) .. controls (480.5,52.46) and (474.47,46.52) .. (467.04,46.52) .. controls (466.05,46.52) and (465.1,46.63) .. (464.17,46.82) ;  
\draw  [draw opacity=0][line width=1.5]  (463.71,93.98) .. controls (465.16,94.5) and (466.73,94.79) .. (468.36,94.79) .. controls (475.8,94.79) and (481.82,88.86) .. (481.82,81.54) .. controls (481.82,74.22) and (475.8,68.29) .. (468.36,68.29) .. controls (467.38,68.29) and (466.42,68.39) .. (465.5,68.59) -- (468.36,81.54) -- cycle ; \draw  [line width=1.5]  (463.71,93.98) .. controls (465.16,94.5) and (466.73,94.79) .. (468.36,94.79) .. controls (475.8,94.79) and (481.82,88.86) .. (481.82,81.54) .. controls (481.82,74.22) and (475.8,68.29) .. (468.36,68.29) .. controls (467.38,68.29) and (466.42,68.39) .. (465.5,68.59) ;  
\draw    (143.5,73.43) -- (178,73.43) ;
\draw [shift={(180,73.43)}, rotate = 180] [color={rgb, 255:red, 0; green, 0; blue, 0 }  ][line width=0.75]    (10.93,-3.29) .. controls (6.95,-1.4) and (3.31,-0.3) .. (0,0) .. controls (3.31,0.3) and (6.95,1.4) .. (10.93,3.29)   ;
\draw    (332.5,72.93) -- (367,72.93) ;
\draw [shift={(369,72.93)}, rotate = 180] [color={rgb, 255:red, 0; green, 0; blue, 0 }  ][line width=0.75]    (10.93,-3.29) .. controls (6.95,-1.4) and (3.31,-0.3) .. (0,0) .. controls (3.31,0.3) and (6.95,1.4) .. (10.93,3.29)   ;

\end{tikzpicture}
    \caption{}
    \label{fig:x-ch-hopf}
    \vspace{2.5em}
    \input{tikzpictures/x-ch-hopf-proof}
    \caption{}
    \label{fig:x-ch-hopf-proof}
\end{figure}

Now, suppose $D$, $D'$ are diagrams related by a crossing change at $x$ as in \Cref{fig:D01}. Note that the transformation from $D$ to $D'$ can be realized by placing a standard Hopf link diagram $H$ near $x$, and surguring it into $D$, as in \Cref{fig:x-ch-hopf}. With this picture in mind, given any cycle $z \in \CKh(H)$, we define a chain map by the following composition
\[
    F(z) \colon
    \CKh(D) 
        \xrightarrow{1 \otimes z}
    \CKh(D) \otimes \CKh(H)
        \xrightarrow{\text{saddle}}
    \CKh(D'')
        \xrightarrow{\text{R2}^{-1}}
    \CKh(D').
\]
The mapping $z \mapsto F(z)$ gives an $R$-homomorphism
\begin{align*}
    F &\colon
    Z(\CKh(H))
        \rightarrow
    \Hom(\CKh(D), \CKh(D'))
\end{align*}
where the domain is the cycle module of $\CKh(H)$ and the codomain is the $R$-module of chain maps from $\CKh(D)$ to $\CKh(D')$. 

\begin{prop}
    If cycles $z, z' \in \CKh(H)$ are homologous, then the corresponding chain maps $F(z), F(z')$ are chain homotopic. 
\end{prop}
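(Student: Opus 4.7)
My plan is to reinterpret $F$ as coming from a single chain map on a tensor product of complexes, which reduces the statement to the standard fact that homologous cycles produce chain-homotopic evaluation maps.

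First I would consolidate the three steps in the definition of $F(z)$ into a single chain map
\[
    G \colon \CKh(D) \otimes \CKh(H) \to \CKh(D'),
\]
obtained by identifying $\CKh(D) \otimes \CKh(H)$ with $\CKh(D \sqcup H)$ (the Hopf link $H$ placed next to the crossing $x$), then applying the saddle cobordism map, then the inverse Reidemeister~II map. Since each elementary piece is a chain map and respects the Künneth identification, $G$ is a well-defined chain map. By construction, $F(z)(x) = G(x \otimes z)$ for every $x \in \CKh(D)$ and every $z \in \CKh(H)$, whether or not $z$ is a cycle; and $F(z)$ is itself a chain map precisely when $z$ is a cycle.

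Next, assume $z - z' = d h$ for some $h \in \CKh(H)$, and define the candidate homotopy
\[
    K \colon \CKh(D) \to \CKh(D'), \qquad K(x) := (-1)^{|x|}\, G(x \otimes h),
\]
where $|x|$ denotes the homological degree and the sign is dictated by the Koszul rule on the tensor differential. The identity to verify is $d K + K d = F(z) - F(z')$; this will follow from a one-line computation that applies $G$ to the Leibniz expansion $d(x \otimes h) = dx \otimes h + (-1)^{|x|} x \otimes dh$ and uses that $G$ commutes with the differentials, yielding $G(x \otimes dh) = G(x \otimes (z - z')) = F(z)(x) - F(z')(x)$ on the right-hand side.

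The only points requiring any care are sign bookkeeping and the compatibility of the saddle and inverse Reidemeister~II chain maps with the Künneth identification $\CKh(D \sqcup H) \cong \CKh(D) \otimes \CKh(H)$, both of which are standard in Khovanov theory. I do not anticipate a substantive obstacle: the proposition is essentially an instance of the general principle that tensoring with a homology class gives a well-defined chain-homotopy class of maps, specialized to the present geometric setup.
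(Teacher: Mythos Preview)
Your proposal is correct and takes essentially the same approach as the paper: the paper simply writes down the null-homotopy $h(x) = (-1)^{\deg(x)}F(a)(x)$ when $z = da$ is a boundary, which is exactly your $K$ after reducing by linearity to the case $z'=0$. Your packaging via the single chain map $G$ and the Leibniz computation is a slightly more explicit version of the same argument.
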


\begin{proof}
    If $z = da$ is a boundary, then 
    \[
        h: x \mapsto (-1)^{\deg(x)}F(a)(x)
    \]
    gives a null-homotopy for $F(z)$.
\end{proof}

Thus $F$ induces an $R$-homomorphism
\[
    F \colon
    \Kh(H)
        \rightarrow
    \Hom(\Kh(D), \Kh(D')).
\]

\begin{prop}
\label{prop:x-ch-hopf}
    \[
        F(\zeta_0) = f_0, \quad
        F(\zeta_1) = f_1.
    \]
\end{prop}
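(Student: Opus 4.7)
The plan is a direct geometric computation, decomposing both sides as mapping cones along the crossing $x$ and matching components. My plan has three steps.

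\textbf{Step 1.} Decompose $\CKh(D)$ as the mapping cone of $e\colon\CKh(D_0)\to\CKh(D_1)$ and $\CKh(D')$ as the mapping cone of $e'\colon\CKh(D_1)\to\CKh(D_0)$, as in \Cref{fig:D01}. A chain map $\CKh(D)\to\CKh(D')$ is then determined by the four components indexed by (resolution of $x$ in $D$, resolution of $x$ in $D'$). Likewise, split the Hopf diagram $H$ via its two crossings, so that the four generating cycles $\zeta_0,\bar X\zeta_0,\zeta_1,\bar X\zeta_1$ are supported on the $(00)$ and $(\underline{11})$ vertices of the cube in \Cref{fig:ckh-hopf}. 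For any cycle $z$, the chain map $F(z)$ can therefore be read off locally by picking out the resolutions of $H$ on which $z$ is supported and then performing the saddle and R2 undoing inside each fixed resolution of $x$ in $D$.

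\textbf{Step 2.} Compute $F(\zeta_0)$. Since $\zeta_0$ is supported at the $(00)$-vertex of $H$, tensoring with $\zeta_0$ places two unlinked circles near $x$ with the antisymmetrized labelling $X\otimes 1-1\otimes X$. The subsequent saddle merges these with the strands meeting at $x$, and the inverse R2 collapses the resulting bigon. A direct inspection of the local cobordism (as illustrated in \Cref{fig:x-ch-hopf-proof}) shows that, in the resolution where $x$ is $0$-resolved on both sides, the composition becomes the identity on $\CKh(D_0)$: one of the two circles of $\zeta_0$ gets merged into the strand, the $X$-label cancels the $1/X$-creation produced by R2, and antisymmetrization kills the term that would otherwise introduce an extra dot. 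The $(1,1)$-component vanishes because $\zeta_0$ has no contribution at the $(\underline{11})$-vertex. This matches the definition of $f_0$ as the identity $I$ drawn diagonally in \Cref{fig:D01}.

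\textbf{Step 3.} Compute $F(\zeta_1)$. Now $\zeta_1$ is supported at the $(\underline{11})$-vertex of $H$, which is a single circle labelled $1$. In the resolution where $x$ is $1$-resolved on both sides, the saddle together with the R2 cancellation produces a tube connecting the two strands of $D_1$ decorated by the antisymmetrized dot pattern coming from the two surgery sites on the Hopf circle. By the neck-cutting relation invoked in the paper to reconcile the two descriptions of $\Phi$, this is exactly the defect-tube/dotted-surgery description of $\Phi\colon\CKh(D_1)\to\CKh(D_1)$. The $(0,0)$-component of $F(\zeta_1)$ vanishes because $\zeta_1$ has no contribution at the $(00)$-vertex. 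This matches the definition of $f_1$.

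\textbf{Main obstacle.} The substantive difficulty is bookkeeping: tracking the sign set by the direction at $x$ and the antisymmetrization in $\zeta_0$, correctly matching the $H$-resolutions with $D_0$ and $D_1$ after the saddle, and applying the R2 inverse with the Bar-Natan conventions so that the extraneous labelled circles cancel cleanly. The neck-cutting relation is the key algebraic input for identifying the $\zeta_1$-cobordism with $\Phi$; once this identification is in hand, the remaining verifications are routine diagrammatic cancellations.
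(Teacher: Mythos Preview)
Your overall approach---direct local computation matching components---is the same as the paper's, but you have swapped $f_0$ and $f_1$. In the paper's definition (the diagram after ``Two chain maps $f_0, f_1$ are defined as follows''), the \emph{dashed} diagonal is $f_0 = \Phi\colon \CKh(D_1) \to \CKh(D_1)$ and the \emph{solid} diagonal is $f_1 = I\colon \CKh(D_0) \to \CKh(D_0)$. Your Step~2 identifies $f_0$ with the identity and Step~3 identifies $f_1$ with $\Phi$; both are backwards.

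Since the proposition asserts $F(\zeta_0)=f_0$ and $F(\zeta_1)=f_1$, your claimed computations of $F(\zeta_0)$ and $F(\zeta_1)$ must therefore also be wrong: in fact $F(\zeta_0)$ is supported on the $D_1\to D_1$ component (as $\Phi$), not on $D_0\to D_0$, and dually for $F(\zeta_1)$. The mistake comes from assuming that ``$\zeta_0$ at the $(00)$-vertex of $H$'' corresponds directly to ``$x$ is $0$-resolved on both sides,'' but after the saddle and the R2-inverse the Hopf-cube index and the $x$-resolution index do not align in this naive way. The paper's proof avoids this pitfall by first expanding $F$ as the sum of two explicit local cobordisms coming from Bar-Natan's R2-inverse (\Cref{fig:x-ch-hopf-proof}) and only then substituting $\zeta_0$ and $\zeta_1$. (Minor point: the $(\underline{11})$-vertex of $H$ consists of two circles, not one.)
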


\begin{proof}
    With the explicit description of the R2-move map given in \cite{BarNatan:2004}, the map $F$ can be described as the sum of two maps depicted in \Cref{fig:x-ch-hopf-proof}. Putting $\zeta_0$ and $\zeta_1$ into the pictures proves the result. 
\end{proof}

We call any chain map obtained from $F$ a \textit{crossing change map}. The two crossing change maps $f_0, f_1$ are canonical in the sense that they arise from the two homological generators $\zeta_0, \zeta_1$ of $\Kh(H)$. 

\begin{rem}
\label{rem:Hopf-from-cc}
    Conversely, the two cycles $\zeta_0, \zeta_1$ can be obtained as images of the crossing change maps $f_0$ and $f_1$. Consider the following sequence of moves:
    \vspace{.5em}
    \begin{center}
        \tikzset{every picture/.style={line width=0.75pt}} 

\begin{tikzpicture}[x=0.75pt,y=0.75pt,yscale=-.9,xscale=.9]

\draw  [draw opacity=0][line width=1.5]  (410.92,36.19) .. controls (412.94,33.84) and (414.17,30.79) .. (414.17,27.44) .. controls (414.17,20.06) and (408.19,14.08) .. (400.81,14.08) .. controls (393.42,14.08) and (387.44,20.06) .. (387.44,27.44) .. controls (387.44,34.83) and (393.42,40.81) .. (400.81,40.81) .. controls (402.36,40.81) and (403.86,40.54) .. (405.25,40.05) -- (400.81,27.44) -- cycle ; \draw  [line width=1.5]  (410.92,36.19) .. controls (412.94,33.84) and (414.17,30.79) .. (414.17,27.44) .. controls (414.17,20.06) and (408.19,14.08) .. (400.81,14.08) .. controls (393.42,14.08) and (387.44,20.06) .. (387.44,27.44) .. controls (387.44,34.83) and (393.42,40.81) .. (400.81,40.81) .. controls (402.36,40.81) and (403.86,40.54) .. (405.25,40.05) ;  
\draw  [draw opacity=0][line width=1.5]  (390.57,39.91) .. controls (388.62,42.17) and (387.44,45.11) .. (387.44,48.33) .. controls (387.44,55.43) and (393.2,61.19) .. (400.3,61.19) .. controls (407.41,61.19) and (413.17,55.43) .. (413.17,48.33) .. controls (413.17,41.22) and (407.41,35.46) .. (400.3,35.46) .. controls (398.81,35.46) and (397.37,35.72) .. (396.03,36.19) -- (400.3,48.33) -- cycle ; \draw  [line width=1.5]  (390.57,39.91) .. controls (388.62,42.17) and (387.44,45.11) .. (387.44,48.33) .. controls (387.44,55.43) and (393.2,61.19) .. (400.3,61.19) .. controls (407.41,61.19) and (413.17,55.43) .. (413.17,48.33) .. controls (413.17,41.22) and (407.41,35.46) .. (400.3,35.46) .. controls (398.81,35.46) and (397.37,35.72) .. (396.03,36.19) ;

\draw  [line width=1.5]  (146.28,21) .. controls (146.28,14.37) and (151.65,9) .. (158.28,9) .. controls (164.91,9) and (170.28,14.37) .. (170.28,21) .. controls (170.28,27.63) and (164.91,33) .. (158.28,33) .. controls (151.65,33) and (146.28,27.63) .. (146.28,21) -- cycle ;
\draw  [line width=1.5]  (146.28,49) .. controls (146.28,42.37) and (151.65,37) .. (158.28,37) .. controls (164.91,37) and (170.28,42.37) .. (170.28,49) .. controls (170.28,55.63) and (164.91,61) .. (158.28,61) .. controls (151.65,61) and (146.28,55.63) .. (146.28,49) -- cycle ;

\draw  [draw opacity=0][line width=1.5]  (268.82,36.71) .. controls (266.81,39.04) and (265.59,42.07) .. (265.59,45.39) .. controls (265.59,52.71) and (271.53,58.65) .. (278.86,58.65) .. controls (286.18,58.65) and (292.12,52.71) .. (292.12,45.39) .. controls (292.12,42.2) and (291,39.28) .. (289.13,37) -- (278.86,45.39) -- cycle ; \draw  [line width=1.5]  (268.82,36.71) .. controls (266.81,39.04) and (265.59,42.07) .. (265.59,45.39) .. controls (265.59,52.71) and (271.53,58.65) .. (278.86,58.65) .. controls (286.18,58.65) and (292.12,52.71) .. (292.12,45.39) .. controls (292.12,42.2) and (291,39.28) .. (289.13,37) ;  
\draw  [line width=1.5]  (265.56,23.39) .. controls (265.56,16.05) and (271.51,10.09) .. (278.86,10.09) .. controls (286.2,10.09) and (292.15,16.05) .. (292.15,23.39) .. controls (292.15,30.73) and (286.2,36.68) .. (278.86,36.68) .. controls (271.51,36.68) and (265.56,30.73) .. (265.56,23.39) -- cycle ;
\draw  [draw opacity=0][line width=1.5]  (282.75,32.71) .. controls (281.52,32.33) and (280.21,32.13) .. (278.86,32.13) .. controls (277.69,32.13) and (276.57,32.28) .. (275.49,32.56) -- (278.86,45.39) -- cycle ; \draw  [line width=1.5]  (282.75,32.71) .. controls (281.52,32.33) and (280.21,32.13) .. (278.86,32.13) .. controls (277.69,32.13) and (276.57,32.28) .. (275.49,32.56) ;  

\draw    (196,39.86) -- (234.5,39.86) ;
\draw [shift={(236.5,39.86)}, rotate = 180] [color={rgb, 255:red, 0; green, 0; blue, 0 }  ][line width=0.75]    (10.93,-3.29) .. controls (6.95,-1.4) and (3.31,-0.3) .. (0,0) .. controls (3.31,0.3) and (6.95,1.4) .. (10.93,3.29)   ;
\draw    (321,39.86) -- (364,39.86) ;
\draw [shift={(366,39.86)}, rotate = 180] [color={rgb, 255:red, 0; green, 0; blue, 0 }  ][line width=0.75]    (10.93,-3.29) .. controls (6.95,-1.4) and (3.31,-0.3) .. (0,0) .. controls (3.31,0.3) and (6.95,1.4) .. (10.93,3.29)   ;
\draw    (78,39.5) -- (116.5,39.5) ;
\draw [shift={(118.5,39.5)}, rotate = 180] [color={rgb, 255:red, 0; green, 0; blue, 0 }  ][line width=0.75]    (10.93,-3.29) .. controls (6.95,-1.4) and (3.31,-0.3) .. (0,0) .. controls (3.31,0.3) and (6.95,1.4) .. (10.93,3.29)   ;
\draw  [dash pattern={on 0.84pt off 2.51pt}][line width=0.75]  (27,21) .. controls (27,14.37) and (32.37,9) .. (39,9) .. controls (45.63,9) and (51,14.37) .. (51,21) .. controls (51,27.63) and (45.63,33) .. (39,33) .. controls (32.37,33) and (27,27.63) .. (27,21) -- cycle ;
\draw  [dash pattern={on 0.84pt off 2.51pt}][line width=0.75]  (27,49) .. controls (27,42.37) and (32.37,37) .. (39,37) .. controls (45.63,37) and (51,42.37) .. (51,49) .. controls (51,55.63) and (45.63,61) .. (39,61) .. controls (32.37,61) and (27,55.63) .. (27,49) -- cycle ;

\draw (216.25,36.46) node [anchor=south] [inner sep=0.75pt]    {$R_{2}$};
\draw (343.5,36.86) node [anchor=south] [inner sep=0.75pt]   [align=left] {c.c.};
\draw (39,70.9) node [anchor=north] [inner sep=0.75pt]    {$\varnothing $};
\draw (158.28,71.4) node [anchor=north] [inner sep=0.75pt]    {$U_{2}$};
\draw (280.15,71.4) node [anchor=north] [inner sep=0.75pt]    {$U'_{2}$};
\draw (401,73.4) node [anchor=north] [inner sep=0.75pt]    {$H$};

\end{tikzpicture}
    \end{center}
    This give rise to two chain maps:
    \[
\begin{tikzcd}
R = \CKh(\varnothing) \arrow[r, "\iota \otimes \iota"] & \CKh(U_2) \arrow[r, "\rho"] & \CKh(U'_2) \arrow[r, "f_0", dashed, shift left] \arrow[r, "f_1"', shift right] & \CKh(H)
\end{tikzcd}
    \]
    One can directly check that $\zeta_0$ and $\zeta_1$ are given by the images of $1 \in R$ by the two chain maps. 
\end{rem}

\begin{rem}
    A similar geometric description for Alishahi's crossing change map on Bar-Natan homology is given in \cite[Section 4]{Alishahi:2017}. 
\end{rem}

\subsection{Crossing changes and Reidemeister moves}

Here we prove the commutativity of the Reidemeister move maps and the crossing change maps. It is assumed that we use the Reidemeister move maps given in \cite[Section 4]{BarNatan:2004}. The first two propositions are easy to verify, so we omit the proofs. 

\begin{prop}
\label{prop:cc-rm1}
    Consider the following commutative diagram of moves:
    \begin{center}
        \tikzset{every picture/.style={line width=0.75pt}} 

\begin{tikzpicture}[x=0.75pt,y=0.75pt,yscale=-.7,xscale=.7]

\draw [color={rgb, 255:red, 208; green, 2; blue, 27 }  ,draw opacity=1 ][line width=1.5]    (56.8,147.21) -- (24,147.21) ;
\draw [shift={(60.8,147.21)}, rotate = 180] [fill={rgb, 255:red, 208; green, 2; blue, 27 }  ,fill opacity=1 ][line width=0.08]  [draw opacity=0] (8.13,-3.9) -- (0,0) -- (8.13,3.9) -- cycle    ;
\draw  [dash pattern={on 4.5pt off 4.5pt}] (152.96,42.3) .. controls (152.96,23.91) and (167.87,9) .. (186.26,9) .. controls (204.65,9) and (219.56,23.91) .. (219.56,42.3) .. controls (219.56,60.68) and (204.65,75.59) .. (186.26,75.59) .. controls (167.87,75.59) and (152.96,60.68) .. (152.96,42.3) -- cycle ;
\draw  [draw opacity=0][line width=1.5]  (166.9,16.27) .. controls (176.72,21.19) and (183.4,31.13) .. (183.31,42.55) .. controls (183.21,54.1) and (176.21,64.03) .. (166.11,68.72) -- (152.96,42.3) -- cycle ; \draw  [line width=1.5]  (166.9,16.27) .. controls (176.72,21.19) and (183.4,31.13) .. (183.31,42.55) .. controls (183.21,54.1) and (176.21,64.03) .. (166.11,68.72) ;  

\draw    (141.57,60.7) -- (73.07,106.27) ;
\draw [shift={(71.4,107.38)}, rotate = 326.36] [color={rgb, 255:red, 0; green, 0; blue, 0 }  ][line width=0.75]    (10.93,-3.29) .. controls (6.95,-1.4) and (3.31,-0.3) .. (0,0) .. controls (3.31,0.3) and (6.95,1.4) .. (10.93,3.29)   ;
\draw  [dash pattern={on 4.5pt off 4.5pt}] (12.17,148.21) .. controls (12.17,166.6) and (27.08,181.51) .. (45.47,181.51) .. controls (63.85,181.51) and (78.76,166.6) .. (78.76,148.21) .. controls (78.76,129.82) and (63.85,114.92) .. (45.47,114.92) .. controls (27.08,114.92) and (12.17,129.82) .. (12.17,148.21) -- cycle ;
\draw [line width=1.5]    (25.25,172.67) .. controls (35.61,144.83) and (69.23,117.55) .. (68.18,149.25) ;
\draw  [draw opacity=0][fill={rgb, 255:red, 255; green, 255; blue, 255 }  ,fill opacity=1 ] (41.1,154.2) .. controls (45.07,154.25) and (48.34,151.07) .. (48.39,147.09) .. controls (48.44,143.11) and (45.26,139.85) .. (41.29,139.8) .. controls (37.31,139.75) and (34.04,142.93) .. (33.99,146.9) .. controls (33.94,150.88) and (37.12,154.14) .. (41.1,154.2) -- cycle ;
\draw  [draw opacity=0][fill={rgb, 255:red, 0; green, 0; blue, 0 }  ,fill opacity=1 ] (40.95,149.46) .. controls (42.22,149.47) and (43.26,148.46) .. (43.28,147.19) .. controls (43.29,145.91) and (42.28,144.87) .. (41.01,144.85) .. controls (39.74,144.84) and (38.69,145.85) .. (38.68,147.12) .. controls (38.66,148.4) and (39.68,149.44) .. (40.95,149.46) -- cycle ;

\draw  [dash pattern={on 4.5pt off 4.5pt}] (300.17,148.21) .. controls (300.17,166.6) and (315.08,181.51) .. (333.47,181.51) .. controls (351.85,181.51) and (366.76,166.6) .. (366.76,148.21) .. controls (366.76,129.82) and (351.85,114.92) .. (333.47,114.92) .. controls (315.08,114.92) and (300.17,129.82) .. (300.17,148.21) -- cycle ;
\draw [line width=1.5]    (356.18,149.25) .. controls (355.13,180.95) and (324.87,150.18) .. (315.25,122.08) ;
\draw  [draw opacity=0][fill={rgb, 255:red, 255; green, 255; blue, 255 }  ,fill opacity=1 ] (329.1,154.2) .. controls (333.07,154.25) and (336.34,151.07) .. (336.39,147.09) .. controls (336.44,143.11) and (333.26,139.85) .. (329.29,139.8) .. controls (325.31,139.75) and (322.04,142.93) .. (321.99,146.9) .. controls (321.94,150.88) and (325.12,154.14) .. (329.1,154.2) -- cycle ;
\draw  [draw opacity=0][fill={rgb, 255:red, 0; green, 0; blue, 0 }  ,fill opacity=1 ] (328.95,149.46) .. controls (330.22,149.47) and (331.26,148.46) .. (331.28,147.19) .. controls (331.29,145.91) and (330.28,144.87) .. (329.01,144.85) .. controls (327.74,144.84) and (326.69,145.85) .. (326.68,147.12) .. controls (326.66,148.4) and (327.68,149.44) .. (328.95,149.46) -- cycle ;

\draw [line width=1.5]    (313.25,172.67) .. controls (323.61,144.83) and (357.23,117.55) .. (356.18,149.25) ;
\draw    (236.57,60.3) -- (303.58,109.01) ;
\draw [shift={(305.2,110.18)}, rotate = 216.01] [color={rgb, 255:red, 0; green, 0; blue, 0 }  ][line width=0.75]    (10.93,-3.29) .. controls (6.95,-1.4) and (3.31,-0.3) .. (0,0) .. controls (3.31,0.3) and (6.95,1.4) .. (10.93,3.29)   ;
\draw    (91.57,150.7) -- (288.6,150.7) ;
\draw [shift={(290.6,150.7)}, rotate = 180] [color={rgb, 255:red, 0; green, 0; blue, 0 }  ][line width=0.75]    (10.93,-3.29) .. controls (6.95,-1.4) and (3.31,-0.3) .. (0,0) .. controls (3.31,0.3) and (6.95,1.4) .. (10.93,3.29)   ;
\draw [line width=1.5]    (68.18,149.25) .. controls (67.13,180.95) and (36.87,150.18) .. (27.25,122.08) ;

\draw (186.26,78.99) node [anchor=north] [inner sep=0.75pt]    {$D$};
\draw (333.47,184.91) node [anchor=north] [inner sep=0.75pt]    {$D^{+}$};
\draw (104.48,80.64) node [anchor=south east] [inner sep=0.75pt]    {$R1_-$};
\draw (272.88,81.84) node [anchor=south west] [inner sep=0.75pt]    {$R1_+$};
\draw (191.08,147.7) node [anchor=south] [inner sep=0.75pt]   [align=left] {c.c.};
\draw (45.47,184.91) node [anchor=north] [inner sep=0.75pt]    {$D^{-}$};

\end{tikzpicture}
    \end{center}
    The corresponding diagram of maps commutes.
    \[
    \begin{tikzcd}[row sep=3em]
& \CKh(D) \arrow[ld, "R1_-"'] \arrow[rd, "R1_+"] & \\
\CKh(D^-) \arrow[rr, "f_0^+"] & & \CKh(D^+).
\end{tikzcd}
    \]
\end{prop}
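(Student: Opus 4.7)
The plan is to verify the identity $f_0^+ \circ R1_- = R1_+$ by direct chain-level comparison, using the explicit formulas for the Reidemeister~$1$ maps from \cite{BarNatan:2004} together with the definition of $f_0^+$ given in \Cref{subsec:x-ch-map}. Since the proof is described by the authors as ``easy to verify,'' it should reduce to a routine unwinding of the definitions, with no essential geometric input beyond the neck-cutting relation.

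First, I would unwind $\CKh(D^{\pm})$ as mapping cones at the curl crossing $x$. Both cones have two summands, $\CKh(D\sqcup\bigcirc)$ and $\CKh(D)$, placed in homological positions determined by the sign of $x$ (with corresponding homological and quantum shifts for $D^-$ versus $D^+$). Under this presentation, Bar-Natan's explicit R1 maps describe each $R1_{\pm}$ as an inclusion $\CKh(D)\to\CKh(D^{\pm})$ into a specific summand, possibly decorated by a dot on the bubble circle. Likewise, the $\Phi$-diagonal that defines $f_0^+$ has only one nonzero component, sending the $\CKh(D)$-summand of $\CKh(D^-)$ to the $\CKh(D\sqcup\bigcirc)$-summand of $\CKh(D^+)$ by the formula for $\Phi$, i.e.\ multiplication by $X$ on the bubble circle (with sign determined by the chosen direction at $x$).

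Next, I would compose the maps summand-by-summand: $R1_-$ sends $\CKh(D)$ into the relevant summand of $\CKh(D^-)$, and then $f_0^+$ sends that summand diagonally into the $\CKh(D\sqcup\bigcirc)$-summand of $\CKh(D^+)$ by placing a dot on the new bubble. Comparing with Bar-Natan's formula for $R1_+$, one sees that the latter hits the same summand with the same dot decoration. Matching the two expressions chain-by-chain establishes the equality on the nose, not merely up to homotopy.

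The main obstacle is purely bookkeeping: one must keep straight the sign coming from the direction on $x$, the cube orientation of the Khovanov differential, and the normalizations chosen for $R1_{\pm}$ and for $\Phi$ in \Cref{subsec:x-ch-map}. Since there is no conceptual subtlety, I expect the verification to be mechanical once all the conventions are written out, which is presumably why the authors omit the proof.
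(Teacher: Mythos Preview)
Your strategy—unwinding the cone decompositions of $\CKh(D^{\pm})$ at the curl crossing and composing Bar-Natan's explicit R1 formulas with the $\Phi$-diagonal from \Cref{subsec:x-ch-map}—is correct and is what the authors have in mind when they omit the proof.

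That said, the specific identifications in your sketch are off in ways that matter. Since $\Phi$ is by definition an \emph{endomorphism} of $\CKh((D^-)_1)$, the map $f_0^+$ cannot send a ``$\CKh(D)$-summand to a $\CKh(D\sqcup\bigcirc)$-summand'': its source and target are the \emph{same} resolution $(D^-)_1=(D^+)_0$. A Kauffman-bracket or $h$-degree check shows this common resolution is $D\sqcup\bigcirc$, not $D$. Moreover, $\Phi$ is not ``multiplication by $X$ on the bubble''; it is the \emph{difference} of dots on the two local arcs. Correspondingly, Bar-Natan's $R1_+$ is not simply $z\mapsto z\otimes X$—that map fails the chain-map condition, since composing with the merge edge of $\CKh(D^+)$ gives $zX\neq 0$.

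With the correct formulas the verification reads: $R1_-(z)=z\otimes 1\in\CKh(D\sqcup\bigcirc)$, and hence
\[
f_0^+\circ R1_-(z)\;=\;\Phi(z\otimes 1)\;=\;z\otimes X - Xz\otimes 1\;=\;R1_+(z),
\]
where the direction at the crossing (the red arrow in the figure) fixes the sign so that equality holds on the nose. So your plan is sound, but the ``pure bookkeeping'' you flag is exactly where the content lies, and as sketched it does not yet match the actual maps.
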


\begin{rem}
    The commutativity of \Cref{prop:cc-rm1} does not hold for the other maps $f_1^\pm$ and $f_0^-$. 
\end{rem}

\begin{prop}
\label{prop:cc-rm2}
    Consider the following commutative diagrams of moves:
    \begin{center}
        \input{tikzpictures/R2-cc}
    \end{center}
    For both $i = 0, 1$, the corresponding diagrams of maps commute.
    \[
\begin{tikzcd}[row sep=3em]
\CKh(D) \arrow[d, "R2"'] \arrow[r, "R2"] & 
\CKh(D') \arrow[d, "f_i"] & 
\CKh(D) \arrow[r, "R2"] \arrow[d, "R2"'] & 
\CKh(D') \arrow[d, "f_i"] \\
\CKh(D'') \arrow[r, "f_i"] & 
{\CKh(D'''),} & 
\CKh(D'') \arrow[r, "f_i"]
& \CKh(D'''').                 
\end{tikzcd}
    \]
\end{prop}

\begin{prop}
\label{prop:cc-rm3}
    Consider the following commutative diagrams of moves:
    \begin{center}
        \input{tikzpictures/R3-cc}
    \end{center}
    For both $i = 0, 1$, the corresponding diagram of maps commutes up to chain homotopy.
    \[
\begin{tikzcd}[row sep=3em]
\CKh(D) \arrow[d, "f_i"', leftrightarrow] \arrow[r, "R3"] & \CKh(D') \arrow[d, "f_i", leftrightarrow] \\
\CKh(D'') \arrow[r, "R3"]                 & \CKh(D''').               
\end{tikzcd}
\]
    Here, the vertical arrows pointing both directions indicate that the statement hold for both the top-to-bottom direction and the reversed direction. 
\end{prop}

\begin{proof}
    Recall from \cite[Section 4]{BarNatan:2004} that the R3 map $\CKh(D) \rightarrow \CKh(D')$ is given by constructing a chain complex $E$ that is a strong deformation retract of both $\CKh(D)$ and $\CKh(D')$, and then composing the chain homotopy equivalences
    \[
        \CKh(D) \rightarrow E \rightarrow \CKh(D').
    \]
    Similarly, the R3 map $\CKh(D'') \rightarrow \CKh(D''')$ is given by
    \[
        \CKh(D'') \rightarrow E' \rightarrow \CKh(D''').
    \]
    Thus it suffices to prove that the following hexagon commutes up to chain homotopy.
    \[
\begin{tikzcd} & E \arrow[ld, "\simeq"'] \arrow[rd, "\simeq"] & \\ \CKh(D) \arrow[d, "f_i"', leftrightarrow] \arrow[rr, "R3", dotted] & & \CKh(D') \arrow[d, "f_i", leftrightarrow] \\ \CKh(D'') \arrow[rr, "R3", dotted] \arrow[rd, "\simeq"'] & & \CKh(D''') \arrow[ld, "\simeq"] \\ & E' & \end{tikzcd}
    \]
    This can be verified by unraveling the chain maps. 
\end{proof}

\begin{rem}
\label{rem:sign-ambiguity}
    From \Cref{prop:cc-rm2}, one can see that the direction for a crossing cannot be uniquely determined by the local orientations so that it gives the directions specified in the two commutative diagrams.  Nonetheless, for negative crossings, we may fix the direction as
    \begin{center}
        \tikzset{every picture/.style={line width=0.75pt}} 

\begin{tikzpicture}[x=0.75pt,y=0.75pt,yscale=-.75,xscale=.75]

\draw [color={rgb, 255:red, 208; green, 2; blue, 27 }  ,draw opacity=1 ][line width=1.5]    (21.33,40) -- (56.67,40) ;
\draw [shift={(60.67,40)}, rotate = 180] [fill={rgb, 255:red, 208; green, 2; blue, 27 }  ,fill opacity=1 ][line width=0.08]  [draw opacity=0] (9.29,-4.46) -- (0,0) -- (9.29,4.46) -- cycle    ;
\draw [color={rgb, 255:red, 0; green, 0; blue, 0 }  ,draw opacity=1 ][line width=1.5]    (61.58,16.81) -- (44.75,33.4) -- (13.71,64) ;
\draw [shift={(64.43,14)}, rotate = 135.41] [fill={rgb, 255:red, 0; green, 0; blue, 0 }  ,fill opacity=1 ][line width=0.08]  [draw opacity=0] (6.97,-3.35) -- (0,0) -- (6.97,3.35) -- cycle    ;
\draw  [dash pattern={on 4.5pt off 4.5pt}] (4,40) .. controls (4,20.12) and (20.12,4) .. (40,4) .. controls (59.88,4) and (76,20.12) .. (76,40) .. controls (76,59.88) and (59.88,76) .. (40,76) .. controls (20.12,76) and (4,59.88) .. (4,40) -- cycle ;
\draw  [draw opacity=0][fill={rgb, 255:red, 255; green, 255; blue, 255 }  ,fill opacity=1 ] (33.71,39.71) .. controls (33.71,36.36) and (36.43,33.64) .. (39.79,33.64) .. controls (43.14,33.64) and (45.86,36.36) .. (45.86,39.71) .. controls (45.86,43.07) and (43.14,45.79) .. (39.79,45.79) .. controls (36.43,45.79) and (33.71,43.07) .. (33.71,39.71) -- cycle ;
\draw  [draw opacity=0][fill={rgb, 255:red, 0; green, 0; blue, 0 }  ,fill opacity=1 ] (35.9,39.71) .. controls (35.9,38.12) and (37.19,36.83) .. (38.79,36.83) .. controls (40.38,36.83) and (41.67,38.12) .. (41.67,39.71) .. controls (41.67,41.31) and (40.38,42.6) .. (38.79,42.6) .. controls (37.19,42.6) and (35.9,41.31) .. (35.9,39.71) -- cycle ;
\draw [color={rgb, 255:red, 0; green, 0; blue, 0 }  ,draw opacity=1 ][line width=1.5]    (16.9,16.83) -- (64.07,64) ;
\draw [shift={(14.07,14)}, rotate = 45] [fill={rgb, 255:red, 0; green, 0; blue, 0 }  ,fill opacity=1 ][line width=0.08]  [draw opacity=0] (6.97,-3.35) -- (0,0) -- (6.97,3.35) -- cycle    ;

\draw (33.67,10.24) node [anchor=north west][inner sep=0.75pt]    {$-$};

\end{tikzpicture}
    \end{center}
    so that it matches the directions specified in the two commutative diagrams. We refer to this direction as the \textit{preferred direction} for a negative crossing. 
\end{rem}

As stated in \Cref{sec:intro}, a homotopy of a normally immersed surface can be realized by a finite sequence of smooth ambient isotopies, together with the following three moves and their inverses: (i) \textit{negative twist moves}, (ii) \textit{positive twist moves}, and (iii) \textit{finger moves}. Furthermore, these moves can be realized by combinations of Reidemeister moves and crossing change moves, and hence we may associate maps accordingly. The following results are again easy to verify, and will be used to prove \Cref{formula_of_moves}. 

\begin{prop}
\label{prop:twist-move-map}
    Consider the following sequence of local moves:
    \begin{center}
        \tikzset{every picture/.style={line width=0.75pt}} 

\begin{tikzpicture}[x=0.75pt,y=0.75pt,yscale=-.8,xscale=.8]

\draw  [dash pattern={on 4.5pt off 4.5pt}] (8.96,43.21) .. controls (8.96,24.82) and (23.87,9.92) .. (42.26,9.92) .. controls (60.65,9.92) and (75.56,24.82) .. (75.56,43.21) .. controls (75.56,61.6) and (60.65,76.51) .. (42.26,76.51) .. controls (23.87,76.51) and (8.96,61.6) .. (8.96,43.21) -- cycle ;
\draw  [draw opacity=0][line width=1.5]  (22.9,17.18) .. controls (32.72,22.1) and (39.4,32.05) .. (39.31,43.47) .. controls (39.21,55.02) and (32.21,64.95) .. (22.11,69.63) -- (8.96,43.21) -- cycle ; \draw  [line width=1.5]  (22.9,17.18) .. controls (32.72,22.1) and (39.4,32.05) .. (39.31,43.47) .. controls (39.21,55.02) and (32.21,64.95) .. (22.11,69.63) ;  

\draw  [dash pattern={on 4.5pt off 4.5pt}] (274.17,42.21) .. controls (274.17,60.6) and (289.08,75.51) .. (307.47,75.51) .. controls (325.85,75.51) and (340.76,60.6) .. (340.76,42.21) .. controls (340.76,23.82) and (325.85,8.92) .. (307.47,8.92) .. controls (289.08,8.92) and (274.17,23.82) .. (274.17,42.21) -- cycle ;
\draw [line width=1.5]    (330.18,43.25) .. controls (329.13,74.95) and (298.87,44.18) .. (289.25,16.08) ;
\draw  [draw opacity=0][fill={rgb, 255:red, 255; green, 255; blue, 255 }  ,fill opacity=1 ] (303.1,48.2) .. controls (307.07,48.25) and (310.34,45.07) .. (310.39,41.09) .. controls (310.44,37.11) and (307.26,33.85) .. (303.29,33.8) .. controls (299.31,33.75) and (296.04,36.93) .. (295.99,40.9) .. controls (295.94,44.88) and (299.12,48.14) .. (303.1,48.2) -- cycle ;
\draw  [draw opacity=0][fill={rgb, 255:red, 0; green, 0; blue, 0 }  ,fill opacity=1 ] (302.95,43.46) .. controls (304.22,43.47) and (305.26,42.46) .. (305.28,41.19) .. controls (305.29,39.91) and (304.28,38.87) .. (303.01,38.85) .. controls (301.74,38.84) and (300.69,39.85) .. (300.68,41.12) .. controls (300.66,42.4) and (301.68,43.44) .. (302.95,43.46) -- cycle ;

\draw [line width=1.5]    (287.25,66.67) .. controls (297.61,38.83) and (331.23,11.55) .. (330.18,43.25) ;

\draw    (88,45.5) -- (132,45.5) ;
\draw [shift={(134,45.5)}, rotate = 180] [color={rgb, 255:red, 0; green, 0; blue, 0 }  ][line width=0.75]    (10.93,-3.29) .. controls (6.95,-1.4) and (3.31,-0.3) .. (0,0) .. controls (3.31,0.3) and (6.95,1.4) .. (10.93,3.29)   ;
\draw    (224.88,45.5) -- (264,45.5) ;
\draw [shift={(266,45.5)}, rotate = 180] [color={rgb, 255:red, 0; green, 0; blue, 0 }  ][line width=0.75]    (10.93,-3.29) .. controls (6.95,-1.4) and (3.31,-0.3) .. (0,0) .. controls (3.31,0.3) and (6.95,1.4) .. (10.93,3.29)   ;
\draw    (353.01,45.5) -- (392,45.53) ;
\draw [shift={(394,45.53)}, rotate = 180.04] [color={rgb, 255:red, 0; green, 0; blue, 0 }  ][line width=0.75]    (10.93,-3.29) .. controls (6.95,-1.4) and (3.31,-0.3) .. (0,0) .. controls (3.31,0.3) and (6.95,1.4) .. (10.93,3.29)   ;
\draw  [dash pattern={on 4.5pt off 4.5pt}] (147.17,42.21) .. controls (147.17,60.6) and (162.08,75.51) .. (180.47,75.51) .. controls (198.85,75.51) and (213.76,60.6) .. (213.76,42.21) .. controls (213.76,23.82) and (198.85,8.92) .. (180.47,8.92) .. controls (162.08,8.92) and (147.17,23.82) .. (147.17,42.21) -- cycle ;
\draw [line width=1.5]    (160.25,66.67) .. controls (170.61,38.83) and (204.23,11.55) .. (203.18,43.25) ;
\draw  [draw opacity=0][fill={rgb, 255:red, 255; green, 255; blue, 255 }  ,fill opacity=1 ] (176.1,48.2) .. controls (180.07,48.25) and (183.34,45.07) .. (183.39,41.09) .. controls (183.44,37.11) and (180.26,33.85) .. (176.29,33.8) .. controls (172.31,33.75) and (169.04,36.93) .. (168.99,40.9) .. controls (168.94,44.88) and (172.12,48.14) .. (176.1,48.2) -- cycle ;
\draw  [draw opacity=0][fill={rgb, 255:red, 0; green, 0; blue, 0 }  ,fill opacity=1 ] (175.95,43.46) .. controls (177.22,43.47) and (178.26,42.46) .. (178.28,41.19) .. controls (178.29,39.91) and (177.28,38.87) .. (176.01,38.85) .. controls (174.74,38.84) and (173.69,39.85) .. (173.68,41.12) .. controls (173.66,42.4) and (174.68,43.44) .. (175.95,43.46) -- cycle ;

\draw [line width=1.5]    (203.18,43.25) .. controls (202.13,74.95) and (171.87,44.18) .. (162.25,16.08) ;

\draw  [dash pattern={on 4.5pt off 4.5pt}] (403.96,43.21) .. controls (403.96,24.82) and (418.87,9.92) .. (437.26,9.92) .. controls (455.65,9.92) and (470.56,24.82) .. (470.56,43.21) .. controls (470.56,61.6) and (455.65,76.51) .. (437.26,76.51) .. controls (418.87,76.51) and (403.96,61.6) .. (403.96,43.21) -- cycle ;
\draw  [draw opacity=0][line width=1.5]  (417.9,17.18) .. controls (427.72,22.1) and (434.4,32.05) .. (434.31,43.47) .. controls (434.21,55.02) and (427.21,64.95) .. (417.11,69.63) -- (403.96,43.21) -- cycle ; \draw  [line width=1.5]  (417.9,17.18) .. controls (427.72,22.1) and (434.4,32.05) .. (434.31,43.47) .. controls (434.21,55.02) and (427.21,64.95) .. (417.11,69.63) ;

\draw (42.26,79.91) node [anchor=north] [inner sep=0.75pt]    {$D$};
\draw (437.26,79.91) node [anchor=north] [inner sep=0.75pt]    {$D$};
\draw (307.47,78.91) node [anchor=north] [inner sep=0.75pt]    {$D^{+}$};
\draw (180.47,78.91) node [anchor=north] [inner sep=0.75pt]    {$D^{-}$};
\draw (245.44,40) node [anchor=south] [inner sep=0.75pt]   [align=left] {c.c.};
\draw (111,42.1) node [anchor=south] [inner sep=0.75pt]    {$R1_-$};
\draw (373.51,42.12) node [anchor=south] [inner sep=0.75pt]    {$R1_+^{-1}$};

\end{tikzpicture}
    \end{center}
    The corresponding sequence of maps on $\CKh$:
    \[
    \begin{tikzcd}
        \CKh(D) \arrow[r, "R1_-"] & \CKh(D^-) \arrow[r, "f^+_i"] & \CKh(D^+) \arrow[r, "R1^{-1}_+"] & \CKh(D)
    \end{tikzcd}    
    \]
    composes to $\id$ for $i = 0$ (using the preferred direction), and $0$ for $i = 1$. 
    For the reversed move, the corresponding sequence of maps on $\CKh$:
    \[
    \begin{tikzcd}
    \CKh(D) & \CKh(D^-) \arrow[l, "R1^{-1}_-"'] & \CKh(D^+) \arrow[l, "f^-_i"'] & \CKh(D) \arrow[l, "R1_+"']
    \end{tikzcd}
    \]
    composes to $0$ for $i = 0$, and $\pm(2X - h)$ for $i = 1$. 
\end{prop}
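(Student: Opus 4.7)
All four assertions concern chain maps between Khovanov complexes of diagrams that differ only inside a small disk containing the curl introduced by the R1 move, so the statement reduces to an explicit local computation. My plan is to fix concrete Bar--Natan models of the homotopy equivalences $R1_\pm$ (together with their chosen homotopy inverses $R1_\pm^{-1}$) and of the two crossing change maps $f_0^\pm, f_1^\pm$, and then to chase an arbitrary element of $\CKh(D)$ through each of the four compositions. Recall from \Cref{subsec:x-ch-map} that $\CKh(D^\pm)$ is a mapping cone of the saddle between the two local smoothings of the curl; delooping the smoothing containing a disjoint small circle identifies this cone with $\CKh(D)$ up to chain homotopy, and $R1_\pm$ and $R1_\pm^{-1}$ are the corresponding inclusion and projection.

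Next I would analyze the four compositions case by case using the observation that the 0- and 1-resolutions of $D^-$ coincide with those of $D^+$ (only the homological positions and quantum shifts change), so that $f_0^{\pm}$ acts as the identity $I$ through the shared 0-resolution and $f_1^{\pm}$ acts as $\Phi$ through the shared 1-resolution. The three easy cases are structural. For $f_0^+$ the R1 maps both factor through the same 0-resolution component and $f_0^+$ is the identity there, so the composition is $\id_{\CKh(D)}$. For $f_1^+$ the output of $\Phi$ lands in the image of the saddle map of $D^+$ and is therefore annihilated by the chosen $R1_+^{-1}$. For $f_0^-$, the low quantum degree of $f_0^-$ forces its image to lie in the complementary summand of the delooping of $\CKh(D^-)$, again giving zero after $R1_-^{-1}$.

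The only nontrivial case is $f_1^-$. Here my plan is to use the Khovanov~\cite{Khovanov:2022} description of $\Phi$ as a cylinder with a defect circle along a meridian. Composing this cylinder with the cap and cup produced by $R1_+$ and $R1_-^{-1}$ yields, in the Bar--Natan category, a cobordism on $D$ equal to a cylinder carrying a single closed defect circle. Applying the dot-sum formula for a defect circle and the neck-cutting relation for $A_{h,t}$ (with dual basis $\{f_1, f_X\}=\{X-h,1\}$ for the basis $\{1,X\}$) reduces this operator to multiplication by $X \cdot 1 - 1 \cdot (X - h) = h$ on one side and $-(X - (X-h)) = -h$... which, when combined with the $X_1 - X_2$ sign built into $\Phi$, collapses to multiplication by $\pm(2X - h)$ in $A_{h,t}$.

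The main obstacle will be sign bookkeeping. The Bar--Natan R1 maps come with explicit signs, and the red arrow at the crossing (\Cref{rem:sign-ambiguity}) fixes the sign of $\Phi$; both choices contribute to the overall sign of the $f_1^-$ case, which is why the final formula is only asserted up to $\pm$. I would fix the preferred direction from \Cref{rem:sign-ambiguity} at the negative crossing so that the signs are consistent with those used in \Cref{prop:cc-rm2,prop:cc-rm3}; the precise sign will then follow from a single neck-cutting computation.
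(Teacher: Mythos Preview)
Your overall strategy—reduce to a local computation, fix explicit Bar--Natan models for $R1_\pm^{\pm1}$, and run a case analysis—is exactly right, and it is what the paper has in mind when it calls the result ``easy to verify.'' However, you have swapped the two crossing change maps. In \Cref{subsec:x-ch-map} the \emph{dashed} arrow is $f_0$, given by $\Phi$ on $\CKh(D_1)$, and the \emph{solid} arrow is $f_1$, given by the identity $I$ on $\CKh(D_0)$; your description (``$f_0^\pm$ acts as the identity $I$ through the shared $0$-resolution and $f_1^\pm$ acts as $\Phi$ through the shared $1$-resolution'') is exactly the reverse. One can double-check this against the listed bidegrees $\deg f_0^+=(0,0)$ and $\deg f_1^+=(2,4)$: the ambient grading shift between the two cones forces the degree-$(0,0)$ map to carry the degree $-2$ endomorphism $\Phi$, not $I$.

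Because of the swap, each of your four arguments targets the wrong map. With the correct definitions the picture is actually simpler. For the curl, $(D^-)_1=(D^+)_0=D\sqcup\bigcirc$ sits in homological degree $0$ and is where both $R1_-$ and $R1_+$ land. The two vanishing cases are then purely structural: $f_1^+$ is supported only on $(D^-)_0$, so $f_1^+\circ R1_-=0$; likewise $f_0^-$ is supported only on $(D^+)_1$, so $f_0^-\circ R1_+=0$. No $\Phi$-in-image-of-saddle or quantum-degree argument is needed. The forward $i=0$ case is exactly \Cref{prop:cc-rm1}: $f_0^+\circ R1_-=\Phi\circ R1_-=R1_+$, hence $R1_+^{-1}\circ f_0^+\circ R1_-=\id$. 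Finally, for the reversed $i=1$ case, $f_1^-$ is literally the identity on the $D\sqcup\bigcirc$ summand, so the content is the composite $R1_-^{-1}\circ R1_+$ on $\CKh(D)\otimes A$; with the Bar--Natan formulas $R1_+(x)=x\otimes X-Xx\otimes1$ and $R1_-^{-1}(x\otimes1)=x$, $R1_-^{-1}(x\otimes X)=(h-X)x$, one reads off $R1_-^{-1}\circ R1_+(x)=-(2X-h)x$. Your defect-circle computation is therefore aimed at the wrong map; $\Phi$ plays no role in this case.
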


\begin{prop}
\label{prop:finger-move-map}
    Consider the following sequence of local moves:
    \begin{center}
        \tikzset{every picture/.style={line width=0.75pt}} 

\begin{tikzpicture}[x=0.75pt,y=0.75pt,yscale=-.8,xscale=.8]

\draw    (88,45.5) -- (132,45.5) ;
\draw [shift={(134,45.5)}, rotate = 180] [color={rgb, 255:red, 0; green, 0; blue, 0 }  ][line width=0.75]    (10.93,-3.29) .. controls (6.95,-1.4) and (3.31,-0.3) .. (0,0) .. controls (3.31,0.3) and (6.95,1.4) .. (10.93,3.29)   ;
\draw    (224.88,45.5) -- (264,45.5) ;
\draw [shift={(266,45.5)}, rotate = 180] [color={rgb, 255:red, 0; green, 0; blue, 0 }  ][line width=0.75]    (10.93,-3.29) .. controls (6.95,-1.4) and (3.31,-0.3) .. (0,0) .. controls (3.31,0.3) and (6.95,1.4) .. (10.93,3.29)   ;
\draw    (353.01,45.5) -- (392,45.53) ;
\draw [shift={(394,45.53)}, rotate = 180.04] [color={rgb, 255:red, 0; green, 0; blue, 0 }  ][line width=0.75]    (10.93,-3.29) .. controls (6.95,-1.4) and (3.31,-0.3) .. (0,0) .. controls (3.31,0.3) and (6.95,1.4) .. (10.93,3.29)   ;
\draw  [dash pattern={on 4.5pt off 4.5pt}] (12,45.7) .. controls (12,27.32) and (26.91,12.41) .. (45.3,12.41) .. controls (63.68,12.41) and (78.59,27.32) .. (78.59,45.7) .. controls (78.59,64.09) and (63.68,79) .. (45.3,79) .. controls (26.91,79) and (12,64.09) .. (12,45.7) -- cycle ;
\draw  [draw opacity=0][line width=1.5]  (21.14,66.27) .. controls (26.44,60.38) and (35.3,56.53) .. (45.34,56.55) .. controls (55.54,56.58) and (64.51,60.58) .. (69.74,66.65) -- (45.3,79) -- cycle ; \draw  [line width=1.5]  (21.14,66.27) .. controls (26.44,60.38) and (35.3,56.53) .. (45.34,56.55) .. controls (55.54,56.58) and (64.51,60.58) .. (69.74,66.65) ;  
\draw  [draw opacity=0][line width=1.5]  (69.75,25.13) .. controls (64.52,31.38) and (55.56,35.52) .. (45.38,35.55) .. controls (35.03,35.59) and (25.93,31.39) .. (20.69,25.01) -- (45.3,12.41) -- cycle ; \draw  [line width=1.5]  (69.75,25.13) .. controls (64.52,31.38) and (55.56,35.52) .. (45.38,35.55) .. controls (35.03,35.59) and (25.93,31.39) .. (20.69,25.01) ;  

\draw  [dash pattern={on 4.5pt off 4.5pt}] (146.51,45.71) .. controls (146.51,27.32) and (161.42,12.42) .. (179.81,12.42) .. controls (198.2,12.42) and (213.1,27.32) .. (213.1,45.71) .. controls (213.1,64.1) and (198.2,79.01) .. (179.81,79.01) .. controls (161.42,79.01) and (146.51,64.1) .. (146.51,45.71) -- cycle ;
\draw  [draw opacity=0][line width=1.5]  (206.72,26.88) .. controls (202.17,40.17) and (191.62,49.44) .. (179.43,49.31) .. controls (167.13,49.19) and (156.69,39.53) .. (152.49,25.95) -- (179.81,12.42) -- cycle ; \draw  [line width=1.5]  (206.72,26.88) .. controls (202.17,40.17) and (191.62,49.44) .. (179.43,49.31) .. controls (167.13,49.19) and (156.69,39.53) .. (152.49,25.95) ;  
\draw  [draw opacity=0][fill={rgb, 255:red, 255; green, 255; blue, 255 }  ,fill opacity=1 ] (201.58,44.3) .. controls (201.55,48.28) and (198.3,51.47) .. (194.32,51.44) .. controls (190.35,51.41) and (187.15,48.16) .. (187.18,44.18) .. controls (187.21,40.21) and (190.46,37.01) .. (194.44,37.04) .. controls (198.42,37.07) and (201.61,40.32) .. (201.58,44.3) -- cycle ;
\draw  [draw opacity=0][fill={rgb, 255:red, 0; green, 0; blue, 0 }  ,fill opacity=1 ] (196.84,44.05) .. controls (196.83,45.32) and (195.8,46.34) .. (194.53,46.33) .. controls (193.25,46.32) and (192.23,45.28) .. (192.24,44.01) .. controls (192.25,42.74) and (193.29,41.72) .. (194.56,41.73) .. controls (195.83,41.74) and (196.85,42.78) .. (196.84,44.05) -- cycle ;

\draw  [draw opacity=0][fill={rgb, 255:red, 255; green, 255; blue, 255 }  ,fill opacity=1 ] (170.59,43.05) .. controls (170.56,47.03) and (167.31,50.23) .. (163.33,50.19) .. controls (159.36,50.16) and (156.16,46.91) .. (156.19,42.94) .. controls (156.22,38.96) and (159.47,35.76) .. (163.45,35.79) .. controls (167.42,35.82) and (170.62,39.07) .. (170.59,43.05) -- cycle ;
\draw  [draw opacity=0][line width=1.5]  (150.77,63) .. controls (155.03,47.68) and (165.77,36.83) .. (178.3,36.88) .. controls (190.94,36.94) and (201.66,48.09) .. (205.7,63.65) -- (178.11,77.51) -- cycle ; \draw  [line width=1.5]  (150.77,63) .. controls (155.03,47.68) and (165.77,36.83) .. (178.3,36.88) .. controls (190.94,36.94) and (201.66,48.09) .. (205.7,63.65) ;  
\draw  [draw opacity=0][fill={rgb, 255:red, 0; green, 0; blue, 0 }  ,fill opacity=1 ] (165.85,42.8) .. controls (165.84,44.07) and (164.81,45.09) .. (163.53,45.08) .. controls (162.26,45.07) and (161.24,44.03) .. (161.25,42.76) .. controls (161.26,41.49) and (162.3,40.47) .. (163.57,40.48) .. controls (164.84,40.49) and (165.86,41.53) .. (165.85,42.8) -- cycle ;

\draw  [draw opacity=0][line width=1.5]  (279.77,63) .. controls (284.03,47.68) and (294.77,36.83) .. (307.3,36.88) .. controls (319.94,36.94) and (330.66,48.09) .. (334.7,63.65) -- (307.11,77.51) -- cycle ; \draw  [line width=1.5]  (279.77,63) .. controls (284.03,47.68) and (294.77,36.83) .. (307.3,36.88) .. controls (319.94,36.94) and (330.66,48.09) .. (334.7,63.65) ;  
\draw  [dash pattern={on 4.5pt off 4.5pt}] (275.51,45.71) .. controls (275.51,27.32) and (290.42,12.42) .. (308.81,12.42) .. controls (327.2,12.42) and (342.1,27.32) .. (342.1,45.71) .. controls (342.1,64.1) and (327.2,79.01) .. (308.81,79.01) .. controls (290.42,79.01) and (275.51,64.1) .. (275.51,45.71) -- cycle ;
\draw  [draw opacity=0][fill={rgb, 255:red, 255; green, 255; blue, 255 }  ,fill opacity=1 ] (330.58,44.3) .. controls (330.55,48.28) and (327.3,51.47) .. (323.32,51.44) .. controls (319.35,51.41) and (316.15,48.16) .. (316.18,44.18) .. controls (316.21,40.21) and (319.46,37.01) .. (323.44,37.04) .. controls (327.42,37.07) and (330.61,40.32) .. (330.58,44.3) -- cycle ;
\draw  [draw opacity=0][fill={rgb, 255:red, 255; green, 255; blue, 255 }  ,fill opacity=1 ] (299.59,43.05) .. controls (299.56,47.03) and (296.31,50.23) .. (292.33,50.19) .. controls (288.36,50.16) and (285.16,46.91) .. (285.19,42.94) .. controls (285.22,38.96) and (288.47,35.76) .. (292.45,35.79) .. controls (296.42,35.82) and (299.62,39.07) .. (299.59,43.05) -- cycle ;
\draw  [draw opacity=0][fill={rgb, 255:red, 0; green, 0; blue, 0 }  ,fill opacity=1 ] (294.85,42.8) .. controls (294.84,44.07) and (293.81,45.09) .. (292.53,45.08) .. controls (291.26,45.07) and (290.24,44.03) .. (290.25,42.76) .. controls (290.26,41.49) and (291.3,40.47) .. (292.57,40.48) .. controls (293.84,40.49) and (294.86,41.53) .. (294.85,42.8) -- cycle ;

\draw  [draw opacity=0][line width=1.5]  (335.72,26.88) .. controls (331.17,40.17) and (320.62,49.44) .. (308.43,49.31) .. controls (296.13,49.19) and (285.69,39.53) .. (281.49,25.95) -- (308.81,12.42) -- cycle ; \draw  [line width=1.5]  (335.72,26.88) .. controls (331.17,40.17) and (320.62,49.44) .. (308.43,49.31) .. controls (296.13,49.19) and (285.69,39.53) .. (281.49,25.95) ;  
\draw  [draw opacity=0][fill={rgb, 255:red, 0; green, 0; blue, 0 }  ,fill opacity=1 ] (325.84,44.05) .. controls (325.83,45.32) and (324.8,46.34) .. (323.53,46.33) .. controls (322.25,46.32) and (321.23,45.28) .. (321.24,44.01) .. controls (321.25,42.74) and (322.29,41.72) .. (323.56,41.73) .. controls (324.83,41.74) and (325.85,42.78) .. (325.84,44.05) -- cycle ;

\draw  [dash pattern={on 4.5pt off 4.5pt}] (402,46.7) .. controls (402,28.32) and (416.91,13.41) .. (435.3,13.41) .. controls (453.68,13.41) and (468.59,28.32) .. (468.59,46.7) .. controls (468.59,65.09) and (453.68,80) .. (435.3,80) .. controls (416.91,80) and (402,65.09) .. (402,46.7) -- cycle ;
\draw  [draw opacity=0][line width=1.5]  (411.14,67.27) .. controls (416.44,61.38) and (425.3,57.53) .. (435.34,57.55) .. controls (445.54,57.58) and (454.51,61.58) .. (459.74,67.65) -- (435.3,80) -- cycle ; \draw  [line width=1.5]  (411.14,67.27) .. controls (416.44,61.38) and (425.3,57.53) .. (435.34,57.55) .. controls (445.54,57.58) and (454.51,61.58) .. (459.74,67.65) ;  
\draw  [draw opacity=0][line width=1.5]  (459.75,26.13) .. controls (454.52,32.38) and (445.56,36.52) .. (435.38,36.55) .. controls (425.03,36.59) and (415.93,32.39) .. (410.69,26.01) -- (435.3,13.41) -- cycle ; \draw  [line width=1.5]  (459.75,26.13) .. controls (454.52,32.38) and (445.56,36.52) .. (435.38,36.55) .. controls (425.03,36.59) and (415.93,32.39) .. (410.69,26.01) ;

\draw (245.44,42.5) node [anchor=south] [inner sep=0.75pt]   [align=left] {c.c.};
\draw (111,42.1) node [anchor=south] [inner sep=0.75pt]    {$R2$};
\draw (373.51,42.12) node [anchor=south] [inner sep=0.75pt]    {$R2^{-1}$};
\draw (45.3,82.4) node [anchor=north] [inner sep=0.75pt]    {$D$};
\draw (435.3,83.4) node [anchor=north] [inner sep=0.75pt]    {$D$};
\draw (179.81,82.41) node [anchor=north] [inner sep=0.75pt]    {$D'$};
\draw (308.81,82.41) node [anchor=north] [inner sep=0.75pt]    {$D''$};

\end{tikzpicture}
    \end{center}
    The corresponding sequence of maps on $\CKh$:
    \[
    \begin{tikzcd}
        \CKh(D) \arrow[r, "R2"] & \CKh(D') \arrow[r, "{(f_i, f_j)}"] & \CKh(D'') \arrow[r, "R2^{-1}"] & \CKh(D)
    \end{tikzcd}    
    \]
    composes to the following map: 
    \[
    \begin{cases}
        \pm \Phi & \text{\ if $(i, j) = (0, 1)$, } \\
        \pm \Psi & \text{\ if $(i, j) = (1, 0)$, } \\
        0    & \text{\ otherwise.}
    \end{cases}
    \]
    Here, $\Phi$ is the endomorphism defined in \Cref{subsec:x-ch-map}, and $\Psi$ is another endomorphism defined by: 
    \begin{center}
        \tikzset{every picture/.style={line width=0.75pt}} 

\begin{tikzpicture}[x=0.75pt,y=0.75pt,yscale=-.8,xscale=.8]

\draw  [dash pattern={on 4.5pt off 4.5pt}] (69,34.43) .. controls (69,20.37) and (80.4,8.97) .. (94.46,8.97) .. controls (108.52,8.97) and (119.91,20.37) .. (119.91,34.43) .. controls (119.91,48.49) and (108.52,59.89) .. (94.46,59.89) .. controls (80.4,59.89) and (69,48.49) .. (69,34.43) -- cycle ;
\draw  [draw opacity=0][line width=1.5]  (114.56,19.91) .. controls (109.19,23.49) and (102.17,25.66) .. (94.48,25.66) .. controls (86.83,25.66) and (79.84,23.51) .. (74.48,19.96) -- (94.48,2.39) -- cycle ; \draw  [line width=1.5]  (114.56,19.91) .. controls (109.19,23.49) and (102.17,25.66) .. (94.48,25.66) .. controls (86.83,25.66) and (79.84,23.51) .. (74.48,19.96) ;  
\draw  [draw opacity=0][line width=1.5]  (74.4,48.96) .. controls (79.77,45.37) and (86.79,43.2) .. (94.48,43.2) .. controls (102.13,43.2) and (109.12,45.35) .. (114.48,48.9) -- (94.48,66.48) -- cycle ; \draw  [line width=1.5]  (74.4,48.96) .. controls (79.77,45.37) and (86.79,43.2) .. (94.48,43.2) .. controls (102.13,43.2) and (109.12,45.35) .. (114.48,48.9) ;  

\draw  [fill={rgb, 255:red, 0; green, 0; blue, 0 }  ,fill opacity=1 ] (91,25.43) .. controls (91,23.66) and (92.43,22.23) .. (94.2,22.23) .. controls (95.97,22.23) and (97.41,23.66) .. (97.41,25.43) .. controls (97.41,27.2) and (95.97,28.64) .. (94.2,28.64) .. controls (92.43,28.64) and (91,27.2) .. (91,25.43) -- cycle ;

\draw  [dash pattern={on 4.5pt off 4.5pt}] (170,34.43) .. controls (170,20.37) and (181.4,8.97) .. (195.46,8.97) .. controls (209.52,8.97) and (220.91,20.37) .. (220.91,34.43) .. controls (220.91,48.49) and (209.52,59.89) .. (195.46,59.89) .. controls (181.4,59.89) and (170,48.49) .. (170,34.43) -- cycle ;
\draw  [draw opacity=0][line width=1.5]  (215.56,19.91) .. controls (210.19,23.49) and (203.17,25.66) .. (195.48,25.66) .. controls (187.83,25.66) and (180.84,23.51) .. (175.48,19.96) -- (195.48,2.39) -- cycle ; \draw  [line width=1.5]  (215.56,19.91) .. controls (210.19,23.49) and (203.17,25.66) .. (195.48,25.66) .. controls (187.83,25.66) and (180.84,23.51) .. (175.48,19.96) ;  
\draw  [draw opacity=0][line width=1.5]  (175.4,48.96) .. controls (180.77,45.37) and (187.79,43.2) .. (195.48,43.2) .. controls (203.13,43.2) and (210.12,45.35) .. (215.48,48.9) -- (195.48,66.48) -- cycle ; \draw  [line width=1.5]  (175.4,48.96) .. controls (180.77,45.37) and (187.79,43.2) .. (195.48,43.2) .. controls (203.13,43.2) and (210.12,45.35) .. (215.48,48.9) ;  

\draw  [fill={rgb, 255:red, 0; green, 0; blue, 0 }  ,fill opacity=1 ] (192.25,43.63) .. controls (192.25,41.86) and (193.69,40.43) .. (195.46,40.43) .. controls (197.23,40.43) and (198.66,41.86) .. (198.66,43.63) .. controls (198.66,45.4) and (197.23,46.84) .. (195.46,46.84) .. controls (193.69,46.84) and (192.25,45.4) .. (192.25,43.63) -- cycle ;

\draw  [dash pattern={on 4.5pt off 4.5pt}] (277,34.43) .. controls (277,20.37) and (288.4,8.97) .. (302.46,8.97) .. controls (316.52,8.97) and (327.91,20.37) .. (327.91,34.43) .. controls (327.91,48.49) and (316.52,59.89) .. (302.46,59.89) .. controls (288.4,59.89) and (277,48.49) .. (277,34.43) -- cycle ;
\draw  [draw opacity=0][line width=1.5]  (322.56,19.91) .. controls (317.19,23.49) and (310.17,25.66) .. (302.48,25.66) .. controls (294.83,25.66) and (287.84,23.51) .. (282.48,19.96) -- (302.48,2.39) -- cycle ; \draw  [line width=1.5]  (322.56,19.91) .. controls (317.19,23.49) and (310.17,25.66) .. (302.48,25.66) .. controls (294.83,25.66) and (287.84,23.51) .. (282.48,19.96) ;  
\draw  [draw opacity=0][line width=1.5]  (282.4,48.96) .. controls (287.77,45.37) and (294.79,43.2) .. (302.48,43.2) .. controls (310.13,43.2) and (317.12,45.35) .. (322.48,48.9) -- (302.48,66.48) -- cycle ; \draw  [line width=1.5]  (282.4,48.96) .. controls (287.77,45.37) and (294.79,43.2) .. (302.48,43.2) .. controls (310.13,43.2) and (317.12,45.35) .. (322.48,48.9) ;

\draw (121.91,32.27) node [anchor=west] [inner sep=0.75pt]    {$\ \ \ +\ \ \ $};
\draw (222.91,33.27) node [anchor=west] [inner sep=0.75pt]    {$\ \ \ -\ h$};

\end{tikzpicture}
    \end{center}
\end{prop}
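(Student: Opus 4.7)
The plan is to verify all four cases by direct chain-level computation, using Bar-Natan's strong deformation retract for the R2 move together with the explicit descriptions of $f_0, f_1$ from \Cref{subsec:x-ch-map}. The diagram $D'$ has one positive crossing $x_+$ and one negative crossing $x_-$ forming a bigon. Its cube of resolutions is a square whose four vertices are two ``trivial'' corners isotopic to two parallel strands, and two ``circle'' corners in which an additional small circle appears. Bar-Natan's splitting gives a chain-level decomposition $\CKh(D') \simeq \CKh(D) \oplus C'$ with $C'$ contractible, under which $R2$ is identified with the inclusion into the first summand; analogously $\CKh(D'') \simeq \CKh(D) \oplus C''$ and $R2^{-1}$ is the projection onto the first summand. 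Under these identifications, the triple composition $R2^{-1} \circ (f_i, f_j) \circ R2$ is computed as the $(\CKh(D), \CKh(D))$-block of $(f_i, f_j)$.

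Recall from \Cref{subsec:x-ch-map} that $f_0$ at a crossing is supported on the 1-resolution of that crossing, acting via the defect endomorphism $\Phi$, while $f_1$ is supported on the 0-resolution, acting via the identity $I$. I would then carry out a case analysis on $(i, j) \in \{0, 1\}^2$. In the cases $(0,0)$ and $(1,1)$, both crossing-change components operate on the same cube-parity, and their combined image lies in corners that are orthogonal to the $\CKh(D)$-summand of $\CKh(D'')$, so the composition is chain-homotopic to zero. In the case $(0, 1)$, exactly one crossing contributes a $\Phi$-decoration and the other contributes the identity; the local surgery picture combined with the Frobenius relation $X^2 = hX + t$ yields an endomorphism equal to $\pm\Phi$. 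In the case $(1, 0)$, the surgery at the two crossings produces geometrically an un-decorated tube connecting the two local sheets; by the neck-cutting relation this equals $X_1 + X_2 - h$, i.e., $\pm\Psi$. Alternatively, one can appeal to the geometric description from \Cref{subsect: Geometric description} via the classes $\zeta_i, \zeta_j$ inserted as Hopf-link decorations, which makes the appearance of the ``twisted'' versus ``plain'' tube manifest.

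The main obstacle will be the careful sign bookkeeping through Bar-Natan's R2 retract and the two crossing-change maps, and in particular distinguishing the antisymmetric $\Phi = X_1 - X_2$ from the symmetric $\Psi = X_1 + X_2 - h$. This asymmetry originates from the distinction between a plain tube and a tube carrying a defect circle along its meridian, and is mirrored by the asymmetric behavior of the positive and negative crossings in the R2 bigon under the chosen crossing-change maps. A secondary subtlety is to confirm that the vanishing in the $(0,0)$ and $(1,1)$ cases is genuine, which follows once one observes that the chain-level supports of $f_0 f_0$ and $f_1 f_1$ land in the contractible summand $C''$ of the R2 decomposition of $\CKh(D'')$.
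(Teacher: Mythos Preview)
The paper omits the proof entirely, simply stating that the result is ``easy to verify''; your approach---a direct chain-level computation using Bar-Natan's explicit $R2$ retract together with the explicit forms of $f_0,f_1$ from \Cref{subsec:x-ch-map}---is precisely the routine verification the paper has in mind, and your case analysis is organized correctly.

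Two minor sharpenings are worth noting. First, in the $(0,0)$ and $(1,1)$ cases the composition is \emph{exactly} zero, not merely chain-homotopic to zero: the $R2$ map is a degree-$0$ chain map and therefore lands in the $a_1+a_2=1$ slice $D'_{01}\oplus D'_{10}$ of the cube, while $(f_1,f_1)$ is supported on $D'_{00}$ and $(f_0,f_0)$ on $D'_{11}$; hence already $(f_i,f_i)\circ R2=0$ before $R2^{-1}$ is applied. Your phrasing via ``landing in the contractible summand $C''$'' is not wrong, but the degree argument is cleaner and gives the strict vanishing the proposition asserts. Second, for the $(0,1)$ versus $(1,0)$ cases, the geometric heuristic (plain tube versus tube with a defect circle) is correct, but to pin down which yields $\Phi$ and which yields $\Psi$ you will need to track which of $D'_{01},D'_{10}$ carries the extra circle and how the two components of Bar-Natan's $R2$ map (identity into the trivial corner, cup into the circle corner) interact with the $\Phi$ factor---this is where the actual bookkeeping lies.
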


\subsection{Immersed cobordism maps}
\label{subsubsec:imm-cob-combi}

Let $S$ be a normally immersed (possibly non-orientable) cobordism in $I \times \R^3$ between links $L, L'$ in $S^3$. Let $\chi$ denote the Euler characteristic, $e$ the normal Euler number and $s_\pm$ the positive and negative double points of $S$. By standard arguments, $S$ may be isotoped so that it decomposes into elementary cobordisms $S_1, \ldots, S_N$, such that 
\begin{enumerate}
    \item each $S_i$ has links $L_i, L_{i+1}$ in both ends of its boundary, 
    \item 
    those projections $D_i, D_{i+1}$ in $\R^2$ are regular link diagrams, and 
    \item the transition from $D_i$ to $D_{i+1}$ is realized by a single elementary move: a Reidemeister move, a Morse move or a crossing change. 
\end{enumerate}
Here, we assume that $S$ is decomposed in this form, and that each double point is assigned a direction.

The corresponding cobordism map on $\CKh$ is defined by the composition of the chain maps corresponding to the elementary cobordisms. For the Reidemeister moves and the Morse moves, the maps are explicitly given in \cite{BarNatan:2004}. If $S$ has no double points, then the composition gives the standard embedded cobordism map $\phi_S$, which has 
\[
    \deg \phi_S = (e/2,\ \chi - 3e/2)
\]
(see \cite[Corollary 3.3]{LS:2022-mixed}). If $S$ has double points, then we may choose any crossing change map for each of them, which corresponds to a cycle $z \in \CKh(H^\pm)$. Thus, given $(s_+ + s_-)$-tuple of cycles $\mathbf{z} = (z^+_1, \ldots, z^+_{s_+}; z^-_1, \ldots, z^-_{s_-})$, we obtain an immersed cobordism map
\[
    \phi(S; \mathbf{z})\colon \CKh(D) \to \CKh(D').
\]
We say $\phi(S; \mathbf{z})$ is \textit{homogeneous} if each $z_i \in \CKh(H^\pm)$ is homogeneous. In such case the bigradings of the cycles $z^\pm_i$ add up to give the bidegree of $\deg \phi(S; \mathbf{z})$. 
Note that the directions are used to fix the sign for the crossing change map $f_0$, so if $\phi(S; \mathbf{z})$ is homogeneous, then the map can be defined, up to sign, without the directions. 

One natural choice for a homogeneous immersed cobordism would be to choose $\zeta_0$ for all positive double points and $\zeta_1$ for all negative double points, that is 
\[
    \phi^{\bal}_S := \phi(S; \zeta_0, \ldots, \zeta_0; \zeta_1, \ldots, \zeta_1)
\]
which has 
\[
    \deg \phi^{\bal}_S = (e/2,\ \chi - 3e/2 - 2s_-).  
\]
We call $\phi^{\bal}_S$ the immersed cobordism map of \textit{balanced homological degree}. We may also consider two extreme choices, where we take either one of $\zeta_0$ or $\zeta_1$ for all double points of $S$, which are 
\begin{align*}
    \phi^{\low}_S &:= \phi(S; \zeta_0, \ldots, \zeta_0; \zeta_0, \ldots, \zeta_0), \\
    \phi^{\hi}_S &:= \phi(S; \zeta_1, \ldots, \zeta_1; \zeta_1, \ldots, \zeta_1).    
\end{align*}
These maps have
\begin{align*}
    \deg \phi^{\low}_S &= (e/2 - 2s_-,\ \chi - 3 e/2 - 6s_-),\\
    \deg \phi^{\hi}_S &= (e/2 + 2s_+,\ \chi - 3 e/2 + 4s_+ - 2s_-).   
\end{align*}
We call $\phi^{\low}_S$ (resp.\ $\phi^{\hi}_S$) the immersed cobordism map of \textit{lowest (resp.\ highest) homological degree}. 

The induced map of $\phi(S; \mathbf{z})$ on homology is denoted $\Kh(S; \mathbf{z})$, and in particular those of $\phi^{\bal}_S$, $\phi^{\low}_S$ and $\phi^{\hi}_S$ are denoted $\Kh^{\bal}(S)$, $\Kh^{\low}(S)$ and $\Kh^{\hi}(S)$ respectively. 
The following proposition implies the first part of \Cref{thm:isotopy-invariance}. 

\begin{prop}
\label{prop:isotopy-inv-up-to-sign}
    Any homogeneous immersed cobordism map $\phi(S; \mathbf{z})$ is invariant (up to chain homotopy and sign) under smooth isotopies of $S$ rel $\partial$.
\end{prop}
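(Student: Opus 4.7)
The plan is to invoke the extended movie-move theorem of Carter--Cooper--Khovanov--Krushkal \cite[Theorem 4.1]{CCKK:2025-immersed}, which generalizes the classical Carter--Saito movie moves to normally immersed cobordisms. By that theorem, any two decompositions of $S$ into elementary pieces, each containing at most one double point, that are related by a smooth isotopy rel boundary can in fact be related by a finite sequence of (i) the classical Carter--Saito movie moves, together with (ii) a finite list of additional local moves that involve the double points. It then suffices to verify, for each move in the list, that the associated chain maps agree up to chain homotopy and sign, \emph{provided} all cycles $z_i^\pm$ are homogeneous.

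First, I would dispose of the classical Carter--Saito moves. If the move takes place in a region disjoint from every double point, then inside that region the cobordism is embedded and the statement reduces to Bar-Natan's original isotopy invariance of embedded cobordism maps \cite{BarNatan:2004}, applied tensorially against the crossing-change maps placed at the double points (these being unaffected by the move). If, on the other hand, a double point sits adjacent to one of the Reidemeister moves in the sequence, then one must slide the crossing-change map past the Reidemeister map; this is precisely the content of \Cref{prop:cc-rm1,prop:cc-rm2,prop:cc-rm3}, each of which gives commutativity up to chain homotopy and a controlled sign, which is all one needs.

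Next, I would handle the additional moves involving double points. These split into two families: (a) local relations that rearrange a double point in the neighbourhood of a Reidemeister or Morse move, and (b) global relations in which two double points exchange order along the $I$-direction. Family (a) is again handled by \Cref{prop:cc-rm1,prop:cc-rm2,prop:cc-rm3}, together with the geometric description of the crossing-change assignment $F$ from \Cref{prop:x-ch-hopf} which shows that every crossing-change map factors through $\CKh(H)$ and therefore behaves functorially with respect to local isotopies away from the Hopf tangle. Family (b) is tautological when the two double points have disjoint support on the diagram: the two compositions then differ at worst by a Koszul sign determined by the homological degrees of the $z_i^\pm$, which is where homogeneity of $\mathbf{z}$ is essential, since only for homogeneous cycles do direction choices on double points contribute purely by a global sign (as recorded after \Cref{rem:sign-ambiguity}).

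The main obstacle I expect is the bookkeeping of signs: verifying that every auxiliary sign that appears from direction conventions at individual crossings, from Koszul reordering of double points, and from the chain homotopies in \Cref{prop:cc-rm3}, can be absorbed into a single overall $\pm 1$ ambiguity. This is exactly why the invariance is stated only up to sign here, with the sign pinned down in the oriented balanced case in the second half of \Cref{thm:isotopy-invariance} via an orientation-driven choice of preferred direction at each double point.
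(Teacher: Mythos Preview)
Your approach is essentially the paper's: invoke \cite[Theorem 4.1]{CCKK:2025-immersed}, handle the classical moves MM1--MM15 by Bar-Natan's theorem, and handle the new moves MM16--MM18 by the crossing-change/Reidemeister compatibility propositions. Two points of confusion are worth cleaning up, however.

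First, you should not invoke \Cref{prop:cc-rm1}. The extended movie moves that involve a double point are MM16 (crossing change past R2), MM17 (crossing change past R3), and MM18 (a trivial identity on the chain level); there is no R1/crossing-change movie move. This matters because \Cref{prop:cc-rm1} only holds for $f_0^+$ and explicitly \emph{fails} for $f_1^\pm$ and $f_0^-$ (see the remark following it), so citing it as a general sliding lemma is incorrect. Relatedly, your organizational split is off: the classical moves MM1--MM15 never have a double point in the active region, so no sliding is needed there at all---the sliding \emph{is} the content of MM16--MM17 and belongs entirely in your ``additional moves'' family (a).

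Second, far-commutativity of crossing-change maps at disjoint crossings gives literally identical compositions, not compositions differing by a Koszul sign: the maps are chain maps defined locally, so they commute on the nose. The role of homogeneity is not in reordering double points but in ensuring that the direction ambiguity at each individual crossing contributes only an overall sign to the corresponding $F(z_i^\pm)$, so that the net ambiguity over the whole movie is a single global $\pm 1$.
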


\begin{proof}
    The \textit{extended movie moves} MM1 - MM18 for immersed surfaces are given in \cite[Theorem 4.1]{CCKK:2025-immersed}. It suffices to prove that for each of the moves, the corresponding pair of cobordisms $S, S'$ give equivalent maps $\phi(S; z)$ and $\phi(S'; z)$, up to chain homotopy and sign. Here, $z$ is chosen arbitrary for the additional moves MM16 - MM18 that involve a single double point. For the ordinary movie moves MM1 - MM15, the results are already proved in \cite[Theorem 4]{BarNatan:2004}. For MM16 and MM17, the results are exactly \Cref{prop:cc-rm2,prop:cc-rm3}. For MM18, it is obvious that the two maps are identical on the chain level. Note that movie moves involving far-commutativity give identical maps, since the maps for the elementary moves are all defined locally.
\end{proof}

\begin{proof}[Proof of \Cref{formula_of_moves}]
    Immediate from the definitions of the immersed cobordism maps, combined with \Cref{prop:isotopy-inv-up-to-sign,prop:twist-move-map,prop:finger-move-map}.
\end{proof}

Now we prove the second part of \Cref{thm:isotopy-invariance}, where we restrict to oriented cobordisms, and focus on the map $\phi^{\bal}_S$. Note that in this case, $\phi^{\bal}_S$ preserves the homological grading. Recall from \Cref{rem:sign-ambiguity} that there is a preferred choice of a direction for each negative-to-positive crossing change. There is no need for a direction for a positive-to-negative crossing change (since we use $f_1$), so $\phi^{\bal}_S$ can be defined solely from $S$ and $\mathbf{z}$. With this definition of $\phi^{\bal}_S$, we have the following. 

\begin{prop}
    For any oriented normally immersed cobordism $S$, the sign of the immersed cobordism map $\phi^\bal(S)$ can be adjusted, so that it is invariant (up to chain homotopy) under smooth isotopies of $S$ rel $\partial$, and is functorial with respect to composition of cobordisms.
\end{prop}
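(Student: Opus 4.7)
The plan is to exploit the fact that, because $S$ is now oriented, every double point carries canonical local orientation data. At each negative-to-positive crossing change (corresponding to a positive double point, where we use $f^+_0$ via $\zeta_0$) and at each positive-to-negative crossing change (corresponding to a negative double point, where we use $f^-_1$ via $\zeta_1$), there is a well-defined \emph{preferred direction} in the sense of \Cref{rem:sign-ambiguity}, determined by the orientation of the resolving saddle implicit in the Hopf pairing. I would designate this as the canonical choice once and for all, so that $\phi^{\bal}_S$ becomes an honest chain map (not just a chain map up to sign) assembled from canonical elementary pieces.

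Next I would upgrade the sign-ambiguous assertion of \Cref{prop:isotopy-inv-up-to-sign} to a strict one under this sign-fixing convention. The embedded movie moves MM1--MM15 already hold on the nose by Bar-Natan's original construction, since those moves involve only embedded pieces and the embedded cobordism map for oriented $S$ is canonically sign-fixed. For the new moves MM16 and MM17, I would inspect the proofs of \Cref{prop:cc-rm2,prop:cc-rm3}: both were shown to commute as stated, but I need to verify that, when the preferred direction is carried consistently across the two sides of the diagram, the sign matches with $+1$. This amounts to checking that the two ways of interpreting the Hopf cycle $\zeta_i$ relative to the two oriented local configurations agree; equivalently, that the geometric description of $F(\zeta_i)$ from \Cref{prop:x-ch-hopf} is orientation-equivariant. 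The R1 interaction \Cref{prop:cc-rm1} involves only $f^+_0$ with its preferred direction, so it is already strictly commutative; MM18 is a far-commutativity identity that holds on the nose since elementary moves act locally.

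For functoriality, given oriented cobordisms $S\colon L\to L'$ and $S'\colon L'\to L''$, I would pick compatible elementary decompositions of $S$ and $S'$ so that their concatenation is an elementary decomposition of $S'\circ S$. Since the preferred direction at each double point depends only on the local orientation data of the double point---which is intrinsic to the immersed surface and unaffected by concatenation---the elementary maps appearing in $\phi^{\bal}_{S'\circ S}$ are precisely those appearing in $\phi^{\bal}_{S'}$ followed by those in $\phi^{\bal}_S$, with matching signs. Strict functoriality at the chain level then follows immediately, and descends to the level of homology.

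The main obstacle will be the sign verification in the second paragraph: because the map $\Phi$ entering $f^\pm_1$ is defined with a sign that depends on the choice of direction at the Hopf crossing (via the black-dot/tube-with-defect pictures in \Cref{subsec:x-ch-map}), I must confirm that the preferred direction assigned to each double point of $S$ is compatible across the two sides of each movie move---in particular across R3, where the crossing involved migrates through a triple point and the ``preferred direction'' a priori depends on a choice of which strand is over. I expect this to reduce, after unwinding the cube-complex hexagon in the proof of \Cref{prop:cc-rm3}, to a routine but somewhat tedious parity check of the Koszul-type signs in Bar-Natan's tangle formalism.
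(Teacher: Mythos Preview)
Your proposal has a genuine gap: you assert that ``the embedded movie moves MM1--MM15 already hold on the nose by Bar-Natan's original construction.'' This is false. Bar-Natan's functoriality theorem in \cite{BarNatan:2004} is explicitly only up to sign, and resolving that sign ambiguity for embedded oriented cobordisms is itself a nontrivial result (established, for instance, in \cite{Sano:2020-b} via the Lee class). So you cannot take the embedded case as given and then only worry about MM16--MM18; the sign-fixing for the embedded part is already the heart of the problem, and it must be done in a way compatible with whatever convention you adopt for the crossing-change pieces.

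The paper's approach avoids case-by-case movie-move verification entirely, and this is worth noting as a genuinely different (and cleaner) route. The idea is to pass to the localized $U(1)$-equivariant theory over $\Z[H^{\pm}]$, where each oriented diagram $D$ carries a canonical nonzero \emph{Lee class} $[\alpha(D)]$ in homological degree $0$. One computes explicitly that under each elementary map---Reidemeister, Morse, and both crossing-change maps $f_1^-$ and $f_0^+$---the Lee class is sent to $\pm H^j$ times the target Lee class. One then \emph{adjusts} the sign of each elementary map so that this coefficient is positive. With this convention the composite $\phi^{\bal}_S$ sends $[\alpha(D)]$ to a positive-coefficient combination involving $[\alpha(D')]$. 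Now invoke \Cref{prop:isotopy-inv-up-to-sign}: if $S'$ is isotopic to $S$ then $\Kh^{\bal}(S) = \pm \Kh^{\bal}(S')$, but both maps produce positive leading coefficients on the Lee class, forcing the sign to be $+1$. The sign adjustment is made uniformly across all $(R,A)$ even when the Lee class does not literally exist in the target theory. This bypasses the delicate parity-tracking through MM1--MM18 that your approach would require.
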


\begin{proof}
    The idea is similar to the proof of \cite[Theorem 1]{Sano:2020-b}, where the second order proved that the sign ambiguity of the embedded and oriented cobordism map on Khovanov homology can be fixed by focusing on the \textit{Lee class}. Here, we only sketch the proof. We assume that the cobordism $S$ is decomposed into elementary pieces, and the intermediate links are oriented coherently with $S$. 
    
    First, let us consider the (localized) $U(1)$-equivariant Khovanov homology, given by the base ring $R = \Z[H^\pm]$ and the Frobenius algebra $A = R[X]/(X^2 - HX)$. The \textit{Lee cycle} $\alpha(D)$ of a link diagram $D$ is a specific cycle in the complex $\CKh_H(D)$, obtained from the oriented resolution $D_o$ of $D$ with the circles labeled by $X$ or $Y = X - H$, according to the orientation and the checkerboard coloring of $D_o$ (see \cite[Definition 2.9]{Sano:2020-b}). Its homology class, the \textit{Lee class} $[\alpha(D)]$, is always nonzero and has homological grading $0$. As stated in \cite[Proposition 3.4]{Sano:2020-b}, its behavior under the Reidemeister moves and the Morse moves can be described explicitly, so we may adjust the signs of the corresponding maps so that the Lee classes of the two diagrams correspond without signs. We claim that the same thing can be done for the crossing change maps.     
    
    For the positive-to-negative crossing change map $f_1^-$, we have 
    \[
        f_1^-(\alpha(D)) = \alpha(D')
    \]
    so there is no need to fix the sign. For the negative-to-positive crossing change map $f_0^+$, we have 
    \[
        f_0^+(\alpha(D)) = \pm H \alpha(D')
    \]
    which follows from 
    \[
        X^2 = HX,\quad XY = 0,\quad Y^2 = -HY.
    \]
    Thus, depending on $D$, we adjust the sign of the crossing change map so that the Lee classes correspond positively. With the signs adjusted for all of the elementary cobordism maps, their composition, redefined as $\phi^\bal(S)$, sends
    \[
        [\alpha(D)] \mapsto H^j [\alpha(D')] + (\cdots)
    \]
    for some exponent $j \in \Z$. 
    Now, if $S'$ is another cobordism isotopic to $S$, and we redefine $\phi^\bal(S')$ by the same procedure, we know from \Cref{prop:isotopy-inv-up-to-sign} that the induced maps correspond as $\Kh^\bal(S) = \pm \Kh^\bal(S')$. However, having adjusted the signs of both maps as above, the images of $[\alpha(D)]$ have positive coefficients for $ [\alpha(D')]$, which forces $\Kh^\bal(S) = \Kh^\bal(S')$. 

    The above sign adjustment can be performed uniformly for any general Frobenius extension $(R, A)$, even if the Lee class cannot be defined in the corresponding homology group, so the claim holds in general.  
\end{proof}

\begin{rem}
    In \cite[Section 6.11]{Ren-Willis:2024}, Ren and Willis define a cobordism map for Khovanov homology $\Kh = \Kh_{0, 0}$ induced from an embedded link cobordism $S$ in the twice-punctured $\C P^2$. Namely, let $X = \C P^2 \setminus (D^4 \sqcup D^4)$ and consider a framed oriented surface $S \subset X$ with boundary links $L$ and $L'$. Then a cobordism map
    \[
        \Kh^{RW}(S) \colon \Kh(L) \to \Kh(L')
    \]
    of bidegree $(0, \chi(S) - S \cdot S + |[S]|)$ is defined in two ways: one by using the $\mathfrak{gl}_2$ Khovanov--Rozansky skein lasagna modules (over a field $\bbF$), and another by a direct construction (over $\Z$). Here, we relate the latter description with our immersed cobordism map. (A similar argument can be found in the proof of \cite[Theorem 6.10]{MMSW:2023}.)

    Suppose $L, L'$ are two links in $S^3$ related by a full left twist along two strands. The transformation from $L$ to $L'$ can be realized as a $(+1)$-surgery along an unknot bounding a disk that intersects the two strands of $L$ in two points. Suppose that the intersection has $p$ positive points and $q$ negative points, where $p + q = 2$. The surgery can be regarded as a link cobordism $S$ in $X = (I \times S^3) \# \C P^2$ between $L$ and $L'$. Let $N$ be of the tubular neighborhood of the core $\C P^1 \subset \C P^2$. The intersection of $S$ and the boundary $\partial N \approx S^3$ is the Hopf link $H_{p, q}$, which is negatively oriented if $(p, q) = (2, 0)$ or $(0, 2)$, and positively if $(p, q) = (1, 1)$. By removing the interior of $N$ and tubing $\partial N$ with the input boundary of $X$ gives an embedded cobordism $S^\circ$ in $I \times S^3$ from $L \sqcup H_{p, q}$ to $L'$, which induces 
    \[
        \Kh(S^\circ)\colon \Kh(L) \otimes \Kh(H_{p, q}) \to \Kh(L'). 
    \]
    Now, choose a generator $z$ of 
    \[
        \Kh^{0, -(p-q)^2 + 2 \max(p, q)}(H_{p, q}) \cong \Z
    \]
    either by
    \[
        \zeta_0 \in \Kh^{0, 2}(H^+) \text{\ \ or \ }
        \zeta_1 \in \Kh^{0, 0}(H^-).
    \]
    Then Ren--Willis' map for $S$ is given by
    \[
        \Kh^{RW}(S) := \Kh(S^\circ)( - \otimes z),
    \]
    which is exactly how we define the immersed cobordism map $\Kh^{\bal}(S)$, when $S$ is realized as a crossing change between two link diagrams. 
\end{rem}
\section{Khovanov homology of the negative \texorpdfstring{$(2, q)$}{(2, q)}-torus knots}
\label{sec:Kh-structure}

Here we study the structure of $\Kh(T_{2, q})$ for the $(2, q)$-torus knot $T_{2, q}$ with odd $q$. The statements will be proved for $q$-twist tangles $T_q$, and for that purpose we consider the category $\Cob^3_{\bullet / l}(B)$ of \textit{dotted cobordisms} with boundary points $B \subset \partial D^2$, introduced by Bar-Natan in \cite{BarNatan:2004}. Here, we consider the local relations corresponding to the $U(2)$-equivariant theory. For any tangle diagram $T$, let $[T]$ denote the \textit{formal Khovanov complex} of $T$, which is an object of $\operatorname{Kob}_{\bullet / l} (\partial T) = \operatorname{Kom}(\operatorname{Mat}(\Cob_{\bullet / l} (\partial T)))$, i.e.\ a complex in the additive closure of the preadditive category $\Cob_{\bullet / l} (\partial T)$. The crossing change maps $f_0, f_1$ defined in \Cref{sec:kh-x-ch} can be extended to chain maps between tangle complexes using their cobordism forms given in \Cref{sec:kh-x-ch}. 

Throughout this section, most technical details of the diagrammatic calculations are omitted. 
The basic methods we use here are \textit{delooping} \cite[Lemma 4.1]{Bar-Natan:2007} and \textit{Gaussian elimination} \cite[Lemma 4.2]{Bar-Natan:2007}. See \cite[Section 2]{Sano:2025} for a comprehensive exposition.

\subsection{Structure of \texorpdfstring{$\Kh(T^*_{2, 2k + 1})$}{Kh(mT(2, 2k + 1))}}

First, we collect some of the results obtained in \cite[Section 4.1]{Sano:2025}. For any $q \in \Z \setminus \{0\}$, let $T_q$ denote the unoriented tangle diagram obtained from a pair of crossingless vertical strands by adding $|q|$ half-twists, twisted positively or negatively depending on the sign of $q$. 
\begin{center}
    \tikzset{every picture/.style={line width=0.75pt}} 

\begin{tikzpicture}[x=0.75pt,y=0.75pt,yscale=-1,xscale=1]

\begin{knot}[
  clip width=3pt,
  end tolerance=1pt,
]

\strand [line width=1.5]    (91.05,10.83) .. controls (90.61,25.83) and (71.06,24.83) .. (71.33,40.16) .. controls (71.61,55.49) and (91.33,56.16) .. (91.33,70.83) .. controls (91.33,85.49) and (71.33,84.83) .. (70.67,100.16) ;
\strand [line width=1.5]    (72.24,10.16) .. controls (72.64,25.16) and (91.59,24.83) .. (91.33,40.16) .. controls (91.08,55.49) and (71.98,55.49) .. (71.98,70.16) .. controls (71.98,84.83) and (90.05,85.49) .. (90.67,100.83) ;


\end{knot}

\draw  [color={rgb, 255:red, 255; green, 255; blue, 255 }  ,draw opacity=1 ][fill={rgb, 255:red, 255; green, 255; blue, 255 }  ,fill opacity=1 ] (67,37.17) -- (95,37.17) -- (95,70.83) -- (67,70.83) -- cycle ;
\draw  [color={rgb, 255:red, 128; green, 128; blue, 128 }  ,draw opacity=1 ] (104,93) .. controls (108.67,93) and (111,90.67) .. (111,86) -- (111,65.73) .. controls (111,59.06) and (113.33,55.73) .. (118,55.73) .. controls (113.33,55.73) and (111,52.4) .. (111,45.73)(111,48.73) -- (111,25.45) .. controls (111,20.78) and (108.67,18.45) .. (104,18.45) ;

\draw (9,44.4) node [anchor=north west][inner sep=0.75pt]    {$T_{q} \ =$};
\draw (72.67,44.57) node [anchor=north west][inner sep=0.75pt]    {$\vdots $};
\draw (125,44.4) node [anchor=north west][inner sep=0.75pt]    {$q$};

\end{tikzpicture}
\end{center}
Let $\mathsf{E}_0$ and $\mathsf{E}_1$ denote the following unoriented $4$-end crossingless tangle diagrams, 
\begin{center}
    \tikzset{every picture/.style={line width=0.75pt}} 

\begin{tikzpicture}[x=0.75pt,y=0.75pt,yscale=-.75,xscale=.75]

\draw  [dash pattern={on 4.5pt off 4.5pt}] (206,34.77) .. controls (206,20.71) and (217.4,9.31) .. (231.46,9.31) .. controls (245.52,9.31) and (256.91,20.71) .. (256.91,34.77) .. controls (256.91,48.83) and (245.52,60.22) .. (231.46,60.22) .. controls (217.4,60.22) and (206,48.83) .. (206,34.77) -- cycle ;
\draw  [draw opacity=0][line width=1.5]  (251.56,20.25) .. controls (246.19,23.83) and (239.17,26) .. (231.48,26) .. controls (223.83,26) and (216.84,23.85) .. (211.48,20.3) -- (231.48,2.73) -- cycle ; \draw  [line width=1.5]  (251.56,20.25) .. controls (246.19,23.83) and (239.17,26) .. (231.48,26) .. controls (223.83,26) and (216.84,23.85) .. (211.48,20.3) ;  
\draw  [draw opacity=0][line width=1.5]  (211.4,49.29) .. controls (216.77,45.71) and (223.79,43.54) .. (231.48,43.54) .. controls (239.13,43.54) and (246.12,45.69) .. (251.48,49.24) -- (231.48,66.81) -- cycle ; \draw  [line width=1.5]  (211.4,49.29) .. controls (216.77,45.71) and (223.79,43.54) .. (231.48,43.54) .. controls (239.13,43.54) and (246.12,45.69) .. (251.48,49.24) ;  

\draw  [dash pattern={on 4.5pt off 4.5pt}] (86.48,9.29) .. controls (100.54,9.29) and (111.94,20.69) .. (111.94,34.75) .. controls (111.94,48.81) and (100.54,60.21) .. (86.48,60.21) .. controls (72.42,60.21) and (61.02,48.81) .. (61.02,34.75) .. controls (61.02,20.69) and (72.42,9.29) .. (86.48,9.29) -- cycle ;
\draw  [draw opacity=0][line width=1.5]  (101,54.85) .. controls (97.42,49.48) and (95.25,42.46) .. (95.25,34.77) .. controls (95.25,27.12) and (97.4,20.13) .. (100.95,14.77) -- (118.52,34.77) -- cycle ; \draw  [line width=1.5]  (101,54.85) .. controls (97.42,49.48) and (95.25,42.46) .. (95.25,34.77) .. controls (95.25,27.12) and (97.4,20.13) .. (100.95,14.77) ;  
\draw  [draw opacity=0][line width=1.5]  (71.96,14.69) .. controls (75.54,20.06) and (77.71,27.08) .. (77.71,34.77) .. controls (77.71,42.42) and (75.56,49.41) .. (72.01,54.77) -- (54.43,34.77) -- cycle ; \draw  [line width=1.5]  (71.96,14.69) .. controls (75.54,20.06) and (77.71,27.08) .. (77.71,34.77) .. controls (77.71,42.42) and (75.56,49.41) .. (72.01,54.77) ;

\draw (7,22.4) node [anchor=north west][inner sep=0.75pt]    {$\mathsf{E}_{0} \ =\ $};
\draw (149,23.4) node [anchor=north west][inner sep=0.75pt]    {$\mathsf{E}_{1} \ =\ $};
\draw (121,30) node [anchor=north west][inner sep=0.75pt]   [align=left] {,};

\end{tikzpicture}
\end{center}
Let $e$ denote the saddle morphism from $\mathsf{E}_0$ to $\mathsf{E}_1$ and also for the other way round.
\[
\begin{tikzcd}
    \mathsf{E}_0 
        \arrow[r, "e", shift left] & 
    \mathsf{E}_1 
        \arrow[l, "e", shift left]
\end{tikzcd}    
\]
Let $\Phi$ denote the endomorphism on $\mathsf{E}_1$ defined in \Cref{subsec:x-ch-map}. We define another endomorphism\footnote{
    Morphisms $\Psi, \Phi$ in this paper are denoted $a, b$ respectively in \cite{Sano:2025}, following the notations of \cite{Thompson:2018}. 
} 
$\Psi$ on $\mathsf{E}_1$ by 
\begin{center}
    \tikzset{every picture/.style={line width=0.75pt}} 

\begin{tikzpicture}[x=0.75pt,y=0.75pt,yscale=-.8,xscale=.8]

\draw  [dash pattern={on 4.5pt off 4.5pt}] (69,34.43) .. controls (69,20.37) and (80.4,8.97) .. (94.46,8.97) .. controls (108.52,8.97) and (119.91,20.37) .. (119.91,34.43) .. controls (119.91,48.49) and (108.52,59.89) .. (94.46,59.89) .. controls (80.4,59.89) and (69,48.49) .. (69,34.43) -- cycle ;
\draw  [draw opacity=0][line width=1.5]  (114.56,19.91) .. controls (109.19,23.49) and (102.17,25.66) .. (94.48,25.66) .. controls (86.83,25.66) and (79.84,23.51) .. (74.48,19.96) -- (94.48,2.39) -- cycle ; \draw  [line width=1.5]  (114.56,19.91) .. controls (109.19,23.49) and (102.17,25.66) .. (94.48,25.66) .. controls (86.83,25.66) and (79.84,23.51) .. (74.48,19.96) ;  
\draw  [draw opacity=0][line width=1.5]  (74.4,48.96) .. controls (79.77,45.37) and (86.79,43.2) .. (94.48,43.2) .. controls (102.13,43.2) and (109.12,45.35) .. (114.48,48.9) -- (94.48,66.48) -- cycle ; \draw  [line width=1.5]  (74.4,48.96) .. controls (79.77,45.37) and (86.79,43.2) .. (94.48,43.2) .. controls (102.13,43.2) and (109.12,45.35) .. (114.48,48.9) ;  

\draw  [fill={rgb, 255:red, 0; green, 0; blue, 0 }  ,fill opacity=1 ] (91,25.43) .. controls (91,23.66) and (92.43,22.23) .. (94.2,22.23) .. controls (95.97,22.23) and (97.41,23.66) .. (97.41,25.43) .. controls (97.41,27.2) and (95.97,28.64) .. (94.2,28.64) .. controls (92.43,28.64) and (91,27.2) .. (91,25.43) -- cycle ;

\draw  [dash pattern={on 4.5pt off 4.5pt}] (170,34.43) .. controls (170,20.37) and (181.4,8.97) .. (195.46,8.97) .. controls (209.52,8.97) and (220.91,20.37) .. (220.91,34.43) .. controls (220.91,48.49) and (209.52,59.89) .. (195.46,59.89) .. controls (181.4,59.89) and (170,48.49) .. (170,34.43) -- cycle ;
\draw  [draw opacity=0][line width=1.5]  (215.56,19.91) .. controls (210.19,23.49) and (203.17,25.66) .. (195.48,25.66) .. controls (187.83,25.66) and (180.84,23.51) .. (175.48,19.96) -- (195.48,2.39) -- cycle ; \draw  [line width=1.5]  (215.56,19.91) .. controls (210.19,23.49) and (203.17,25.66) .. (195.48,25.66) .. controls (187.83,25.66) and (180.84,23.51) .. (175.48,19.96) ;  
\draw  [draw opacity=0][line width=1.5]  (175.4,48.96) .. controls (180.77,45.37) and (187.79,43.2) .. (195.48,43.2) .. controls (203.13,43.2) and (210.12,45.35) .. (215.48,48.9) -- (195.48,66.48) -- cycle ; \draw  [line width=1.5]  (175.4,48.96) .. controls (180.77,45.37) and (187.79,43.2) .. (195.48,43.2) .. controls (203.13,43.2) and (210.12,45.35) .. (215.48,48.9) ;  

\draw  [fill={rgb, 255:red, 0; green, 0; blue, 0 }  ,fill opacity=1 ] (192.25,43.63) .. controls (192.25,41.86) and (193.69,40.43) .. (195.46,40.43) .. controls (197.23,40.43) and (198.66,41.86) .. (198.66,43.63) .. controls (198.66,45.4) and (197.23,46.84) .. (195.46,46.84) .. controls (193.69,46.84) and (192.25,45.4) .. (192.25,43.63) -- cycle ;

\draw  [dash pattern={on 4.5pt off 4.5pt}] (277,34.43) .. controls (277,20.37) and (288.4,8.97) .. (302.46,8.97) .. controls (316.52,8.97) and (327.91,20.37) .. (327.91,34.43) .. controls (327.91,48.49) and (316.52,59.89) .. (302.46,59.89) .. controls (288.4,59.89) and (277,48.49) .. (277,34.43) -- cycle ;
\draw  [draw opacity=0][line width=1.5]  (322.56,19.91) .. controls (317.19,23.49) and (310.17,25.66) .. (302.48,25.66) .. controls (294.83,25.66) and (287.84,23.51) .. (282.48,19.96) -- (302.48,2.39) -- cycle ; \draw  [line width=1.5]  (322.56,19.91) .. controls (317.19,23.49) and (310.17,25.66) .. (302.48,25.66) .. controls (294.83,25.66) and (287.84,23.51) .. (282.48,19.96) ;  
\draw  [draw opacity=0][line width=1.5]  (282.4,48.96) .. controls (287.77,45.37) and (294.79,43.2) .. (302.48,43.2) .. controls (310.13,43.2) and (317.12,45.35) .. (322.48,48.9) -- (302.48,66.48) -- cycle ; \draw  [line width=1.5]  (282.4,48.96) .. controls (287.77,45.37) and (294.79,43.2) .. (302.48,43.2) .. controls (310.13,43.2) and (317.12,45.35) .. (322.48,48.9) ;

\draw (121.91,32.27) node [anchor=west] [inner sep=0.75pt]    {$\ \ \ +\ \ \ $};
\draw (222.91,33.27) node [anchor=west] [inner sep=0.75pt]    {$\ \ \ -\ h$};

\end{tikzpicture}
\end{center}
which can also be described as a cobordism:
\begin{center}
    \tikzset{every picture/.style={line width=0.75pt}} 

\begin{tikzpicture}[x=0.75pt,y=0.75pt,yscale=-.75,xscale=.75]

\draw [color={rgb, 255:red, 0; green, 0; blue, 0 }  ,draw opacity=1 ][line width=1.5]    (149.96,34.56) .. controls (150,33.25) and (154,36.75) .. (159.14,28.74) ;
\draw [color={rgb, 255:red, 0; green, 0; blue, 0 }  ,draw opacity=1 ][line width=1.5]    (141.5,64.56) .. controls (145.5,64.75) and (148.23,63.24) .. (154.32,78.52) ;
\draw  [dash pattern={on 4.5pt off 4.5pt}] (150.5,13.33) .. controls (158.05,17.59) and (162.83,36.78) .. (161.18,56.2) .. controls (159.52,75.62) and (152.06,87.92) .. (144.5,83.66) .. controls (136.95,79.41) and (132.17,60.21) .. (133.82,40.79) .. controls (135.48,21.37) and (142.94,9.07) .. (150.5,13.33) -- cycle ;
\draw [color={rgb, 255:red, 0; green, 0; blue, 0 }  ,draw opacity=1 ][line width=1.5]    (64.96,17.56) .. controls (70.84,35.28) and (75.32,38.82) .. (84.14,28.74) ;
\draw [color={rgb, 255:red, 0; green, 0; blue, 0 }  ,draw opacity=1 ][line width=1.5]    (60.14,67.34) .. controls (68.59,61.65) and (73.23,63.24) .. (79.32,78.52) ;
\draw  [dash pattern={on 4.5pt off 4.5pt}] (75.5,13.33) .. controls (83.05,17.59) and (87.83,36.78) .. (86.18,56.2) .. controls (84.52,75.62) and (77.06,87.92) .. (69.5,83.66) .. controls (61.95,79.41) and (57.17,60.21) .. (58.82,40.79) .. controls (60.48,21.37) and (67.94,9.07) .. (75.5,13.33) -- cycle ;
\draw [line width=0.75]    (159.14,28.74) -- (84.14,28.74) ;
\draw [line width=0.75]    (70.5,67.34) -- (60.14,67.34) ;
\draw [line width=0.75]    (141.5,64.56) -- (69.96,64.56) ;
\draw [line width=0.75]    (154.32,78.52) -- (79.32,78.52) ;
\draw [line width=0.75]    (149.96,34.56) -- (74.96,34.56) ;
\draw [line width=0.75]    (139.96,17.56) -- (64.96,17.56) ;
\draw [line width=0.75]    (106.46,34.75) -- (106.46,64.31) ;
\draw [line width=0.75]    (114.96,34.75) -- (114.96,64.31) ;
\draw [color={rgb, 255:red, 0; green, 0; blue, 0 }  ,draw opacity=1 ][line width=1.5]    (139.96,17.56) .. controls (141.15,21.14) and (142.28,24.14) .. (143.4,26.57) ;


\end{tikzpicture}
\end{center}
Note that both $\Psi$ and $\Phi$ have quantum degree $-2$, and the compositions $\Psi \Phi$, $\Phi \Psi$, $e \Phi$, $\Phi e$ are all $0$. The following lemma is the key to prove \Cref{prop:twist-tangle-retract} and other propositions in \Cref{subsec:relating-t2q}.

\begin{lem}
\label{lem:Tq-lem1}
    Consider the following diagrams $T, T'$ and morphisms $m, \Delta, \Phi, \Psi$:
    \begin{center}
        \tikzset{every picture/.style={line width=0.75pt}} 

\begin{tikzpicture}[x=0.75pt,y=0.75pt,yscale=-.9,xscale=.9]

\draw  [color={rgb, 255:red, 0; green, 0; blue, 0 }  ,draw opacity=1 ][line width=1.5]  (177.91,28.77) .. controls (177.91,23.46) and (182.41,19.16) .. (187.96,19.16) .. controls (193.5,19.16) and (198,23.46) .. (198,28.77) .. controls (198,34.07) and (193.5,38.37) .. (187.96,38.37) .. controls (182.41,38.37) and (177.91,34.07) .. (177.91,28.77) -- cycle ;
\draw  [dash pattern={on 4.5pt off 4.5pt}] (162.5,33.77) .. controls (162.5,19.71) and (173.9,8.31) .. (187.96,8.31) .. controls (202.02,8.31) and (213.41,19.71) .. (213.41,33.77) .. controls (213.41,47.83) and (202.02,59.22) .. (187.96,59.22) .. controls (173.9,59.22) and (162.5,47.83) .. (162.5,33.77) -- cycle ;
\draw  [draw opacity=0][line width=1.5]  (168.89,50.71) .. controls (174.12,48.05) and (180.76,46.45) .. (187.98,46.45) .. controls (194.91,46.45) and (201.3,47.92) .. (206.43,50.39) -- (187.98,65.81) -- cycle ; \draw  [line width=1.5]  (168.89,50.71) .. controls (174.12,48.05) and (180.76,46.45) .. (187.98,46.45) .. controls (194.91,46.45) and (201.3,47.92) .. (206.43,50.39) ;  
\draw  [dash pattern={on 4.5pt off 4.5pt}] (37,33.77) .. controls (37,19.71) and (48.4,8.31) .. (62.46,8.31) .. controls (76.52,8.31) and (87.91,19.71) .. (87.91,33.77) .. controls (87.91,47.83) and (76.52,59.22) .. (62.46,59.22) .. controls (48.4,59.22) and (37,47.83) .. (37,33.77) -- cycle ;
\draw  [draw opacity=0][line width=1.5]  (42.4,48.29) .. controls (47.77,44.71) and (54.79,42.54) .. (62.48,42.54) .. controls (70.13,42.54) and (77.12,44.69) .. (82.48,48.24) -- (62.48,65.81) -- cycle ; \draw  [line width=1.5]  (42.4,48.29) .. controls (47.77,44.71) and (54.79,42.54) .. (62.48,42.54) .. controls (70.13,42.54) and (77.12,44.69) .. (82.48,48.24) ;  
\draw    (100,33) -- (147,33) ;
\draw [shift={(149,33)}, rotate = 180] [color={rgb, 255:red, 0; green, 0; blue, 0 }  ][line width=0.75]    (10.93,-4.9) .. controls (6.95,-2.3) and (3.31,-0.67) .. (0,0) .. controls (3.31,0.67) and (6.95,2.3) .. (10.93,4.9)   ;
\draw    (151,44) -- (100,44) ;
\draw [shift={(98,44)}, rotate = 360] [color={rgb, 255:red, 0; green, 0; blue, 0 }  ][line width=0.75]    (10.93,-4.9) .. controls (6.95,-2.3) and (3.31,-0.67) .. (0,0) .. controls (3.31,0.67) and (6.95,2.3) .. (10.93,4.9)   ;
\draw    (223.77,25.32) .. controls (232.08,17.65) and (256.35,7.43) .. (256.99,33.62) .. controls (257.6,58.63) and (237.61,49.89) .. (225.44,44.01) ;
\draw [shift={(223.77,43.2)}, rotate = 25.94] [color={rgb, 255:red, 0; green, 0; blue, 0 }  ][line width=0.75]    (6.56,-1.97) .. controls (4.17,-0.84) and (1.99,-0.18) .. (0,0) .. controls (1.99,0.18) and (4.17,0.84) .. (6.56,1.97)   ;

\draw (179,69.65) node [anchor=north west][inner sep=0.75pt]    {$T'$};
\draw (53,69.65) node [anchor=north west][inner sep=0.75pt]    {$T$};
\draw (117,11.4) node [anchor=north west][inner sep=0.75pt]    {$\Delta$};
\draw (117,47.4) node [anchor=north west][inner sep=0.75pt]    {$m$};
\draw (263,23.4) node [anchor=north west][inner sep=0.75pt]    {$\Psi,\ \Phi$};

\end{tikzpicture}
    \end{center}
    By delooping the circle appearing in tangle $T'$, the morphisms $m, \Delta$ can be described as
    \[
\begin{tikzcd}[row sep=.3em]
 & & q^2 T & & T \arrow[dd, "\oplus", phantom] \arrow[rrd, "I"] & &     \\
T \arrow[rru, "Y"] \arrow[rrd, "I"'] & & \oplus & & & & T, \\ 
& & T & & q^{-2}T \arrow[rru, "X"'] & &  
\end{tikzcd}        
    \]
    Similarly, the endomorphism $\Psi, \Phi$ can be described as
    \[
\begin{tikzcd}[row sep=2em]
T \arrow[d, "\oplus" description, no head, phantom] \arrow[rrr, "Y"] \arrow[rrrd, "I" description, pos=.8] & & & q^2 T \arrow[d, "\oplus" description, no head, phantom] & T \arrow[d, "\oplus" description, no head, phantom] \arrow[rrrd, "I" description, pos=.8] \arrow[rrr, "-X"] & & & q^2 T \arrow[d, "\oplus" description, no head, phantom] \\
q^{-2} T \arrow[rrr, "X"] \arrow[rrru, "t" description, pos=.75] & & & T & q^{-2}T \arrow[rrr, "-Y"] \arrow[rrru, "t" description, pos=.75] & & & T.
\end{tikzcd}
    \]
    Here, $Y$ denotes the cobordism corresponding to the multiplication of $Y = X - h$. 
\end{lem}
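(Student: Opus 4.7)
The plan is to apply Bar-Natan's delooping lemma to the isolated circle appearing in $T'$, obtaining an explicit isomorphism $T' \cong T \oplus q^{-2}T$ in $\Kob_{\bullet/l}$. This isomorphism is realized by mutually inverse matrices of cobordisms whose entries are undotted and dotted cups/caps on the extra circle (tensored with the identity on $T$). Once this isomorphism is fixed, each of the morphisms $m$, $\Delta$, $\Psi$, $\Phi$ can be expressed as a matrix of cobordisms between appropriately shifted copies of $T$ by pre- and post-composing with the delooping maps; the $q^{\pm 2}$ shifts displayed in the statement arise from matching the quantum degrees of the morphisms.

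For $\Delta : T \to T'$ and $m : T' \to T$, each resulting matrix entry is the composition of the defining saddle with one dotted or undotted cap/cup, producing an identity cobordism on $T$ decorated with at most one dot. The local relations of $\Cob_{\bullet/l}$ — namely the sphere evaluation $\epsilon(1) = 0,\ \epsilon(X) = 1$ together with the neck-cutting relation, which is the cobordism-categorical form of the Frobenius relations $X^2 = hX + t$ and $\Delta(1) = 1 \otimes X + X \otimes 1 - h$ — then simplify each entry to either the identity $I$ or multiplication by $X$ or $Y = X-h$, yielding the claimed matrix descriptions $(Y, I)$ for $\Delta$ and $(I, X)$ for $m$.

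For $\Psi$ and $\Phi$, each of which is a signed sum of identity cobordisms carrying one or two dots (with an additional $h\cdot\mathrm{id}$ term in the case of $\Psi$), pre- and post-composing with the delooping produces a $2\times 2$ matrix whose entries combine the dots from $\Psi$ or $\Phi$ with those from the delooping. Simplification via the same local relations reduces the diagonal entries to $I$, the off-diagonal entries to $\pm X$ or $\pm Y$, and the anti-diagonal entry — which involves the dotted cup on one side composed with the dotted cap on the other — to multiplication by $t$. The $t$ appears because applying $X^2 = hX + t$ inside this composition produces both an $hX$ term that cancels against a contribution from $\Psi$'s $-h\cdot\mathrm{id}$ summand (or, in the $\Phi$ case, is absorbed into an adjacent entry) and a leftover constant $t$.

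The main obstacle is the careful bookkeeping of signs and grading shifts. One must consistently match the sign conventions in the definitions of $\Psi$ and $\Phi$ as signed cobordism sums, the orientation conventions of the dotted cups/caps in the delooping, and the quantum degree shifts on source and target. The computation itself is a direct local calculation in $\Cob_{\bullet/l}$, but verifying that the anti-diagonal entry is exactly $t$ — rather than $h$ (the value of $\epsilon(X^2)$) or $h^2 + t$ — requires carefully tracking which neck-cutting expansions contribute the constant term and which are absorbed by the other signed summands in $\Psi$ or $\Phi$.
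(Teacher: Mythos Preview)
Your approach is correct and is precisely the method the paper has in mind: the paper does not actually supply a proof of this lemma, explicitly stating at the start of \Cref{sec:Kh-structure} that ``most technical details of the diagrammatic calculations are omitted'' and that the basic tools are delooping \cite[Lemma~4.1]{Bar-Natan:2007} and Gaussian elimination, with a pointer to \cite[Section~2]{Sano:2025} for full details. Your outline---deloop the circle in $T'$ via the explicit cup/cap isomorphism, then conjugate $m,\Delta,\Psi,\Phi$ by the delooping maps and simplify each entry using the sphere and neck-cutting relations in $\Cob^3_{\bullet/l}$---is the standard and essentially unique way to verify such a statement, so there is nothing substantive to compare.

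One small remark on the grading convention: your identification ``$T' \cong T \oplus q^{-2}T$'' is off by a global $q$-shift from the usual $q^{1}T \oplus q^{-1}T$, and the lemma records the source and target with different overall shifts (reflecting the quantum degrees of $\Delta$, $m$, $\Psi$, $\Phi$). This does not affect the argument, but when you carry out the computation be sure to fix one delooping convention and track the shifts consistently; the discrepancy between the $\Delta$-picture ($q^2T\oplus T$ as target) and the $m$-picture ($T\oplus q^{-2}T$ as source) in the statement is exactly this degree bookkeeping, not two different isomorphisms.
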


\begin{prop}
\label{prop:twist-tangle-retract}
    For each $q \geq 1$, there is a strong deformation retract from the complex $[T_q]$ to a complex $\mathsf{E}_q$ of length $q + 1$ defined as 
    \[
\begin{tikzcd}
\underline{\mathsf{E}_0} \arrow[r, "e"] & q^{1}\mathsf{E}_1\{1\} \arrow[r, "\Phi"] & q^3\mathsf{E}_1 \arrow[r, "\Psi"] & q^5\mathsf{E}_1 \arrow[r, "\Phi"] & \cdots \arrow[r, ""] & q^{2q - 1}\mathsf{E}_1.
\end{tikzcd}
    \]
    Similarly, there is a strong deformation retract from the complex $[T_{-q}]$ to a complex $\mathsf{E}_{-q}$ of length $q + 1$ defined as 
    \[
\begin{tikzcd}
q^{-2q + 1}\mathsf{E}_1 \arrow[r, ""] & \cdots \arrow[r, "\Phi"] & q^{-5}\mathsf{E}_1 \arrow[r, "\Psi"] & q^{-3}\mathsf{E}_1 \arrow[r, "\Phi"] & q^{-1}\mathsf{E}_1 \arrow[r, "e"] & \underline{\mathsf{E}_0}.
\end{tikzcd}
    \]
    Here, the bigradings are relative with respect to the underlined object $\underline{\mathsf{E}_0}$, and $q^a$ denotes the quantum grading shift by $a$.
\end{prop}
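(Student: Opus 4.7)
The strategy is induction on $|q|$. For the positive case $q \geq 1$, the base $q = 1$ is tautological: the single positive crossing complex $[T_1]$ is, by definition, $\{\underline{\mathsf{E}_0} \xrightarrow{e} \mathsf{E}_1\{1\}\}$, which is exactly $\mathsf{E}_1$ regarded as a complex of length $2$. The negative case $q \leq -1$ is handled by an entirely symmetric argument after interchanging positive and negative crossings and reversing the homological direction, so the remainder of the plan focuses on the inductive step for $q \geq 1$.

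Given $[T_q] \simeq \mathsf{E}_q$, write
$$[T_{q+1}] \;\simeq\; \mathsf{E}_q \cdot [X] \;=\; \mathrm{Cone}\!\bigl(\mathsf{E}_q \cdot \mathsf{E}_0 \xrightarrow{\,\mathrm{id}\,\cdot\, e\,} \mathsf{E}_q \cdot \mathsf{E}_1\{1\}\bigr),$$
where $\cdot$ denotes horizontal composition of tangle complexes and $[X] = \{\underline{\mathsf{E}_0} \xrightarrow{e} \mathsf{E}_1\{1\}\}$ is the one-crossing complex. Since $\mathsf{E}_0$ is the identity tangle, $\mathsf{E}_q \cdot \mathsf{E}_0 = \mathsf{E}_q$. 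The composition $\mathsf{E}_q \cdot \mathsf{E}_1$ replaces each occurrence of $\mathsf{E}_0$ in $\mathsf{E}_q$ by $\mathsf{E}_1$ and each occurrence of $\mathsf{E}_1$ by the looped tangle $\mathsf{E}_1 \sqcup \bigcirc$; the differentials $e, \Phi, \Psi$ in $\mathsf{E}_q$ induce the analogous cobordisms between these new objects.

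Next, apply delooping to every disjoint circle, replacing $\mathsf{E}_1 \sqcup \bigcirc$ by a direct sum of two copies of $\mathsf{E}_1$ with opposite quantum shifts. In this expanded basis, the saddle $e$, the $m, \Delta$ maps arising from the looped $\mathsf{E}_1$, and the decorated morphisms $\Phi, \Psi$ take on the explicit $2 \times 2$ matrix forms listed in \Cref{lem:Tq-lem1}, with entries among $I, X, Y$, and $t$. A sequence of Gaussian eliminations along the identity entries then cancels pairs of isomorphic summands; after all cancellations, the surviving complex has length exactly $q + 2$, begins with the saddle $e\colon \underline{\mathsf{E}_0} \to \mathsf{E}_1\{1\}$, and continues with an alternating pattern of $\Phi$ and $\Psi$ maps. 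Reading off the non-identity entries from \Cref{lem:Tq-lem1} confirms that this is exactly $\mathsf{E}_{q+1}$.

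The main obstacle is the grading and sign bookkeeping. One must verify that the quantum shifts accumulate to match the prescribed values $q^{2i-1}$ at each position, that the alternation between $\Phi$ and $\Psi$ is offset correctly with each inductive step (so that the pattern $e, \Phi, \Psi, \Phi, \Psi, \ldots$ persists at the right positions), and that the signs produced by the $-X, -Y$ entries in \Cref{lem:Tq-lem1} and by Gaussian elimination combine to give the morphisms $\Phi$ and $\Psi$ in the sense of \Cref{subsec:x-ch-map}. These verifications are local and reduce to a small number of explicit $2 \times 2$ matrix computations linking consecutive positions of the complex.
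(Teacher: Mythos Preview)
Your proposal is correct and follows a standard one-crossing-at-a-time induction: write $[T_{q+1}]$ as a cone over $\mathsf{E}_q \cdot e$, deloop the circles that appear in $\mathsf{E}_q \cdot \mathsf{E}_1$, and Gaussian-eliminate the identity entries coming from \Cref{lem:Tq-lem1}. The paper does not prove \Cref{prop:twist-tangle-retract} in full; it cites \cite{Sano:2025} for the general result and only sketches a partial reproof in \Cref{subsec:relating-t2q} for the case of $T_{-q}$ with $q$ odd. That sketch uses a \emph{two-crossings-at-a-time} step: decompose $T_{-q-2} = D(T_{-2}, T_{-q})$, apply $D(r_{-2}, r_{-q})$, and then collapse the resulting grid $D(E_{-2}, E_{-q})$ column by column. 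The motivation for jumping by two is not the proposition itself but the subsequent \Cref{prop:Tq-Tq+2}, where the maps $g_0, g_1$ implementing a full twist must be tracked through the retraction, and the paper wants these to land directly inside the already-reduced complex $E_{-q} = D(\mathsf{E}_1, E_{-q}) \hookrightarrow D(E_{-2}, E_{-q})$. Your step-by-one approach would prove the proposition equally well but would make that later tracking slightly more awkward, since a single crossing change is composed of an R2 move plus a crossing change, which naturally produces two new crossings.
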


\Cref{prop:twist-tangle-retract} will be reproved partially in \Cref{subsec:relating-t2q}. It immediately implies the following corollary, which have been proved in \cite[Section 6.2]{Khovanov:2000}, \cite[Proposition 4.1]{Thompson:2018} and \cite[Proposition 5.1]{Schuetz:2021} under various specializations. 

\begin{cor}
\label{cor:CKh-T_2q}
    Let $U$ denote the map
    \[
        U = 2X - h: A \rightarrow q^2 A.
    \]
    For any $k > 0$, the negative $(2, 2k + 1)$-torus knot $T^*_{2, 2k + 1}$ has
    \[
        \CKh(T^*_{2, 2k + 1}) 
            \ \simeq \ 
        \bigoplus_{i = 1}^k t^{-2i-1}q^{-2k - 4i - 1} \left(A \xrightarrow{U} t q^2 A\right)
            \ \oplus\ 
        q^{-2k + 1}\underline{A}.
    \]
    Here, the underlined $\underline{A}$ indicates the homological grading $0$ part, and $t^a q^b$ denote the homological and quantum grading shift by $(a, b)$.
\end{cor}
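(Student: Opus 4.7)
My plan is to apply \Cref{prop:twist-tangle-retract} to the tangle $T_{-(2k+1)}$, whose braid closure is $T^*_{2, 2k+1}$, and then simplify the resulting closed complex by iterated Gaussian elimination. Under braid closure, $\mathsf{E}_0$ becomes the $2$-component unlink $U_2$ with chain group $A \otimes A$, while $\mathsf{E}_1$ becomes the unknot $U_1$ with chain group $A$. Correspondingly, the rightmost saddle $e$ becomes the comultiplication $\Delta\colon A \to A \otimes A$, and the endomorphisms $\Phi$ and $\Psi$ become specific self-maps of $A$. Interpreting the cobordism pictures via the Frobenius TQFT, and noting that the two ``local circles'' appearing in the endomorphism descriptions become identified into a single circle once the closure is taken, $\Phi$ reduces to the zero map (its ``left minus right'' dots cancel) while $\Psi$ reduces to multiplication by $U = 2X - h$ (its symmetric dots together with the $-h$ correction combine).

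Next I will perform the Gaussian eliminations. Since $\Delta$ is injective and satisfies $\Delta \circ \Phi = 0$ (coming from the tangle-level relation $e \circ \Phi = 0$ recorded after \Cref{lem:Tq-lem1}), I can cancel the chain group $A$ at homological position $-1$ against the rank-$2$ image $\Delta(A) \subset A \otimes A$ at position $0$ without any Schur-complement correction. The surviving summand at position $0$ is $\operatorname{coker}(\Delta) \cong A$, which contributes the $q^{-2k+1}\underline{A}$ factor once the quantum shifts are recorded. Moving leftward, the alternation $\Phi = 0$, $\Psi = U$, $\Phi = 0$, $\ldots$ now decouples the remaining positions into $k$ independent two-term subcomplexes of the form $A \xrightarrow{U} A$, one for each $i = 1, \dots, k$, supported at homological degrees $(-2i - 1, -2i)$.

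As a sanity check, setting $h = t = 0$ and $k=1$ one recovers $\ker U = \Z\cdot X$ at bidegree $(-3,-9)$, $\operatorname{coker}U = \Z \oplus \Z/2$ at bidegrees $(-2,-5)$ and $(-2,-7)$, and $\operatorname{coker}\Delta \cong A$ at bidegrees $(0, -1)$ and $(0, -3)$, matching the Khovanov homology of the left-handed trefoil. The main obstacle will be the careful tracking of quantum and homological shifts through all the successive simplifications so that the surviving summands precisely match the claimed factors $t^{-2i-1}q^{-2k-4i-1}(A \xrightarrow{U} tq^2 A)$ and $q^{-2k+1}\underline{A}$, together with the verification (via the Frobenius TQFT applied to the cobordism descriptions of $\Phi$ and $\Psi$) that $\Phi$ and $\Psi$ indeed play the roles described above after closure.
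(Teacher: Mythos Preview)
Your proposal is correct and follows essentially the same route as the paper: close the tangle complex $E_{-(2k+1)}$ from \Cref{prop:twist-tangle-retract} vertically, identify $e,\Psi,\Phi$ with $\Delta,U,0$ on the closed circles, and then simplify the rightmost piece to a single $A$. The only cosmetic difference is that the paper invokes \Cref{lem:Tq-lem1} (delooping $\bigcirc\bigcirc$) to exhibit an explicit identity component of $\Delta$ before cancelling, whereas you phrase the same step as ``cancel $A$ against $\Delta(A)$''; to make your Gaussian elimination literally applicable you should insert that delooping, after which your ``no Schur complement'' observation follows simply from $\Phi\mapsto 0$ (the tangle relation $e\Phi=0$ is then redundant). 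The grading bookkeeping and the final splitting into $k$ two-term pieces are exactly as in the paper.
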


\begin{proof}
    The negative torus knot $T^*_{2, 2k + 1}$ is obtained by closing the four ends of $T^*_{2k + 1} = T_{-2k - 1}$ vertically. This turns $\mathsf{E}_0$, $\mathsf{E}_1$ into $\bigcirc\bigcirc$, $\bigcirc$, and the morphisms $e, \Psi, \Phi$ into $\Delta, U, 0$ respectively. Thus the sequence of \Cref{prop:twist-tangle-retract} splits into segments of length 2,
    \[
\begin{tikzcd}
q^{-4k - 1}\bigcirc \arrow[r, "U"] & q^{-4k} \bigcirc \arrow[r, dotted, no head] & \cdots \arrow[r, dotted, no head] & q^{-5}\bigcirc \arrow[r, "U"] & q^{-3}\bigcirc \arrow[r, dotted, no head] & q^{-1}\bigcirc \arrow[r, "\Delta"] & \underline{\bigcirc\bigcirc}.
\end{tikzcd}
    \]
    For the rightmost segment, from \Cref{lem:Tq-lem1}, we have 
    \[
        \left(q^{-1} \bigcirc \xrightarrow{\ \Delta\ } \underline{\bigcirc\bigcirc} \right)
            \ \simeq \ 
        \left(0 \xrightarrow{\ 0\ } q^1 \underline{\bigcirc} \right).
    \]
    The absolute bigrading shift for the rightmost object is given by 
    \[
        (0,\ 2n^+ - n^-) = (0,\ -2k - 1)
    \]
    so the rightmost $q^1 \underline{\bigcirc}$ should be $q^{-2k + 1} \underline{A}$ after applying the TQFT. The description for the remaining part is obvious. 
\end{proof}

The map $U$ is represented by the matrix
\[
    \begin{pmatrix}
        -h & 2t \\ 2 & h
    \end{pmatrix}
\]
with respect to the basis $\{1, X\}$ for $A$. Its  determinant is $-h^2 - 4t$, equal to the negative of the discriminant $\mathcal{D}$ of $X^2 - hX - t$. In particular, when $\mathcal{D}$ is a unit in $R$, we see that 
\[
    \CKh(T^*_{2, 2k + 1}) 
        \ \simeq \ 
    q^{-2k + 1}\underline{A}
\]
For other typical cases, the homology groups can be easily computed as follows.
\begin{align*}
    \Kh_{0, 0}(T^*_{2, 2k + 1}; \Z) \ &\cong \ 
    \bigoplus_{i=1}^k (\Z \oplus \Z \oplus \Z/2) \oplus \Z^2,\\
    \Kh_{0, t}(T^*_{2, 2k + 1}; \Q[t]) \ &\cong \ 
    \bigoplus_{i=1}^k (\Q[t]/(t)) 
    \oplus \Q[t]^2,\\
    \Kh_{h, 0}(T^*_{2, 2k + 1}; \bbF_2[h]) \ &\cong \ 
    \bigoplus_{i=1}^k (\bbF_2[h]/(h))^2 
    \oplus \bbF_2[h]^2.
\end{align*}
Here, $\Kh_{h, t}(-; R)$ denotes the Khovanov homology obtained from the Frobenius algebra $A = R[X]/(X^2 - hX - t)$ over $R$. The first one is the original Khovanov homology \cite{Khovanov:2000}, the second is the (bigraded) Lee homology over $\Q$ \cite{Lee:2005}, and the third is the (bigraded) Bar-Natan homology over $\bbF_2$ \cite{BarNatan:2004}. 

\subsection{Relating \texorpdfstring{$\Kh(T^*_{2, q})$}{Kh(mT(2, q))} and \texorpdfstring{$\Kh(T^*_{2, q+2})$}{Kh(mT(2, q+2))}}
\label{subsec:relating-t2q}

Let $q \geq 1$ be an positive odd integer, and let $E_{-q}$ denote the complex 
\[
\begin{tikzcd}
E_{-q} \arrow[r, "=", phantom] & \{\ q^{-2q + 1}\mathsf{E}_1 \arrow[r, ""] & \cdots \arrow[r, "\Phi"] & q^{-5}\mathsf{E}_1 \arrow[r, "\Psi"] & q^{-3}\mathsf{E}_1 \arrow[r, "\Phi"] & q^{-1}\mathsf{E}_1 \arrow[r, "e"] & \underline{\mathsf{E}_0} \ \}
\end{tikzcd}
\]
obtained as a strong deformation retract of $[T_{-q}]$ in \Cref{prop:twist-tangle-retract}. For each odd $q \geq 1$, we inductively construct a strong deformation retraction 
\[
    r_{-q}\colon 
    [T_{-q}] \xrightarrow{\ \simeq \ } E_{-q}.
\]
A similar argument can be found in \cite[Section 4]{Sano:2025}, so here we only sketch the construction. 

First, take $r_{-1} = \id$. Next, $r_{-2}\colon [T_{-2}] \to E_{-2}$ is defined as follows. The tangle $T_{-2}$ can be written as $D(T_{-1}, T_{-1})$, where $D$ is a 2-input planar arc diagram that vertically connects the two tangles. Then the complex $[T_{-2}]$ can be described as a square
\[
\begin{tikzcd}
\mathsf{E}_{11} \arrow[r, "m_2"] \arrow[d, "m_1"] & 
\mathsf{E}_1 \arrow[d, "e"] \\
\mathsf{E}_1 \arrow[r, "-e"] & \underline{\mathsf{E}_0}.
\end{tikzcd}    
\]
Here, diagrams $D(\mathsf{E}_0, \mathsf{E}_i)$ and $D(\mathsf{E}_i, \mathsf{E}_0)$ are identified with $\mathsf{E}_i$ ($i = 0, 1$). The diagram $D(E_1, E_1)$ is denoted $\mathsf{E}_{11}$, which has the form of a circle inserted between the two arcs of $\mathsf{E}_1$. The underlined diagram $\underline{\mathsf{E}_0}$ indicates the one with highest homological grading. Subscripts on the labels indicate which of the two input holes of $D$ are being used, i.e.\ $m_1 = D(m, I)$ and $m_2 = D(I, m)$. Delooping isomorphism gives $\mathsf{E}_{11} \cong \mathsf{E}_1 \oplus \mathsf{E}_1$, and with \Cref{lem:Tq-lem1}, one can see that the square can be collapsed as 
\[
\begin{tikzcd}
\mathsf{E}_1 \arrow[d, "\Phi"] \\
\mathsf{E}_1 \arrow[r, "e"] & \underline{\mathsf{E}_0}.
\end{tikzcd}    
\]
This gives the retraction $r_{-2}$. 

For odd $q > 1$, suppose we have obtained the retraction $r_{-q}$. The tangle $T_{-q-2}$ can be decomposed as $D(T_{-2}, T_{-q})$, and the complex $[T_{-q-2}] = D([T_{-2}], [T_{-q}])$ retracts to $D(E_{-2}, E_{-q})$ by the retraction $D(r_{-2}, r_{-q})$. The complex $D(E_{-2}, E_{-q})$ is described as
\[
\begin{tikzcd}
\mathsf{E}_{11} \arrow[d, "\Phi_1"] \arrow[r, "\Psi_2"] & \mathsf{E}_{11} \arrow[d, "\Phi_1"] \arrow[r, "\Phi_2"] & \cdots \arrow[r, "\Phi_2"] & \mathsf{E}_{11} \arrow[d, "\Phi_1"] \arrow[r, "e_2"] & \mathsf{E}_1 \arrow[d, "\Phi_1"] \\
\mathsf{E}_{11} \arrow[r, "-\Psi_2"] \arrow[d, "m_1"] & \mathsf{E}_{11} \arrow[r, "-\Phi_2"] \arrow[d, "m_1"] & \cdots \arrow[r, "-\Phi_2"] & \mathsf{E}_{11} \arrow[r, "-e_2"] \arrow[d, "m_1"] & \mathsf{E}_1 \arrow[d, "e_1"] \\
\mathsf{E}_1 \arrow[r, "\Psi"] & \mathsf{E}_1 \arrow[r, "\Phi"] & \cdots \arrow[r, "\Phi"] & \mathsf{E}_1 \arrow[r, "e"] & \underline{\mathsf{E}_0}.
\end{tikzcd}
    \]
    Again, using \Cref{lem:Tq-lem1}, we may collapse the squares from the upper right, resulting in a sequence
    \[
\begin{tikzcd}
\mathsf{E}_1 \arrow[d, "\Psi"] \\
\mathsf{E}_1 \arrow[d, "\Phi"] \\
\mathsf{E}_1 \arrow[r, "\Psi"] & \mathsf{E}_1 \arrow[r, "\Phi"] & \cdots \arrow[r, "\Phi"] & \mathsf{E}_1 \arrow[r, "e"] & \underline{\mathsf{E}_0},
\end{tikzcd}
\]
giving the complex $E_{-q - 2}$. The retraction $r_{-q-2}$ is defined by the composition
\[
\begin{tikzcd}
{[T_{-q-2}] = D([T_{-2}], [T_{-q}])} 
    \arrow[r, "\simeq"] 
& {D(E_{-2}, E_{-q})}
    \arrow[r, "\simeq"] 
& {E_{-q-2}}.
\end{tikzcd}
\]

Now, as we have seen in \Cref{prop:cc-rm2}, the left-hand full twist that transforms $T_{-q}$ to $T_{-q-2}$ can be realized by the following sequence of moves
\[
    \begin{tikzcd}
    {T_{-q}} \arrow[r, "R2"] & {T'_{-q}} \arrow[r, "\text{c.c.}"] & {T_{-q-2}}.
    \end{tikzcd}
\]
On the corresponding chain complex, consider the two crossing change chain maps
\[
    \begin{tikzcd}
    {[T_{-q}]} \arrow[r, "\rho"] & {[T'_{-q}]} \arrow[r, "f_0", dashed, shift left] \arrow[r, "f_1"', shift right] & {[T_{-q-2}]}.
    \end{tikzcd}
\]
Let $g_i := f_i \circ \rho$ for $i = 0, 1$. We define maps $g'_0, g'_1$ between the simplified complexes that makes the following diagram commute.
\[
    \begin{tikzcd}[row sep=4em, column sep=5em]
{[T_{-q}]} 
    \arrow[r, "r_{-q}"] 
    \arrow[d, "g_0"', dashed, shift right] 
    \arrow[d, "g_1", shift left] 
& E_{-q}
    \arrow[d, "g'_0"', dashed, shift right] 
    \arrow[d, "g'_1", shift left] \\
{[T_{-q - 2}]} 
    \arrow[r, "r_{-q-2}"]
& {E_{-q - 2}}.
    \end{tikzcd}
\]
\begin{lem}
\label{lem:g01-base}
    When $q = 0$, the maps $g'_1, g'_0$ are given as follows: 
    \[
\begin{tikzcd}[row sep=3em]
{[T_0]} \arrow[d, "g_0"', dashed, shift right] \arrow[d, "g_1", shift left] & = & \{\ 0 \arrow[r] & 0 \arrow[r] & \underline{\mathsf{E}_1} \arrow[lld, "e" description, dashed] \arrow[d, "I" description, shift right = 2]\ \} \\
{[T_{-2}]} & \simeq & \{\ \mathsf{E}_0 \arrow[r] & \mathsf{E}_0 \arrow[r] & \underline{\mathsf{E}_1}\ \}.
\end{tikzcd}
    \]
    The dashed diagonal arrow gives $g'_0$ and the solid vertical arrow gives $g'_1$. 
\end{lem}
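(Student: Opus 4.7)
The plan is to prove this base case by explicit diagrammatic computation. I will write down each of the three ingredients $\rho$, $f_i$, and $r_{-2}$ in explicit cobordism form, compose them, and read off the result.

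First, I would spell out the R2 deformation retract $\rho\colon [T_0] \to [T'_0]$ following \cite{BarNatan:2004}. Since $T_0$ has no crossings, $[T_0]$ is concentrated in a single homological degree, and $\rho$ embeds it into one vertex of the $2\times 2$ cube $[T'_0]$, accompanied by delooping of the small circle that the R2 move produces. Next, I would apply each of the two crossing change maps $f_0, f_1\colon [T'_0] \to [T_{-2}]$ at the crossing of $T'_0$ whose flip yields $T_{-2}$, using the local cobordism descriptions (identity, saddle and defect tube $\Phi$) given in \Cref{subsec:x-ch-map}. Composing then gives an explicit chain-level formula for each $g_i = f_i \circ \rho$.

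Finally, I would push $g_0$ and $g_1$ through the retraction $r_{-2}\colon [T_{-2}] \xrightarrow{\simeq} E_{-2}$ constructed just before the lemma. That retraction deloops the central circle appearing in the $(1,1)$-vertex of the square $[T_{-2}]$ and collapses the resulting enlarged square by Gaussian elimination, in exactly the pattern recorded by \Cref{lem:Tq-lem1}. Under this simplification all but one component of each $g_i$ is killed: for $g_1$ the identity onto the top summand $\underline{\mathsf{E}_1}$ survives, and for $g_0$ the diagonal saddle $e$ landing in the middle summand $\mathsf{E}_0$ survives, exactly as stated.

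The main obstacle is bookkeeping: one has to check that all signs (in particular those governed by the preferred direction of \Cref{rem:sign-ambiguity}) and all quantum shifts are compatible through the delooping and Gaussian elimination steps. This verification is finite and local, since $[T_0]$ sits in a single homological degree and hence only one column of the retracted target can receive a nonzero contribution, which should keep the calculation tractable.
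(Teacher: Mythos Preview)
Your proposal is correct and takes essentially the same approach as the paper, whose proof reads simply ``Straightforward from the explicit definitions.'' You are just spelling out what that one-line proof entails: write down $\rho$, $f_0$, $f_1$, and $r_{-2}$ explicitly and compose. One small imprecision: the standard R2 chain map from \cite{BarNatan:2004} does not itself involve delooping---the circle is handled later when you apply $r_{-2}$---but this does not affect the validity of your plan.
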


\begin{proof}
    Straightforward from the explicit definitions. 
\end{proof}

\begin{prop}
\label{prop:Tq-Tq+2}
    For odd $q \geq 1$, the maps $g'_1, g'_0$ are given as follows: 
    \[
\begin{tikzcd}[row sep=3em]
{[T_{-q}]} \arrow[d, "g_0"', dashed, shift right] \arrow[d, "g_1", shift left] & \simeq\ & \{\ 0 \arrow[r] & 0 \arrow[r] & \mathsf{E}_0 \arrow[r] \arrow[d] \arrow[lld, dashed] & \cdots \arrow[r] \arrow[lld, dashed] & \mathsf{E}_0 \arrow[r] \arrow[d] & \mathsf{E}_0 \arrow[r, "e"] \arrow[d] \arrow[lld, dashed] & \underline{\mathsf{E}_1} \ \} \arrow[d] \arrow[lld, "e" description, dashed, pos=0.25] \\
{[T_{-q-2}]} & \simeq & \{\ \mathsf{E}_0 \arrow[r] & \mathsf{E}_0 \arrow[r] & \mathsf{E}_0 \arrow[r] & \cdots \arrow[r] & \mathsf{E}_0 \arrow[r] & \mathsf{E}_0 \arrow[r, "e"] & \underline{\mathsf{E}_1} \ \}.
\end{tikzcd}
    \]
    The dashed vertical arrows give $g'_0$ and the solid vertical arrows give $g'_1$. All of the vertical and diagonal arrows are identity morphisms, except for the rightmost diagonal arrow $e: \mathsf{E}_1 \to \mathsf{E}_0$. 
\end{prop}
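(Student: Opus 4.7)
The key observation is that the transition from $T_{-q}$ to $T_{-q-2}$ is \emph{local}: both the R2 move $\rho$ and the crossing change $f_i$ take place inside a small four-ended disk, and what happens inside that disk is exactly the base case $T_0 \leadsto T_{-2}$ of \cref{lem:g01-base}. Writing $T_0 = \mathsf{E}_1$ (the identity tangle) and using the planar vertical composition $D$, we identify $T_{-q} = D(T_0, T_{-q})$ and $T_{-q-2} = D(T_{-2}, T_{-q})$. Since the Khovanov bracket is a planar-algebra morphism and $\rho,\, f_i$ are supported in the four-ended disk, we obtain the factorization
\[
g_i \;=\; D\bigl(g^{(0)}_i,\; \mathrm{id}_{[T_{-q}]}\bigr),
\]
where $g^{(0)}_i \colon [T_0] = \mathsf{E}_1 \to [T_{-2}]$ is the base-case map from \cref{lem:g01-base}. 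This reduces the proposition to transporting $g^{(0)}_i$ through the retracts on both sides.

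Recall that $r_{-q-2}$ is constructed as the composition
\[
[T_{-q-2}] \;=\; D([T_{-2}],\, [T_{-q}]) \xrightarrow{\;D(r_{-2},\, r_{-q})\;} D(E_{-2},\, E_{-q}) \xrightarrow{\;\simeq\;} E_{-q-2},
\]
where the second arrow collapses the resulting $3 \times (q+1)$ grid via a cascade of Gaussian eliminations guided by \cref{lem:Tq-lem1}. By functoriality of $D$, the composite $D(r_{-2}, r_{-q}) \circ D(g^{(0)}_i, \mathrm{id})$ equals $D(\bar{g}^{(0)}_i,\, r_{-q})$, where $\bar{g}^{(0)}_i := r_{-2}\circ g^{(0)}_i \colon \mathsf{E}_1 \to E_{-2}$. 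By \cref{lem:g01-base}, $\bar{g}^{(0)}_1$ is the identity inclusion of $\mathsf{E}_1$ into the rightmost ($\underline{\mathsf{E}_1}$) summand of $E_{-2}$, while $\bar{g}^{(0)}_0$ is the saddle $e$ into the adjacent $\mathsf{E}_0$ summand. After precomposing with $r_{-q}$ on the domain side, we therefore obtain a map $E_{-q} \to D(E_{-2},\, E_{-q})$ of the form $D(\bar{g}^{(0)}_i,\, \mathrm{id}_{E_{-q}})$, which embeds $E_{-q}$ as a single horizontal row of the $3 \times (q+1)$ grid.

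The final step is to track this row-inclusion through the collapse $D(E_{-2}, E_{-q}) \simeq E_{-q-2}$. The Gaussian eliminations in the construction of $r_{-q-2}$ cancel off-row objects, so their net effect on the embedded row is, up to bookkeeping, the identity. Consequently the inclusion for $\bar{g}^{(0)}_1$ descends to the vertical identity arrows of $g'_1$, while the saddle inclusion for $\bar{g}^{(0)}_0$ descends to the dashed \texttt{lld}-arrows (shifting homological degree by $-2$), acting as identity on the generic $\mathsf{E}_0$-summands and as the saddle $e\colon \underline{\mathsf{E}_1} \to \mathsf{E}_0$ on the rightmost column, matching the picture in \cref{prop:Tq-Tq+2}.

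The primary technical obstacle is precisely this last claim: verifying that the cascade of Gaussian eliminations produces no extraneous correction terms on the embedded row. This reduces to a square-by-square local check, which follows from the explicit delooping formulas for $m, \Delta, \Psi, \Phi$ in \cref{lem:Tq-lem1} together with the observation that the embedded row lies in a subcomplex on which the one-sided homotopy contributions coming from each elimination vanish. Once this bookkeeping is completed, the explicit descriptions of $g'_0$ and $g'_1$ asserted in \cref{prop:Tq-Tq+2} are immediate.
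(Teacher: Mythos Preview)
Your overall strategy---localize via the planar operation $D$, reduce to the base case of \cref{lem:g01-base}, then track through the collapse $D(E_{-2},E_{-q})\to E_{-q-2}$---is exactly the paper's approach, and for $g_1$ your argument is essentially complete and matches the paper verbatim: the image $D(\bar g^{(0)}_1,\id)$ lands in the bottom row $D(\mathsf{E}_0,E_{-q})=E_{-q}$, which survives the collapse untouched.

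The gap is in your treatment of $g_0$. Your justification---``the embedded row lies in a subcomplex on which the one-sided homotopy contributions coming from each elimination vanish''---is not correct here. The map $D(\bar g^{(0)}_0,\id)$ lands in the \emph{top} row of the $3\times(q{+}1)$ grid (via $\Delta_1$ on generic entries and $e_1$ on the rightmost), and this row is precisely what gets eliminated by the cascade of deloopings and cancellations. The homotopy (zigzag) contributions through the cancelled isomorphisms are therefore not zero; they are the entire mechanism by which the incoming $\Delta_1$ arrows get converted into the identity arrows landing in the surviving bottom row. The paper does this computation explicitly in two passes: after delooping the top-row $\mathsf{E}_{11}$'s and cancelling (using the delooped description of $\Delta,\Psi,\Phi$ from \cref{lem:Tq-lem1}), each $\Delta_1$ becomes an identity arrow into the middle row shifted one step left; after the middle-row reduction, each such arrow becomes an identity into the bottom row shifted one further step. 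A separate check handles the leftmost three columns, where the two extra surviving objects of $E_{-q-2}$ sit. This column-by-column bookkeeping is the actual content of the $g_0$ case, and your sentence does not substitute for it.
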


\begin{proof}
        First, the result for $g_1$ is obvious from the following commutative diagram
    \[
\begin{tikzcd}[row sep=3em, column sep=5em]
{[T_{-q}] = D([T_{-}], [T_{-q}])} \arrow[d, "\simeq"] \arrow[r, "{D(g_1, I)}"] & {D([T_{-2}], [T_{-q}])} \arrow[r, "\simeq"] & {D(E_{-2}, E_{-q})} \arrow[d, "\simeq"] \\
E_{-q} \arrow[r, equal] & {D(\mathsf{E}_1, E_{-q})} \arrow[r, hook] \arrow[ru, hook] & {E_{-q - 2}}. 
\end{tikzcd}
    \]
    The commutativity of the left trapezoid follows from \Cref{lem:g01-base} and the locality of $D$, and the right triangle from the construction of the vertical retraction showing that $E_{-q}$ is unchanged. Next, for $g_0$, we observe how the degree $-2$ arrows from $S_q$ to $D(S_2, S_{q+2})$
    \[
\begin{tikzcd}[row sep=1.5em]
{D(T_0, T_{-q})} \arrow[r, "=", phantom] \arrow[dd, "{D(g_0, I)}", dashed] & \mathsf{E}_1 \arrow[r] \arrow[dd, "\Delta_1", dashed] & \mathsf{E}_1 \arrow[r] \arrow[dd, "\Delta_1", dashed] & \cdots \arrow[r] & \mathsf{E}_1 \arrow[r] \arrow[dd, "\Delta_1", dashed] & \underline{\mathsf{E}_0} \arrow[dd, "e_1", dashed] \\ 
& & & & & \\
{D(T_{-2}, T_{-q})} \arrow[r, "=", phantom] & \mathsf{E}_{11} \arrow[d] \arrow[r] & \mathsf{E}_{11} \arrow[d] \arrow[r] & \cdots \arrow[r] & \mathsf{E}_{11} \arrow[d] \arrow[r] & \mathsf{E}_1 \arrow[d] \\ & \mathsf{E}_{11} \arrow[r] \arrow[d] & \mathsf{E}_{11} \arrow[r] \arrow[d] & \cdots \arrow[r] & \mathsf{E}_{11} \arrow[r] \arrow[d] & \mathsf{E}_1 \arrow[d] \\ & \mathsf{E}_1 \arrow[r] & \mathsf{E}_1 \arrow[r] & \cdots \arrow[r] & \mathsf{E}_1 \arrow[r] & \underline{\mathsf{E}_0}
\end{tikzcd}
    \]
    are modified by the reduction. Note that each of the vertical arrows $\Delta_1$ splits a center circle from the upper arc of $\mathsf{E}_1$. Consider the following parts in the diagram:
    \[
\begin{tikzcd}
 & & \mathsf{E}_1 \arrow[d, "\Delta_1", dashed] & & \mathsf{E}_1 \arrow[d, "\Delta_1", dashed] & & \underline{\mathsf{E}_0} \arrow[d, "e_1", dashed] \\
 & \mathsf{E}_{11} \arrow[r, "\Psi_2"] \arrow[d, "\Phi_1"] & \mathsf{E}_{11} & \mathsf{E}_{11} \arrow[d, "\Phi_1"] \arrow[r, "\Phi_2"] & \mathsf{E}_{11} & \mathsf{E}_{11} \arrow[r, "e_2"] \arrow[d, "\Phi_1"] & \mathsf{E}_1 \\
\mathsf{E}_{11} \arrow[r, "\Phi_2"] \arrow[d, "m_1"] & \mathsf{E}_{11} & \mathsf{E}_{11} \arrow[d, "m_1"] \arrow[r, "\Psi_2"] & \mathsf{E}_{11} & \mathsf{E}_{11} \arrow[r, "\Phi_2"] \arrow[d, "m_1"] & \mathsf{E}_{11} & \\
\mathsf{E}_1 & & \mathsf{E}_1 & & \mathsf{E}_1 & & \end{tikzcd}
    \]
    With \Cref{lem:Tq-lem1}, the reduction in the top row of $D(E_{-2}, E_{-q})$ transforms these parts into
    \[
\begin{tikzcd}
 & & \mathsf{E}_1 \arrow[ldd, "I" description, dashed, bend left] & & \mathsf{E}_1 \arrow[ldd, "I" description, dashed, bend left] & & \mathsf{E}_0 \arrow[ldd, "e" description, dashed, bend left] \\
 & & & & & & \\
\mathsf{E}_1 \arrow[r, "I"] \arrow[d, "I"] & \mathsf{E}_1 & \mathsf{E}_1 \arrow[d, "I"] \arrow[r, "I"] & \mathsf{E}_1 & \mathsf{E}_1 \arrow[r, "I"] \arrow[d, "I"] & \mathsf{E}_1 & \\
\mathsf{E}_1 & & \mathsf{E}_1 & & \mathsf{E}_1 & &                                        
\end{tikzcd}    
    \]
    and by the reduction in the middle row 
    \[
\begin{tikzcd}
 & & \mathsf{E}_1 \arrow[llddd, "I" description, dashed, bend left] & & \mathsf{E}_1 \arrow[llddd, "I" description, dashed, bend left] & & \mathsf{E}_0 \arrow[llddd, "e" description, dashed, bend left] \\
 & & & & & & \\
 & & & & & & \\
\mathsf{E}_1 & & \mathsf{E}_1 & & \mathsf{E}_1 & &.                                          
\end{tikzcd}
    \]
    At the left end, the part
    \[
\begin{tikzcd}
\mathsf{E}_1 \arrow[d, "\Delta_1", dashed] \arrow[r] & \mathsf{E}_1 \arrow[d, "\Delta_1", dashed] \arrow[r] & \mathsf{E}_1 \arrow[d, "\Delta_1", dashed] \\
\mathsf{E}_{11} \arrow[r] \arrow[d, "\Phi_1"] & \mathsf{E}_{11} \arrow[r] \arrow[d, "\Phi_1"] & \mathsf{E}_{11} \\
\mathsf{E}_{11} \arrow[r] \arrow[d, "m_1"] & \mathsf{E}_{11} & \\
\mathsf{E}_1 & &                           
\end{tikzcd}
    \]
    transforms into
    \[
\begin{tikzcd}[column sep=1em]
\mathsf{E}_1 \arrow[d, "I", dashed] \arrow[rr] & & \mathsf{E}_1 \arrow[rr] \arrow[lldd, "I" description, dashed, bend left] & & \mathsf{E}_1 \arrow[lldd, "I" description, dashed, bend left] \\
\mathsf{E}_1 \arrow[d, "\Psi"] & & & &                                            \\
\mathsf{E}_1 \arrow[d, "\Phi"] \arrow[r, "\oplus", phantom] & \mathsf{E}_1 \arrow[r, "I"] \arrow[ld, "I"] & \mathsf{E}_1 & &                                            \\
\mathsf{E}_1 & & & &
\end{tikzcd}
    \]
    and then into 
    \[
\begin{tikzcd}
\mathsf{E}_1 \arrow[d, "I" description, dashed] \arrow[r] & \mathsf{E}_1 \arrow[r] \arrow[ldd, "I" description, dashed, bend left] & \mathsf{E}_1 \arrow[llddd, "I" description, dashed, bend left] \\
\mathsf{E}_1 \arrow[d, "\Psi"] & & \\
\mathsf{E}_1 \arrow[d, "\Phi"] & & \\
\mathsf{E}_1 & & .                                          
\end{tikzcd}    
    \]
\end{proof}

For each $k \geq 0$, the maps
\[
    g_0, g_1: T_{-2k-1} \rightarrow T_{-2k-3}
\]
give rise to the maps
\[
    \bar{g}_0, \bar{g}_1: \CKh(T^*_{2, 2k+1}) \rightarrow \CKh(T^*_{2, 2k+3})
\]
by taking their closures. 

\begin{cor}
\label{cor:map-g-on-ckh}
    The two maps $\bar{g}_0, \bar{g}_1$ are described as follows:
    \[
\begin{tikzcd}[row sep=1em, column sep=1.5em]
{\CKh(T^*_{2, q})} \arrow[dd, "\bar{g}_0"', dashed, shift right] \arrow[dd, "\bar{g}_1", shift left] \arrow[r, "\simeq", phantom] & \{ & & A \arrow[r, "U"] \arrow[dd] \arrow[lldd, dashed] & A \arrow[lldd, dashed] \arrow[r, "\cdots", phantom] \arrow[dd] & A \arrow[r, "U"] \arrow[dd] & A \arrow[dd] \arrow[dd] \arrow[ld, no head, dashed] & A \arrow[r, "U"] \arrow[lldd, dashed] \arrow[dd] & A \arrow[lldd, dashed] \arrow[dd] & 0 \arrow[r, dotted] \arrow[dd, dotted] & {\underline{A} \, \}} \arrow[dd] \arrow[lldd, dashed] \\
 & & & & {} \arrow[ld, dashed] & {} & & & & & \\
\CKh(T^*_{q+2}) \arrow[r, "\simeq", phantom] & \{ A \arrow[r, "U"'] & A & A \arrow[r, "U"'] & A \arrow[r, "\cdots", phantom] & A \arrow[r, "U"'] & A & A \arrow[r, "U"'] & A & 0 \arrow[r, dotted] & {\underline{A} \, \}}
\end{tikzcd}
    \]
    Here, the dashed diagonal arrows indicate $\bar{g}_0$ and the solid vertical arrows indicate $\bar{g}_1$. On the right side, all of the vertical and diagonal arrows are identity maps on $A$. 
\end{cor}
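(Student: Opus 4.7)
The plan is to deduce Corollary~\ref{cor:map-g-on-ckh} directly from Proposition~\ref{prop:Tq-Tq+2} by closing the four endpoints of the twist tangles and then running the same Gaussian-elimination reductions that were used in the proof of Corollary~\ref{cor:CKh-T_2q}. I would carry this out in two steps.

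First, I would close the tangle complexes appearing in Proposition~\ref{prop:Tq-Tq+2}. Under the closure one has $\mathsf{E}_0 \mapsto \bigcirc\bigcirc$ and $\mathsf{E}_1 \mapsto \bigcirc$, so the TQFT identifies these with $A \otimes A$ and $A$. The morphisms $\Psi$, $\Phi$, $e$, and the identities $I$ then close to $U = 2X - h$, $0$, $\Delta$ (or $m$), and $\id$, respectively. Consequently the closures of the reduced tangle complexes $E_{-q}$ and $E_{-q-2}$ become sequences of $A$'s terminating in $\underline{A \otimes A}$, with alternating differentials $U$ and $0$ and rightmost differential $\Delta$. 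The closures of the maps $g'_0, g'_1$ from Proposition~\ref{prop:Tq-Tq+2} become chain maps whose components are all identities, except for the rightmost diagonal of $\bar g_0$, which closes to the multiplication $m : A \otimes A \to A$.

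Second, I would apply the Gaussian eliminations from the proof of Corollary~\ref{cor:CKh-T_2q}: the vanishing of each $\Phi$ splits the chain into length-$2$ segments $A \xrightarrow{U} A$, and the rightmost segment $(A \xrightarrow{\Delta} \underline{A \otimes A})$ reduces to $(0 \to \underline{A})$ after delooping one of the two circles of $\bigcirc\bigcirc$. All identity morphisms in the closures of $g'_0, g'_1$ commute strictly with these reductions and therefore descend to identities $\id : A \to A$ between the corresponding surviving $A$'s; in particular the vertical $\id : A \otimes A \to A \otimes A$ descends to $\id : \underline{A} \to \underline{A}$.

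The main (though still routine) point to verify is that the rightmost diagonal of $\bar g_0$, i.e.\ the closure $m : A \otimes A \to A$ of the saddle $e$, also descends to the identity on the surviving $A$. I expect this to reduce to a direct calculation with the delooping isomorphism $A \otimes A \cong qA \oplus q^{-1}A$: the Gaussian elimination kills one summand (the one hit by the invertible component of $\Delta$) and identifies the surviving summand with $\underline{A}$ via a distinguished embedding $\iota : A \hookrightarrow A \otimes A$, after which $m \circ \iota = \id_A$ follows from the unit axiom of the Frobenius algebra $A$. This finishes the proof.
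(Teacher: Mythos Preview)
Your proposal is correct and follows essentially the same route as the paper, whose proof simply reads ``Immediate from \Cref{lem:Tq-lem1,cor:CKh-T_2q,prop:Tq-Tq+2}''; you have spelled out precisely how those three ingredients combine. The only small imprecision is the appeal to the ``unit axiom'' for $m\circ\iota=\id_A$: this is really just the explicit description of $m$ under delooping given in \Cref{lem:Tq-lem1} (the $I$-component of $m$ lands on the summand surviving the Gaussian elimination of $\Delta$), which is exactly why the paper cites that lemma.
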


\begin{proof}
    Immediate from \Cref{lem:Tq-lem1,cor:CKh-T_2q,prop:Tq-Tq+2}.
\end{proof}

\begin{cor}
\label{cor:seq-from-trefoil}
    For any $k \geq 1$, the homology group $\Kh(T^*_{2, 2k + 1})$ is generated by the images of all combinations of the composite maps 
    \[
    \begin{tikzcd}
    {\Kh(T^*_{2, 3})} 
        \arrow[r, "\bar{g}_0", dashed, shift left] \arrow[r, "\bar{g}_1"', shift right] & 
    {\Kh(T^*_{2, 5})} 
        \arrow[r, "\bar{g}_0", dashed, shift left] \arrow[r, "\bar{g}_1"', shift right] & 
    \cdots
        \arrow[r, "\bar{g}_0", dashed, shift left] \arrow[r, "\bar{g}_1"', shift right] & 
    {\Kh(T^*_{2, 2k + 1})}.
    \end{tikzcd}
    \]
\end{cor}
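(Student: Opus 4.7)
The plan is to combine the explicit homotopy-equivalent model of $\CKh(T^*_{2, 2k+1})$ from \Cref{cor:CKh-T_2q} with the combinatorial form of the maps $\bar g_0, \bar g_1$ from \Cref{cor:map-g-on-ckh}. Recall that $\Kh(T^*_{2, 2k+1})$ decomposes by homological grading as $A$ at grading $0$, $\operatorname{coker} U$ at each grading $-2i$, and $\ker U$ at each grading $-2i - 1$ for $i = 1, \dots, k$, and is zero at grading $-1$; here $U \colon A \to A$ is the map appearing in \Cref{cor:CKh-T_2q}. By \Cref{cor:map-g-on-ckh}, both $\bar g_0$ and $\bar g_1$ act as the identity on every $A$-summand in the simplified complex, with $\bar g_1$ preserving homological grading and $\bar g_0$ lowering it by $2$.

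From this description I would first observe that on each pair $A \xrightarrow{U} A$ in the source complex, both $\bar g_1$ (landing in the same-positioned pair in the target) and $\bar g_0$ (landing in the next pair to the left) restrict to an identity map on $A$, and hence induce isomorphisms on both $\ker U$ and $\operatorname{coker} U$. The only non-isomorphism case is $\bar g_0$ applied to the $\underline A$ at grading $0$: it lands in the $A$ at grading $-2$ of the target, which is the cokernel slot of a pair, so on homology it is the quotient $A \twoheadrightarrow \operatorname{coker} U$, still surjective.

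The core step is to exhibit, for each nonzero homological grading $n$ of $\Kh(T^*_{2, 2k+1})$, a composite of $k-1$ operations drawn from $\{\bar g_0, \bar g_1\}$ whose image covers $\Kh^n$. The natural choices are: for $n = 0$, apply $\bar g_1^{k-1}$ to $\Kh^0(T^*_{2, 3}) \cong A$, inducing the identity on $A$; for $n = -2i$ with $1 \leq i \leq k$, apply any composite of $i-1$ copies of $\bar g_0$ and $k-i$ copies of $\bar g_1$ to $\Kh^{-2}(T^*_{2,3}) \cong \operatorname{coker} U$, giving an isomorphism onto $\Kh^{-2i}(T^*_{2, 2k+1})$; and analogously for $n = -2i-1$, starting from $\Kh^{-3}(T^*_{2, 3}) \cong \ker U$. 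Since this exhausts every nonzero grading of the target (recalling $\Kh^{-1} = 0$), the union of images of these compositions spans $\Kh(T^*_{2, 2k+1})$.

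The only delicate point is ensuring that the iterated identifications $A \xrightarrow{\mathrm{id}} A \xrightarrow{\mathrm{id}} \cdots \xrightarrow{\mathrm{id}} A$ match the correct $A$-summands at each stage --- namely, that $\bar g_0$ and $\bar g_1$ send the $i$-th pair of the source to the $(i+1)$-th and $i$-th pairs of the target respectively, and that $\bar g_0$ on $\underline A$ lands in the rightmost pair --- but this is immediate from the labeling in \Cref{cor:map-g-on-ckh}, so the argument reduces to bookkeeping.
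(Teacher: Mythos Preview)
Your proposal is correct and follows precisely the approach the paper intends: the corollary is stated without proof, as an immediate consequence of \Cref{cor:map-g-on-ckh}, and your argument simply unpacks that implication by reading off from the explicit chain-level description that $\bar g_1$ and $\bar g_0$ act as the identity on each $A$-summand (grading-preserving and grading-shifting by $-2$, respectively), then matching each homological degree of $\Kh(T^*_{2,2k+1})$ with a suitable composite starting from $\Kh^{0}$, $\Kh^{-2}$, or $\Kh^{-3}$ of $T^*_{2,3}$. The only cosmetic slip is the phrase ``for each nonzero homological grading $n$'' immediately before treating $n=0$; the argument itself is fine.
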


\begin{rem}
    If we start from $T^*_{2, 1}$, which is the unknot, then the former statement of \Cref{cor:seq-from-trefoil} does not hold: it only generates the even homological grading part of $\Kh(T^*_{2, 2k + 1})$.
\end{rem}

\begin{cor}
\label{cor:special-imm-cob-from-unknot}
    Let $k$ be a positive integer, and $s_-$ be an integer such that $0 \leq s_- \leq k$. Let $U_1$ denote the unknot, and consider an immersed oriented cobordism $S$ from $U_1$ to $T^*_{2, 2k + 1}$, with genus $g = k - s_-$ and $s_-$ negative double points, obtained from the following sequence
    \[
\begin{tikzcd}
U_1 \arrow[r, "R1_R"] & {T^*_{2, 1}} \arrow[r, "S^{(\times)}_1"] & {T^*_{2, 3}} \arrow[r, "S^{(\times)}_2"] & \cdots \arrow[r, "S^{(\times)}_k"] & {T^*_{2, 2k + 1}}.
\end{tikzcd}
    \]
    Here, among the $k$ arrows after the first twist, we assume that $g$ of them are  genus 1 embedded cobordisms of the form
    \[
        S_i\colon {T^*_{2, 2i+1}} 
            \xrightarrow{\ R2\ } 
        {(T'_{2, 2i+1})^*}
            \xrightarrow{\ \ \ } 
        {T^*_{2, 2i+3}}    
    \]
    where the latter arrow is given by the move described in \Cref{prop:f0-explicit}. The remaining $s_-$ of them are genus 0 immersed cobordisms of the form 
    \[
        S^\times_i\colon {T^*_{2, 2i+1}} 
            \xrightarrow{\ R2\ } 
        {(T'_{2, 2i+1})^*}        
            \xrightarrow{\ \text{c.c.}\ } 
        {T^*_{2, 2i+3}}.
    \]
    Then the immersed cobordism map of lowest degree
    \[
        \Kh^{\low}(S)\colon
        \Kh(U_1) \cong A \rightarrow \Kh(T^*_{2, 2k+1})
    \]
    is surjective onto the homological grading $-2s_-$ part of $\Kh(T^*_{2, 2k + 1})$, which is 
    \[
        \Kh^{-2s_-}(T^*_{2, 2k + 1}) \cong A / \operatorname{Im}(U). 
    \]
\end{cor}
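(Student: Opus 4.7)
The plan is to identify $\Kh^{\low}(S)$ at the chain level as an explicit composition of the maps $\bar{g}_0, \bar{g}_1$ of \Cref{cor:map-g-on-ckh}, and then to read off surjectivity directly from the identity-map description given there.

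First, I unpack the construction of $\Kh^{\low}(S)$ along the prescribed decomposition of $S$. By the definition of $\Kh^{\low}$ in \Cref{subsubsec:imm-cob-combi}, the chain map for $S$ is the composition of the elementary chain maps attached to each piece. For an embedded piece $S_i$, \Cref{prop:f0-explicit} identifies the elementary map (composed with the $R2$ map) as $f_1^-$; for an immersed piece $S_i^\times$ whose negative double point is assigned $\zeta_0$, as $\Kh^{\low}$ prescribes, the elementary map is $f_0^-$ (again composed with $R2$). Upon taking the link closure these are precisely the maps $\bar{g}_1$ and $\bar{g}_0$ of \Cref{cor:map-g-on-ckh}. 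The initial $R1_R$ yields a chain homotopy equivalence $\CKh(U_1)\simeq \CKh(T^*_{2,1})$ carrying the generator of $\Kh^0(U_1)\cong A$ to the generator of the summand $\underline{A}$ at homological grading $0$ in the model of \Cref{cor:CKh-T_2q} for $T^*_{2,1}$. Altogether,
\[
    \phi^{\low}_S \;\simeq\; \bar{g}_{\epsilon_k}\circ\cdots\circ\bar{g}_{\epsilon_1}\circ (R1_R)_*,
\]
with exactly $s_-$ of the $\epsilon_i$ equal to $0$ and the remaining $k-s_-$ equal to $1$.

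Next, I trace this composition through the model complexes of \Cref{cor:CKh-T_2q}. By \Cref{cor:map-g-on-ckh}, each $\bar{g}_\epsilon$ acts on every summand $A$ as the identity map, with the sole distinction that $\bar{g}_1$ preserves the homological grading while $\bar{g}_0$ shifts it down by $2$. Starting from the class in the summand $\underline{A}$ at homological grading $0$, each $\bar{g}_0$ step moves the image one pair to the left in the tower of pairs $A\xrightarrow{U}A$ of \Cref{cor:CKh-T_2q}, while each $\bar{g}_1$ step keeps it in place. After all $k$ steps the image sits in the second copy of $A$ of the $(i=s_-)$-th pair, that is, in the $A$ placed at homological grading $-2s_-$ of $\CKh(T^*_{2,2k+1})$, and the induced chain map onto that summand is the identity on $A$.

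Finally, passing to homology, \Cref{cor:CKh-T_2q} shows that this $A$ is the codomain of the differential $U\colon A\to A$ coming from grading $-2s_- - 1$, so $\Kh^{-2s_-}(T^*_{2,2k+1})\cong A/\operatorname{Im}(U)$, and the induced map $\Kh^0(U_1)=A\to A/\operatorname{Im}(U)$ is the natural quotient, hence surjective. When $s_-=0$ the image lies in the isolated $\underline{A}$ at grading $0$ and the map is the identity, so surjectivity is immediate. The only mild bookkeeping needed is checking that $(R1_R)_*$ indeed sends the unit class to the generator of $\underline{A}$ in the model of \Cref{cor:CKh-T_2q}, which is a direct diagrammatic verification using the explicit R1 map of \cite{BarNatan:2004}; once this is in place, surjectivity is forced by the identity-map description of \Cref{cor:map-g-on-ckh}, so no serious obstacle arises.
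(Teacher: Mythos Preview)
Your argument is correct and follows the same route as the paper. The paper's proof is the single line ``On the chain level, we have $\phi^{\low}_{S^\times_i} = \bar{g}_0$ by definition, and $\phi^{\low}_{S_i} = \bar{g}_1$ from \Cref{prop:f0-explicit},'' leaving the trace through \Cref{cor:map-g-on-ckh} and the $R1_R$ identification implicit; you have simply spelled these out.
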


\begin{proof}
    On the chain level, we have $\phi^{\low}_{S^\times_i} = \bar{g}_0$ by definition, and $\phi^{\low}_{S_i} = \bar{g}_1$ from \Cref{prop:f0-explicit}. 
\end{proof}

\section{Immersed cobordism maps for instanton cube complexes}

\label{subsection:Review of instanton knot homology}

\subsection{Statement of the main result}

\label{subsect: Cobordism maps on spectral sequences}
In this section, we shall construct the chain map $\phi^\sharp _S$ which appeared in the main theorem, restated as follows. 

\begin{thm} [\cref{thm:induced-map}] \label{thm:restated}

Let $L, L'$ be links in $\R^3$ with diagrams $D, D'$, and $S$ a normally immersed cobordism in $[0,1]\times \R^3$ from $L$ to $L'$ with a decomposition into elementary cobordisms (as in \Cref{subsubsec:imm-cob-combi}). 
Then, there is a cobordism map 
\[
\phi^\sharp_{S} : \CKh^\sharp (D) \to \CKh^\sharp (D') 
\]
of order
\[
    \geq \left(-2s_+ + \frac{1}{2} (S\cdot S),\ 
    \chi(S) + \frac{3}{2} (S\cdot S) - 6s_+ \right)
\] 
satisfying the following properties: 
\begin{enumerate}
    \item The following diagram commutes 
  \[
  \begin{CD}
    H_*( \CKh^\sharp (D)) @>{(\phi^\sharp_{S})_*}>> H_*(\CKh^\sharp (D')) \\
  @VVV    @VVV \\
     I^\sharp(L)   @>{I^\sharp(S)}>>   I^\sharp(L').
  \end{CD}
\]
    \item The induced map on $E_2$-term coincides with $\Kh^{\low}(S)$ defined in Khovanov homology. 
\end{enumerate}

\end{thm}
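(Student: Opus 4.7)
Following the given decomposition of $S$ into elementary cobordisms, I will construct $\phi^\sharp_S$ piece by piece. For each embedded elementary piece---a Reidemeister move, birth, death, or saddle---use the instanton chain map from \cite{ISST1}, which already realizes the instanton cobordism map on homology and the Khovanov cobordism map on the $E^{2}$-page. The new ingredient is a chain map
\[
\phi^\sharp_{\mathrm{cc}} \colon \CKh^\sharp(D) \longrightarrow \CKh^\sharp(D')
\]
associated with a crossing change at a single double point, which I will construct by mimicking the combinatorial Hopf-insertion recipe of \Cref{subsect: Geometric description} at the instanton level.

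Realize the crossing change $D \leadsto D'$ as the three-step sequence ``insert a standard Hopf link diagram $H$'' (with orientation dictated by the sign of the double point) $\to$ ``saddle'' $\to$ ``$R2^{-1}$''. Fix a cycle $\tilde{\zeta}_0 \in \CKh^\sharp(H)$ whose image on the $E^{1}$-page of the $h$-filtration spectral sequence is $\zeta_0 \in \CKh(H^{*})$ from \Cref{subsect: Geometric description}, and set
\[
\phi^\sharp_{\mathrm{cc}}(x) \;=\; \phi^\sharp_{R2^{-1}} \circ \phi^\sharp_{\mathrm{sad}} \bigl( x \otimes \tilde{\zeta}_0 \bigr),
\]
where $\phi^\sharp_{\mathrm{sad}}$ and $\phi^\sharp_{R2^{-1}}$ are the saddle and inverse Reidemeister II instanton chain maps from \cite{ISST1}. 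Existence of $\tilde{\zeta}_0$, together with its filtration bidegree, will be obtained by direct inspection of the instanton cube complex of the Hopf link; this bidegree reduces to that of $\zeta_0$ under the $h$-filtration spectral sequence. Composing all pieces and summing the filtration bidegrees reproduces the claimed order $(-2s_+ + \tfrac{1}{2}(S\cdot S),\, \chi(S) + \tfrac{3}{2}(S\cdot S) - 6s_+)$.

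Compatibility with $\Kh^{\low}(S^{*})$ on the $E^{2}$-page is then automatic: the $E^{1}$-reduction of $\phi^\sharp_{\mathrm{cc}}$ is exactly the combinatorial crossing change chain map $F(\zeta_0) = f_0$ by \Cref{prop:x-ch-hopf}, which is the choice used to define $\Kh^{\low}$; hence the composition of $E^{1}$-reductions of all elementary maps reproduces $\Kh^{\low}(S^{*})$ up to sign, and passing to $E^{2}$ gives the required equality.

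The main obstacle is verifying compatibility with $I^\sharp(S)$ on homology. The Kronheimer--Mrowka immersed cobordism map is defined by blowing up the ambient $4$-manifold at each double point, producing an embedded cobordism in the blown-up $4$-manifold. The plan is to show, at the homological level, that this geometric construction is equivalent to our combinatorial decomposition: a small sphere bounding a tubular neighborhood of the exceptional divisor meets the proper transform of $S$ in the standard Hopf link $H$, and splitting the cobordism along this sphere yields ``birth of $H$'' composed with a saddle and an inverse R2 move. Identifying the homology class produced by ``birth of $H$'' with $\tilde{\zeta}_0$ in singular instanton homology is the technical heart of the argument, as it requires an excision-type comparison of instanton moduli spaces on the blowup with those used in the direct computation of $\CKh^\sharp(H)$.
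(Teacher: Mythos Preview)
Your plan inverts the paper's logic: you make the Khovanov side automatic (by choosing a lift $\tilde{\zeta}_0$ of $\zeta_0$) and leave the $I^\sharp$ side as the ``technical heart'', whereas the paper defines the Hopf-insertion step \emph{geometrically} via the blown-up disk cobordism $D_H\colon \emptyset \to H$ in $([0,1]\times S^3)\#\overline{\CP^2}$, so that compatibility with $I^\sharp(S)$ is immediate (the blowup \emph{is} Kronheimer--Mrowka's definition of the immersed map), and the nontrivial work is to compute the $E^1$-reduction and identify it with the Khovanov map $x\mapsto x\otimes\zeta_0$. Concretely, the paper sets $\phi^\sharp_{\sqcup H}:=\Phi'_D\circ\Phi_D\circ C^\sharp_{D_H}$, where $C^\sharp_{D_H}$ is the instanton cobordism map for the blowup and $\Phi_D,\Phi'_D$ are Kronheimer--Mrowka's comparison quasi-isomorphisms built from parametrized moduli spaces; the $E^1$ identification is then reduced, via an excision argument, to the case $D=\emptyset$, where an explicit moduli-space computation (using negative twist invariance and the unlink identification) pins down the image as $\zeta_0$.

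Your approach has two concrete gaps. First, the expression $\phi^\sharp_{\mathrm{sad}}(x\otimes\tilde{\zeta}_0)$ presupposes an identification $\CKh^\sharp(D)\otimes\CKh^\sharp(H)\simeq\CKh^\sharp(D\sqcup H)$ at the chain level; this is not formal and requires an excision map (the paper constructs one from a geometric excision cobordism, but only establishes the desired identification on the $E^1$-page, not as a chain isomorphism). Second, and more seriously, ``a cycle $\tilde{\zeta}_0$ whose $E^1$-image is $\zeta_0$'' is under-specified: since $\Kh(H^-)$ has rank $2$ in higher $h$-filtration, the homology class $[\tilde{\zeta}_0]\in I^\sharp(H)$ is determined only modulo a $\Z^2$ ambiguity. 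For the map on $I^\sharp$ to equal the blowup map, you need $[\tilde{\zeta}_0]$ to equal \emph{exactly} the image of the generator under $I^\sharp(D_H)\colon I^\sharp(\emptyset)\to I^\sharp(H)$; a generic lift will not do. Pinning this down requires precisely the moduli-space computation the paper carries out, so your ``technical heart'' is not an afterthought but the entire content of the argument, and it forces you back to the paper's geometric definition anyway.
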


\begin{rem}\label{normal_Euler}
 The term $S\cdot S$ appearing in \Cref{thm:induced-map} is defined as follows. 
 For a normal immersion $\iota: S \to [0,1]\times \R^3$, let $\nu(S) \to S$ denote its normal bundle, and $e(S)$ the relative Euler class of $\nu(S)$ in the local coefficient cohomology $H^2(S, \partial S; o(S))$ with respect to the orientation local system $o(S)$ of $S$ and non-zero sections on the boundaries determined from Seifert framings. We define 
\[
 S\cdot S := \langle e(S), [S,\partial S;  o(S)] \rangle \in \Z,  
 \]
 where $[S,\partial S;  o(S)]$ is the relative fundamental class of $ S$ with respect to the  orientation local system $o(S)$.
 \end{rem}

The cobordism map $\phi^\sharp_S$ is defined as the composition of the maps $\phi^\sharp_{S_i}$ associated to each elemental cobordism $S_i$. 
In \cite{ISST1}, the corresponding cobordism map $\phi^\sharp_{S_i}$ has been defined except for the crossing change cobordisms satisfying (1) and (2) of \cref{thm:restated}. Therefore, it is sufficient to construct the corresponding cobordism maps for the crossing change cobordisms satisfying (1) and (2) of \cref{thm:restated}.
Following the geometric description of the immersed cobordism maps given in \Cref{subsect: Geometric description}, we shall make use of the Hopf link. 

\subsection{Some computations for the Hopf link in instanton theory}\label{gradings}

We follow the notations given in \cite{KM11u, ISST1}.

\subsubsection{Framed instanton homology of Hopf link}
Let $H$ be the unoriented diagram of the Hopf link shown in \Cref{fig:hopf-instanton-cube},
which is the unoriented mirror image of \Cref{fig:ckh-hopf}.
We calculate the framed instanton homology $I^\sharp (H)$ of $H$. 

Note that the traceless $SU(2)$-character variety of $H$
\[
R(H) := \{ \rho \in \operatorname{Hom} (\pi_1(S^3 \setminus H), SU(2) ) | \operatorname{Tr}\rho (m)= \operatorname{Tr}\rho (m') =0\}  / SU(2)
\]
consists of two reducibles 
\[
R(H) = \{[ \rho_+],[\rho_-]\},
\]
where $m$ and $m'$ are the meridians of the components of $H$. 
We have two orientations $\mathfrak{o}_\pm$ up to overall sign so that the crossings are positive or negative and these orientations correspond to the representations $[ \rho_+],[\rho_-]$ as order of eigenvalues $i$ and $-i$.  It is observed in \cite{daemi2024unoriented} that the Floer gradings of $[ \rho_+]$ and $[\rho_-]$ differ by $2$. 
Now, we consider the link 
\[
H^\sharp := H \sqcup H_\omega \subset S^3
\]
and the corresponding representation space 
\[
R^\sharp (H) := 
\left\{
\begin{aligned}
 &\rho \in \operatorname{Hom}\!\bigl(\pi_1(S^3 \setminus H \cup H \cup \omega), SU(2)\bigr) \ \big| \\[2mm]
 &\operatorname{Tr}\rho (m)= \operatorname{Tr}\rho (m') =0,\ 
   \operatorname{Tr}\rho (m_H)= \operatorname{Tr}\rho (m'_H) =0,\ 
   \rho ( m_\omega ) = -\id
\end{aligned}
\right\} \big/ SU(2),  
\]
where $\omega$ is a small arc connecting different comoponents of $H$ and $m_\omega$ is a small merdian loop of $\omega$.

Roughly speaking, the framed instanton Floer homology of $H$ is an infinite-dimensional Morse homology whose critical point set is $R^\sharp (H)$.

On the other hand, we have a fiber-product formula. 
\[
R^\sharp (H) = R^{\mathrm{fr}}(H) \times_{SU(2)}   R^{\mathrm{fr}}(H_\omega) = S^2 \sqcup S^2, 
\]
where 
\begin{align*}
    &R^{\mathrm{fr}}(H)  = \{ \rho \in \operatorname{Hom} (\pi_1(S^3 \setminus H), SU(2) ) | \operatorname{Tr}\rho (m)= \operatorname{Tr}\rho (m') =0\} = S^2 \sqcup S^2  \\ 
    & R^{\mathrm{fr}}(H_\omega) = \{ \rho \in \operatorname{Hom} (\pi_1(S^3 \setminus H \cup \omega ), SU(2) ) | \operatorname{Tr}\rho (m)= \operatorname{Tr}\rho (m') =0, \rho( m_\omega ) = - \id\} =SO(3) .
\end{align*}
Therefore, we can take a Morse perturbation of the singular Chern--Simons functional for $H^\sharp$ so that the critical point set is identified with four points with even Morse gradings, hence having perfect Morse homology. This observation directly shows 
\begin{align}\label{sharp_of_H}
I^\sharp (H ) \cong \Z_{(0)}^2 \oplus \Z_{(2)}^2.  
\end{align}

For the latter section, we always fix a small perfect perturbation for the Hopf link $H$ so that there is no differential on $C^\sharp (H)$.
\begin{rem}
    This isomorphism \eqref{sharp_of_H} can also be verified using two different ways: 
    \begin{itemize}
        \item applying skein exact triangle to $H$ and  
        \item comparing with Daemi--Scaduto's equivariant link singular instanton Floer $\mathcal{S}$-complex \cite{daemi2024unoriented}: 
        \[
        (\tilde{C} (H ), \wt{d}, \chi) 
        \]
       equipped with the trivial local coefficient with the framed instanton Floer homology $I^\sharp (H)$, i.e. 
        \[
        I^\sharp (H) \cong H_* ( \operatorname{Cone} (2\chi =0 : \wt{C} (H) \to \wt{C} (H) ) ) \cong \Z_{(0)}^2 \oplus \Z_{(2)}^2 . 
        \]
    \end{itemize}
\end{rem}

\subsection{Computations of cube complexes of $H$}

Let $K_{(2,2)}$ be the unoriented diagram of the Hopf link shown in \Cref{fig:hopf-instanton-cube},
which is the unoriented mirror image of \Cref{fig:ckh-hopf}.
Then, $K_{(i,j)}$ for $i,j \in \{0,1\}$ corresponds to the resolutions of the Hopf link on each crossing. 
We put 
\[
C^\sharp_{i,j}:= C^\sharp (K_{(i,j)}) . 
\]

We give concrete computations of gradings for the positive/negative Hopf link here, which are identical to the gradings of Khovanov homology of its mirror. 
Then, an instanton cube complex of $K_{(2,2)}$ is described as 
\[
\CKh^\sharp(K_{(2,2)}) = C^\sharp_{1,1} \oplus ( C^\sharp_{1, 0}\oplus C^\sharp_{0, 1} )  \oplus C^\sharp_{0,0} = V^2 \oplus ( V \oplus  V) \oplus V^2,  
\]
where 
$V=\langle \bv_+, \bv_- \rangle$ is a free abelian group. 
As it is observed that 
\[
E^2 (\CKh^\sharp (K_{(2,2)}) ) = \Kh (K_{(2,2)}) = \Z^4, 
\]
there cannot be higher differential on the complex $E^2 (\CKh^\sharp (K_{(2,2)}) )$. 

Also, we shall use the intermediate complex 
\[
\mathbf{C}[(0, 2), (1,2)] = C^\sharp_{1,2} \oplus    C^\sharp_{0, 2} = V \oplus V  
\]
which is also introduced in \cite{KM11u}. 
We calculate the homological/quantum gradings of generators. 
Note that the defitions of homological and quantum gradings of the cube complexes are given as 
\begin{align*}
&h|_{C^\sharp(D_v)} = -|v|_1 + n_- \\
&q|_{C^\sharp(D_v)} = Q- |v|_1  - n_+ + 2 n_-, 
\end{align*}
where the grading $Q$ is defined as the integer grading of a generator of $C^\sharp (D_v)$ obtained from an identification 
\[
\gamma_v \colon C (D_v) \to V^{\otimes r(D_v)},
\]
$n_+, n_-$ are the numbers of positive and negative crossings of $D$ respectively, and $r(D_v)$ denotes the number of components of $D_v$. Note that $Q \operatorname{mod} 4$ coincides with the mod four grading of each unreduced Floer complex. 
Here $V=\langle \bv_+, \bv_- \rangle$ is a free graded abelian group with $\bv_+$ and $\bv_-$ in degrees 1 and $-1$ respectively, and give $V^{\otimes r(D_v)}$ the tensor-product grading.

We first fix an orientation of $K_{(2,2)}$ as  $K^+_{(2,2)}$. 
In this case, we have $n_+=2, n_-=0$.
We have 
\[
\CKh^\sharp(K_{(2,2)}^+) = V^2 \{ -4\} \oplus (V\{-3\} \oplus V\{-3\}) \oplus V^2\{-2\}
\]
with an identification
\[
E^2 \CKh^\sharp(K_{(2,2)}^+) = R\{-2, -6\} \oplus R\{0, -2\}
\]
in the notations in the earlier section. 

If we consider $K_{(2,2)}^-$, the oriented resolution is $K_{(1,2)}$ with $n_+=0$ and $n_-=2$. For the other orientation $K^-_{(2,2)}$, we have 
\[
\CKh^\sharp(K_{(2,2)}^-) = V^2 \{ 2\} \oplus (V\{3\} \oplus V\{3\}) \oplus V^2\{4\} 
\]
with an identification
\[
E^2 \CKh^\sharp(K_{(2,2)}^+) = R\{0, 2\} \oplus R\{2, 6\}
\]
in the notations in the earlier section.

    In the first case, we only regard the upper crossing as a diagrammatic crossing. 
Then, the oriented resolution in this case is $K_{(0,2)}$ in Figure 7. 

For $K_{(2,2)}^+$ with $n_+=1$ and $n_-=0$, we have an identification 
\[
\mathbf{C}[(0, 2), (1,2)] = C^\sharp_{1,2} \oplus    C^\sharp_{0, 2} \cong V\{-2\} \oplus V\{-1\}. 
\]
For $K_{(2,2)}^-$ with $n_+=0$ and $n_-=1$, we have 
\[
\mathbf{C}[(0, 2), (1,2)] = C^\sharp_{1,2} \oplus    C^\sharp_{0, 2} \cong V\{1\} \oplus V\{2\}. 
\]

Also, Kronheimer--Mrowka \cite{KM11u} constructed two comparison maps 
\begin{align*}
&\Phi : C^\sharp (H) \to \mathbf{C}[(0, 2), (1,2)](D) 
\\ 
&\Phi =
\begin{pmatrix}
f_{(2,2)(1,2)}\\
j_{(2,2)(0,2)}
\end{pmatrix} \text{ and } \\ 
&\Phi' : \mathbf{C}[(0, 2), (1,2)] \to \CKh^\sharp ( K_{2,2} ) \\
&\Phi' =
\begin{pmatrix}
f_{(1,2)(1,1)} & 0 \\
j_{(1,2) (1,0)} & 0 \\
f_{(1,2)(0,1)} & f_{(0,2)(0,1)}\\
j_{(1,2)(0,0)} & j_{(0,2)(0,0)}
\end{pmatrix}
\end{align*}
constructed by counings parameterized moduli spaces such that 
$\Phi$ and $\Phi'$ are chain maps that induce quasi-isomorphisms on the homologies. Since these induce isomorphisms, we see the differential of $\mathbf{C}[(0, 2), (1,2)] = \Z^2$ is trivial. 
We will detect some of these components later in order to compare them with Khovanov theory.    

\subsection{Crossing change map}

Suppose $D'$ is obtained from $D$ by a diagrammatic crossing change. Let us denote a natural immersed cobordism $S$ in $[0,1]\times S^3$ from $D$ to $D'$. As it is illustrated in \Cref{fig:x-ch-hopf}, we consider the following 3-steps: 
\begin{itemize}
    \item[(I)]  We put an unoriented Hopf link $H$ near the crossing point equipped with blown-up cobordism which is described as a map 
    \[
   \phi^\sharp_{\sqcup H} : \CKh^\sharp (D) \to  \CKh^\sharp (D \sqcup H) ,
    \]
    \item[(II)] We do two 1-handle surgeries with respect to both components of $H$, the two red bands described in \cref{fig:x-ch-hopf-proof}, which is described as a map 
    \[
   \phi^\sharp_{h^1_1\sqcup h^1_2} :  \CKh^\sharp (D\sqcup H) \to  \CKh^\sharp (D''), 
    \]
    \item[(III)] We do the Reidemeister move described in \cref{fig:x-ch-hopf-proof} to get the desired crossing change move, which is described as a map 
    \[
     \phi^\sharp_{\mathrm{RII}} :  \CKh^\sharp (D'')\to \CKh^\sharp (D')  
    \]
    \end{itemize}
By making use of the above three maps, we define 
\[
\phi^\sharp_S := \phi^\sharp_{\mathrm{RII}} \circ \phi^\sharp_{h^1_1\sqcup h^1_2} \circ \phi^\sharp_{\sqcup H} : \CKh^\sharp (D) \to \CKh^\sharp (D'). 
\]
    
For (II) and (III), we use the corresponding cobordism maps in \cite{ISST1}, which have already been verified to satisfy the compatibility conditions \cref{thm:restated}(i) and (ii). Therefore, we only need to care about (I). 
In order to introduce a cobordism map for (I), 
we introduce a new cube complex for a pseudo diagram $D$,
\[
\CKh^\sharp_{\sqcup H} (D) := \bigoplus_{v \in \{0,1\}^N} (D_v \sqcup H),  
\]
where $N$ is the number of crossings of $D$, with the differential $d^\sharp_{\sqcup H}$ which counts the parametrized instanton moduli spaces for 
\[
S_{vu} \sqcup [0,1]\times H  : D_v \sqcup H  \to D_u \sqcup H. 
\]
Here, we take a perfect Morse function for the $H$-component so that we have two critical points of even gradings for the Hopf link $H$.
One can check $(d^\sharp_{\sqcup H})^2 =0 $ as it is shown in \cite{KM11u, KM14}. 

\begin{defn}
For the cobordism map of (I), we define 
\[
\phi^\sharp_{\sqcup H} : \CKh^\sharp (D) \to  \CKh^\sharp (D \sqcup H)
\]
 by 
\[
\phi^\sharp_{\sqcup H } = \Phi'_D \circ \Phi_D \circ C^\sharp_{D_H}
\]
where the two maps $\Phi_D$, $\Phi'_D$ and $C^\sharp_{D_H} $ are given as follows: 

\begin{enumerate}[leftmargin=2em]
    \item 
{\bf Definition of $\Phi_D$.}

Suppose $D$ has $N$-crossings. We shall define 
\[
\mathbf{C}[(0, 2), (1,2)](D) := \bigoplus_{i \in \{0,1\}, v \in \{0,1\}^N } C^\sharp (D_v \sqcup K_{(i,2) }) = \bigoplus_{ v \in \{0,1\}^N } C^\sharp (D_v \sqcup K_{(1,2) }) \bigoplus_{ v \in \{0,1\}^N } C^\sharp (D_v \sqcup K_{(0,2) })  . 
\]

We define a chain map 
\[
\Phi_D : \CKh^\sharp_{\sqcup H} (D) \to \mathbf{C}[(0, 2), (1,2)](D) 
\]
defined as 
\[
\Phi_D =
\begin{pmatrix}
f_{(2,2)(1,2)}\\
j_{(2,2)(0,2)}
\end{pmatrix}, 
\]
where the maps $f_{(2,2)(1,2)}$ and $j_{(2,2)(0,2)}$ are the cobordism maps for 
\[
S_{(2,2)(1,2)} \sqcup [0,1]\times D_v : K_{2,2} \sqcup D_v \to K_{1,2} \sqcup D_v 
\]
and the families cobordism map for 
\[
S_{(2,2)(0,2)} \sqcup [0,1]\times D_v: K_{2,2} \sqcup D_v \to K_{0,2} \sqcup D_v 
\]
with 1-parameter family of metrics. 
We follow the notations in \cite[Proposition 6.11]{KM11u}. 

\item 
{\bf Definition of $\Phi_D'$.}

We next define the chain map 
\[
\Phi'_D : \mathbf{C}[(0, 2), (1,2)](D) \to \CKh^\sharp (D \sqcup K_{2,2} ).  
\]
With the order of crossings of $K_{2,2}$, we have
\[
\CKh^\sharp (D \sqcup K_{2,2} ) = \bigoplus_{ v \in \{0,1\}^N } C^\sharp (D_v \sqcup K_{(1,1) })  \bigoplus_{ v \in \{0,1\}^N } C^\sharp (D_v \sqcup K_{(1,0) })  \bigoplus_{ v \in \{0,1\}^N } C^\sharp (D_v \sqcup K_{(0,1) })  \bigoplus_{ v \in \{0,1\}^N } C^\sharp (D_v \sqcup K_{(0,0) })  . 
\]
With this decomposition, $\Phi'_D$ is defined as 
\begin{align*}
\Phi'_D =
\begin{pmatrix}
f_{(1,2)(1,1)} & 0 \\
j_{(1,2) (1,0)} & 0 \\
f_{(1,2)(0,1)} & f_{(0,2)(0,1)}\\
j_{(1,2)(0,0)} & j_{(0,2)(0,0)}
\end{pmatrix}, 
\end{align*}
where $f_{(1,2)(1,1)}, f_{(1,2)(0,1)}$, and $f_{(0,2)(0,1)}$ are the cobordism maps for 
\begin{align*}
    S_{(i,j)(i', j')} \sqcup [0,1]\times D_v : K_{i,j} \sqcup D_v \to K_{i',j'} \sqcup D_v 
\end{align*}
and $j_{(1,2) (1,0)}$, $j_{(1,2)(0,0)}$ and $j_{(0,2)(0,0)}$ are the 1-parameter families cobordism maps for 
\begin{align*}
    S_{(i,j)(i', j')} \sqcup [0,1]\times D_v : K_{i,j} \sqcup D_v \to K_{i',j'} \sqcup D_v . 
\end{align*} 

\item 
{\bf Definition of $C^\sharp_{D_H}$.}

Let $(([0,1]\times S^3)\#\overline{\C P^2}, D_H) \colon 
(S^3,\emptyset ) \to (S^3, K_{(2,2)})$ be a blown-up cobordism of normally immersed disks intersecting positively/negatively at one point \footnote{Note that this notion does not depend on the choices of orientations of $D$.}, where 
\[
D_H \cong D^2_+ \cup D^2_- \subset ([0,1]\times S^3)\#\overline{\C P^2}.
\]

The map 
\[
C^\sharp_{D_H} := C^\sharp_{[0,1]\times D \sqcup D_H}:  \CKh^\sharp (D) \to \CKh^\sharp_{\sqcup H} (D)
\]
is the (unoriented) cobordism map with respect to the blown-up cobordism $ D_H$ over $\Z$, which is just the counting of (un-parametrized) instanton moduli spaces 
\[
M([\a], ([0,1]\times S^3 \# \overline{\C P^2}, [0,1]\times D_v \sqcup D_H), [\b])  
\]
for each resolution  $v \in \{0,1\}^N$, here our notation follows that of \cite{ISST1}. 

\end{enumerate}

\end{defn}
We give two remarks on $\phi^\sharp_S$ for a crossing change cobordism. 
\begin{rem}
    The definition of an immersed cobordism map does not depend on the choices of orientations of surface cobordisms as we have not assumed the existence of orientations on the surface cobordisms. In order to compute the homological and quantum gradings of the map, we need to fix orientations of $D$ and $D'$.
\end{rem}

\begin{rem}
When $D $ is the empty diagram, the composition 
\[
\Phi' \circ \Phi : C^\sharp (K_{(2,2)}=H) \to \CKh^\sharp (H)  
\]
induces the Kronheimer--Mrowka's identification  of $I^\sharp (H) \cong H_*( \CKh^\sharp (H))$. Also, note that the map $\Phi'$ can be regarded as the {\it adding crossing map} introduced in \cite{KM14} since the domain and codomain are the cube complexes of pseudo diagrams. 
\end{rem}

\begin{lem}
    The maps $\Phi'_D$, $\Phi_D$ and $C^\sharp_{D_H}$ are chain maps. Thus, $\phi^\sharp_{\sqcup H}$ is also a chain map. 
\end{lem}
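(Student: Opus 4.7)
The plan is to verify each of the three maps is a chain map separately, and then conclude that $\phi^\sharp_{\sqcup H}$ is a chain map as a composition of three chain maps. Throughout, the strategy is the standard one in gauge theory: compactify the relevant moduli spaces of singular instantons, identify the codimension-one boundary strata via gluing/neck-stretching, and interpret the resulting count as the defining relation $d\phi = \pm \phi d$. The key point is that each map has been set up so that Kronheimer--Mrowka's arguments for the (single-crossing) Hopf-link pattern go through in the presence of the extra trivial factor $[0,1]\times D_v$ indexed by the cube vertices $v \in \{0,1\}^N$ of $D$.

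First I would handle $C^\sharp_{D_H}$. The map is defined by counting unparametrized instantons on the blown-up cobordism $\bigl(([0,1]\times S^3)\#\overline{\C P^2},\ [0,1]\times D_v \sqcup D_H\bigr)$ vertex-by-vertex. The chain map property $d^\sharp_{\sqcup H}\circ C^\sharp_{D_H} = C^\sharp_{D_H}\circ d^\sharp$ follows from the usual end analysis of the one-dimensional moduli spaces: the two Floer-theoretic ends give the two sides of the equation, while edge contributions from the cube of resolutions of $D$ match on both sides because $D_H$ is supported away from the crossings of $D$. The reducibility issues that would normally be caused by the $\overline{\C P^2}$ factor have been dealt with already in \cite{ISST1} for the embedded cobordism maps, and exactly the same discussion applies here (this is essentially Kronheimer's blow-up construction for singular Donaldson/instanton theory).

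Next I would handle $\Phi_D$ and $\Phi'_D$. Entry by entry these are either (i) ordinary cobordism maps $f_{(i,j)(i',j')}$ for saddle moves on the Hopf-link factor tensored with the trivial cylinder over $D_v$, or (ii) one-parameter family cobordism maps $j_{(i,j)(i',j')}$ associated with a path of metrics on the Hopf-link factor tensored with the trivial cylinder over $D_v$. The chain map property of the $\Phi,\Phi'$ built only from the Hopf-link factor is exactly \cite[Proposition 6.11]{KM11u}, the crucial point there being that the boundary of the one-parameter moduli spaces (counted by $j$) contributes precisely the two endpoint compositions needed for the matrix identity $d\Phi = \Phi d$ and $d\Phi' = \Phi' d$. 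To upgrade to $\Phi_D,\Phi'_D$ we take the fiber product of these moduli spaces with the trivial cylinder moduli on $[0,1]\times D_v$, and observe that the ends of the resulting parametrized moduli spaces split into (a) degenerations in the Hopf-link factor, contributing the Kronheimer--Mrowka identity, and (b) degenerations along the cube directions of $D$, contributing the cube differential of the appropriate complex. Since these two families of degenerations are disjoint (they occur in geometrically separated regions), the two types of terms add up to give $d\circ \Phi_D = \Phi_D\circ d^\sharp_{\sqcup H}$ and likewise for $\Phi'_D$.

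The main obstacle is to ensure, in the one-parameter-family terms $j_{(i,j)(i',j')}$, that no extraneous bubbling or reducible strata appear from the interaction between the $\overline{\C P^2}$ blow-up used in $C^\sharp_{D_H}$ and the Hopf-link family of metrics. This is already the content of the gluing analyses in \cite{KM11u, KM14}, transplanted to the present setting via the disjoint-support argument above; I would simply invoke those results vertex-by-vertex rather than reprove them. Once $\Phi_D$, $\Phi'_D$, and $C^\sharp_{D_H}$ are all chain maps, the final conclusion that $\phi^\sharp_{\sqcup H} = \Phi'_D \circ \Phi_D \circ C^\sharp_{D_H}$ is a chain map is immediate.
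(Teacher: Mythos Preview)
Your approach is essentially the same as the paper's: invoke Kronheimer--Mrowka \cite[Proposition 6.11]{KM11u} for $\Phi_D$ and $\Phi'_D$, handle $C^\sharp_{D_H}$ by a standard end-counting argument using that $D_H$ is disjoint from $D$, and conclude for the composition. Two small remarks. First, the paper singles out one ingredient you leave implicit: the Hopf link $H\sqcup H_\omega$ admits a perfect Morse perturbation with only even Floer degrees, so $C^\sharp(H)$ carries no internal differential; this is what ensures that, in the end analysis for $C^\sharp_{D_H}$, no extra terms arise from Floer breaking on the $H$ side. Second, your final paragraph about interaction between the $\overline{\C P^2}$ blow-up and the one-parameter metric families is misplaced: the blow-up occurs only in $C^\sharp_{D_H}$, while the metric families occur only in $\Phi_D,\Phi'_D$, and since you are verifying each map is a chain map separately and then composing, no such interaction needs to be analyzed.
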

\begin{proof}
    The maps $\Phi'_D$ and $\Phi_D$ are shown to be chain maps by Kronheimer--Mrowka \cite{KM11u} when they gave a concrete quasi-isomorphism $I^\sharp(K) \cong H_*(\CKh^\sharp(D))$. 
    Also, from the definition, $\phi^\sharp_{\sqcup H}$ commutes with differentials since the cobordism $D_H$ does not touch $D$ at all. Thus, the certain counting of the ends of 1-dimensional moduli space shows $\phi^\sharp_{\sqcup H}$ is a chain map. Here, we used the fact that $H\sqcup H_\omega$ has a perfect Morse perturbation with even Floer degrees so that there is no differential on its complex.
    This completes the proof. 
\end{proof}

We calculate gradings.

\begin{lem}\label{compatibility_w_instanton}
Let $D$ and $D'$ be oriented pseudo diagrams. 
Let $S$ be a diagrammatic (positive/negative) crossing change from $D$ to $D'$ compatible with some fixed orientations.
The chain map $\phi^\sharp_S$ has the degree shift $\geq (-2,-6)$ (resp. $\geq (0,0)$) for a positive double point (negative double point). 
\end{lem}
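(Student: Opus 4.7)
The plan is to track the bigrading shift through each factor of the decomposition
\[
\phi^\sharp_S = \phi^\sharp_{\mathrm{RII}} \circ \phi^\sharp_{h^1_1 \sqcup h^1_2} \circ \phi^\sharp_{\sqcup H}
\]
and sum them. First, from the degree formulas for embedded cobordism maps established in \cite{ISST1}, the Reidemeister II map $\phi^\sharp_{\mathrm{RII}}$ is degree-preserving and each 1-handle saddle contributes $(0,-1)$, so $\phi^\sharp_{h^1_1 \sqcup h^1_2}$ has degree $(0,-2)$. It therefore suffices to show that $\phi^\sharp_{\sqcup H}$ has degree $(-2,-4)$ in the positive double point case and $(0,2)$ in the negative double point case, since summing yields $(-2,-6)$ and $(0,0)$ respectively.

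To treat $\phi^\sharp_{\sqcup H} = \Phi'_D \circ \Phi_D \circ C^\sharp_{D_H}$, I would observe that each of the three factors acts locally on the inserted Hopf link component and is the identity along the cube directions coming from $D$. Their total degree shift therefore does not depend on $D$ and equals the bigrading of the image of $1 \in I^\sharp(\emptyset) = \Z$ under
\[
\Phi'_\emptyset \circ \Phi_\emptyset \circ C^\sharp_{\emptyset, D_H}\colon \Z \longrightarrow \CKh^\sharp(K_{(2,2)}).
\]
Since $\Phi'_\emptyset \circ \Phi_\emptyset$ realizes the Kronheimer--Mrowka identification $I^\sharp(H) \cong H_*(\CKh^\sharp(H))$, the problem reduces to a grading computation inside the Hopf link cube complex, whose generators and bigradings for both orientations are tabulated in Section~4.2.

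The crucial step is to identify this image, up to sign, with the Hopf link element $\zeta_0$ in the appropriate orientation. Geometrically, $D_H$ is the proper transform of the canonical pair of disks in $D^4$ bounding the Hopf link and meeting at a single transverse point whose sign equals that of the double point; this is precisely the model used in Remark~\ref{rem:Hopf-from-cc} to realize $\zeta_0$ on the Khovanov side. Combined with the bigrading data from Section~4.2, $\zeta_0$ lies in bidegree $(-2,-4)$ inside $\CKh^\sharp(K_{(2,2)}^-)$ (positive double point case) and in bidegree $(0,2)$ inside $\CKh^\sharp(K_{(2,2)}^+)$ (negative double point case). Together with the saddle and Reidemeister contributions computed above, this yields the claimed bounds.

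The main obstacle is verifying the chain-level identification $\Phi'_\emptyset \circ \Phi_\emptyset \circ C^\sharp_{\emptyset, D_H}(1) = \pm \zeta_0$. This requires a careful analysis of the parametrized instanton moduli spaces defining the comparison maps $\Phi_\emptyset$, $\Phi'_\emptyset$, combined with a blow-up/gluing argument relating the instanton counts on the negative-definite background $([0,1]\times S^3)\#\overline{\mathbb{CP}^2}$ with the combinatorial element $\zeta_0 \in \CKh^\sharp(H)$. Once this identification is secured, the grading formula follows by direct addition of the bidegrees listed in Section~4.2.
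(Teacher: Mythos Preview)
Your decomposition is correct and matches the paper's, and the contributions $\geq (0,0)$ from $\phi^\sharp_{\mathrm{RII}}$ and $\geq (0,-2)$ from the two saddles are exactly what the paper quotes from \cite{KM14}. The homological part of the bound for $\phi^\sharp_{\sqcup H}$ can indeed be read off from the Hopf cube, since $\phi^\sharp_{\sqcup H}$ acts trivially in the $D$-cube directions and the $h$-gradings of all generators of $\CKh^\sharp(K_{(2,2)}^\pm)$ already lie in $\{-2,-1,0\}$ (resp.\ $\{0,1,2\}$). The paper's proof of the $h$-bound is essentially this observation, phrased slightly differently.

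The gap is in the quantum grading. The crude ``range of generators'' argument is too weak there: the minimum $q$-grading among generators of $\CKh^\sharp(K_{(2,2)}^+)$ is $-6$, but one needs $\gr_q(\phi^\sharp_{\sqcup H}) \geq -4$ to reach the target $-6$ after subtracting $2$ for the saddles. You attempt to close this by asserting the chain-level identity $\Phi'_\emptyset \circ \Phi_\emptyset \circ C^\sharp_{\emptyset,D_H}(1) = \pm\zeta_0$, but this is stronger than what the paper establishes and the logic is inverted: the paper only proves the corresponding identity at the level of the induced $E^1$-map (\Cref{prop:hopf-compute-h}), and even defining that $E^1$-map requires first knowing the filtered order---i.e., the lemma you are trying to prove. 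At the chain level the image is in general $\zeta_0 + (\text{higher filtration terms})$, not $\pm\zeta_0$ on the nose; your ``degree equals bigrading of the image of $1$'' heuristic silently assumes homogeneity.

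The paper proceeds differently for the quantum grading. Rather than computing the image, it applies Kronheimer--Mrowka's general lower bound
\[
Q(\alpha) - Q(\beta) \;\geq\; \chi(T) + T\cdot T - 4\left\lfloor \tfrac{T\cdot T}{8}\right\rfloor + \dim G
\]
from \cite{KM14} to each of the six composite surfaces $T = [0,1]\times D_v \sqcup (S_{(i',j')(i,j)} \circ S_{(2,2)(i',j')} \circ D_H)$ arising in the matrix entries of $\Phi'_D \circ \Phi_D \circ C^\sharp_{D_H}$, together with the dimension of the associated metric family. The Euler characteristics, self-intersections (computed via writhe differences of the boundaries), and $\dim G$ are tabulated case by case, and the resulting inequalities give $\gr_q(\phi^\sharp_{\sqcup H}) \geq -4$ and $\geq 2$ respectively. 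The ``moduli space analysis plus blow-up/gluing'' you flag as the main obstacle is, in effect, replaced by this index-theoretic dimension count---done to bound degrees directly rather than to pin down the image.
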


\begin{rem}
We use the following computations. 
The normal Euler number can be computed from the writhe of boundary links. 
We note the normal Euler number has the formula: 
\[
e (S) = - w (D_2) +  w(D_1) 
\]
which is pointed out in \cite{Sa19}, if we have a unoriented $H(2)$-move from $D_1$ to $D_2$, where $w$ denotes the writhe.  For the convention, see \cite[Figure 11]{Sa19}. 
\end{rem}

\begin{proof}
We first compute homological gradings. 
We first fix a label of crossings of $D$ written as $N\cup \{c_*\}$, where $c_*$ denotes the crossing corresponding to the crossing where we do the crossing change. 
Then, a label of crossings of $D'$ can also be written in the same way, $N\cup \{c_*\}$. 
Then, we have decompositions of complexes: 
\begin{align*}
   & \CKh^\sharp (D) = \bigoplus_{v \in \{0,1\} ^N } C^\sharp (D_{v0}) \oplus \bigoplus_{v \in \{0,1\} ^N } C^\sharp (D_{v1}) \\
     &  \CKh^\sharp (D') = \bigoplus_{v \in \{0,1\} ^N } C^\sharp (D'_{v0}) \oplus \bigoplus_{v \in \{0,1\} ^N } C^\sharp (D'_{v1}) .
\end{align*}
Note that $D_{v0}=D'_{v1}$ and $D_{v1}=D'_{v0}$ as diagrams.

From the definition of the crossing change map, we see $\phi^\sharp_S$ preserves $v \in N$, i.e., the map $\phi^\sharp_S$ can be decomposed into 
\[
\phi^\sharp_S: C^\sharp (D_{v0}) \oplus C^\sharp (D_{v1}) \to C^\sharp (D'_{v0}) \oplus C^\sharp (D'_{v1})
\]
for each $v \in \{0,1\}^N$.

The homological gradings of each component are 
\[
-|v|_1 + n_-(D), -|v|_1-1 + n_-(D), -|v|_1 + n_-(D) +1 , -|v|_1+ n_-(D)
\]
for a positive to negative crossing change. This shows the homological grading of $\phi^\sharp_S \geq 0$ in this case.

For a negative-to-positive crossing change, the homological gradings are 
\[
-|v|_1 + n_-(D), -|v|_1-1 + n_-(D), -|v|_1 +n_-(D)-1, -|v|_1 +n_-(D)-2. 
\]
This shows the homological grading of $\phi^\sharp_S \geq -2$ in this case.

Next, we compute quantum gradings.
Note that $\phi^\sharp_S$ is decomposed into 
\[
\phi^\sharp_{\mathrm{RII}} \circ \phi^\sharp_{h^1_1\sqcup h^1_2} \circ \phi^\sharp_{\sqcup H}. 
\]
Note that the shifts of quantum gradings of $\phi^\sharp_{\mathrm{RII}} $ and $\phi^\sharp_{h^1_1\sqcup h^1_2}$ are already computed in \cite{KM14} as 
\[
\gr_q( \phi^\sharp_{\mathrm{RII}} )\geq 0 \text{ and }\gr_q(\phi^\sharp_{h^1_1\sqcup h^1_2})\geq -2. 
\]
Therefore, we only need to give lower bounds of $\gr_q(\phi^\sharp_{\sqcup H})$ as 
\[
\begin{cases}
\gr_q(\phi^\sharp_{\sqcup H}) \geq     -4 \text{ for a negative-to-positive c.c} \\  
  \gr_q(\phi^\sharp_{\sqcup H}) \geq    2 \text{ for positive-to-negative c.c } .
\end{cases}
\]
We regard $\phi^\sharp_{\sqcup H}$ as a map from $CKh^\sharp(D) \to CKh^\sharp(D \sqcup H)$.
In this computation, we take a resolution in $v \in \{0,1\}^{N+1}$ corresponding to all crossings of $D$. 
Let $i, j \in \{0,1\}$ be resolutions for the Hopf link. 
Take a critical point of a perturbed Chern--Simons functional $\beta$ for $D_{v} \sqcup H_\omega$. By taking a concrete Morse perturbation of it, $Q(\beta)$ is defined as the integer-valued Morse degree of the perturbation, which is the same as the gradings that come from
\[
C^\sharp (D_{v}) = H_*(S^2\times \cdots \times S^2) =  V^{r(D_{vi})} . 
\]
Now, the quantum grading of it is given as 
\[
Q(\beta) - |v|_1  - n_+(D) + 2n_-(D). 
\]
We also take a critical point $\alpha$ for $D_{v} \sqcup H_{(i,j)}  \sqcup H_\omega$. Then, for a negative-to-positive crossing change, we have 
\[
Q(\alpha) - |v|_1 -i - j - n_+(D)+ 2n_-(D)-2. 
\]
Therefore, the difference of the gradings is bounded by 
\[
Q(\alpha) - Q(\beta) -2 - i -j. 
\]
For a positive-to-negative crossing change, we have 
\[
Q(\alpha) - |v|_1 -i-j - n_+(D) + 2n_-(D) +4. 
\]
Therefore, the minimum difference of the gradings is 
\[
 Q(\alpha) -Q(\beta)  +4 - i -j.
\]
One can regard $\langle \phi^\sharp_S(\beta), \alpha \rangle$ as a certain counting of parameterized moduli spaces with respect to a geometric cobordism 
\[
T : D_{v} \to D_{v} \sqcup H_{(i,j)} \subset [0,1]\times S^3 \# - \C P^2.  
\]
In such a case, Kronheimer--Mrokwa \cite{KM14} generally gave how to give lower bounds of shifts of the homological and quantum gradings.

For computing the shifts for quantum gradings, 
we generally have 
    \begin{align*}
Q(\alpha) - Q(\beta)  \geq   & \chi(T)+T\cdot T-4\left\lfloor \frac{T\cdot T}{8}\right\rfloor + \dim G
    \end{align*}
    as the shifts of quantum gradings, where $T$ is a geometric cobordism and $G$ is a space of Riemann metrics for parametrized moduli spaces, appeared as the compositions of surface cobordisms corresponding to $\phi^\sharp_{\sqcup H}$.
    Let $T$ be 
    \[
   [0,1]\times D_v \sqcup  ( S_{(i', j'),(i,j)} \circ S_{(2, 2),(i', j')} \circ D_H ) \subset [0,1] \times S^3 \# \overline{\C P^2}, 
    \]
    corresponding to the compositions 
\[
 \Phi_D'\circ \Phi_D\circ C^\sharp _{D_H}, 
\]
where $i,j, i', j'$ are the 
subscripts appeared in the definitions of $\Phi$ and $\Phi'$.

We have the following cases: 
\begin{align*}
(T, \dim G)= 
\begin{cases}
   [0,1]\times D_{v}  \sqcup  ( S_{(1,2), (1, 1)} \circ S_{(2, 2), (1, 2)} \circ D_H ) , \quad \dim G  =0 \\ 
    [0,1]\times D_{v}  \sqcup ( S_{(1,2)(1,0)}\circ S_{(2,2)(1,2)} \circ D_H) , \quad \dim G  =1 \\
      [0,1]\times D_{v}  \sqcup (S_{(1,2)(0,1)}\circ S_{(2,2)(1,2)}\circ  D_H ), \quad \dim G  =  1 \\
    [0,1]\times D_{v}  \sqcup  (S_{(0,2)(0,1)}\circ S_{(2,2)(0,2)}\circ D_H ) , \quad \dim G  = 1 \\
    [0,1]\times D_{v}  \sqcup     (S_{(1,2)(0,0)}\circ S_{(2,2)(1,2)} \circ D_H), \quad \dim G  =2 \\
    [0,1]\times D_{v}  \sqcup     (S_{(0,2)(0,0)}\circ S_{(2,2)(0,2)} \circ D_H), \quad \dim G  = 2 \\
    \end{cases},
\end{align*}
where we have used 
\[
\dim G = \|(i,j)-(i',j')\|_1+ \|(i',j')-(i'',j'')\|_1 -2. 
\]
We calculate $\chi(T)$ and $T\cdot T$ one by one. 
For a given cobordism $T: D_1\to D_2$, if we choose orientations on boundaries with respect to Seifert framings, we have
\[
T \cdot T = w(D_1) - w(D_2)
\]
where $w(D)= n^+(D) - n^- (D)$.

Suppose we consider the negative-to-positive crossing change with respect to the fixed orientations. We have the corresponding positive orientation on $H$. 
We see 
$D_H\cdot D_H = -4$ in this case. 
From the direct computations of the writhes, we see 
\[
\left(S_{(i,j), (i', j')} \circ S_{(i', j'), (i'', j'')}\right) \cdot \left(S_{(i,j), (i', j')} \circ S_{(i', j'), (i'', j'')} \right)  =4
\]
for the above cobordism. 

We next consider the positive-to-negative crossing change with respect to fixed orientations.
We see 
$D_H\cdot D_H = 0$ in this case. 
From the computations of the writhe, we see 
\[
\left( S_{(i,j), (i', j')} \circ S_{(i', j'), (i'', j'')} \right)\cdot \left( S_{(i,j), (i', j')} \circ S_{(i', j'), (i'', j'')} \right) =0
\]
for the above cobordism. 
Note that $\chi(T)=i+j -2$ for the above surface cobordisms.
    If $S$ is a negative-to-positive crossing change, 
    we have 
    \[
    \chi(T) + T \cdot T -  4\left\lfloor \frac{T\cdot T}{8}\right\rfloor + \dim G \geq  i+j-2-4+4+ 2-i-j = 0. 
    \]
    If $S$ is a positive-to-negative crossing change, 
    we have 
       \[
   \chi(T) + T \cdot T -  4\left\lfloor \frac{T\cdot T}{8}  \right\rfloor + \dim G \geq i+j-2 + 0 + 2-i-j =0 . 
    \]
    Therefore, in both cases, we have 
    \begin{align*}
        \operatorname{gr}_q (\phi^\sharp_S) &\geq Q(\alpha) - Q(\beta) -2 - i-j\\
        &\geq \chi(T)+T\cdot T-4\left\lfloor \frac{T\cdot T}{8}\right\rfloor + \dim G -4  \\
        & \geq -4  \text{ for the positive crossing change and } \\ 
        \operatorname{gr}_q (\phi^\sharp_S) &\geq Q(\alpha) - Q(\beta) + 4 -i-j\\
        &\geq \chi(T)+T\cdot T-4\left\lfloor \frac{T\cdot T}{8}\right\rfloor + \dim G +4- i-j  \\
        & \geq 2  \text{ for the negative crossing change. } \\ 
    \end{align*}
    This completes the proof. 
\end{proof}

From the grading computation \cref{gradings}, if we choose a suitable orientation of $H =K_{(2,2)}^-$, we see:  
\begin{lem}\label{E1term}The induced map on the $E^1$-terms:
\[
    E^1 (\phi^\sharp_{\sqcup H}) : E^1 ( \CKh^\sharp (D)) \to  E^1 ( \CKh^\sharp (D \sqcup H) ) 
\]
is given as the compositions of the following forms: 
\[
\begin{pmatrix} f_{(1,2)(1,1)} \circ f_{(2,2)(1,2)}\circ C^\sharp_{D_H} \\ 
0\\
0 \\
0
\end{pmatrix}
\]
with repspect to the decomposition
\[
\CKh^\sharp ( D \sqcup K_{(2,2)}^-) = \bigoplus_{ v \in \{0,1\}^N } C^\sharp (D_v \sqcup K_{(1,1) })  \bigoplus_{ v \in \{0,1\}^N } C^\sharp (D_v \sqcup K_{(1,0) })  \bigoplus_{ v \in \{0,1\}^N } C^\sharp (D_v \sqcup K_{(0,1) })  \bigoplus_{ v \in \{0,1\}^N } C^\sharp (D_v \sqcup K_{(0,0) }). 
\]

\end{lem}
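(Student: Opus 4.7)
The plan is to reduce the statement to a direct homological grading count on the four matrix entries of the composition $\Phi'_D \circ \Phi_D \circ C^\sharp_{D_H}$, and then invoke the standard fact that the induced map on $E^1$ of a filtered chain map is precisely its filtration-preserving part.

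First, I would record the homological gradings on both sides using the convention $h|_{C^\sharp(D_v)} = -|v|_1 + n_-$ stated in \Cref{gradings}. For the source, $C^\sharp(D_v) \subset \CKh^\sharp(D)$ sits in h-degree $-|v|_1 + n_-(D)$. For the target, we chose the orientation $K_{(2,2)}^-$, which has $n_- = 2$; hence
\[
C^\sharp(D_v \sqcup K_{(i,j)}) \subset \CKh^\sharp(D \sqcup K_{(2,2)}^-)
\]
lies in h-degree $-|v|_1 - i - j + n_-(D) + 2$. Because $D_H$ and the cobordisms defining $\Phi_D$ and $\Phi'_D$ are all disjoint from $D$, each component of $\phi^\sharp_{\sqcup H}$ preserves the cube coordinate $v$ of $D$ (this is the same observation used in the proof of \Cref{compatibility_w_instanton}). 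Therefore the h-shift of any component landing in the $(i,j)$-summand is exactly $2 - i - j$, independently of whether that component is built from $f$- or $j$-type cobordism maps.

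Second, I would read off the implications: the shift is $0$ for $(i,j) = (1,1)$, equals $+1$ for $(1,0)$ and $(0,1)$, and equals $+2$ for $(0,0)$. Thus $\phi^\sharp_{\sqcup H}$ is a filtered map, and its associated graded on the h-filtration retains only the $(1,1)$-component. Reading the first row of the matrix for $\Phi'_D$, the only entry landing in $(1,1)$ is $f_{(1,2)(1,1)}$, paired against the first (rather than the second) entry of $\Phi_D$, so the $(1,1)$-component of $\Phi'_D \circ \Phi_D$ is $f_{(1,2)(1,1)} \circ f_{(2,2)(1,2)}$, with no contribution from $j_{(2,2)(0,2)}$. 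Composing with $C^\sharp_{D_H}$ gives the first column of the matrix in the statement; the remaining columns are zero on $E^1$.

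The one subtlety to address is to ensure that no hidden cancellation or additional h-preserving contribution can appear through the $j$-maps: since the h-degree of the image is pinned by $(i,j)$ via the formula above, any map, irrespective of the parameter dimension of the moduli space it counts, into the summand $(i,j) \neq (1,1)$ strictly raises the h-filtration, and hence vanishes on $E^1$. This shows that the $E^1$-level map has precisely the stated column-vector form.
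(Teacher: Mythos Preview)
Your proof is correct and follows essentially the same approach as the paper, which states the lemma as a direct consequence of the grading computations in \Cref{gradings} without giving further details. Your explicit identification of the h-shift $2-i-j$ on each summand, together with the observation that the cube coordinate $v$ is preserved, is exactly the content the paper leaves implicit.
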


Now, we prove that $\phi^\sharp_S$ recovers $I^\sharp_S$. 
\begin{prop}
    For an immersed surface cobordism $S \subset [0,1]\times \R^3$ from $D$ to $D'$ with a decomposition to a sequence of elementary cobordisms,  we have the commutative diagram :
    \[
      \begin{CD}
     H_*(\CKh^\sharp(D)) @>{\phi_{S}^\sharp}>> H_*(\CKh^\sharp(D')) \\
  @V{}VV    @VVV \\
     I_*^\sharp(K) = H_* ( \CKh^\sharp(D), d^\sharp)   @>{I^\sharp(S)}>>  I_*^\sharp(K') = H_* ( \CKh^\sharp(D'), d^\sharp)
  \end{CD}
    \]
    where the vertical arrows are the identifications $I_*^\sharp(K) = H_* ( \CKh^\sharp(D), d^\sharp) $ given in \cite{KM11u}. 
\end{prop}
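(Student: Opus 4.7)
The plan is to reduce to the elementary pieces of $S$ by functoriality and then to dispose of each type separately. Since by construction $\phi^\sharp_S = \phi^\sharp_{S_N} \circ \cdots \circ \phi^\sharp_{S_1}$, and since $I^\sharp(S) = I^\sharp(S_N) \circ \cdots \circ I^\sharp(S_1)$ by functoriality of the singular instanton cobordism map with respect to immersed cobordisms, the commutativity of the global square follows once it is verified for each elementary piece $S_i$ individually. This reduces the problem to three cases: Reidemeister move cobordisms, Morse (band) cobordisms, and diagrammatic crossing changes.

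First I would dispose of the elementary cobordisms that are not crossing changes. For Reidemeister moves and Morse cobordisms, the corresponding compatibility has already been established in \cite{ISST1}, so no new work is required. Hence, the content of the proof is to verify the square for a single crossing-change cobordism $S$. For such $S$ the factorization
\[
\phi^\sharp_S \;=\; \phi^\sharp_{\mathrm{RII}} \circ \phi^\sharp_{h^1_1\sqcup h^1_2} \circ \phi^\sharp_{\sqcup H}
\]
mirrors the geometric decomposition of $S$ into (I) the blown-up cobordism $D_H \subset ([0,1]\times S^3)\#\overline{\C P^2}$ which introduces the Hopf link $H$ via a pair of immersed disks meeting at one transverse double point, (II) two simultaneous 1-handle surgeries performed along the bands shown in \Cref{fig:x-ch-hopf-proof}, and (III) a Reidemeister II move. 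Since the compatibility of $\phi^\sharp_{\mathrm{RII}}$ and $\phi^\sharp_{h^1_1\sqcup h^1_2}$ with $I^\sharp$ is again covered by \cite{ISST1}, everything collapses to the identification
\[
\phi^\sharp_{\sqcup H} \;=\; I^\sharp(D_H)
\]
on homology, under the Kronheimer--Mrowka identifications.

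To prove this last identity I would analyze the three factors $C^\sharp_{D_H}$, $\Phi_D$, and $\Phi'_D$ of $\phi^\sharp_{\sqcup H}$ in turn. The first factor $C^\sharp_{D_H}$ is, by definition, a vertex-wise count of zero-dimensional moduli on the cobordism $[0,1]\times D_v \sqcup D_H$; since it preserves the cube variable $v\in\{0,1\}^N$ and $D_H$ is disjoint from $[0,1]\times D$, it manifestly realizes the instanton cobordism map $I^\sharp(D_H)$ from $I^\sharp(L)$ to $I^\sharp(L\sqcup H)$, provided one first identifies the cube complex $\CKh^\sharp_{\sqcup H}(D)$ with a chain model for $I^\sharp(L\sqcup H)$ (which it is, by the standard unoriented skein/cube argument applied to the crossings of $D$ while keeping $H$ passive). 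The composition $\Phi'_D \circ \Phi_D$ is the Kronheimer--Mrowka parameterized-metric quasi-isomorphism from $\CKh^\sharp_{\sqcup H}(D)$ to $\CKh^\sharp(D\sqcup H)$; by the comparison argument of \cite[Section~6]{KM11u}, recalled in \cite{ISST1}, it induces the canonical KM identification between the two chain models of $I^\sharp(L \sqcup H)$. The only point needing checking is that this argument survives the presence of the passive background $D$, but this is immediate because all cobordisms appearing in the definitions of $\Phi_D$ and $\Phi'_D$ are of the form $S_{(i,j)(i',j')}\sqcup [0,1]\times D_v$, so the parameterized moduli factor trivially across the disjoint union.

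The hard part will be the chain-level identification of $C^\sharp_{D_H}$ with the blown-up cobordism map and its commutation with $\Phi'_D \circ \Phi_D$. Concretely, one has to control the ends of parameterized moduli on the glued cobordism $([0,1]\times S^3)\#\overline{\C P^2}$ carrying both $[0,1]\times D_v$ and $D_H$, via a neck-stretching argument that decouples the $D_H$-contribution from the cube-differential contributions on the $D$-side, and to check compatibility with the families-of-metrics used by Kronheimer--Mrowka. Once this gluing analysis is in place, the identity $\phi^\sharp_{\sqcup H} = I^\sharp(D_H)$ follows on homology; combining it with the Reidemeister II and 1-handle compatibilities from \cite{ISST1} closes the induction step and yields the desired commutative square.
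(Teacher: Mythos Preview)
Your proposal is correct and follows essentially the same route as the paper: reduce to elementary pieces via functoriality and the composition law for $I^\sharp$, appeal to \cite{ISST1} for Reidemeister and Morse moves, factor the crossing-change map as $\phi^\sharp_{\mathrm{RII}}\circ\phi^\sharp_{h^1_1\sqcup h^1_2}\circ\phi^\sharp_{\sqcup H}$, and reduce to showing that $\phi^\sharp_{\sqcup H}$ intertwines with $I^\sharp(D_H)$ under the Kronheimer--Mrowka identifications. The paper's proof does exactly this and asserts the last commutativity ``from its construction.''

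The one place you overcomplicate matters is your final paragraph. The ``hard part'' you anticipate---a neck-stretching argument controlling parameterized moduli on the glued cobordism---is not needed. The commutativity of $C^\sharp_{D_H}$ with the $D$-side of the Kronheimer--Mrowka identification is formal: $D_H$ lives in a region disjoint from the cobordisms $S_{vu}$ and the metric families used to add the crossings of $D$, so the relevant moduli spaces factor and the vertex-wise counts commute on the nose. Meanwhile $\Phi'_D\circ\Phi_D$ is precisely the remaining portion of the KM identification for $D\sqcup H$ (adding the two crossings of $H$), so $\phi^\sharp_{\sqcup H}\circ\Theta_D=\Theta_{D\sqcup H}\circ I^\sharp(D_H)$ holds by construction. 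No gluing analysis beyond what is already encoded in the definitions is required.
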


\begin{proof}
Let \[
S = S_1 \circ S_1 \circ \cdots \circ S_N
\]
be a decomposition of $S$ into elementary cobordisms. If $S_i$ is either a Reidemeister move or a Morse move, the corresponding commutativity is already proven in \cite{ISST1}. 
Moreover, the instanton cobordism maps satisfy the following composition law 
\[
I^\sharp_{S_1 \circ S_1 \circ \cdots \circ S_N}  = I^\sharp_{S_1} \circ I^\sharp_{S_2}\circ \cdots \circ I^\sharp_{S_N}.  
\]
Therefore, we only need to confirm the corresponding commutative diagram for a diagramatic crossing change cobordism $S : D \to D'$.

Since $\phi^\sharp_S$ is the composition $ \phi^\sharp_{\mathrm{RII}} \circ \phi^\sharp_{h^1_1\sqcup h^1_2} \circ \phi^\sharp_{\sqcup H} $, it is sufficient to see each component has the same property. For the maps $\phi^\sharp_{\mathrm{RII}}$ and $\phi^\sharp_{h^1_1\sqcup h^1_2}$, again the corresponding commutativity has been established in \cite{ISST1}. Therefore, it is sufficient to see that $\phi^\sharp_{\sqcup H}$ has the desired commutative diagram. From its construction, the following diagram is commutative 
        \[
      \begin{CD}
     H_*(\CKh^\sharp(D)) @>{\phi^\sharp_{\sqcup H}}>> H_*(\CKh^\sharp(D\sqcup H))  \\
  @VVV    @VVV \\
     I_*^\sharp (K)   @>{ I^\sharp_{[0,1]\times K \sqcup D_H \subset [0,1]\times S^3 \# \overline{\C P}^2 }}>>  I_*^\sharp(K \sqcup H) 
  \end{CD}. 
    \]
    where the vertical maps are Kronheimer--Mrowka's quasi-isomorphism $I_*^\sharp(K) \cong  H_*(\CKh^\sharp(D))$ and $I ^\sharp_S$ denotes the cobordism map for the blown-uped surface. 

On the other hand, $I^\sharp_{[0,1]\times K \sqcup D_H \subset [0,1]\times S^3 \# \overline{\C P}^2}$ coincides with blown-up cobordism map in instanton Floer theory.  Therefore, from the composition law of instanton cobordism maps, we get the desired diagram. 
\end{proof}

\subsection{Computing $E_2$-term of the immersed cobordism map}

We shall prove: 
\begin{thm}
   For an immersed surface cobordism $S \subset [0,1]\times \R^3$ from $D$ to $D'$ with a decomposition to a sequence of elementary cobordisms, 
    we have the commutative diagram :
    \[
      \begin{CD}
     E_2(\CKh^\sharp(D)) @>{E_2(\phi_{S}^\sharp)}>> E_2(\CKh^\sharp(D')) \\
  @VVV    @VVV \\
      \Kh (D^*)   @>{\Kh^{\low}(S)}>>   \Kh ((D')^*)
  \end{CD},
    \]
    where the vertical arrows $E_2(\CKh^\sharp(D^*)) \to \Kh (D^*)$ are the identifications given in \cite{KM11u}. 
\end{thm}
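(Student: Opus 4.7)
The plan is to reduce to the case of an elementary cobordism and then argue for each type separately. By the composition law for $\phi^\sharp_S$ (which holds by construction, since $\phi^\sharp_S$ is defined as the composition of the elementary $\phi^\sharp_{S_i}$) together with the functoriality of the spectral sequence construction $E_2(-)$, it suffices to establish the commutative diagram when $S = S_i$ is a single elementary cobordism: a Reidemeister move, a Morse move, or a diagrammatic crossing change. For Reidemeister and Morse moves, this was already proven in \cite{ISST1}, and on the Khovanov side the corresponding $E_2$-level maps coincide with Bar-Natan's cobordism maps \cite{BarNatan:2004}, which by definition are what $\Kh^{\low}$ reduces to in the absence of double points. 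Thus the remaining case is a diagrammatic crossing change.

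For such an $S_i\colon D \to D'$, recall the decomposition
\[
\phi^\sharp_{S_i} \;=\; \phi^\sharp_{\mathrm{RII}} \circ \phi^\sharp_{h^1_1\sqcup h^1_2}\circ \phi^\sharp_{\sqcup H}.
\]
Again by functoriality of $E_2$ (and naturality of the Kronheimer--Mrowka identification $E_2(\CKh^\sharp(-))\cong \Kh((-)^*)$), it is enough to identify the $E_2$-level map for each of these three factors with its Khovanov counterpart appearing in the geometric description of $\Kh^{\low}(S_i)$ given in \Cref{subsect: Geometric description}. Namely, by \Cref{prop:x-ch-hopf} combined with the definition $\Kh^{\low} = \Kh(-;\zeta_0,\dots,\zeta_0)$, the Khovanov crossing change map factors through exactly the same three steps: (i) tensoring with $\zeta_0 \in \Kh(H)$, (ii) performing the two saddle cobordisms realizing the $1$-handles, and (iii) undoing the resulting R2 tangle. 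Steps (ii) and (iii) are cobordism maps coming from embedded Morse and Reidemeister moves, and so the $E_2$-level identification for $\phi^\sharp_{h^1_1 \sqcup h^1_2}$ and $\phi^\sharp_{\mathrm{RII}}$ with their Khovanov counterparts follows directly from \cite{ISST1}. The remaining and main task is therefore to prove
\[
E_2(\phi^\sharp_{\sqcup H}) \;=\; (-\otimes \zeta_0)\colon \Kh(D^*) \longrightarrow \Kh(D^*\sqcup H^*),
\]
up to an overall sign.

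To do this, I would apply \Cref{E1term}, which says that after choosing the orientation of $H = K_{(2,2)}^-$ so that $\phi^\sharp_{\sqcup H}$ lands in the lowest available homological degree, the induced map on $E_1$ is concentrated in a single matrix entry, namely
$f_{(1,2)(1,1)} \circ f_{(2,2)(1,2)} \circ C^\sharp_{D_H}$, which lands in the $(1,1)$-resolution summand of the $H$-factor. Passing to $E_2$, this is an $R$-linear map whose bidegree agrees (by the grading computation of \Cref{compatibility_w_instanton} combined with the grading of $\zeta_0 \in \CKh^{-2,-4}(H^-)$ listed in \Cref{subsect: Geometric description}) with the bidegree of $-\otimes \zeta_0$. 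It remains to pin down the actual value. The strategy is to first check the special case $D = \varnothing$: here $\phi^\sharp_{\sqcup H}$ becomes the map $R = \CKh^\sharp(\varnothing) \to \CKh^\sharp(H)$ whose image on $E_2$ can be computed explicitly as a generator of the appropriate bigraded component, and by the explicit description of Kronheimer--Mrowka's quasi-isomorphism $H_*(\CKh^\sharp(H)) \cong I^\sharp(H)$ together with the alternative description of $\zeta_0$ given in \Cref{rem:Hopf-from-cc}, this generator is indeed $\zeta_0$ up to sign. The general case then follows because the cobordism $[0,1] \times D \sqcup D_H$ defining $C^\sharp_{D_H}$ is a disjoint union, so the resulting chain-level map factors through the identity on the $D$-factor, and the $E_1$-level computation reduces to an external product with the $D = \varnothing$ case.

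The main obstacle is step (iii) above: correctly identifying a particular matrix entry of a composite of Kronheimer--Mrowka's instanton cobordism maps with the combinatorial generator $\zeta_0$, and in particular fixing signs. For signs, I would exploit the fact that both sides satisfy the same compatibility with the differential in the Hopf cube and have the same bigrading, so that a single nonzero computation (the $D=\varnothing$ base case) pins down the sign uniformly. Once this is verified, the diagram in the theorem commutes on the nose on $E_2$, up to the ambiguity of sign already present in the statement of \Cref{thm:induced-map}. Combining with the two previously handled factors completes the proof for the crossing change case, and together with the Reidemeister and Morse cases from \cite{ISST1} and the functoriality of $E_2$, the theorem follows.
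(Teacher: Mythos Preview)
Your overall architecture matches the paper's: decompose into elementary pieces, cite \cite{ISST1} for Reidemeister and Morse moves, and for the crossing change reduce to identifying $E_2(\phi^\sharp_{\sqcup H})$ with $(-\otimes\zeta_0)$. The factorization through the three steps and the appeal to \Cref{E1term} are also correct.

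The gap is in your reduction from general $D$ to the base case $D=\varnothing$. You write that the cobordism $[0,1]\times D\sqcup D_H$ is a disjoint union, so the chain-level map ``factors through the identity on the $D$-factor'' and the $E^1$-computation ``reduces to an external product.'' This is not automatic: the target complex $\CKh^\sharp(D\sqcup H)$ is built from instanton moduli spaces on $D_v\sqcup K_{(i,j)}\sqcup H_\omega$, and it is \emph{not} a priori the tensor product $\CKh^\sharp(D)\otimes\CKh^\sharp(H)$. The paper supplies exactly this bridge via \Cref{excision theorem}, which constructs explicit excision maps $\Psi_1,\Psi_2$ (defined by counting instantons on a nontrivial excision cobordism $(W,S)$) and proves that on $E^1$ they realize the disjoint-union isomorphism from Khovanov theory. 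Only after this is established can one invoke \Cref{lem:tensor-spectral-map} to reduce to the empty-diagram case.

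A secondary point: your sketch of the base case is also underspecified. The appeal to \Cref{rem:Hopf-from-cc} together with ``the explicit description of Kronheimer--Mrowka's quasi-isomorphism'' does not by itself compute the image of the instanton blow-up map $C^\sharp_{D_H}$ followed by $f_{(1,2)(1,1)}\circ f_{(2,2)(1,2)}$. The paper's argument (\Cref{prop:hopf-compute-h}) closes this off by composing with a further cap $S_\varepsilon$ so that the total cobordism becomes an annulus in $\overline{\C P}^2$, and then invoking the negative twist invariance and concordance identity from \Cref{prop:unlink-properties} to force the answer. Without some such device you cannot pin down that the output is $\zeta_0$ rather than, say, $\bar X\zeta_0$.
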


Again, we take a decomposition of the surface $S = S_1 \circ S_1 \circ \cdots \circ S_N$ into elementary cobordisms. If $S_i$ is either a Reidemeister move or a Morse move, the corresponding commutativity is proven in \cite{ISST1}. Therefore, we only need to treat the case $S$ is a diagrammatic crossing change from $D$ to $D'$. In order to prove it, we need some excision-type argument, described as follows: 
\begin{prop} \label{excision theorem}The induced maps on $E^1$ pages of the diagrams: 
    \[
  \begin{CD}
     E^1(\CKh^\sharp (D))  @>{E^1(\phi^\sharp_{\sqcup H}) }>> E^1(\CKh^\sharp (D \sqcup H )) \\
  @V{E^1(\Psi_1)}VV    @V{E^1(\Psi_2)}VV \\
     E^1( \CKh^\sharp (D) \otimes  \CKh^\sharp (\emptyset ))  @>{E^1( \CKh^\sharp([0,1]\times D) \otimes  \phi^\sharp_{\sqcup H}) }>>   E^1(\CKh^\sharp (D) \otimes  \CKh^\sharp (H))
  \end{CD}
\]
are commutative up to homotopy, where the vertical maps are the excision maps defined for instanton cube complexes with the grading shift of $\Psi_i$ being $\geq (0,0)$ for $i=1,2$.
For the definitions of the $E^1$-term tensor product $E^1(\CKh^\sharp (D) \otimes  \CKh^\sharp (D'))$, see \cite{ISST1}. 

Moreover, the induced maps $E^1(\Psi_1)$ and $E^1(\Psi_2)$ coincide respectively with $\id$ and the disjoint isomorphism in Khovanov theory mentioned in \cite[Proposition 2.2]{ISST1}. 

\end{prop}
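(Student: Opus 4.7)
The plan is to exploit the geometric locality of $\phi^\sharp_{\sqcup H}$: by construction, it is the composition $\Phi'_D \circ \Phi_D \circ C^\sharp_{D_H}$, and every surface cobordism appearing in these three maps is of the form $[0,1]\times D_v \sqcup T$, where $T$ is supported in a small ball $B^4$ disjoint from $[0,1]\times D_v$. In particular, the same family of metrics used to define $\Phi, \Phi', C^\sharp_{D_H}$ for the empty diagram can be used, unchanged, near the added Hopf link when $D$ is nonempty. This is the geometric content underlying the excision maps $\Psi_1, \Psi_2$.

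First I would recall the construction of the excision maps $\Psi_1, \Psi_2$ from \cite{ISST1}: they are chain maps arising from the pair-of-pants/splitting cobordisms associated to a separating $S^3$ that isolates the region where the Hopf link is inserted. Their grading shifts are $\geq (0,0)$ by the same Kronheimer--Mrowka index/energy estimate used throughout the excision arguments in \cite{KM11u,KM14}. To identify $E^1(\Psi_1) = \id$, one observes that $\CKh^\sharp(\emptyset)$ is concentrated in a single bidegree with value $\Z$, and the excision cobordism is a product outside the ball, so the induced map on the $E^1$ page (i.e.\ on each piece $C^\sharp(D_v) \to C^\sharp(D_v)\otimes \Z$) is counted by genuinely $0$-dimensional moduli spaces over a product and reduces to the identity. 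For $\Psi_2$, the same argument shows that on $E^1$, the map splits as $C^\sharp(D_v \sqcup H) \to C^\sharp(D_v) \otimes C^\sharp(H)$ and coincides with the disjoint-union isomorphism of \cite[Proposition~2.2]{ISST1}, because on $E^1$ one only counts moduli spaces of index-$0$ instantons on the split metric, where energy concentrates on each side separately.

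The commutativity up to homotopy of the main square would then follow from a neck-stretching argument. Let $g_R$ be the family of metrics obtained by inserting a long cylinder $[-R,R]\times S^3$ along the separating sphere bounding the ball containing $D_H$ (and the subsequent cobordisms to $K_{(1,1)},K_{(0,1)}$, etc.). For each $R$, the two compositions $E^1(\Psi_2) \circ E^1(\phi^\sharp_{\sqcup H})$ and $E^1(\CKh^\sharp([0,1]\times D) \otimes \phi^\sharp_{\sqcup H}) \circ E^1(\Psi_1)$ are defined by counting $0$-dimensional parameterized moduli spaces over $g_R$, and these two counts agree in the limit $R\to\infty$ by the standard gluing theorem for instantons on split metrics (cf.\ \cite[Proposition~6.11]{KM11u}). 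Counting ends of the associated $1$-dimensional moduli space as $R$ varies from $0$ to $\infty$ yields the desired chain homotopy on the $E^1$ page.

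The main obstacle is the careful bookkeeping of the parameterized families of metrics involved in $\Phi_D$ (1-parameter) and $\Phi'_D$ (up to 2-parameter), and in $C^\sharp_{D_H}$ (the blow-up). One must check that when composed with the neck-stretching family, the resulting total parameter space is compact with boundary controlled only by the two compositions in the square, and that the filtration orders computed in \cref{compatibility_w_instanton} and \cref{E1term} are compatible with the additional neck-stretching direction. Once this is set up, the $E^1$-level identification of $\Psi_2$ with the tensor-product isomorphism (together with $\Psi_1 = \id$ on $E^1$) translates the limit $R\to\infty$ count into the desired square.
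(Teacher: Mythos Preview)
Your approach is in the right spirit, but you have made the problem harder than it needs to be, and the paper's proof exploits a simplification you mention only in passing.

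The key point you underuse is \cref{E1term}: on the $E^1$ page, the map $\phi^\sharp_{\sqcup H}$ is not a parameterized map at all---it reduces to the composition of three \emph{ordinary} cobordism maps
\[
E^1(\phi^\sharp_{\sqcup H}) = f_{(1,2)(1,1)} \circ f_{(2,2)(1,2)} \circ C^\sharp_{D_H}.
\]
The higher-parameter pieces of $\Phi_D$ and $\Phi'_D$ do not contribute on $E^1$ for grading reasons. This completely dissolves what you call ``the main obstacle'': there is no bookkeeping of $1$- and $2$-parameter families to worry about. The paper therefore breaks the square into three separate squares (one for each of $C^\sharp_{D_H}$, $f_{(2,2)(1,2)}$, $f_{(1,2)(1,1)}$ against the excision maps), and since every map in each square is now an ordinary cobordism map between framed instanton complexes, commutativity up to homotopy follows directly from the usual composition law in framed instanton Floer homology. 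No bespoke neck-stretching/family argument is needed.

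Your direct neck-stretching argument would presumably also work, but it is more delicate: you would have to manage a product of the excision neck parameter with the internal metric families of $\Phi_D, \Phi'_D$ and verify that all boundary strata behave. The paper's route avoids this by passing to $E^1$ first and then invoking the standard composition law on unparameterized maps.

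For the identification of $E^1(\Psi_1)$ and $E^1(\Psi_2)$ with $\id$ and the disjoint isomorphism, and for the grading shifts, the paper does not reprove these but simply cites \cite[Proposition~5.1]{ISST1} and \cite[Proposition~4.7]{ISST1}; your sketch of these is consistent with those arguments.
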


\begin{rem}
    Note that the same commutativity holds for a general link $H$ admitting a perturbed Chern-Simons functional whose perturbation is small enough, and critical points have only even Floer grading. 
\end{rem}

\begin{proof}[Proof of \Cref{excision theorem}]

We first recall how the vertical cobordism maps
\begin{align*}
&\Psi_1: \CKh^\sharp (D\sqcup \emptyset) \otimes \CKh^\sharp (\emptyset ) \to \CKh^\sharp (D )\otimes \CKh^\sharp (\emptyset ) \\
& \Psi_2: \CKh^\sharp (D\sqcup H) \otimes \CKh^\sharp (\emptyset ) \to \CKh^\sharp (D) \otimes \CKh^\sharp (H)
\end{align*}
are defined for a pseudo diagram $D$.

For arbitrary disjoint link $K_1 \sqcup K_2 \subset \R^3$, we first consider the geometric cobordism called excision cobordism 
\[
(W, S) : (S^3, K_1 \sqcup K_2 \sqcup  H_\omega ) \sqcup (S^3,   H_\omega  )  \to  (S^3, K_1 \sqcup H_\omega )  \sqcup (S^3, K_2 \sqcup  H_\omega ) ,
\]
where $H_\omega$ is again a Hopf link with admissible $SO(3)$-bundle represented by an arc $\omega$ connecting the different components of the Hopf link, see \cite[Section 4]{ISST1}. In \cite[Section 4]{ISST1}, we considered the cobordisms from $(S^3, K_1 \sqcup H_\omega )  \sqcup (S^3, K_2 \sqcup  H_\omega )$ to $(S^3, K_1 \sqcup K_2 \sqcup  H_\omega ) \sqcup (S^3,   H_\omega  )$. However, the non-trivial surgery in the construction of $W$ happens around the Hopf links $H_\omega$. Therefore, we have the freedom to choose concordances in W connecting $K_i$.

For a general pseudo diagram $D_1 \sqcup D_2 \subset \R^3$ and resolutions $u\in \{0, 1\}^{N_1}$, $v\in \{0, 1\}^{N_{2}}$ and $w\in \{0, 1\}^{N_1+N_2}$, we define an excision cobordism between resolved link diagrams:
\[(W, S_{w;uv}): (S^3, (D_1\sqcup D_2)_{w}\sqcup H_{\omega})\sqcup (S^3, H_{\omega}) \to  (S^3, (D_1)_{u}\sqcup H_{\omega})\sqcup (S^3, (D_2)_{v}\sqcup H_{\omega})  . \]
As it is given in \cite{ISST1}, one can construct an orbifold metric $\breve{g}$ on 
\[
\overline{W} =(-\infty, -1] \times ( S^3 \sqcup S^3) \cup W \cup [1, \infty)  \times ( S^3 \sqcup S^3)
\]
with order 2 orbifold singularity: 
\[
\overline{S}_{w;uv} = (-\infty, -1] \times ( (D_1)_u \sqcup H_\omega  \sqcup (D_2)_u \sqcup H_\omega) \cup {S}_{w;uv} \cup [1, \infty)  \times ( (D_1 \sqcup D_2)_w  \sqcup   H_\omega  \sqcup H_\omega ). 
\]
For critical points $\alpha$, $\beta$ and $\gamma$ of perturbed Chern--Simons functionals of $(D_1 \sqcup D_2)_w$, $(D_1)_u$ and $(D_2)_v$ respectively, we consider the moduli space
\[
\breve{M}_{w;uv}(\alpha; \beta, \gamma) \]
which is the instanton moduli space for the orbifold metric $\check{g}$ with asymptotic conditions on ends determined by $(\alpha, \beta, \gamma)$. 
Using these moduli spaces, the excision map is defined as 
\[ 
E^1(\Psi_{w;uv})(\alpha ):=\sum_{\beta \in \mathfrak{C}_{\pi}(D_1)_u, \gamma \in \mathfrak{C}_{\pi}(D_2)_v}\#\breve{M}_{w;uv}(\alpha; \beta, \gamma)_{0}\cdot \beta \otimes \gamma.\]
Note that the higher version of $\Psi_{w;uv}$ can be defined in a similar way to \cite{ISST1}. 
Since $E^1(\Psi_{w;uv})$ can be regarded as the usual cobordism map, we see that $E^1(\Psi_{w;uv})$ is a chain map.  
Now,  we define $\Psi_1$ and $\Psi_2$ as the maps $\Psi_{w;uv}$ for $D_1= D, D_2 = \emptyset$ and $D_1 = D, D_2 =H$.

Next, we shall make a homotopy between $
E^1(\Psi_2) \circ E^1(\phi^\sharp_{\sqcup H})$ and $ E^1 (\CKh^\sharp([0,1]\times D) \otimes \phi^\sharp_{\sqcup H} ) \circ E^1(\Psi_1)$. 
From \cref{E1term}, 
we see 
\[
E^1(\phi^\sharp_{\sqcup H } ) = f_{(1,2)(1,1)} \circ f_{(2,2)(1,2)}\circ C^\sharp_{D_H}.
\]
Therefore, it is sufficient to make homotopies connecting 
\begin{align*}
&E^1\Psi_2 \circ E^1C^\sharp_{D_H} \text{ and } \left(E^1 \CKh^\sharp([0,1]\times D) \otimes C^\sharp_{D_H} \right)\circ E^1\Psi_1\\
&E^1\Psi_2 \circ f_{(2,2)(1,2)} \text{ and } \left( E^1 \CKh^\sharp([0,1]\times D) \otimes f_{(2,2)(1,2)}\right) \circ E^1\Psi_1  \\
&E^1\Psi_2 \circ f_{(1,2)(1,1)} \text{ and } \left( E^1 \CKh^\sharp([0,1]\times D) \otimes f_{(1,2)(1,1)} \right) \circ E^1\Psi_1 . 
\end{align*}

On the other hand, these maps are just cobordism maps, therefore, the commutativity follows from the usual composition law in framed instanton Floer homology. 

Finally, we show that 
the induced maps $E^1(\Psi_1)$ and $E^1(\Psi_2)$ respectively coincide with $\id$ and the disjoint isomorphism in Khovanov theory. 
This follows from the same discussion as \cite[Proposition 5.1]{ISST1}. Also the grading shifts of $\Psi_i$ have also been computed in \cite[Proposition 4.7]{ISST1}.  

This completes the proof. 

\end{proof}

From \cref{excision theorem}, the computation of
\[
E^1 ( \phi^\sharp_{\sqcup H}) :  E^1(\CKh^\sharp (D)) \to E^1(\CKh^\sharp (D\sqcup H )), 
\]
reduces to the computation of 
\[
E^1 ( \phi^\sharp_{\sqcup H}) :  E^1(\CKh^\sharp (\emptyset)) \to E^1(\CKh^\sharp (H ). 
\]

\begin{figure}[tbp]
\hspace{0mm}
\begin{minipage}[]{0.4\hsize}
\hspace{-3mm}
\includegraphics[scale = 0.7]{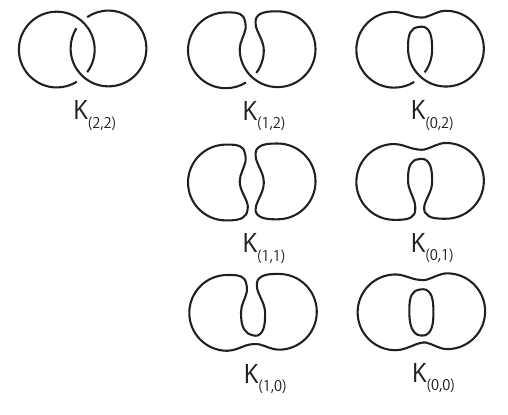}
\caption{\label{fig:hopf-instanton-cube} }
 \end{minipage}
 \begin{minipage}[]{0.4\hsize}
 \vspace{31mm}
\begin{center}
\hspace{-3mm}
\includegraphics[scale = 0.7]{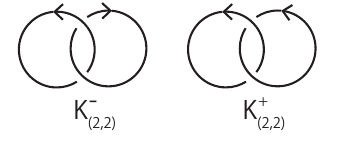}
\caption{\label{fig:hopf-instanton-oriented}}
\end{center}
 \end{minipage}
 \includegraphics[scale = 0.7]{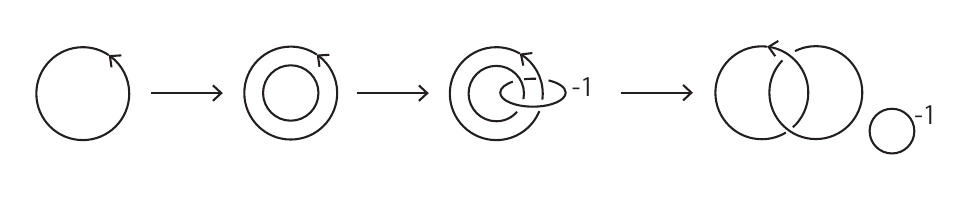}
\caption{\label{fig:cob-cp2} }
\end{figure}

\begin{figure}
    \centering
     \includegraphics[scale = 0.7]{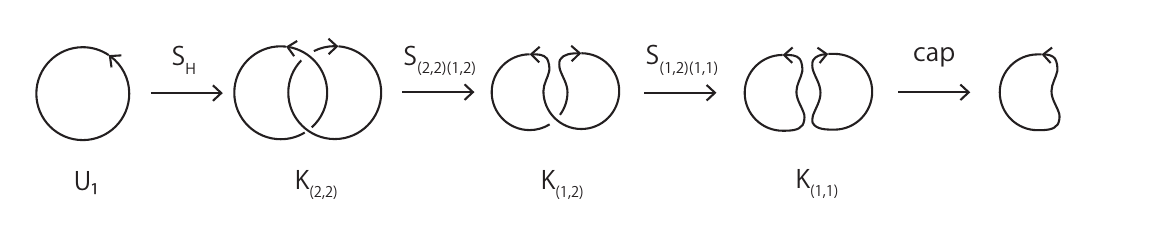}
    \caption{The cobordism $S$ in the proof of \Cref{prop:hopf-compute-h}. Here the orientation of each diagram is induced from $U_1$ so that all orientations are compatible with an orientation of $S$.}
    \label{fig:cob-cp2-2}
\end{figure}

Let 
\[
\iota^\sharp \colon \Z= \CKh^\sharp(U_0 = \emptyset ) \to \CKh^\sharp(U_1)
\]
be the map induced from a trivial disk in $[0,1] \times S^3$, which is a filtered map of order $\geq (0,1)$. 
Let $D^\pm_H$ denote $D_H$ with orientation whose boundary is $K^\pm_{(2,2)}$ in \Cref{fig:hopf-instanton-oriented},
and $\hat{S}^\pm_H$ be the composition of trivial disk 
in $[0,1] \times S^3$ and $D^\pm_H$. 
Note that $\CKh^\sharp(\hat{D}^\pm_H) = \CKh^\sharp(D^\pm_H) \circ \iota^\sharp$.
\begin{prop}
\label{prop:hopf-compute-h}
The following diagram commutes:
\[
\xymatrix@C=80pt{
E^{1}_0(\CKh^\sharp(U_0))
\ar[r]^-{E^1_0(\CKh^\sharp(\hat{S}^\pm_H))}
\ar[d]_-{\cong}
& E^{1}_{-n_+}(\CKh^\sharp(K^\pm_{(2,2)}))
\ar[d]^-{\cong}\\
\Z
\ar[r]^-{1 \mapsto \xi^\mp}
& \CKh^{-n_+,*}(H^{\mp})
}
\]
Here $\xi^{\mp}$ in the bottom horizontal is cycles defined in \Cref{subsec:x-ch-map}.
\end{prop}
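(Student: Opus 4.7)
The plan is to localize the calculation to a $4$-ball containing $\hat{S}^\pm_H$, isolate the surviving component on the $E^1$-page, and then match it with the combinatorial cycle $\xi^\mp$ using the geometric description from \Cref{rem:Hopf-from-cc}.

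First, since the cobordism $\hat{S}^\pm_H$ is supported inside a $4$-ball whose boundary meets the link only in the local Hopf tangle, the map $\CKh^\sharp(\hat{S}^\pm_H)$ depends only on the local model and factors as
\[
    \Z = \CKh^\sharp(\varnothing) \xrightarrow{\iota^\sharp} \CKh^\sharp(U_1) \xrightarrow{\CKh^\sharp(D^\pm_H)} \CKh^\sharp(K^\pm_{(2,2)}).
\]
By the grading computation carried out in the proof of \Cref{compatibility_w_instanton}, together with the description of $E^1(\phi^\sharp_{\sqcup H})$ in \Cref{E1term} (applied with the outer diagram empty), the only component of the composition that contributes in homological degree $-n_+$ comes from the fully $1$-resolved summand $C^\sharp(K_{(1,1)}) \cong V \otimes V$, corresponding to a pair of unknots; every other contribution either lives in strictly higher homological filtration or is killed by the cube differential on $E^1$.

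Second, I would identify this surviving component explicitly. It is a $\Z$-linear map of the form
\[
    f_{(1,2)(1,1)} \circ f_{(2,2)(1,2)} \circ C^\sharp_{D_H} \circ \iota^\sharp\colon \Z \longrightarrow V\otimes V,
\]
between free $\Z$-modules, and a rank/grading argument shows that in the relevant bidegree the target is free of rank one. Hence the image is a generator up to sign, and under Kronheimer--Mrowka's identification of $V\otimes V$ with the appropriate summand of the Khovanov complex, this generator is precisely the labelling of two circles that defines $\zeta_0$ or $\zeta_1$ (depending on the orientation $\pm$).

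Third, I would match this with $\xi^\mp$ by invoking \Cref{rem:Hopf-from-cc}, which writes $\xi^\mp$ as the image of $1\in R$ under a composition of two cap maps, an R2-move, and the crossing change map $f_0$ or $f_1$. On the instanton side, the same combinatorial scheme is realised geometrically by $\hat{S}^\pm_H$ together with the subsequent $1$-handle and R2 moves appearing in the construction of $\phi^\sharp_S$. Compatibility of Reidemeister and Morse cobordism maps with the Kronheimer--Mrowka identification has already been established in \cite{ISST1}, so the problem reduces to matching the local blow-up map $C^\sharp_{D_H}$ with the crossing change map at the level of generators.

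The main obstacle is this last step: pinning down the sign of the local moduli count for the blow-up. I would not attempt a direct count. Instead, by composing with further Morse cobordisms that close the tangle up, one reduces the identification to the known calculation of $I^\sharp(H)\cong \Z^2_{(0)}\oplus \Z^2_{(2)}$ recorded in \Cref{gradings}; naturality under this closing operation, together with the preferred direction convention from \Cref{rem:sign-ambiguity}, fixes the sign to match $\xi^\mp$ and completes the argument.
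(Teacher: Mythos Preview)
Your outline correctly isolates the relevant component via \Cref{E1term}: on the $E^1$-page only the summand $C^\sharp(K_{(1,1)}) \cong V\otimes V$ survives, and the map factors as
\[
(f_{(1,2)(1,1)})_* \circ (f_{(2,2)(1,2)})_* \circ I^\sharp(D_H)\circ\iota^\sharp.
\]
However, your ``rank/grading argument'' in the second step does not go through. At homological degree $-n_+$ the space of cycles in $V\otimes V$ that could arise is \emph{two}-dimensional, spanned by $\bv_+\otimes\bv_- - \bv_-\otimes\bv_+$ and $\bv_-\otimes\bv_-$; the quantum grading on the instanton side is only a filtration, so you cannot use it to cut down to a rank-one target. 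Thus the image is a priori $z = a(\bv_+\otimes\bv_- - \bv_-\otimes\bv_+) + b(\bv_-\otimes\bv_-)$ for unknown $a,b\in\Z$, and both must be determined. Your appeal to \Cref{rem:Hopf-from-cc} is circular here: that remark identifies $\zeta_i$ via the Khovanov crossing change maps, but what is at stake is precisely whether the instanton blow-up map matches those maps on $E^1$.

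The paper resolves this by a topological trick rather than a grading count. One caps off the \emph{second} circle of $K_{(1,1)}$ by a disk $S_\varepsilon$, so that the image of $z$ becomes $a\bv_+ + b\bv_-\in I^\sharp(U_1)$. The composite cobordism $S = S_\varepsilon\circ S_{(1,2)(1,1)}\circ S_{(2,2)(1,2)}\circ D_H$ is then an annulus in $([0,1]\times S^3)\#\overline{\C P^2}$ from $U_1$ to $U_1$; it is the blow-up of an immersed annulus obtained from the product by a single twist move. Now one invokes the twist-move invariance and the concordance identity for $I^\sharp$ recorded in \Cref{prop:unlink-properties}(1),(3): these force $I^\sharp(S)(\bv_+)=\bv_+$, hence $(a,b)=(1,0)$. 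Your final paragraph gestures at closing up, but reducing to the abstract isomorphism $I^\sharp(H)\cong\Z^4$ from \Cref{gradings} does not by itself pin down the coefficients; the specific input you are missing is \Cref{prop:unlink-properties}.
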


To prove \Cref{prop:hopf-compute-h},
we use the following lemmas.

\begin{prop}
\label{prop:unlink-properties}
The following properties hold regarding the maps in framed instanton Floer homology:

\begin{itemize}[leftmargin=2em]
  \item[\textnormal{(1)}] \textbf{Negative twist invariance} \textup{(\cite[Proposition 5.2]{KM11})}:
  If \( S' \) is obtained from an immersed link cobordism \( S : K \to K' \) by a negative twist move, then
  \[
  I^\sharp(S') = I^\sharp(S)
  \]

  \item[\textnormal{(2)}] \textbf{Unlink identification} \textup{(\cite[Corollary 8.5]{KM11u})}:
  Let \( V = \langle \bv_+, \bv_- \rangle \cong \mathbb{Z}^2 \) be a \( \mathbb{Z}/4 \)-graded group with \( \gr(\bv_\pm) = \pm 1 \).
  Then there are isomorphisms of \( \mathbb{Z}/4 \)-graded abelian groups
  \[
  \Phi_n \colon V^{\otimes n} \to I^\sharp(U_n)
  \]
  for all \( n \), satisfying
  \[
  (\iota^\sharp)_*^{\otimes n}(\bu_0) = \Phi_n(\bv_+ \otimes \cdots \otimes \bv_+),
  \]
  where \( \bu_0 \) is the chosen generator for \( I^\sharp(U_0) \cong \mathbb{Z} \).

  \item[\textnormal{(3)}] \textbf{Concordance identity} \textup{(\cite[Lemma 8.9]{KM11u})}:
  Let \( S \) be an oriented concordance from the standard unlink \( U_n \) to itself, consisting of \( n \) oriented annuli in \( [0,1] \times \mathbb{R}^3 \), and preserving the order of the components of \( U_n \). Then the induced map
  \[
  I^\sharp(S) \colon I^\sharp(U_n) \to I^\sharp(U_n)
  \]
  is the identity.
\end{itemize}
\end{prop}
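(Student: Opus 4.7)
Since the three items are collected from Kronheimer--Mrowka's foundational papers on framed singular instanton homology, the plan is to reduce each assertion to the precise statement in \cite{KM11} or \cite{KM11u} and verify that the hypotheses of those statements are met in the form used here. The main work is bookkeeping: matching our orientation, grading, and blow--up conventions with the conventions of the cited references.

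For (1), the plan is to first recall the geometric model of a negative twist move: a local modification that introduces a single negatively self-intersecting point and can be realized on a blown-up cobordism $(W \#\overline{\mathbb{CP}}^2, S')$, where $S'$ is the proper transform of $S$ at the new double point. Following \cite[Proposition~5.2]{KM11}, the parametrized instanton moduli space over the blow-up fails to contribute an extra factor precisely because the line bundle $\mathcal{O}(1)$ twists the index in a way that cancels the change in expected dimension caused by the negative double point. I would re-derive the relevant dimension count in our setting, check that the orientation of the exceptional sphere agrees with the one used in \cite{KM11}, and conclude that the induced map $I^\sharp(S')$ coincides identically with $I^\sharp(S)$.

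For (2), I would invoke \cite[Corollary~8.5]{KM11u}, whose proof constructs the isomorphism $\Phi_n\colon V^{\otimes n}\to I^\sharp(U_n)$ inductively via the skein exact triangle together with the computation $I^\sharp(U_1)\cong V$. The only point requiring care is that $\iota^\sharp$ here is the cobordism map for the standard oriented disk, so its composite with the TQFT identification matches $\bv_+$ by the normalization chosen in \cite{KM11u}. I would verify this normalization by tracking the pair-of-pants cobordism $U_0\to U_1$ and comparing with the algebra structure that $\Phi_n$ intertwines. The assertion that $(\iota^\sharp)_*^{\otimes n}(\bu_0)=\Phi_n(\bv_+\otimes\cdots\otimes\bv_+)$ then follows from monoidality of the disjoint union operation, together with the identity $\Phi_1(\bv_+) = \iota^\sharp(\bu_0)$.

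For (3), the plan is to first reduce to the case of a single annulus by the monoidal structure of $I^\sharp$ under disjoint union. For a single oriented annulus $A\colon U_1\to U_1$, isotopy rel boundary to the trivial cylinder $[0,1]\times U_1$ gives $I^\sharp(A)=\mathrm{id}$ by functoriality of the cobordism maps, which holds up to sign; fixing the sign via the Lee/parallel-transport argument of \cite[Lemma~8.9]{KM11u} yields the strict identity. Since the annuli are assumed to preserve the ordering of components of $U_n$, the tensor product of these single-component identities gives the identity on $I^\sharp(U_n)=V^{\otimes n}$. The main potential obstacle is ensuring that the sign ambiguity inherent in Floer-theoretic cobordism maps is resolved consistently across all three statements, so that they can be composed with the maps constructed in \Cref{prop:hopf-compute-h} without introducing hidden signs; this is handled by the orientation conventions already fixed in \cite[Section~8]{KM11u}.
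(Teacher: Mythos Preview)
The paper does not supply its own proof of this proposition: it is stated purely as a compendium of results from \cite{KM11} and \cite{KM11u}, with the three citations serving as the entire justification. Your proposal goes further by sketching the arguments behind those citations, which is reasonable in spirit, and your accounts of (1) and (2) are broadly aligned with what the references do.

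Your sketch for (3), however, contains a genuine gap. You assert that for a single annulus $A\colon U_1\to U_1$, ``isotopy rel boundary to the trivial cylinder $[0,1]\times U_1$'' gives the result. An arbitrary smooth concordance from the unknot to itself is \emph{not} known to be isotopic rel boundary to the product cylinder; indeed, capping both ends with standard disks turns such an annulus into a smooth $2$-sphere in $S^4$, and deducing triviality of the annulus this way would essentially require the smooth unknotting conjecture for $2$-spheres in $S^4$, which is open. Kronheimer--Mrowka's proof of \cite[Lemma~8.9]{KM11u} does not proceed via isotopy: it uses the algebraic structure of $I^\sharp(U_n)$---in particular its identification with $V^{\otimes n}$ and the module and grading constraints on cobordism maps---to pin down the induced map directly. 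Your preliminary reduction to the one-component case is also suspect: even when the $n$ annuli match components bijectively, they need not embed as a split disjoint union in $[0,1]\times\mathbb{R}^3$ (they may be braided or linked with one another), so the cobordism map does not a priori factor as a tensor product of single-component maps.
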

\begin{proof}[Proof of \Cref{prop:hopf-compute-h}]
Note that $hE^1_{-n_+}(\CKh^\sharp(K^\pm_{(2,2)}))=I^\sharp(K_{(1,1)})$, and hence
we have
\begin{align*}
&hE^1_0(\CKh^\sharp(S^{\pm}_H)) \\[2mm]
&= (f_{(1,2)(1,1)})_* \circ (f_{(2,2)(1,2)})_* \circ I^\sharp(D_H) \colon I^\sharp(U_1) 
\to I^\sharp(K_{(1,1)}), 
\end{align*}
where 
\begin{align*}
\begin{cases}
&I^\sharp(D_H) :  I^\sharp(U_1)  \to I^\sharp(K_{(2,2)}) \\ 
&(f_{(2,2)(1,2)})_* : I^\sharp(K_{(2,2)}) \to I^\sharp(K_{(1,2)}) \\ 
& (f_{(1,2)(1,1)})_*: I^\sharp(K_{(1,2)}) \to I^\sharp(K_{(1,1)})
\end{cases}
\end{align*}
are the corresponding induced cobordism maps on the $E^1$-page from \cref{E1term}.  
Moreover, both $(f_{(1,2)(1,1)})_*$ and $(f_{(2,2)(1,2)})_*$
can be regarded as cobordism maps induced from 
$S_{(2,2)(1,2)}$ and $S_{(1,2)(1,1)}$ respectively.
Therefore, to prove the proposition, it suffices to compute the image of $\bv_+$ (in \Cref{prop:unlink-properties}(2)) by 
\[
I^\sharp (S_{(1,2)(1,1)} \circ S_{(2,2)(1,2)}\circ D_H) = (f_{(1,2)(1,1)})_* \circ (f_{(2,2)(1,2)})_* \circ I^\sharp(D_H)  ,
\]
which is denoted by $z$ in the following.

Next, note that $hE^1(\CKh^\sharp(S^{\pm}_H))$ is a chain map with codomain  
$hE^1(\CKh^\sharp(K^{\pm}_{(2,2)})) \cong \CKh(H^{\mp})$,
and hence the element $z \in I^\sharp(K_{(1,1)})$ is isomorphically mapped to a cycle of $\CKh(H^{\mp})$ with h-grading $-n_+$.
This implies that 
\[
z= a (\bv_+ \otimes \bv_- -\bv_- \otimes \bv_+) + b (\bv_- \otimes \bv_-) \in I^\sharp(K_{(1,1)})
\]
for some $a,b \in \Z$, where we order the components of $K_{(1,1)}$ so that the left component is the first.
Moreover, if we consider the cobordism 
\[
S_{\varepsilon} = [0,1] \times U_1 \sqcup D_0 : K_{(1,1)} \to U_1 \subset [0,1]\times \R^3
\]
corresponding to the rightmost arrow in \Cref{fig:cob-cp2-2},
then the image of $z$ by $
I^\sharp(S_{\varepsilon})$ is 
\[
a \bv_+ + b \bv_- = I^\sharp(S_{\varepsilon}) (z).
\]
Thus, to prove the proposition, it suffices to compute the image of $\bv_+$ by the map induced from the cobordism
\[
S := S_{\varepsilon} \circ S_{(1,2)(1,1)} \circ S_{(2,2)(1,2)}\circ D_H,
\]
shown in \Cref{fig:cob-cp2-2}. Here the orientation of each diagram in \Cref{fig:cob-cp2-2} is induced from the leftmost diagram $U_1$ so that all orientations are compatible with an orientation of $S$. 
On the other hand, the cobordism $S$ is diffeomorphic to $[0,1]\times S^1$ and the geometric and algebraic intersection numbers of $S$ with $\C P^1$ are 2 and 0 respectively.
Note that $S$ is a blow-up of an immersed surface $S_*$ and $S_*$ is a negative or positive twist move of the product cobordism. 
Now, it follows from \Cref{prop:unlink-properties}(1) and
\Cref{prop:unlink-properties}(3) that \[
I^\sharp(S)(\bv_+) = \bv_+,
\]
which shows $(a,b)=(1,0)$ and completes the proof.
\end{proof}

\subsubsection{Proof of \Cref{thm:induced-map}}

Since \Cref{thm:induced-map} for embedded cobordisms is already proved, we only need to prove that the following diagram commutes:

\begin{align} \label{eq:diag-h}
\begin{split}
\xymatrix@C=120pt{
E^{1}_p(\CKh^\sharp(D))
\ar[r]^-{E^1_p\left(\CKh^\sharp(([0,1] \times D )\sqcup_i \hat{S}^+_H \sqcup_i \hat{S}^-_H)\right)}
\ar[d]_-{\cong}
& E^{1}_{p-2s_+}(\CKh^\sharp(D \sqcup_i (K_{(2,2)}^+) \sqcup_i (K_{(2,2)}^-)))
\ar[d]^-{\cong}\\
\CKh^{p,*}(D^*)
\ar[r]^-{1 \otimes_i \xi^- \otimes_i \xi^+}
& \CKh^{p,*}(D^*) \otimes_i \CKh^{-2,*}(H^-_i) 
\otimes_i \CKh^{0,*}(H^+_i) 
}
\end{split}
\end{align}

Applying \Cref{excision theorem} and \Cref{lem:tensor-spectral-map}, we have identifications
\begin{align*}
&E^1_p\left(\CKh^\sharp(([0,1] \times D )\sqcup_i \hat{S}^+_H \sqcup_i \hat{S}^-_H)\right)\\[2mm]
&= 
E^1_p\left(\CKh^\sharp(([0,1] \times D )) \otimes_i \CKh^\sharp(\hat{S}^+_H) \otimes_i \CKh^\sharp(\hat{S}^-_H)\right)\\[2mm]
&=
E^1_p \left(\CKh^\sharp([0,1]\times D) \right)\otimes_i 
E^1_0 \left(\CKh^\sharp(\hat{S}^+_H) \right) \otimes_i 
E^1_0 \left(\CKh^\sharp(\hat{S}^-_H) \right).
\end{align*}
Now, the commutativity of the diagram
(\ref{eq:diag-h}) follows from \Cref{prop:hopf-compute-h}.
\qed

\begin{rem}
    In \cite[Theorem 1.1]{ISST1}, we have proven the corresponding statements for filtrations on quantum gradings, i.e. 
    the $E^1$-term with respect to the quantum filtration of $\phi^\sharp_S$ induces the corresponding map in Khovanov theory.  
    We believe this holds as well without essential change. 
\end{rem}
\section{Computation using equivariant instanton theory}
\label{sec:computation}

Note that the cube complexes are written in terms of framed instanton theory. On the other hand, as we mentioned in the introduction, we shall use equivariant instanton Floer theory \cite{DS19} to compute cobordism maps. 

\subsection{Review of equivariant instanton theory} 

We first briefly review equivariant instanton theory. 

\subsubsection{Algebra of $\mathcal{S}$-complexes and special cycles}
Firstly, we review the algebraic background described in \cite{DS19, DS20, daemi2022instantons}.

An $\mathcal{S}$-complex over a ring $R$ consists of a triple $(\wt{C}_* , \wt{d}, \chi)$ where $\wt{C}_{*}$ is a finitely generated free graded $R$-module, $\wt{d}$ and $\chi $ are endomorphisms on $\wt{C}_*$ such that
\begin{itemize}
    \item The degree of $\wt{d}$ and $\chi $ is $-1$ and $+1$ respectively,

    \item $\wt{d}^2 =0$, $\chi^2 =0$, and $\chi \wt{d}+\wt{d} \chi=0$,

    \item $\mathrm{Ker}(\chi)/\mathrm{Im}(\chi)\cong R_{(0)}$, where $R_{(0)}$ is a copy of $R$ equipped with degree $0$.
\end{itemize}

An $\mathcal{S}$-complex has a natural decomposition:
\begin{eqnarray}\label{decomp of S-cpx}
\wt{C}=C_{*}\oplus C_{*-1}\oplus R_{(0)}
\end{eqnarray}
where $C=\mathrm{Im}\chi$. 
The differential $\wt{d}$ has the form 

\begin{eqnarray*}\label{diff of S-cpx}
\wt{d}=\begin{pmatrix}
    d&0&0\\
    v&-d&\delta_2 \\
    \delta_{1}&0&0
\end{pmatrix}
\end{eqnarray*}
with respect to the decomposition (\ref{decomp of S-cpx}).

\textit{A morphism of $\mathcal{S}$-complexes (or $\mathcal{S}$-morphism)} is a graded $R$-module map $\wt{\lambda}:\wt{C}\rightarrow\wt{C}'$ which commutes with differentials and the action of $\chi$ on $\wt{C}$ and $\wt{C'}$.
\textit{An $\mathcal{S}$-chain homotopy} $\wt{K}$ between two $\mathcal{S}$-morphism $\wt{\lambda}$ and $\wt{\lambda}'$ is a $R$-module map which satisfies $\wt{\lambda}-\wt{\lambda}'=\wt{K}\wt{d}+\wt{d}'\wt{K}$ and which anti-commutes with the $\chi$-actions.
A morphism of $\mathcal{S}$-complexes has the form 
\begin{eqnarray}\label{S-morphism}
    \wt{\lambda}=
\begin{pmatrix}
\lambda&0&0\\\mu&\lambda&\Delta_2 \\ \Delta_1&0&c_0
\end{pmatrix}
\end{eqnarray}
with respect to the decomposition (\ref{decomp of S-cpx}).
For each integer $j>0$, we define an element
\begin{eqnarray}\label{const_i}
    c_{j}:=\delta_{1}'(v')^{j-1}\Delta_{2}(1)+\Delta_1 v^{j-1} \delta_2 (1) +\sum_{l=0}^{j-2}\delta'_{2}(v')^{l}\mu v^{j-2-l}\delta_{2} (1).
\end{eqnarray}
in the coefficient ring $R$.
For a given integer $i\geq 0$,
 an $\mathcal{S}$-morphism $\wt{\lambda}$ is called a
\textit{ height $i$ morphism} if it has homological degree $2i$ and satisfies $c_j=0$ for $j<i$.
Moreover, we call a height $i$ morphism $\wt{\lambda}$ \textit{strong}
if $c_i$ is invertible in $R$.

Given an $\mathcal{S}$-complex $\wt{C}$, we can form equivariant chain complexes, which admit $R[x]$-module structures.
Equivariant chain complexes realize three flavors of $R[x]$-modules as their homology groups.
Each of the flavors admits two different models of underground chain complexes, called \textit{large} and \textit{small} equivariant complexes.
Moreover, each flavor of large and small equivariant complexes is chain homotopy equivariant through chain homotopy maps compatible with the $R[x]$-actions.

The large equivariant complexes $(\widecheck{\bf{C}}, \widecheck{\bf{d}})$, $(\widehat{\bf{C}}, \widehat{\bf{d}})$, and $(\overline{\bf{C}}, \overline{\bf{d}})$
are defined as follows:
\begin{eqnarray*}
\widecheck{\mathbf{C}}_* &:=& \widetilde{C}_* \otimes_R \left( R[\![x^{-1}, x] / R[x] \right), \quad 
\widecheck{d} = \widetilde{d} \otimes 1 - \chi \otimes x, \\
\widehat{\mathbf{C}}_* &:=& \widetilde{C}_* \otimes_R R[x], \quad 
\widehat{d} = -\widetilde{d} \otimes 1 + \chi \otimes x, \\
\overline{\mathbf{C}}_* &:=& \widetilde{C}_* \otimes_R R[\![x^{-1}, x], \quad 
\overline{d} = -\widetilde{d} \otimes 1 + \chi \otimes x.
\end{eqnarray*}
Large equivariant complexes admit the following exact triangle sequence on the homology level:

\[
\begin{tikzcd}
H_{*}(\widecheck{\mathbf{C}} , \widecheck{\bf{d}}) \arrow[rr, "\bf{j}_{*}"] 
& &\arrow[dl, "\bf{i}_{*}"] H_{*}(\widehat{\bf{C}}, \widehat{\bf{d}}) \\
& H_{*}(\overline{\bf{C}}, \overline{\bf{d}}) \arrow[ul, "\bf{p}_{*}"']
\end{tikzcd}
\]

\noindent
where the maps $\bf{i}_{*}$, $\bf{j}_{*}$, and $\bf{p}_{*}$ are defined as the natural way on the chain level.
If an $\mathcal{S}$-morphism $\wt{\lambda}:\wt{C}\rightarrow \wt{C}'$
 is given, we have induced $R[x]$-module homomorphisms 
 \[\widecheck{{\bf{\lambda}}}:\widecheck{\bf{C}}\rightarrow \widecheck{\bf{C}}' ,\quad
 \widehat{\bf{\lambda}}:\widehat{\bf{C}}\rightarrow \widehat{\bf{C}}', \quad \overline{{\bf{\lambda}}}:\overline{\bf{C}}\rightarrow \overline{\bf{C}}'\]
which are defined by taking tensor product of the original map $\wt{\lambda}$ and the identities.

Small equivariant complexes $(  \widecheck{\mathfrak{C}}_*, \widecheck{\mathfrak{d}})$, $(\widehat{\mathfrak{C}}_*,  \widehat{\mathfrak{d}})$, and $(\overline{\mathfrak{C}}_{*}, \overline{\mathfrak{d}})$ are defined as follows:



\begin{eqnarray*}
    \widecheck{\mathfrak{C}}_* := {C}_* \oplus R[\![x^{-1}, x]/R[x],&\widecheck{\mathfrak{d}}(\zeta, \sum_{i=-\infty}^{-1} a_i x^i ) := ( {d}\zeta - \sum_{i=0}^{N} v^i \delta_2 (a_i), 0 ),\\
     \widehat{\mathfrak{C}}_*:=C_{*}\oplus R[x],&\widehat{\mathfrak{d}} (\zeta, \sum_{i=0}^{N} a_i x^i ) := ({d}\zeta, \sum_{i= -\infty}^{-1} \delta_1 v^{-i-1} (\zeta) x^i ),\\
     \overline{\mathfrak{C}}_{*}:=R[\![x^{-1}, x],& \overline{\mathfrak{d}} := 0
\end{eqnarray*}


\noindent
The chain complexes $\widecheck{\mathfrak{C}}$ and $\widehat{\mathfrak{C}}_*$
has the $R[x]$-module structures which are given by

\[
x \cdot ( \alpha, \sum_{i=-\infty}^{-1} a_i x^i ) := 
( v(\alpha) + \delta_2(a_{-1}), \sum_{i=-\infty}^{-2} a_i x^{i+1} ), \quad
x \cdot ( \alpha, \sum_{i=0}^{N} a_i x^i ) := 
( v(\alpha), \delta_1(\alpha) + \sum_{i=0}^{N} a_i x^{i+1} ).
\]

\noindent
The chain complex $\overline{\mathfrak{C}}_{*}=R[\![x^{-1}, x]$ has the obvious $R[x]$-module structure.


Small equivariant complexes admit the following exact triangle at the level of homology.

\[
\begin{tikzcd}
H_{*}(\widecheck{\mathfrak{C}}, \widecheck{\mathfrak{d}}) \arrow[rr, "\mathfrak{j}_{*}"] 
& &\arrow[dl, "\mathfrak{i}_{*}"] H_{*}(\widehat{\mathfrak{C}},     \widehat{\mathfrak{d}}) \\
& H_{*}(\overline{\mathfrak{C}}, \overline{\mathfrak{d}}) \arrow[ul, "\mathfrak{p}_{*}"']
\end{tikzcd}
\]
Note that the homology group $H_{*}(\overline{\mathfrak{C}}, \overline{\mathfrak{d}})$ is canonically isomorphic to $R[\![x^{-1}, x]$.
On the chain level, the maps $\mathfrak{i}_{*}$, $\mathfrak{j}_{*}$, and $\mathfrak{p}_{*}$ are defined by

\[\mathfrak{i}(\alpha, \sum_{i=0}^{N}a_{i}x^{i}):=\sum_{i=-\infty}^{-1}\delta_{1}v^{-i-1}(\alpha)x^{i}+\sum_{i=0}^{N}a_{i}x^{i},\ \ \  \mathfrak{j}(\alpha, \sum_{i=-\infty}^{-1}a_{i}x^{i}):=(-\alpha, 0)\]

\[\mathfrak{p}(\sum_{i=-\infty}^{N}a_{i}x^{i}):=(\sum_{i=0}^{N}v^{i}\delta_{2}(a_{i}), \sum_{i=-\infty}^{-1}a_{i}x^{i}).\]

The large and small equivariant complexes give isomorphic homology groups of each flavor.
In particular, there is an $R[x]$-equivariant chain homotopy equivalence in each flavor of equivariant complexes (\cite[Lemma 4.11]{DS19}).
The $R$-module homomorphisms
\[
\widehat{\Psi} : \widehat{\mathfrak{C}} \rightarrow \widehat{\mathbf{C}} 
\quad\quad
\widehat{\Phi} : \widehat{\mathbf{C}} \rightarrow \widehat{\mathfrak{C}}
\]
that give chain homotopy equivalences between the hat flavor complexes are given by

\[\widehat{\Psi}(\alpha, \sum_{i=0}^{N}a_{i}x^{i}):=(\sum_{i=1}^{N}\sum_{j=0}^{i-1}v^{i}\delta_{2}(a_{i})x^{i-j-1}, \alpha, \sum_{i=0}^{N}a_{i}x^{i}).\]
\[\widehat{\Phi}(\sum_{i=0}^N \alpha_i x^i, \sum_{i=0}^N \beta_i x^i, \sum_{i=0}^N a_i x^i):=(\sum_{i=0}^N v^i(\beta_{i})x^i, \sum_{i=0}^N a_{i}x^i+\sum_{i=1}^{N } \sum_{j=0}^{-N}\delta_{1}v^{j}(\beta_{i})x^{i-j-1} ).\]

The reader can find the definition of other homomorphisms in \cite[Section 4.3]{DS19}.
Based on the above setup, we recall the definition of special cycles.
\begin{defn}(\cite[Definition 3.1]{daemi2022instantons})
Let $k\in \mathbb{Z}$ and $f \in R$.
    A chain $z\in \widehat{\bf{C}}$ is called a special $(k, f)$-cycle if there exists $\mathfrak{z}\in \widehat{\mathfrak{C}}$ such that $\widehat{\Psi}(\mathfrak{z})=z$ and $\mathfrak{i}(\mathfrak{z})=fx^{-k}+\sum_{i=-\infty}^{-k-1}b_{i}x^i$.
\end{defn}

The behavior of special cycles under the induced map from an $\mathcal{S}$ -morphism is summarized as follows:
\begin{prop}(\cite[Lemma 3.3]{daemi2022instantons})
    Let $\wt{\lambda}:\wt{C}\rightarrow \wt{C}'$ be a height $i$ morphism. 
    Then, for a special $(k, f)$-cycle $z$ in $\widehat{\bf{C}}$, the chain $\widehat{\Psi'}\circ \widehat{\Phi}'\circ \widehat{\bf{\lambda}}(z)$
 is a special $(k+i, c_{i}f)$-cycle in $\widehat{\bf{C}}'$.
 Here, $c_{i}$ is a constant defined as (\ref{const_i}).
\end{prop}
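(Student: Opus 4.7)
The plan is to exhibit a direct witness in the small model $\widehat{\mathfrak{C}}'$ that certifies $z' := \widehat{\Psi}'\circ\widehat{\Phi}'\circ\widehat{\bf{\lambda}}(z)$ as a special $(k+i, c_i f)$-cycle. By hypothesis there is $\mathfrak{z}\in \widehat{\mathfrak{C}}$ with $\widehat{\Psi}(\mathfrak{z})=z$ and $\mathfrak{i}(\mathfrak{z})=fx^{-k}+\sum_{j<-k}b_j x^j$. The natural candidate is
\[
\mathfrak{z}' := \widehat{\Phi}'\circ \widehat{\bf{\lambda}}(z) \;=\; \widehat{\Phi}'\circ \widehat{\bf{\lambda}}\circ \widehat{\Psi}(\mathfrak{z}) \ \in \ \widehat{\mathfrak{C}}',
\]
for which $\widehat{\Psi}'(\mathfrak{z}') = z'$ holds tautologically. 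The remaining task is to show that $\mathfrak{i}'(\mathfrak{z}')$ has the form $c_i f \cdot x^{-(k+i)} + (\text{lower order terms in } x^{-1})$.

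Writing $\mathfrak{z}=(\alpha, \sum_{j\geq 0} a_j x^j)\in C\oplus R[x]$, the hypothesis on $\mathfrak{i}(\mathfrak{z})$ translates to $\delta_1 v^{k-1}(\alpha)=f$ together with $\delta_1 v^{n-1}(\alpha)=0$ for $n<k$. I would then substitute the explicit formulas for $\widehat{\Psi}$, the matrix form (\ref{S-morphism}) of $\wt{\lambda}$ (acting componentwise on $R[x]$), and $\widehat{\Phi}'$ into $\mathfrak{i}'(\mathfrak{z}')$ and collect terms by power of $x$. Each coefficient of $x^{-n}$ decomposes into three families: terms of the shape $\delta_1'(v')^{\bullet}\Delta_2(\cdot)$ coming from the $\Delta_2$-entry of $\wt{\lambda}$ acting on the $R[x]$-slot of $\widehat{\Psi}(\mathfrak{z})$; terms of the shape $\Delta_1 v^{\bullet}\delta_2(\cdot)$ coming from the $\Delta_1$-entry acting on the $C$-slot $\alpha$; and cross-terms of the shape $\delta_1'(v')^l\mu v^m\delta_2(\cdot)$ coming from the off-diagonal entry $\mu$, which mixes the $C$-level content produced by $\widehat{\Psi}$ with the $\delta_1'$-readout of $\mathfrak{i}'\circ\widehat{\Phi}'$.

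These three families match exactly the three summands in the definition of $c_j$ recorded in (\ref{const_i}). A careful count of $x$-degree shifts shows that, originating from the leading term $fx^{-k}$ of $\mathfrak{i}(\mathfrak{z})$, the coefficient of $x^{-(k+j)}$ in $\mathfrak{i}'(\mathfrak{z}')$ equals $c_j\cdot f$, while the lower coefficients $b_j$ of $\mathfrak{i}(\mathfrak{z})$ contribute only at powers strictly below $x^{-(k+i)}$. Since $c_j=0$ for $j<i$ by the height hypothesis, every coefficient of $x^{-n}$ with $k<n<k+i$ vanishes, while the coefficient of $x^{-(k+i)}$ is exactly $c_i f$, as required.

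The main obstacle is the bookkeeping of $x$-degree shifts through the composition $\mathfrak{i}'\circ \widehat{\Phi}'\circ \widehat{\bf{\lambda}}\circ \widehat{\Psi}$: one must verify that the only surviving contributions at $x$-power $-(k+i)$ are those built from $\delta_1 v^{k-1}(\alpha)=f$ via the three operator types listed, and that their sum reassembles into $c_i$ without spurious cancellation. Once the expansion is organized in this way, the identification with the defining formula for $c_i$ in (\ref{const_i}) is immediate, and it then remains only to repackage the tail of $\mathfrak{i}'(\mathfrak{z}')$ as $\sum_{m<-(k+i)} b'_m x^m$ to conclude that $\mathfrak{z}'$ witnesses $z'$ as a special $(k+i, c_i f)$-cycle.
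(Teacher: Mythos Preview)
The paper does not provide its own proof of this proposition: it is stated with the citation \cite[Lemma 3.3]{daemi2022instantons} and used as a black box, so there is nothing in the paper to compare your argument against.

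That said, your outline is the natural direct verification and is essentially the way the result is proved in the cited reference. The key points---taking $\mathfrak{z}' := \widehat{\Phi}'\circ\widehat{\boldsymbol{\lambda}}\circ\widehat{\Psi}(\mathfrak{z})$ so that $\widehat{\Psi}'(\mathfrak{z}') = z'$ tautologically, and then expanding $\mathfrak{i}'(\mathfrak{z}')$ in powers of $x$ to recognize the coefficient of $x^{-(k+j)}$ as $c_j f$ plus lower-order contributions---are correct. One small point worth tightening: your translation of the hypothesis $\mathfrak{i}(\mathfrak{z}) = fx^{-k} + \sum_{j<-k} b_j x^j$ into conditions on $\mathfrak{z}=(\alpha,\sum_{j\geq 0} a_j x^j)$ implicitly assumes $k>0$ (so that the polynomial part $\sum a_j x^j$ vanishes and $\delta_1 v^{k-1}(\alpha)=f$); for $k\leq 0$ the leading term comes instead from the polynomial part, and the bookkeeping must be adjusted accordingly. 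This does not change the structure of the argument, only which slot of $\widehat{\Psi}(\mathfrak{z})$ carries the leading contribution.
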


\subsubsection{$SU(2)$-singular instanton homology groups for based knots}


We briefly review how to construct an $\mathcal{S}$-complex from $SU(2)$-instanton gauge theory.
For a knot $K\subset Y$, we fix a $\mathbb{Z}/2$-orbifold structure on $Y$ that is singular along $K$. 
Then we have the space $\mathcal{A}$ of singular $SU(2)$-connections associated to the $\mathbb{Z}/2$-orbifold $\widecheck{Y}$ along with the Chern-Simons functional $\mathrm{CS}: \mathcal{A}\rightarrow\mathbb{R}$.
We write $\mathcal{B}$ for the quotient space of $\mathcal{A}$ by the action of the group of gauge transformations.
The Chern-Simons functional descends to the quotient space $\mathcal{B}$ as an $S^1=\mathbb{R}/\mathbb{Z}$-valued functional. 
The set $\mathfrak{C}$ of the critical points of $\mathrm{CS} :\mathcal{B}\rightarrow S^1$ is identified with the traceless $SU(2)$-character variety:
\[\chi(Y, K):=\{\rho \in \mathrm{Hom}(\pi_{1}(Y\!\setminus\! K ), SU(2))\vert \mathrm{tr}\rho(\mu)=0\}/SU(2).\]
The space contains a unique reducible representation. 
We write $\theta\in\mathcal{B}$ as the corresponding flat reducible element.
After choosing a suitable perturbation $\pi$ of the Chern-Simons functional, the set of critical points of the perturbed Chern-Simons functional $\mathrm{CS}_{\pi}$
can be written as $\mathfrak{C}_{\pi}=\mathfrak{C}^{*}_{\pi}\sqcup \{\theta\}$, where $\mathfrak{C}^{*}_{\pi}$ is the irreducible part.
Let $R$ be an algebra over the ring $\Z[T^{\pm1}]$.
We define an irreducible $SU(2)$-singular instanton Floer chain complex
\[(C_{*}(K;\Delta_{R}), d)\]
over the local coefficient system $\Delta_{R}$.
The underlying chain complex $C_{*}(K;\Delta_{R})$ is
a $R$-module finitely generated by the elements in $\mathfrak{C}^{*}_{\pi}$, equipped with an $\Z/4$-grading.
Roughly speaking, the differential $d$ is an $R$-module endomorphism of degree $-1$, which is defined by counting instantons over the cylinder $\mathbb{R}\times \widecheck{Y}$.

Given a based knot $(K, p)$ in an integral homology $3$-sphere $Y$, and an algebra $R$ over the ring $\mathbb{Z}[T^{\pm 1}]$,  we associate a $\mathbb{Z}/4$-graded $\mathcal{S}$-complex with a local coefficient system $\Delta_{R}$.
The underlying chain group is defined as
\[\wt{C}_{*}(Y, K;\Delta_{R}):=C_{*}\oplus C_{*-1}\oplus R_{(0)}\]
by putting $C_{*}=C_{*}(Y, K;\Delta_{R})$.
The differential $\wt{d}$ is essentially defined by counting instantons over the cylinder $\mathbb{R}\times \widecheck{Y}$ equipped with a path $\mathbb{R}\times \{p\}$.
Note that the $\mathcal{S}$-chain homotopy type of an $\mathcal{S}$-complex does not depend on the choice of the base point $p\in K$.

Next, we review the functorial property of $\mathcal{S}$-complexes.
For simplicity, we will only consider the case $Y=S^3$, and cobordisms in a cylinder $[0, 1]\times S^3$.
\begin{defn}
    An immersed surface cobordism $S: K\rightarrow K'$ with $s_{+}$ positive double points is called height $i$ if item (i) holds.
    If both items (i) and (ii) hold, we call $S$ strong height $i$.
    \begin{itemize}
        \item[(i)] $i=-g(S)+\frac{1}{2}\sigma(K)-\frac{1}{2}\sigma(K')$,
        \item[(ii)] $(T^2-T^{-2})^{s_{+}}\in R$ is invertible.
    \end{itemize}
\end{defn}

\begin{prop}
Let $(K, p)$ and $(K', p')$ be based knots and $S:K\rightarrow K'$ be a (strong) height $i$ cobordism.
We fix a smooth path $\gamma$ connecting $p$ and $p'$ on $S$ away from double points of $S$.
   If $i\geq 0$, then the pair $(S, \gamma)$ induces a (strong) height $i$
   morphism 
   $\wt{\lambda}_{(S, \gamma)}:\wt{C}_{*}(K; \Delta_{R})\rightarrow \wt{C}_{*+2i}(K':\Delta_{R})$
   of $\mathcal{S}$-complexes.
  Moreover, for a different choice of the paths $\gamma$ and $\gamma'$, there is an $\mathcal{S}$-chain homotopy between two induced maps $\wt{\lambda}_{(S, \gamma)}$ and $\wt{\lambda}_{(S, \gamma')}$. 
   
\end{prop}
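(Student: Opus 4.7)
The plan is to construct the induced map directly from the parameterized instanton moduli spaces associated with the pair $(S,\gamma)$, and then verify the height-$i$ (resp.\ strong height-$i$) conditions by a careful analysis of the reducible contributions, much in the spirit of \cite{DS19,DS20,daemi2022instantons}. First, I would replace $S$ by its blow-up $\widecheck{S}\subset([0,1]\times S^3)\#s_+\overline{\mathbb{C}P^2}\#s_-\mathbb{C}P^2$ at each double point, so that $\widecheck{S}$ is an embedded orbifold cobordism whose normal Euler number records the double points. The path $\gamma$ gives a distinguished section of the base-point bundle, and together with a generic metric and perturbation I would define the matrix entries of $\wt\lambda_{(S,\gamma)}$ as in \eqref{S-morphism} by counting $0$-dimensional moduli spaces of singular instantons on $\widecheck{S}$ connecting critical points in $\mathfrak{C}_\pi^*\cup\{\theta\}$ at the two ends. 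The standard boundary-of-one-dimensional-moduli argument, together with bubbling analysis on the blow-ups, shows that $\wt\lambda_{(S,\gamma)}$ is a chain map that commutes with $\chi$, hence an $\mathcal{S}$-morphism.

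Next I would compute the degree shift. Using the standard index formulas for singular instantons on orbifold cobordisms, the homological degree of $\wt\lambda_{(S,\gamma)}$ equals
\[
-2g(S)+\sigma(K)-\sigma(K')=2i,
\]
taking into account that each blown-up double point contributes the expected amount to the relative grading through the normal Euler number term. This verifies that $\wt\lambda_{(S,\gamma)}$ has the correct homological degree $2i$. For well-definedness up to $\mathcal{S}$-chain homotopy with respect to the path $\gamma$, I would use the standard trick: any two paths $\gamma,\gamma'$ from $p$ to $p'$ on $S$ avoiding double points are homotopic rel endpoints in $S$, and a generic homotopy yields a $1$-parameter family of path-data whose associated counts of ends of $1$-dimensional moduli spaces define the $\mathcal{S}$-chain homotopy $\wt K$.

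The heart of the proof is the computation of the constants $c_j$ defined in \eqref{const_i}. For $j<i$, these are counts of reducible instantons on $\widecheck{S}$ with low energy, and a standard dimension-count argument using the hypothesis $i=-g(S)+\tfrac12\sigma(K)-\tfrac12\sigma(K')$ forces the relevant reducible moduli spaces to be empty for generic perturbations, giving $c_j=0$ for $j<i$. For the strong case, $c_i$ is the count of reducible instantons on $\widecheck{S}$ at the critical energy level. Here the double-point blow-ups contribute a factor of $(T^2-T^{-2})^{s_+}$ via the local coefficient system $\Delta_R$, as is worked out in \cite{kronheimer1997obstruction,daemi2024unoriented}, so that $c_i$ is invertible precisely when $(T^2-T^{-2})^{s_+}\in R$ is invertible, matching the strong height-$i$ condition.

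The main obstacle will be the reducible analysis: one must carefully identify which reducible trajectories on the blown-up cobordism contribute to $c_j$ for $j\le i$, show that those with $j<i$ vanish by index reasons, and compute the leading term for $j=i$ in order to extract the factor $(T^2-T^{-2})^{s_+}$. Once this reducible computation is in place, the remaining statements (chain-map property, $\chi$-equivariance, and path-independence up to $\mathcal{S}$-chain homotopy) follow from standard gluing and cobordism arguments that are by now well established in this framework.
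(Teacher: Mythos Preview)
Your overall strategy---blow up the immersed cobordism and invoke the $\mathcal{S}$-complex machinery of \cite{DS19,DS20}---is the same as the paper's, but there is a genuine technical problem in your blow-up step, and your proposal is considerably more laborious than necessary.

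The paper blows up \emph{only} with copies of $\overline{\mathbb{CP}}^2$, obtaining an embedded surface $\widetilde{S}\subset([0,1]\times S^3)\#_s\overline{\mathbb{CP}}^2$. This is essential: the cobordism-map construction for $\mathcal{S}$-complexes in \cite{DS20} requires the ambient pair to be \emph{negative definite} (see \cite[Definition~4.16, Proposition~4.17]{DS20}), since otherwise one cannot control the reducible locus and the maps $\delta_1,\delta_2,v$ interacting with the $\theta$-summand are not well defined. Your proposed blow-up $([0,1]\times S^3)\#s_+\overline{\mathbb{CP}}^2\#s_-\mathbb{CP}^2$ has $b^+=s_-$, so as soon as $s_->0$ the framework you are appealing to does not apply, and your ``standard dimension-count'' for the reducible analysis breaks down.

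Beyond this, the paper does not redo any moduli-space analysis: once $\widetilde{S}$ is negative definite of height $i$, \cite[Proposition~4.17]{DS20} immediately yields a height~$i$ morphism $\widetilde{\lambda}_{(\widetilde{S},\widetilde{\gamma})}$, and the leading constant is read off from \cite[Proposition~2.30]{DS20} as $(1-T^4)^{s_+}$. The map for $S$ itself is then \emph{defined} with a normalization, $\widetilde{\lambda}_{(S,\gamma)}:=(-T^2)^{s_+}\widetilde{\lambda}_{(\widetilde{S},\widetilde{\gamma})}$, which is what produces the clean value $c_i=(T^2-T^{-2})^{s_+}$; you omit this normalization, so your computation of $c_i$ would not match without it. Path-independence likewise follows directly from the corresponding statement for $\widetilde{S}$ in \cite{DS20}, rather than from a separate homotopy-of-paths argument.
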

\begin{proof}
    For a given height $i$ immersed cobordism $S:K\rightarrow K'$, we consider the embedded cobordism $\wt{S}$ in $([0, 1]\times S^3)\#_{s}\overline{\mathbb{CP}}^2$, which is obtained by blowing up the double points.
    Condition (i) implies that the cobordism $\wt{S}$ is a negative definite cobordism of height $i$ in the sense of \cite[Definition 4.16]{DS20} (see also \cite[Definition 2.7]{daemi2022instantons}), hence \cite[Proposition 4.17]{DS20} implies that it induces a height $i$ morphism of $\mathcal{S}$-complexes.
     As described in \cite[Section 2]{DS20}, an $\mathcal{S}$-morphism associated to $S$ is defined by
     $\wt{\lambda}_{(S, \gamma)}:=(-T^{2})^{s_{+}}\wt{\lambda}_{(\wt{S}, \wt{\gamma})}$.
     On the other hand, the $c_{0}$ component of $\wt{\lambda}_{\wt{S}}$ is $(1-T^{4})^{s_{+}}$ (see the proof of \cite[Proposition 2.30]{DS20}).
     Hence, condition (ii) implies that the induced map $\wt{\lambda}_{S}$ has the invertible component $c_{0}=(T^2-T^{-2})^{s_{+}}$, which means that the height $i$ $\mathcal{S}$-morphism $\wt{\lambda}_{(S, \gamma)}$ is strong.
     The last statement follows from the fact that 
     there is an $\mathcal{S}$-chain homotopy between $\wt{\lambda}_{(\wt{S}, \wt{\gamma})}$ and $\wt{\lambda}_{(\wt{S}, \wt{\gamma}')}$.
\end{proof}
For any choice of the path, we simply write $\wt{\lambda}_{S}$ for $\wt{\lambda}_{(S, \gamma)}$.

Let us see an $\mathcal{S}$-morphism in a simplified situation.
Suppose that $U_1$ is the unknot in $S^3$, $K$ is a knot in 
$S^3$, and $S\colon U_1 \to K$ is an oriented immersed cobordism with $s_{\pm}$ many $\pm$-double points and genus $g(S)$.
Then we have the induced cobordism map
\[
\widetilde{\lambda}_S
=
\begin{pmatrix}
0 & 0 & 0\\
0 & 0 & \Delta_2\\
0 & 0 & c_0
\end{pmatrix}
\colon
\Z[T^{\pm 1}] = 
\widetilde{C}_*(U_1;\Delta_{\Z[T^{\pm 1}]}) \to 
\widetilde{C}_{*+2i}(K;\Delta_{\Z[T^{\pm 1}]}),
\]
where $i = -g(S) -\sigma(K)/2$.
For any $j>0$, set 
\[
c_j := \delta_1 v^{j-1} \Delta_2(1).
\]
Then we have

\begin{align}
\label{eq:morphism}
c_j &= 
\begin{cases}
(T^2-T^{-2})^{s_+} & (j=i)\\[1mm]
0 & (0 \leq j < i)    .
\end{cases}
\end{align}

\subsubsection{Relation to $I^\sharp(K;\Z)$}
We review the relation between $\wt{C}$-complexes and the $\Z/4$-graded knot homology group $I^{\natural}_{*}(K)$.
A triple $(Y, K, \omega)$ of an oriented $3$-manifold $Y$, a knot $K$, and an embedded $1$-manifold $\omega$ in $Y\!\setminus\! K$ such that $\partial \omega=\omega\cap K$ and $\omega$ meets $K$ transversely is called \textit{admissible} if there exists a closed oriented surface $\Sigma$ in $Y$ satisfying either one of the following:
\begin{itemize}
    \item $\omega \cap K=\emptyset $ and $\sharp (\omega\cap \Sigma)$ is an odd number,
    \item $\Sigma$ transversely intersects to $K$  an odd number of times.
\end{itemize}

For a given admissible triple $(Y, K, \omega)$, we have an mapping cone complex:
\[\wt{C}^{\omega}(Y, K):=\mathrm{Cone}(C^{\omega}\xrightarrow{v}C^{\omega}[1])\]
where $C^{\omega}=C^{\omega}(Y, K)$ is a $\Z/4$-graded singular instanton knot Floer complex defined in \cite{KM11}.

Let $(W, S): (Y, K)\rightarrow (Y', K')$ be a pair of cobordism where $W$ is oriented and $S$ is possibly immersed.
Moreover, we assumed that there is an embedding of cornered 2-manifold $\bf{\omega}$ such that
\begin{itemize}
    \item $\partial \mathbf{\omega}=\omega_{0}\cup \omega_{1} \cup \text{arcs}\ \text{on}\ S$, and $\omega$ is normal to $S$ near the boundary components.
    \item The intersection $\omega\cap S$ allows finitely many points, where the intersection is transverse.
\end{itemize}
Assume that $(Y, K, \omega_{0})$ and $(Y', K', \omega_{1})$ are admissible.
Then we have a chain map
\[\wt{\lambda}^{\omega}_{S}:\wt{C}^{\omega}(Y, K)\rightarrow \wt{C}^{\omega}(Y', K')\]
whose components are given by 
\[
\wt{\lambda}^{\omega}_{S}:=
\begin{bmatrix}
\lambda^{\omega} & 0\\
\mu^{\omega} & \lambda^{\omega}
\end{bmatrix}.
\]
Here $\lambda^\omega_{S}$ is the ordinary cobordism map defined by counting the $0$-dimensional part of the instanton moduli space over the cobordism.
The component $\mu^{\omega}$ is defined by a similar way as the component of $\mu$ in (\ref{S-morphism}), that is given by counting the $2$-dimensional part of the moduli space after transversely cut out by the codimension-2 divisor.

For a Hopf link $H$ in $S^3$ decorated by an arc $\omega$ connecting two components of $H$, the pair $(H, \omega)$ is admissible.
Note that for any pair $(Y, K)$ of an integer homology $3$-sphere $Y$ and a knot $K$, any triple $(Y, K\sqcup H, \omega)$ is admissible.
For a knot $K$ in $Y$, we write 
\[
C^{\sharp}(Y, K):=C^{\omega}(Y, K\sqcup H).
\]

In \cite{KM11u}, Kronheimer and Mrowka defined $I^{\sharp}$-homology group:
\[I^{\sharp}(Y, K):=H_{*}({C}^{\sharp}(Y, K))\]
for any pair $(Y, K)$, which is supposed to be an instanton counterpart of  Khovanov homology.

On the other hand, the $\mathcal{S}$-complex for $(Y, K)$ recovers the $I^{\sharp}$-homology. 
Note that $\widetilde{C}_*(K;\Delta_{\Z}) = \widetilde{C}_*(K;\Delta_{\Z[T^{\pm 1}]}) \otimes_{\Z[T^{\pm 1}]} \Z$, where the action of $\Z[T^{\pm}]$ on $\Z$ is defined by $f(T) \cdot n := f(1) n$ ($f(T) \in \Z[T^{\pm 1}], n \in \Z$).
The reduced framed instanton Floer homology is recovered from equivariant instanton Floer homology in the following way: 
We define a chain complex $\wt{C}^\sharp_*(K;\Delta_{\Z})$ as the quotient complex $\widehat{\mathbf{C}}(K;\Delta_{\Z})/(x^2)$.
In other words,
\[
\wt{C}^\sharp_*(K;\Delta_{\Z}) :=
\wt{C}_*(K;\Delta_{\Z}) \oplus \wt{C}_{*+2}(K;\Delta_{\Z}),
\quad \wt{d}^\sharp :=
\begin{bmatrix}
\wt{d} & 0\\
2\chi & \wt{d}
\end{bmatrix}
\]
after the identification $x\cdot \wt{C}_{*}(K;\Delta_{\Z})=\wt{C}_{*+2}(K;\Delta_{\Z})$.

\begin{thm}\cite[Theorem 8.13]{DS19}\label{recov I sharp}
There is a canonical isomorphism    
\[I^{\sharp}(K;\Z)\cong H_{*}(\wt{C}^{\sharp}(K;\Delta_{\Z}))\]
as $\Z/4$-graded abelian groups.
\end{thm}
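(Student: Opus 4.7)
The plan is to construct an explicit chain map realizing the isomorphism by analyzing how the decorated Hopf link summand $(H,\omega)$ interacts with the equivariant instanton complex of $K$. Recall that the representation variety $R^{\mathrm{fr}}(H)$ is a disjoint union of two two-spheres, and that after imposing the holonomy condition $\rho(m_\omega) = -\mathrm{id}$ the fiber of the natural projection $\chi(Y, K\sqcup H,\omega) \to \chi(Y,K)$ over each nondegenerate irreducible representation becomes an $S^2$ (and over the reducible $\theta$ a copy of $SO(3)$, with the usual subtleties). The homology of a free rank one $R[x]$-module modulo $x^2$ is $R\oplus R[2]$, which heuristically matches $H_*(S^2)$; this explains the answer.

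The first step would be to set up a neck-stretching argument that separates the Hopf link summand from the knot $K$ inside $S^3 = S^3 \#_{\mathrm{pt}} S^3$, and to use a Morse–Bott / cascade model to compute $H_*(C^\omega(K\sqcup H))$. After a small equivariant perturbation of the Chern–Simons functional for $K\sqcup H$, the critical set becomes (a small resolution of) the fiber product $\mathfrak{C}_\pi(K)\times_{\mathrm{pt}} (S^2\sqcup S^2)$, with an appropriate Morse function on the two two-spheres contributing a cellular $\mathbb{Z}\oplus \mathbb{Z}[2]$ to each critical manifold. This produces an $E_1$-page canonically isomorphic, as a $\mathbb{Z}/4$-graded abelian group, to $\widetilde{C}_*(K;\Delta_\mathbb{Z}) \oplus \widetilde{C}_{*+2}(K;\Delta_\mathbb{Z})$, matching the underlying module of $\widetilde{C}^\sharp$.

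The second step is to identify the differential. The diagonal blocks come from instantons with trivial Hopf link behavior and recover the ordinary differential $\widetilde{d}$ on each summand, again via a neck-stretching / gluing argument together with a gauge-fixing using the arc $\omega$. The off-diagonal block is where the content lies: it should arise from instantons which interact nontrivially with the two components of $H$ via the divisor defined by the base point $p\in K$ (equivalently, by the $U(2)$-holonomy around the chosen loop). Under the identification $x\cdot \widetilde{C}_* = \widetilde{C}_{*+2}$ built into the definition of $\widehat{\mathbf{C}}$, this contribution is precisely the operator $2\chi$, where the factor $2$ tracks the two components of $H$ (equivalently, the intersection number of the instanton moduli with the two codimension-two divisors coming from the two Hopf components). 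The resulting $E_1$-complex therefore agrees on the nose with $\widehat{\mathbf{C}}(K;\Delta_\mathbb{Z})/(x^2) = \widetilde{C}^\sharp(K;\Delta_\mathbb{Z})$.

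The hard part is the analytic justification for identifying the off-diagonal count with exactly $2\chi$, without unwanted correction terms. Morally, this is the statement that the only new contributions beyond the standard $\widetilde{d}$ on each summand come from boundary strata where a single instanton "bubbles" against one of the two Hopf components, and that such bubbling is transversally cut out and signed so that the two components contribute $+\chi$ each. This requires a careful analysis of the compactification of parametrized moduli spaces over the fibered model, together with an orientation computation; the main technical input is a gluing theorem showing that broken trajectories along such bubbling strata are precisely parametrized by $\chi$-trajectories tensored with the Morse trajectories on $S^2$ between its two critical points. Once this identification is established, verifying $\mathbb{Z}/4$-grading compatibility and naturality under cobordism maps is essentially formal, and the isomorphism $I^\sharp(K;\mathbb{Z}) \cong H_*(\widetilde{C}^\sharp(K;\Delta_\mathbb{Z}))$ follows.
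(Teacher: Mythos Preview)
The paper does not supply its own proof of this theorem; it is quoted verbatim from \cite[Theorem 8.13]{DS19} and used as a black box. There is therefore no ``paper's proof'' to compare your proposal against. What the paper does reveal, in the proof of the subsequent proposition, is the \emph{mechanism} behind the cited isomorphism: it is obtained from the connected sum (boundary sum) formula for $\widetilde{C}$-complexes in \cite[Section 6.3.4]{DS19}, by identifying $C^\sharp(K)=C^\omega(K\sqcup H)$ inside $\widetilde{C}^\omega(K\sqcup H)$ and then using the tensor decomposition
\[
\widetilde{C}(K)\otimes \Z v_{+}\ \oplus\ \widetilde{C}(K)\otimes \Z v_{-}\ \subset\ \widetilde{C}_*(K;\Delta_{\Z})\otimes\widetilde{C}^{\sharp}(U_{1}),
\]
where $v_\pm$ are the two generators of $C^\omega(U_1\sqcup H)$. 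So the actual argument is algebraic: one first proves a K\"unneth-type theorem for $\widetilde{C}$-complexes under split disjoint union with the earring, and then reads off the answer from the computation of $\widetilde{C}^\omega(U_1\sqcup H)$.

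Your proposal instead attempts a direct Morse--Bott/neck-stretching analysis of $C^\omega(K\sqcup H)$. This is morally the geometric content underlying the connected sum theorem, so the strategy is not unreasonable, but as written it remains a heuristic rather than a proof. In particular, your explanation of the coefficient $2$ in the off-diagonal term $2\chi$ as ``tracking the two components of $H$'' is not correct: the two generators $v_\pm$ of the $H$-summand already account for the rank-$2$ splitting $\widetilde{C}\oplus\widetilde{C}[2]$, and the $2$ enters for a different reason (it comes from the structure of the $v$-map on the earring complex, ultimately a degree computation on the $S^2$ critical manifold, not from counting link components). Your bubbling picture for the off-diagonal term is also too vague to pin down this coefficient. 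If you want to turn your sketch into an honest argument, the cleanest route is exactly what DS19 does: prove the tensor product formula once and for all, and then compute the earring factor explicitly.
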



Let $S$ be a possibly immersed surface cobordism from $K$ to $K'$.
We choose a smooth embedding of the pair $[0, 1]\times (H, \omega)$ on the surface complement, which extends the Hopf link decoration on each boundary.
After choosing auxiliary data, the cobordism $S$ induces a map
\[\lambda^{\sharp}_{S}:C^{\sharp}(K)\rightarrow C^{\sharp}(K').\]

 See \cite[Section 4.3]{KM11u} for the detail. We write the map induced on the homology level as \[I^{\sharp}_{S}: I^{\sharp}(K)\rightarrow I^{\sharp}(K').\]
On the other hand, if $S$ is an immersed cobordism of height $i\geq 0$, then there is a cobordism map
\[
\wt{\lambda}^\sharp_S \colon
\wt{C}^\sharp(K;\Delta_\Z) \to 
\wt{C}^\sharp(K';\Delta_\Z), \quad
\wt{\lambda}^\sharp_S :=
\begin{bmatrix}
\wt{\lambda} & 0 \\
0& \wt{\lambda}
\end{bmatrix}
\]
where the component $\lambda$ is the same map as in the corresponding component of (\ref{S-morphism}).
\begin{prop}
    For any oriented immersed cobordism $S\colon K \to K'$, the following diagram is commutative:
\begin{align}
\label{eq:equiv-diag-sharp}
\begin{split}
\xymatrix@C=80pt{
H_*(\widetilde{C}^\sharp_*(K;\Delta_{\Z}))
\ar[r]^-{(\widetilde{\lambda}^\sharp_S)_*}
\ar[d]_-{\cong}
& H_*(\widetilde{C}^\sharp_*(K';\Delta_{\Z}))
\ar[d]^-{\cong}\\
I^\sharp(K;\Z)
\ar[r]^-{I^\sharp_S}
& I^\sharp(K';\Z)
}
\end{split}
\end{align}
\end{prop}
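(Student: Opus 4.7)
The plan is to reduce to the embedded case and then extend via blow-up. For an embedded oriented cobordism $S\colon K \to K'$, the commutativity should follow from the naturality of Daemi--Scaduto's identification in \Cref{recov I sharp}. Recall that the canonical isomorphism $I^\sharp(K;\Z) \cong H_*(\widetilde{C}^\sharp(K;\Delta_\Z))$ is established by identifying both complexes with a common Morse--Bott model: on the $I^\sharp$-side this arises from a perfect perturbation of the singular Chern--Simons functional for $K \sqcup H \sqcup H_\omega$, while on the $\widetilde{C}^\sharp$-side it comes from the quotient $\widehat{\mathbf{C}}/(x^2) \cong \widetilde{C} \oplus \widetilde{C}[2]$, with the factor of $2$ in the twisted differential $2\chi$ reflecting the two critical points of the Hopf link perturbation. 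The first step is therefore to trace through the proof of \Cref{recov I sharp} and verify that the identification is natural with respect to embedded cobordism maps, since both $\lambda^\omega_S$ and $\widetilde{\lambda}_S$ ultimately arise from counts of the same instanton moduli spaces over $[0,1]\times S^3$ with matching singular boundary conditions and decorations.

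Next, for an immersed oriented cobordism $S$ with $s_\pm$ double points, both cobordism maps are defined via the blow-up $\widetilde{S} \subset ([0,1]\times S^3)\#_{s}\overline{\mathbb{CP}}^2$. The $\mathcal{S}$-morphism $\widetilde{\lambda}_S$ is defined (by the discussion preceding \eqref{eq:morphism}) as the normalisation $(-T^{2})^{s_+}\widetilde{\lambda}_{\widetilde{S}}$, and the framed instanton map $I^\sharp_S$ is defined analogously by inserting the blown-up cobordism together with the product decoration $[0,1]\times(H,\omega)$ placed away from the exceptional divisors. After specialising $T = 1$, the two sign factors agree as $(-1)^{s_+}$. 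Hence, once the embedded case is established, the immersed case reduces to applying it to $\widetilde{S}$, together with the observation that the block-diagonal structure of $\widetilde{\lambda}^\sharp_S$ is compatible with the $\widehat{\mathbf{C}}/(x^2)$-splitting on both sides of the identification.

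The main obstacle will be carefully matching the sign and orientation conventions for the blown-up cobordism maps in the two theories. In particular, one must verify that the framed instanton cobordism map for an immersed surface is defined with the same $(-1)^{s_+}$ normalisation as the specialisation of the equivariant map at $T=1$, and that the Hopf link decoration $[0,1]\times(H,\omega)$ is unaffected by the blow-up operation at interior double points of $S$, since these lie away from the neighbourhood in which $[0,1]\times H$ is inserted. Once these compatibilities are reconciled, the proof concludes by invoking naturality of the Daemi--Scaduto identification on the embedded cobordism $\widetilde{S}$.
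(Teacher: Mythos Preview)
Your plan is reasonable in outline but remains a sketch: you propose to ``trace through the proof of \Cref{recov I sharp}'' without identifying the specific structural tool that makes this tractable. The paper's proof is more direct and avoids a case split between embedded and immersed cobordisms. The key observation is that the cobordism $S \sqcup [0,1]\times H$ from $K \sqcup H$ to $K' \sqcup H$ decomposes as a boundary connected sum $([0,1]\times S^3, S) \natural ([0,1]\times S^3, [0,1]\times(U_1 \sqcup H_\omega))$. One then invokes the \emph{naturality of the connected sum theorem} for $\widetilde{C}$-complexes \cite[Section 6.3.4]{DS19}, which yields a commutative square with horizontal maps $\widetilde{\lambda}_S \otimes \widetilde{\lambda}^\omega_{[0,1]\times H}$ and $\widetilde{\lambda}^\omega_{S \natural ([0,1]\times H)}$. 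Since $C^\omega_*(U_1 \sqcup H)$ is rank $2$ with generators $v_\pm$, and the identification in \Cref{recov I sharp} is realised by restricting to the subcomplex $\widetilde{C}(K) \otimes \Z v_+ \oplus \widetilde{C}(K) \otimes \Z v_-$ inside the tensor product, the connected-sum square restricts to give the desired commutativity directly.

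This approach buys you two things over your plan. First, it handles immersed $S$ uniformly with embedded $S$, since the connected sum naturality applies to the blown-up cobordism without further comment; your separation into embedded and immersed steps, along with the sign-matching discussion, becomes unnecessary. Second, it replaces an open-ended verification (``trace through the identification'') with a single citable lemma (K\"unneth naturality). Your description of the isomorphism via a ``common Morse--Bott model'' is a plausible heuristic but is not how the identification is structured in \cite{DS19}; the actual mechanism is algebraic, via the tensor product with the rank-$2$ complex for $U_1 \sqcup H_\omega$.
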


\begin{proof}
    Firstly, we consider the natural cobordism map
    \[\wt{\lambda}^{\omega}: \wt{C}_{*}^{\omega}(K\sqcup H)\rightarrow \wt{C}_{*}^{\omega}(K'\sqcup H)\]
    induced from the cobordism associated to the pair  $(S\sqcup [0, 1]\times H, [0, 1]\times \omega)$.
    Since the cobordism decomposes into the boundary sum
    \[([0, 1]\times S^3, S)\natural ([0, 1]\times S^3, [0, 1]\times (U_{1}\sqcup H_{\omega})),\]
     the naturality of the connected sum for $\wt{C}$-complexes \cite[Section 6.3.4]{DS19} implies that we have a commutative diagram
     
\begin{align}\label{commdig}
     \begin{split}
\xymatrix@C=80pt{
\widetilde{C}_*(K;\Delta_{\Z})\otimes\wt{C}^{\omega}(U_{1}\sqcup H;\Delta_{\Z})
\ar[r]^-{\wt{\lambda}_{S}\otimes \wt{\lambda}^{\omega}_{[0, 1]\times H}}
\ar[d]_-{\simeq}
& \widetilde{C}_*(K';\Delta_{\Z})\otimes \wt{C}^{\omega}(U_{1}\sqcup H;\Delta_{\Z})
\ar[d]^-{\simeq}\\
\wt{C}_{*}^{\omega}(K\sqcup H;\Delta_{\Z})
\ar[r]^-{\wt{\lambda}^{\omega}_{S\natural ([0, 1]\times H)}}
& \wt{C}_{*}^{\omega}(K'\sqcup H;\Delta_{\Z})
}
\end{split}
\end{align}

Note that $C^{\omega}_{*}(U_{1}\sqcup H)$ has rank $2$ of two generators $v_{+}$ and $v_{-}$.
Since the isomorphism in Theorem \ref{recov I sharp} is given by the identification with $C^{\sharp}(K)=C^{\omega}(K\sqcup H)\subset \wt{C}^{\omega}(K\sqcup H)$ and \[\wt{C}(K)\otimes \Z v_{+}\oplus \wt{C}(K)\otimes \Z v_{-} \subset \widetilde{C}_*(K;\Delta_{\Z})\otimes\wt{C}^{\sharp}(U_{1})\]
through the vertical maps, the commutative diagram \ref{commdig} restricts to the corresponding subcomplexes.
This induces the commutative diagram in the statement at the level of homology groups.

\end{proof}

\subsection{$\mathcal{S}$-complexes of the torus knots $T_{2,2k+1}$}\label{subsec:S-cpx-T2q}

Here we focus on the $(2,2k+1)$ torus knots $T_{2,2k+1}$ where $k$ is a non-negative integer. (Note that $T_{2,1} = U_1$.) 
It is proved in \cite[Proposition 5]{DS20} that 
$\widetilde{C}_* =
\widetilde{C}_{*} (T_{2,2k+1};\Delta_{\Z[T^{\pm 1}]})$
is given by 
\[
\widetilde{C}_* = C_*(T_{2,2k+1}) \oplus C_{*-1}(T_{2,2k+1}) \oplus \Z[T^{\pm 1}],
\quad C_*(T_{2,2k+1}) = \bigoplus_{i=1}^k \Z[T^{\pm 1}] \cdot \zeta^i
\]
where the $\Z/4$-grading of $\zeta^i$ is $2i-1$,
and the differential $\widetilde{d}$ has the components
$d= \delta_2 = 0$ and
\begin{align*}
\delta_1(\zeta^i) &=
\begin{cases}
T^2 - T^{-2} & (i=1)\\[1mm]
0 & (2 \leq i \leq k),
\end{cases}\\[2mm]
v(\zeta^i) &=
\begin{cases}
(T^2 - T^{-2})\zeta^{i-1} & (2\leq i \leq k)\\[1mm]
0 & (i=1).
\end{cases}
\end{align*}
In particular, for any $1 \leq i \leq k$ and $j \in \Z_{> 0}$, we have
\[
\delta_1 v^{j-1} (\zeta^i)
=
\begin{cases}
(T^2 - T^{-2})^i & (j=i)\\[1mm]
0 & (j \neq i).
\end{cases}
\]

By just putting $T=1$ combined with \cref{recov I sharp}, we can recover the computations of the framed instanton homology. 
Here we consider the $\Z/2\Z$-reduction of the $\Z/4\Z$-grading on $I^\sharp(T_{2,2k+1};\Z)$, and denote the $\Z/2\Z$-grading 0 part (resp. grading 1 part) of $I^\sharp(T_{2,2k+1};\Z)$ by $I^\sharp_{[0]}(T_{2,2k+1};\Z)$ 
(resp.\ $I^\sharp_{[1]}(T_{2,2k+1};\Z)$).

\begin{lem}
\label{lem:T2q-sharp}
For any $k \geq 0$, we have 
$I^\sharp_{[0]}(T_{2,2k+1};\Z) \cong \Z^{k+2} \oplus (\Z/2\Z)^k$ and $I^\sharp_{[1]}(T_{2,2k+1};\Z) \cong \Z^{k}$.
\end{lem}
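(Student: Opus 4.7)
The plan is to compute $I^\sharp(T_{2,2k+1};\Z)$ by applying Theorem~\ref{recov I sharp}, which identifies it with $H_*(\wt{C}^\sharp(T_{2,2k+1};\Delta_\Z))$, and then to carry out the homology calculation directly from the explicit $\mathcal{S}$-complex recalled in Subsection~\ref{subsec:S-cpx-T2q}. The first step is to specialize the coefficients from $\Z[T^{\pm 1}]$ to $\Z$ via $T \mapsto 1$. Since $T^2 - T^{-2}$ vanishes at $T = 1$, every component of the differential $\wt{d}$ on $\wt{C}_*(T_{2,2k+1};\Delta_\Z)$ becomes zero. Consequently, as a $\Z/4$-graded free $\Z$-module, $\wt{C}_*(T_{2,2k+1};\Delta_\Z)$ is generated by $1$ in degree $0$, together with $\zeta^i$ in degree $2i-1$ and $\chi\zeta^i$ in degree $2i$ for $i = 1, \ldots, k$.

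The second step is to set up the reduced complex $\wt{C}^\sharp = \wt{C} \oplus \wt{C}_{*+2}$ with the differential $\left[\begin{smallmatrix} \wt{d} & 0 \\ 2\chi & \wt{d} \end{smallmatrix}\right]$ from the definition just before Theorem~\ref{recov I sharp}. Because $\wt{d} = 0$, the only surviving block is $2\chi$, and the action of $\chi$ is read off from the $\mathcal{S}$-complex: it sends $\zeta^i \mapsto \chi\zeta^i$ and annihilates both $1$ and each $\chi\zeta^i$. Hence $\ker\chi = \Z\{1, \chi\zeta^1, \ldots, \chi\zeta^k\}$ and $\im(2\chi) = \bigoplus_{i=1}^k 2\Z\cdot\chi\zeta^i$, so the homology splits as a direct sum of $\ker(2\chi)$ from the first summand and $\wt{C}/\im(2\chi)$ from the second summand, with the latter's grading shifted by $-2$.

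In the third step I read off the $\Z/2$-grading. The $-2$ shift on the second summand preserves parity mod $2$, so $1$ and each $\chi\zeta^i$ sit in even $\Z/2$-degree while each $\zeta^i$ sits in odd $\Z/2$-degree. Collecting contributions, the first summand gives $\Z^{k+1}$ in even degree, while the second summand gives $\Z \oplus (\Z/2)^k$ in even degree (from $1$ and the $\chi\zeta^i$'s) together with $\Z^k$ in odd degree (from the $\zeta^i$'s). This yields $I^\sharp_{[0]}(T_{2,2k+1};\Z) \cong \Z^{k+2} \oplus (\Z/2)^k$ and $I^\sharp_{[1]}(T_{2,2k+1};\Z) \cong \Z^k$, as claimed.

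The only real subtlety in the argument is bookkeeping the $\chi$-action together with the grading shift on the second summand; beyond that, every step is entirely elementary. As sanity checks, the $k=0$ case recovers $I^\sharp(U_1;\Z) \cong \Z^2$ concentrated in even degree, and the total free rank $2k+2$ agrees with the expected Khovanov-type rank for $T_{2,2k+1}$.
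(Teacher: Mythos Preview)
Your argument is correct and follows exactly the approach the paper indicates (the paper merely says the lemma follows by ``putting $T=1$ combined with \cref{recov I sharp}'' without writing out the computation). Your explicit identification of the generators, the vanishing of $\wt d$ at $T=1$, and the resulting homology of the mapping cone of $2\chi$ are all accurate, including the parity bookkeeping.
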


From the descriptions of the differentials above, we also have the following. 
\begin{lem}
\label{lem:evaluate}
For any element $\zeta = \sum_{i=1}^k a_i \zeta^i \in C_*(T_{2,2k+1}; \Delta_{\Z[T^{\pm 1}]})$,
we have
\[
\delta_1 v^{i-1}(\zeta) = a_i (T^2 - T^{-2})^i.
\]
\end{lem}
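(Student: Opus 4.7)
The statement follows directly from the explicit formulas for $v$ and $\delta_1$ recorded in \Cref{subsec:S-cpx-T2q}, so the plan is just to carry out a short bookkeeping calculation and invoke linearity. First, I would reduce the claim to the single-generator case: since $\delta_1$ and $v$ are $\Z[T^{\pm 1}]$-linear, it suffices to show that, for each pair $1 \le i,j \le k$,
\[
\delta_1 v^{i-1}(\zeta^j) =
\begin{cases}
(T^2 - T^{-2})^i & (j=i),\\
0 & (j \neq i),
\end{cases}
\]
and then sum against the coefficients $a_j$.

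Next I would establish, by induction on $n \ge 0$, that
\[
v^n(\zeta^j) =
\begin{cases}
(T^2 - T^{-2})^n\,\zeta^{j-n} & (j > n),\\
0 & (j \le n\text{ and }n \ge 1),
\end{cases}
\]
with the base case $n=0$ being trivial and the inductive step using $v(\zeta^m) = (T^2-T^{-2})\zeta^{m-1}$ for $m \ge 2$ and $v(\zeta^1) = 0$. Specializing to $n = i-1$ gives $v^{i-1}(\zeta^j) = (T^2-T^{-2})^{i-1}\zeta^{j-i+1}$ when $j \ge i$ and $0$ otherwise.

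Finally, I would apply $\delta_1$ and invoke the defining property that $\delta_1(\zeta^m) = T^2 - T^{-2}$ for $m=1$ and vanishes for $m \ge 2$. The index $j-i+1$ equals $1$ precisely when $j=i$, producing the coefficient $(T^2-T^{-2})^i$; in all other cases of $j \ge i$ the output is $0$, and for $j < i$ the map already vanishes at the $v^{i-1}$ step. Summing yields the claimed formula $\delta_1 v^{i-1}(\zeta) = a_i(T^2-T^{-2})^i$.

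There is no real obstacle here: the only thing to be careful about is the edge case $i=1$ and the convention $v^0 = \id$, both of which are handled transparently by the induction base. The lemma is a direct arithmetic consequence of the $\mathcal{S}$-complex structure already exhibited, and it is being stated separately because it will be invoked repeatedly in the subsequent computations of cobordism maps on equivariant instanton complexes for two-bridge torus knots.
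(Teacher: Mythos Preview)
Your proposal is correct and takes essentially the same approach as the paper. The paper in fact records the single-generator formula $\delta_1 v^{j-1}(\zeta^i) = (T^2-T^{-2})^i \cdot [j=i]$ immediately before stating the lemma and then asserts the lemma without proof as an immediate consequence; your write-up simply spells out that computation and the linearity step explicitly.
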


\subsection{Cobordism maps from the unknot}

From the structure of $\wt{C}(T_{2,2k+1}; \Delta_{\Z[T^{\pm 1}]})$, we can determine some parts of cobordism maps from the unknot to two-bridge torus knots. 
\begin{lem}
\label{lem:Delta2}
Let $\Delta_2$ be the $(2,3)$-entry of $\widetilde{\lambda}_S$ for an immersed smooth surface cobordism $S \colon U_1 \to T_{2,2k+1}$ with \[
s_+ + g(S)= k.
\]
Then, for some $a_i \in \Z[T^{\pm}]$ $(i=s_+ +1, \ldots, k)$, we have the equality
\[
\Delta_2(1) = \zeta^{s_+} + \sum_{i=s_+ +1}^k a_i \zeta^i
\]
as elements of $C_*(T_{2,2k+1};\Delta_{\Z[T^{\pm 1}]})$, where $\zeta^0 = 0$.
\end{lem}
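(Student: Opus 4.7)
The plan is to combine the height identification of $\wt{\lambda}_S$ with the explicit description of $\wt{C}_*(T_{2,2k+1};\Delta_{\Z[T^{\pm 1}]})$ recorded in \Cref{subsec:S-cpx-T2q}. First, I would compute the height of $S$. Since $\sigma(U_1) = 0$ and $\sigma(T_{2,2k+1}) = -2k$, condition (i) in the definition of height gives
\[
i \;=\; -g(S) + \tfrac{1}{2}\sigma(U_1) - \tfrac{1}{2}\sigma(T_{2,2k+1}) \;=\; k - g(S) \;=\; s_+,
\]
using the hypothesis $s_+ + g(S) = k$. Thus $\wt{\lambda}_S$ is a height $s_+$ $\mathcal{S}$-morphism whose source is the trivial $\mathcal{S}$-complex $\wt{C}(U_1) = \Z[T^{\pm 1}]_{(0)}$ of the unknot.

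Next, I would simplify the constants $c_j$ attached to $\wt{\lambda}_S$. Because $C_*(U_1) = 0$, the source structure maps $v$ and $\delta_2$ both vanish, so in the general formula (\ref{const_i}) only the very first summand survives. This gives
\[
c_j \;=\; \delta_1\, v^{j-1}\, \Delta_2(1) \qquad (j \geq 1),
\]
where now $v$ and $\delta_1$ refer to the structure maps of $\wt{C}_*(T_{2,2k+1};\Delta_{\Z[T^{\pm 1}]})$ recalled in \Cref{subsec:S-cpx-T2q}. Expanding $\Delta_2(1) = \sum_{i=1}^k a_i \zeta^i$ with $a_i \in \Z[T^{\pm 1}]$ and applying \Cref{lem:evaluate} then yields
\[
c_j \;=\; a_j\,(T^2 - T^{-2})^j.
\]

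Finally, I would read off the coefficients $a_i$ from the height $s_+$ property together with the computation (\ref{eq:morphism}). The height condition asserts $c_j = 0$ for $1 \leq j < s_+$, while (\ref{eq:morphism}) gives $c_{s_+} = (T^2 - T^{-2})^{s_+}$. Since $\Z[T^{\pm 1}]$ is an integral domain, the element $(T^2 - T^{-2})^j$ is a non-zero-divisor, so these identities force
\[
a_j = 0 \ \ (1 \leq j < s_+), \qquad a_{s_+} = 1
\]
when $s_+ \geq 1$; in the trivial case $s_+ = 0$ the convention $\zeta^0 = 0$ reduces the claimed expansion to the tautology $\Delta_2(1) = \sum_{i=1}^k a_i \zeta^i$. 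This produces the desired equality, with the remaining $a_{s_++1},\dots,a_k$ unconstrained (parity considerations from the $\Z/4$-grading on $\wt{C}$ will force some of them to vanish, but this is not needed for the statement). The argument is therefore direct given the setup; the one point requiring care is only the correct unpacking of (\ref{const_i}) in the presence of a trivial source complex, which presents no real obstacle.
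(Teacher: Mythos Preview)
Your proof is correct and follows essentially the same approach as the paper's: expand $\Delta_2(1)$ in the basis $\{\zeta^i\}$, use \Cref{lem:evaluate} to identify $c_j = a_j(T^2-T^{-2})^j$, and read off the coefficients from the values of $c_j$ given in \eqref{eq:morphism}. You spell out a few steps more explicitly (the height computation $i = s_+$ and the reason the formula for $c_j$ collapses when the source is trivial), but the argument is the same.
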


\begin{proof}
Note that $\{\zeta^i\}_{i=1}^k$ is a free basis for $C_*(T_{2,2k+1};\Delta_{\Z[T^{\pm 1}]})$ over $\Z[T^{\pm 1}]$, and hence
we have $\Delta_2(1) = \sum_{i=1}^k a_i \zeta^i$ for some $a_i \in \Z [T^{\pm 1}]$. Moreover, it follows from the equality \eqref{eq:morphism} and \Cref{lem:evaluate} that for any $1 \leq i \leq s_+$,
\[
a_i(T^2-T^{-2})^i = \delta_2 v^{i-1}\Delta_2(1) = c_i 
=
\begin{cases}
(T^{2}-T^{-2})^{s_+} & (i=s_+)\\[1mm]
0 & (1 \leq i < s_+).
\end{cases}
\]
These give $a_{s_+} = 1$ and $a_i = 0$ for each $1 \leq i < s_+$.
\end{proof}

Next, we consider the $\Z/2\Z$-reduction of the $\Z/4\Z$-grading on $\widetilde{C}_* = \widetilde{C}_*(T_{2,2k+1};\Delta_{\Z[T^{\pm 1}]})$. Denote the $\Z/2\Z$-grading 0 part (resp. grading 1 part) of $\widetilde{C}_*$ by $\widetilde{C}_{[0]}$ 
(resp.\ $\widetilde{C}_{[1]}$ ). Then we see that
\[
\widetilde{C}_{[0]} = 0 \oplus C_{*-1}(T_{2,2k+1};\Delta_{\Z[T^{\pm 1}]}) \oplus \Z[T^{\pm 1}]
\]
and
\[
\widetilde{C}_{[1]} = C_{*}(T_{2,2k+1};\Delta_{\Z[T^{\pm 1}]}) \oplus 0 \oplus 0.
\]
From \Cref{lem:Delta2}, we can give a free basis for $\wt{C}_{[0]}(T_{2,2k+1};\Delta_{\Z[T^{\pm 1}]})$ as follows.
(Here we use the identification
$\wt{C}_*(U_1;\Delta_{\Z[T^{\pm 1}]}) = 0 \oplus 0 \oplus \Z[T^{\pm 1}] = \Z[T^{\pm 1}]$.)
\begin{lem}
\label{lem:family-of-cob}
An arbitrary set of immersed cobordisms 
$\{S_i \colon U_1 \to T_{2,2k+1}\}_{i=0}^k$ with the properties
\[
s_+(S_i)= i  \text{ and } \ g(S_i)= k-i
\]
gives a free basis $\{\widetilde{\lambda}_{S_i}(1)\}_{i=0}^k$ for $\widetilde{C}_{[0]}(T_{2,2k+1};\Delta_{\Z[T^{\pm 1}]})$ over $\Z[T^{\pm 1}]$. Moreover, if a given immersed cobordism $S\colon U_1 \to T_{2,2k+1}$ satisfies
\[
s_+(S)+g(S)=k,
\]
then we have
\[
\widetilde{\lambda}_S(1)-\widetilde{\lambda}_{S_{s_+(S)}}(1) \in 
\Z[T^{\pm 1}] \cdot
\left\langle \widetilde{\lambda}_{S_{s_+(S)+1}}(1), \ldots, \widetilde{\lambda}_{S_{k}}(1) \right\rangle.
\]
\end{lem}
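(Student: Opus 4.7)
The plan is to reduce the claim to a triangular-matrix calculation, driven by \cref{lem:Delta2} together with the vanishing of the $R_{(0)}$-component $c_0$ for $\mathcal{S}$-morphisms of strictly positive height.

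First I will expand each $\widetilde{\lambda}_{S_i}(1)$ in the decomposition $\widetilde{C}_{[0]}=C_{*-1}(T_{2,2k+1};\Delta_{\Z[T^{\pm 1}]})\oplus R_{(0)}$, which is free of rank $k+1$ over $\Z[T^{\pm 1}]$ with basis $\{\zeta^1,\ldots,\zeta^k,1\}$. Since $S_i$ satisfies $s_{+}(S_i)=i$ and $g(S_i)=k-i$, its height equals
\[
-g(S_i)+\tfrac{1}{2}\sigma(U_1)-\tfrac{1}{2}\sigma(T_{2,2k+1})\;=\;-(k-i)+0-(-k)\;=\;i.
\]
Applying \cref{lem:Delta2} and reading off $c_0$ from \eqref{eq:morphism}, I obtain
\[
\widetilde{\lambda}_{S_i}(1) \;=\; \zeta^i + \sum_{j>i} a_j^{(i)}\zeta^j \quad (i\geq 1), \qquad
\widetilde{\lambda}_{S_0}(1) \;=\; \sum_{j\geq 1}a_j^{(0)}\zeta^j + 1\cdot 1_{R_{(0)}},
\]
for some $a_j^{(*)}\in\Z[T^{\pm 1}]$: the $R_{(0)}$-component vanishes for $i\geq 1$ by \eqref{eq:morphism} (since $0<i$), and equals $(T^2-T^{-2})^0=1$ for $i=0$.

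Next I will write down the transition matrix from the ordered tuple $(\widetilde{\lambda}_{S_0}(1),\widetilde{\lambda}_{S_1}(1),\ldots,\widetilde{\lambda}_{S_k}(1))$ to the ordered basis $(1,\zeta^1,\ldots,\zeta^k)$. With this ordering the matrix is lower triangular with $1$'s on the diagonal, so its determinant equals $1$, and the tuple therefore constitutes a free $\Z[T^{\pm 1}]$-basis of $\widetilde{C}_{[0]}$; this gives the first assertion.

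For the moreover part, set $s:=s_+(S)$. Applying \cref{lem:Delta2} to both $S$ and $S_s$, which share the same $s_+$ and height $s$ (and hence the same value of $c_0$ by \eqref{eq:morphism}), the $R_{(0)}$-components of $\widetilde{\lambda}_S(1)$ and $\widetilde{\lambda}_{S_s}(1)$ coincide, so
\[
\widetilde{\lambda}_S(1)-\widetilde{\lambda}_{S_s}(1) \;\in\; \bigoplus_{j>s}\Z[T^{\pm 1}]\cdot\zeta^j.
\]
The elements $\widetilde{\lambda}_{S_{s+1}}(1),\ldots,\widetilde{\lambda}_{S_k}(1)$ are pure $C_{*-1}$-elements with lower-triangular leading terms on $\zeta^{s+1},\ldots,\zeta^k$, so they form a $\Z[T^{\pm 1}]$-basis of this very submodule, concluding the argument. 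The only point demanding care is the vanishing of $c_0$ for positive-height morphisms, which is precisely supplied by \eqref{eq:morphism}; otherwise powers of $(T^2-T^{-2})$ would appear in the last row of the matrix and potentially spoil unit determinant.
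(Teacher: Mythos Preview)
Your proof is correct and follows essentially the same approach as the paper: both arguments use \eqref{eq:morphism} and \Cref{lem:Delta2} to write each $\widetilde{\lambda}_{S_i}(1)$ in the standard basis $\{(0,0,1),(0,\zeta^1,0),\ldots,(0,\zeta^k,0)\}$ of $\widetilde{C}_{[0]}$, observe that the resulting change-of-basis matrix is unitriangular, and then use the same triangularity to identify the span of $\widetilde{\lambda}_{S_{s+1}}(1),\ldots,\widetilde{\lambda}_{S_k}(1)$ with $\bigoplus_{j>s}\Z[T^{\pm 1}]\cdot\zeta^j$. Your explicit height computation and the remark about $c_0$ vanishing for positive height are helpful additions that the paper leaves implicit.
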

\begin{proof}
By the equality (\ref{eq:morphism})  and \Cref{lem:Delta2},
the element
$\widetilde{\lambda}_{S_i}(1)$ is in the form of
\begin{align}
\label{eq:image-of-cob-map}
\widetilde{\lambda}_{S_i}(1) = 
\begin{cases}
(0,\sum_{j= 1}^k a_j \zeta^j,1) & (i=0) \\[2mm]
\left(0,\zeta^{i} + \sum_{j=i +1}^k a_j \zeta^j, 0 \right) & (1 \leq j \leq k).
\end{cases}
\end{align}
Hence, the first half assertion immediately follows from the fact that 
\[
\{(0,0,1), (0,\zeta^1,0), \ldots (0,\zeta^k,0) \}
\]
is a free basis for 
$\widetilde{C}_{[0]}(T_{2,2k+1};\Delta_{\Z[T^{\pm 1}]})$ over $\Z[T^{\pm 1}]$.
Moreover, the form (\ref{eq:image-of-cob-map}) also shows
the equalities
\[
\widetilde{\lambda}_S(1)-\widetilde{\lambda}_{S_{s_+(S)}}(1)
= 
\begin{cases}
(0,\sum_{i=1}^k a_i \zeta^i,0) & (s_+(S)=0) \\[2mm]
\left(0,(\sum_{i=s_+(S) +1}^k a_i \zeta^i, 0 \right) & (1 \leq s_+(S) \leq k)
\end{cases}
\]
and
\[
\Z[T^{\pm 1}] \cdot
\left\langle 
(0,\zeta^{s_+(S)+1},0), \ldots, (0, \zeta^{k}, 0)
\right\rangle
=
\Z[T^{\pm 1}] \cdot
\left\langle \widetilde{\lambda}_{S_{s_+(S)+1}}(1), \ldots, \widetilde{\lambda}_{S_{k}}(1) \right\rangle.
\]
These imply the second half assertion.
\end{proof}
Now we translate the above computations into the $I^\sharp$-theory. Recall that $I^\sharp(U_1;\Z) = I^\sharp_{[0]}(U_1;\Z) \cong \Z \oplus \Z$ is generated by $u_+,u_-$, which are coresponding to $(1,0), (0,1)$ in $H_*(\wt{C}^\sharp(U_1;\Z))=\wt{C}^\sharp(U_1;\Z)=\Z \oplus \Z$ respectively, via the canonical isomorphism given by \Cref{recov I sharp}.
\begin{cor}
\label{cor:family-of-cob}
An arbitrary set of immersed cobordisms 
$\{S_i \colon U_1 \to T_{2,2k+1}\}_{i=0}^k$ with the property
\[
s_+(S_i)= i  \text{ and } \ g(S_i)= k-i
\]
gives a generating set $\{I^\sharp_{S_i}(u_+), I^\sharp_{S_i}(u_-)\}_{i=0}^k$ of $I^\sharp_{[0]}(T_{2,2k+1};\Z)$. Here,
 the elements $\{I^\sharp_{S_i}(u_-)\}_{i=1}^k$ generate 
$\Tor I^\sharp_{[0]}(T_{2,2k+1};\Z) \cong (\Z/2\Z)^k$, and
the elements $\{I^\sharp_{S_i}(u_+), I^\sharp_{S_0}(u_-)\}_{i=0}^k$ are linearly independent in 
$I^\sharp_{[0]}(T_{2,2k+1};\Z)$ over $\Z$. Moreover, if a given immersed cobordism $S\colon U_1 \to T_{2,2k+1}$ satisfies 
\[
s_+(S)+g(S)=k,
\]
then we have
\[
I^\sharp_S(u_+)-I^\sharp_{S_{s_+(S)}}(u_+) \in 
\Z \cdot
\left\langle I^\sharp_{S_{s_+(S)+1}}(u_+), \ldots, I^\sharp_{S_{k}}(u_+) \right\rangle
\]
and
\[
I^\sharp_S(u_-)-I^\sharp_{S_{s_+(S)}}(u_-) \in 
\Z \cdot
\left\langle I^\sharp_{S_{s_+(S)+1}}(u_-), \ldots, I^\sharp_{S_{k}}(u_-) \right\rangle.
\]
\end{cor}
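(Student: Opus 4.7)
The plan is to deduce the corollary from \Cref{lem:family-of-cob} by specializing the coefficient ring from $\Z[T^{\pm 1}]$ to $\Z$ via $T \mapsto 1$, and then translating the result to $I^\sharp(T_{2,2k+1};\Z)$ through the identification of \Cref{recov I sharp} and the naturality square (\ref{eq:equiv-diag-sharp}).

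The first step is to observe that every structure map of $\wt{C}_*(T_{2,2k+1};\Delta_{\Z[T^{\pm 1}]})$ recalled in \Cref{subsec:S-cpx-T2q} is divisible by $T^2-T^{-2}$, so the specialized differential $\wt{d}$ on $\wt{C}_*(T_{2,2k+1};\Delta_\Z)$ vanishes identically. Consequently the mapping-cone differential $\wt{d}^\sharp$ on $\wt{C}^\sharp$ is concentrated in its $2\chi$-entry. Using the decomposition $\wt{C} = C \oplus C[-1] \oplus \Z$ together with $\operatorname{Im}\chi = $ second summand and $\Z\cdot(0,0,1) \subset \ker\chi$, the map $\chi$ acts as the identity from the first summand onto the second and vanishes on the other two. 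Since every $C$-generator $\zeta^i$ has odd $\Z/4\Z$-grading, this forces $\chi|_{\wt{C}_{[0]}} = 0$, so every element of $\wt{C}^\sharp_{[0]}$ is a cycle and the boundaries inside $\wt{C}^\sharp_{[0]}$ form exactly the subgroup $0 \oplus 2\langle(0,\zeta^i,0)\rangle_{i=1}^k$ of the second copy. This gives $H_{[0]}(\wt{C}^\sharp) \cong \Z^{k+2} \oplus (\Z/2)^k$ with explicit generators, matching \Cref{lem:T2q-sharp}.

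The second step is to transport the basis from \Cref{lem:family-of-cob}. The upper-triangular form (\ref{eq:image-of-cob-map}) with unit diagonal entries survives the specialization $T \mapsto 1$, so $\{\wt{\lambda}_{S_i}(1)\}_{i=0}^k$ remains a free $\Z$-basis of $\wt{C}_{[0]}(T_{2,2k+1};\Delta_\Z)$. Combined with the diagonal form $\wt{\lambda}^\sharp_S = \operatorname{diag}(\wt{\lambda}_S,\wt{\lambda}_S)$ and the identifications $u_+ = (1,0)$, $u_- = (0,1)$ in $\wt{C}^\sharp(U_1;\Delta_\Z)$, the $2k+2$ elements $\{\wt{\lambda}^\sharp_{S_i}(u_\pm)\}_{i=0}^k$ form a free $\Z$-basis of $\wt{C}^\sharp_{[0]}$; their images therefore generate $I^\sharp_{[0]}(T_{2,2k+1};\Z)$.

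It finally remains to sort these classes between the free and torsion parts. The $k+1$ classes $\wt{\lambda}^\sharp_{S_i}(u_+)$ lie in the first summand, where the boundary subgroup is trivial, so they descend to $k+1$ linearly independent classes in the free part; the class $\wt{\lambda}^\sharp_{S_0}(u_-)$ carries a $1$ in the $\Z$-direct factor of the second summand and provides the remaining free generator, producing a $\Z$-basis of the rank-$(k+2)$ free quotient. For $i \geq 1$ the element $\wt{\lambda}_{S_i}(1) = (0,\zeta^i + \sum_{j>i}a_j\zeta^j,0)$ lies in $\chi(\wt{C}_{[1]})$, so $\wt{\lambda}^\sharp_{S_i}(u_-)$ becomes $2$-torsion in homology, and the same upper-triangular pattern shows that these $k$ classes freely generate $\Tor I^\sharp_{[0]} \cong (\Z/2)^k$. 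The two ``moreover'' inclusions in the statement follow verbatim from the corresponding clause of \Cref{lem:family-of-cob} by applying $\wt{\lambda}^\sharp_\bullet(u_\pm)$. The argument is essentially bookkeeping; the only subtlety is tracking the $\Z/2\Z$-graded decomposition of the mapping-cone $\wt{C}^\sharp$, so I anticipate no serious obstacle.
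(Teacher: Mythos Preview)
Your proof is correct and follows essentially the same approach as the paper's: both identify $\wt{C}^\sharp_{[0]}(T_{2,2k+1};\Delta_\Z) \subset \Ker\wt{d}^\sharp$ with image of $\wt{d}^\sharp$ equal to $0 \oplus (0 \oplus 2C_{*-1} \oplus 0)$, use the diagonal form $\wt{\lambda}^\sharp_S(\alpha,\beta) = (\wt{\lambda}_S(\alpha),\wt{\lambda}_S(\beta))$, and then invoke \Cref{lem:family-of-cob} together with the naturality square~(\ref{eq:equiv-diag-sharp}). You simply spell out in more detail why the differential vanishes at $T=1$ and how the free/torsion sorting works, whereas the paper records the kernel and image descriptions and declares the rest immediate.
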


\begin{proof}
Note that
\begin{align*}
\widetilde{C}^\sharp_{[0]}(T_{2,2k+1};\Delta_{\Z}) &= \left(\widetilde{C}_{[0]}(T_{2,2k+1};\Delta_{\Z[T^{\pm 1}]}) \oplus \wt{C}_{[0]}(T_{2,2k+1};\Delta_{\Z[T^{\pm 1}]})\right)
\otimes_{\Z[T^{\pm 1}]} \Z \\[2mm]
&= (0 \oplus C_{*-1}(T_{2,2k+1};\Delta_{\Z})\oplus \Z)
\oplus (0 \oplus C_{*-1}(T_{2,2k+1};\Delta_{\Z})\oplus \Z)\\[2mm]
&\subset \Ker \wt{d}^\sharp,
\end{align*}
and
\[
\wt{d}^\sharp \left(\wt{C}^\sharp_{[1]} (T_{2,2k+1};\Delta_{\Z}) 
\right) =
(0 \oplus 0 \oplus 0)
\oplus (0 \oplus 2 \cdot C_{*-1}(T_{2,2k+1};\Delta_{\Z})\oplus 0).
\]
Moreover, we have $\wt{\lambda}^\sharp_S(\alpha, \beta) = (\wt{\lambda}_S(\alpha), \wt{\lambda}_S(\beta))$.
Therefore, the assertion directly follows from \Cref{lem:family-of-cob} and 
the commutativity of (\ref{eq:equiv-diag-sharp}).
\end{proof}

\subsection{Cobordism maps from the trefoil}

Next, we consider cobordisms $S \colon T_{2,3} \to T_{2,2k+1}$ for $k>0$ and their induced maps, which will be used for the study of $I^\sharp_{[1]}(T_{2,2k+1};\Z)$.
\begin{lem}
\label{lem:lambda-T23}
Let $\lambda_S$ be the $(2,2)$-entry of $\widetilde{\lambda}_S$ for an immersed smooth surface cobordism $S \colon T_{2,3} \to T_{2,2k+1}$ with \[
s_+ + g(S)= k-1.
\]
Then, for some $a_i \in \Z[T^{\pm}]$ $(i=s_+ +2, \ldots, k)$,
we have the equality
\[
\lambda_S(\zeta^1) = \zeta^{s_++1} + \sum_{i=s_+ + 2}^k a_i \zeta^i
\]
as elements of $C_*(T_{2,2k+1};\Delta_{\Z[T^{\pm 1}]})$.
\end{lem}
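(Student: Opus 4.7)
The plan is to mimic the proof of Lemma~\ref{lem:Delta2}, but to extract the components of $\lambda_S$ by applying the chain-map equation $\widetilde{\lambda}_S \widetilde{d} = \widetilde{d}'\widetilde{\lambda}_S$ to the generator $\zeta^1$ of $C_*(T_{2,3};\Delta_{\Z[T^{\pm 1}]})$. First I would determine the height of $S$: using $\sigma(T_{2,3}) = -2$ and $\sigma(T_{2,2k+1}) = -2k$, the height comes out to
\[
-g(S) + \tfrac{1}{2}\sigma(T_{2,3}) - \tfrac{1}{2}\sigma(T_{2,2k+1}) = -g(S) - 1 + k = s_+,
\]
so $\widetilde{\lambda}_S$ is an $\mathcal{S}$-morphism of height $s_+$, and the equality (\ref{eq:morphism}) gives $c_j = 0$ for $j < s_+$ together with $c_{s_+} = (T^2 - T^{-2})^{s_+}$.

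Next, I would apply the chain-map condition to $(\zeta^1, 0, 0) \in \widetilde{C}(T_{2,3};\Delta_{\Z[T^{\pm 1}]})$. Since $d = \delta_2 = 0$, $v(\zeta^1) = 0$, and $\delta_1(\zeta^1) = T^2 - T^{-2}$ on the source, while $d' = \delta_2' = 0$ on the target, the middle and bottom components of the resulting equality produce the two key identities
\[
v'\,\lambda_S(\zeta^1) = (T^2 - T^{-2})\,\Delta_2(1), \qquad \delta_1'\,\lambda_S(\zeta^1) = (T^2 - T^{-2})\,c_0.
\]

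To exploit the first identity, I would first compute $\Delta_2(1)$ by the same argument as in Lemma~\ref{lem:Delta2}: writing $\Delta_2(1) = \sum_{i=1}^k b_i \zeta^i$, the defining formula for $c_j$ simplifies to $c_j = \delta_1'(v')^{j-1}\Delta_2(1) = b_j(T^2-T^{-2})^j$ by Lemma~\ref{lem:evaluate} (the other two terms vanish because $\delta_2 = 0$ on $T_{2,3}$). The height-$s_+$ condition then forces $b_j = 0$ for $j < s_+$ and $b_{s_+} = 1$. Expanding $\lambda_S(\zeta^1) = \sum_{i=1}^k a_i \zeta^i$ and using $v'(\zeta^i) = (T^2-T^{-2})\zeta^{i-1}$, the first identity unravels into $a_{i+1} = b_i$ for $i \geq 1$, giving $a_j = 0$ for $2 \leq j \leq s_+$ and $a_{s_++1} = 1$. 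The remaining coefficient $a_1$ (which is not pinned down by the first identity) can then be read off from the second identity via Lemma~\ref{lem:evaluate}: $a_1(T^2-T^{-2}) = (T^2-T^{-2})\,c_0$, so $a_1 = c_0$.

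The main point of care is the edge case $s_+ = 0$: there the first identity does not determine the leading coefficient $a_{s_++1} = a_1$ (the constraint $b_{s_+} = 1$ becomes the vacuous statement $0 = 0$, since $\zeta^0 = 0$ in the convention of Lemma~\ref{lem:Delta2}), and one must instead read $a_1 = 1$ off from the height-zero normalization $c_0 = 1$. When $s_+ \geq 1$ one has $c_0 = 0$, so $a_1 = 0$ and the expansion indeed begins at $\zeta^{s_++1}$. Aside from this bookkeeping, the argument is a direct mechanical translation of the Lemma~\ref{lem:Delta2} strategy, and I do not anticipate serious difficulty.
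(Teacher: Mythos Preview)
Your approach is correct but takes a genuinely different route from the paper's proof. The paper instead precomposes with an auxiliary cobordism $S'\colon U_1 \to T_{2,3}$ having $(s_+(S'),g(S'))=(1,0)$, observes that the $\mathcal{S}$-chain homotopy between $\widetilde{\lambda}_S\circ\widetilde{\lambda}_{S'}$ and $\widetilde{\lambda}_{S\circ S'}$ annihilates $(0,0,1)$ for $\Z/4$-grading reasons, and then applies Lemma~\ref{lem:Delta2} twice---once to $S'$ (giving $\widetilde{\lambda}_{S'}(0,0,1)=(0,\zeta^1,0)$) and once to $S\circ S'$---to read off $\lambda_S(\zeta^1)$ directly. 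This reduction keeps the whole computation in the unknot-source setting where equation~(\ref{eq:morphism}) is actually stated.

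Your direct method, applying the chain-map condition to $(\zeta^1,0,0)$, produces the two identities correctly and the coefficient extraction is clean. The only soft spot is that you invoke~(\ref{eq:morphism}) for a cobordism whose source is $T_{2,3}$, whereas in the paper that formula is derived only for source $U_1$. The facts you need---$c_j=0$ for $j<s_+$, $c_{s_+}=(T^2-T^{-2})^{s_+}$, and in particular $c_0=1$ when $s_+=0$---do hold in general (they underlie the height-$i$ Proposition in \S5.1.2 and come from \cite{DS20}), so this is a citation issue rather than a mathematical gap. The paper's composition trick buys self-containment at the cost of the homotopy-vanishing step; your approach is more hands-on but needs the general $c_i$ formula as input.
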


\begin{proof}
We can take an immersed smooth surface cobordism 
$S' \colon U_1 \to T_{2,3}$ with $s_+(S')=1$ and $g(S')=0$. Then, the composition 
$S \circ S' \colon U_1 \to T_{2,2k+1}$ satisfies 
\[
s_+(S \circ S') + g(S \circ S')
= s_+ + 1 + g(S) = k.
\]
Furthermore, there exists a $\Z[T^{\pm 1}]$-module map
\[
\wt{h} \colon \wt{C}_*(U_1; \Delta_{\Z[T^{\pm 1}]}) \to
\wt{C}_*(T_{2,2k+1}; \Delta_{\Z[T^{\pm 1}]}),
\quad \wt{h} =
\begin{bmatrix}
h_{11} & 0 & 0\\
h_{21} & -h_{11} & h_{23}\\
h_{31} & 0 & 0
\end{bmatrix}
\]
which increases the $\Z/4$-grading by 1 and satisfies
$\wt{\lambda}_S \circ \wt{\lambda}_{S'} - \wt{\lambda}_{S \circ S'} = \wt{d} \circ \wt{h} + \wt{h} \circ \wt{d}$.
Here we note that 
\[
\wt{d}(0,0,1) = 0 \in  \wt{C}_*(U_1;\Delta_{\Z[T^{\pm 1}]})
\]
and
\[
\wt{h}(0,0,1) \in \wt{C}_{[1]}(T_{2,2k+1};\Delta_{\Z[T^{\pm 1}]})  \cap 
\left(0 \oplus C_{*-1} \oplus 0 \right) 
= (C_* \oplus 0 \oplus 0) \cap (0 \oplus C_{*-1} \oplus 0) = \{0\}.
\]
These show that $(\wt{d} \circ \wt{h} + \wt{h} \circ \wt{d})(0,0,1) = 0$, and hence
$\wt{\lambda}_S \circ \wt{\lambda}_{S'}(0,0,1) = \wt{\lambda}_{S \circ S'}(0,0,1)$.
Thus, by \Cref{lem:Delta2}, we see that
\begin{align*}
(0, \lambda_S(\zeta^1),0) 
&= \wt{\lambda}_{S}(0,\zeta^1,0) 
= \wt{\lambda}_S(0,\Delta_2(1),0)\\[2mm]
&= \wt{\lambda}_S \circ \wt{\lambda}_{S'} (0,0,1)
= \wt{\lambda}_{S \circ S'} (0,0,1)\\[2mm]
&= (0, \Delta'_2(1), 0) = 
\left(0, \zeta^{s_++1} + \sum_{i=s_+ + 2}^k a_i \zeta^i, 0\right),
\end{align*}
where $\Delta_2$ (resp.\ $\Delta'_2$) denotes the $(2,3)$-entry of $\wt{\lambda}_S'$ (resp.\ $\wt{\lambda}_{S \circ S'}$).
\end{proof}
\Cref{lem:lambda-T23} provides an analogy of \Cref{lem:family-of-cob} as follows:
\begin{lem}
\label{lem:family-of-cob2}
An arbitrary set of immersed cobordisms 
$\{S_i \colon T_{2,3} \to T_{2,2k+1}\}_{i=1}^{k}$ with the properties
\[
s_+(S_i)= i-1  \text{ and } \ g(S_i)= k-i
\]
gives a free basis $\{\widetilde{\lambda}_{S_i}(\zeta^1,0,0)\}_{i=1}^{k}$ for $\widetilde{C}_{[1]}(T_{2,2k+1};\Delta_{\Z[T^{\pm 1}]})$ over $\Z[T^{\pm 1}]$. Moreover, if a given immersed cobordism $S\colon T_{2,3} \to T_{2,2k+1}$ satisfies
\[
s_+(S)+g(S)=k-1,
\]
then we have
\[
\widetilde{\lambda}_S(\zeta^1,0,0)
-\widetilde{\lambda}_{S_{s_+(S)+1}}(\zeta^1,0,0) \in 
\Z[T^{\pm 1}] \cdot
\left\langle \widetilde{\lambda}_{S_{s_+(S)+2}}(\zeta^1,0,0), \ldots, \widetilde{\lambda}_{S_{k}}(\zeta^1,0,0)\right\rangle.
\]
\end{lem}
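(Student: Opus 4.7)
The plan is to imitate the proof of \Cref{lem:family-of-cob}, with \Cref{lem:lambda-T23} playing the role that \Cref{lem:Delta2} played there. First, I would unpack $\widetilde{C}_{[1]}(T_{2,2k+1};\Delta_{\Z[T^{\pm 1}]})$ explicitly: from the description of $\widetilde{C}$ recalled in \Cref{subsec:S-cpx-T2q} it is the $C_{*}$-summand, so the elements $(\zeta^j,0,0)$ for $j=1,\dots,k$ form a free $\Z[T^{\pm 1}]$-basis.

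Next, I would observe that since $\widetilde{\lambda}_{S_i}$ is an $\mathcal{S}$-morphism of the form \eqref{S-morphism}, it preserves the first summand of the decomposition \eqref{decomp of S-cpx}, and therefore
\[
\widetilde{\lambda}_{S_i}(\zeta^1,0,0) = \bigl(\lambda_{S_i}(\zeta^1),\,0,\,0\bigr).
\]
Applying \Cref{lem:lambda-T23} to $S_i$, whose hypotheses hold with $s_+(S_i)+g(S_i)=(i-1)+(k-i)=k-1$, gives
\[
\lambda_{S_i}(\zeta^1) = \zeta^{i} + \sum_{j=i+1}^{k} a^{(i)}_j\,\zeta^{j}
\]
for some $a^{(i)}_j\in\Z[T^{\pm 1}]$. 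Thus, expressed in the basis $\{(\zeta^j,0,0)\}_{j=1}^{k}$, the family $\{\widetilde{\lambda}_{S_i}(\zeta^1,0,0)\}_{i=1}^{k}$ is given by an upper-triangular matrix with $1$'s on the diagonal. Such a change-of-basis matrix is invertible over $\Z[T^{\pm 1}]$, which proves the first assertion.

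For the second assertion, I apply \Cref{lem:lambda-T23} to $S$ (with $i_0:=s_+(S)+1$) and to $S_{i_0}$ to get
\[
\widetilde{\lambda}_S(\zeta^1,0,0)-\widetilde{\lambda}_{S_{i_0}}(\zeta^1,0,0)
= \Bigl(0,\ \sum_{j=i_0+1}^{k} c_j\,\zeta^{j},\ 0\Bigr)
\]
for some $c_j \in \Z[T^{\pm 1}]$, where the leading $\zeta^{i_0}$-terms cancel. The triangular change-of-basis already established shows that
\[
\Z[T^{\pm 1}]\cdot\bigl\langle (\zeta^{i_0+1},0,0),\dots,(\zeta^{k},0,0)\bigr\rangle
= \Z[T^{\pm 1}]\cdot\bigl\langle \widetilde{\lambda}_{S_{i_0+1}}(\zeta^1,0,0),\dots,\widetilde{\lambda}_{S_{k}}(\zeta^1,0,0)\bigr\rangle,
\]
which gives the stated inclusion. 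The only nontrivial input is \Cref{lem:lambda-T23}; everything else is linear algebra, so no real obstacle is expected here.
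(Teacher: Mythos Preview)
Your approach is exactly the one taken in the paper: apply \Cref{lem:lambda-T23} to get an upper-triangular expression for $\widetilde{\lambda}_{S_i}(\zeta^1,0,0)$ in the standard basis $\{(\zeta^j,0,0)\}_{j=1}^{k}$, conclude the first claim by invertibility of a unitriangular matrix, and deduce the second claim from the resulting equality of submodules.

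There are two small slips to fix. First, the assertion that an $\mathcal{S}$-morphism ``preserves the first summand'' is not correct as stated: from the matrix \eqref{S-morphism} one has $\widetilde{\lambda}(\zeta^1,0,0)=(\lambda(\zeta^1),\mu(\zeta^1),\Delta_1(\zeta^1))$, so in general there are contributions to all three summands. The paper closes this gap by a $\Z/4$-grading argument: the target components $C_{*-1}(T_{2,2k+1})$ and $R_{(0)}$ have no generators in the grading that $\widetilde{\lambda}_{S_i}$ would hit, so $\mu(\zeta^1)=0$ and $\Delta_1(\zeta^1)=0$. You should insert this grading observation. Second, in your displayed formula for the difference, the sum $\sum_{j} c_j\zeta^j$ belongs in the first slot, not the second; both terms lie in $C_*\oplus 0\oplus 0$, so their difference does too.
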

\begin{proof}
By \Cref{lem:lambda-T23},
the element
$\widetilde{\lambda}_{S_i}(\zeta^1,0,0)$ is in the form of
\begin{align}
\label{eq:image-of-cob-map2}
\widetilde{\lambda}_{S_i}(\zeta^1,0,0) = 
\left(\zeta^{i} + \sum_{j=i +1}^k a_j \zeta^j, 0 , 0 \right). \end{align}
(Here, it follows from the $\Z/4$-grading reason that $\mu$ and $\Delta_1$ in \eqref{S-morphism} maps $\zeta^1$ to zero.)
Hence, the first half assertion immediately follows from the fact that 
\[
\{(\zeta^1,0,0), \ldots (\zeta^k,0,0) \}
\]
is a free basis for $\widetilde{C}_{[1](T_{2,2k+1};\Delta_{\Z[T^{\pm 1}]})}$ over $\Z[T^{\pm 1}]$.
Moreover, the form \eqref{eq:image-of-cob-map2} also shows
the equalities
\[
\widetilde{\lambda}_S(\zeta^1,0,0)-\widetilde{\lambda}_{S_{s_+(S)+1}}(\zeta^1,0,0)
= 
\left(0,\sum_{i=s_+(S) +2}^k a_i \zeta^i, 0 \right) 
\]
and
\[
\Z[T^{\pm 1}] \cdot
\left\langle 
(\zeta^{s_+(S)+2},0,0), \ldots, (\zeta^{k}, 0,0)
\right\rangle
=
\Z[T^{\pm 1}] \cdot
\left\langle \widetilde{\lambda}_{S_{s_+(S)+2}}(\zeta^1,0,0), \ldots, \widetilde{\lambda}_{S_{k}}(\zeta^1,0,0) \right\rangle.
\]
These imply the second half assertion.
\end{proof}
Now we show an analogy of \Cref{cor:family-of-cob},
which gives information of $I^\sharp_{[1]}(T_{2,2k+1};\Z)$.
Let $\zeta^\sharp$ be a generator of $I^\sharp_{[1]}(T_{2,3};\Z) \cong \Z$, which is coresponding to 
\[
[\big((0,0,0),(\zeta, 0,0) \big)] \in H_*(\wt{C}^{\sharp}_*(T_{2,3};\Delta_{\Z}))
\]
via the canonical isomorphism given by \Cref{recov I sharp}.
\begin{cor}
\label{cor:family-of-cob2}
An arbitrary set of immersed cobordisms 
$\{S_i \colon T_{2,3} \to T_{2,2k+1}\}_{i=1}^k$ with the property
\[
s_+(S_i)= i -1  \text{ and } \ g(S_i)= k-i
\]
gives a free basis $\{I^\sharp_{S_i}(\zeta^\sharp)\}_{i=1}^k$ of $I^\sharp_{[1]}(T_{2,2k+1};\Z)$.
Moreover, if a given immersed cobordism $S\colon T_{2,3} \to T_{2,2k+1}$ satisfies 
\[
s_+(S)+g(S)=k-1,
\]
then we have
\[
I^\sharp_S(\zeta^\sharp)-I^\sharp_{S_{s_+(S)}+1}(\zeta^\sharp) \in 
\Z \cdot
\left\langle I^\sharp_{S_{s_+(S)+2}}(\zeta^\sharp), \ldots, I^\sharp_{S_{k}}(\zeta^\sharp) \right\rangle.
\]
\end{cor}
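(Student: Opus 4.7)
The plan is to mirror the proof of \Cref{cor:family-of-cob}, now working in the odd $\Z/2$-grading component. First, I would translate the statement through the commutative diagram \eqref{eq:equiv-diag-sharp} together with \Cref{recov I sharp}, so that everything is computed inside $H_*(\widetilde{C}^\sharp_{[1]}(T_{2,2k+1};\Delta_\Z))$. Since $\zeta^\sharp$ is represented by $((0,0,0),(\zeta^1,0,0))$ and $\widetilde{\lambda}^\sharp_S$ is block-diagonal with entry $\widetilde{\lambda}_S$, the class $I^\sharp_{S_i}(\zeta^\sharp)$ corresponds to the homology class of $((0,0,0),\widetilde{\lambda}_{S_i}(\zeta^1,0,0))$.

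Next, I would unpack the complex at $T=1$. Using the explicit formulas in \Cref{subsec:S-cpx-T2q}, the components $d,v,\delta_1,\delta_2$ all vanish, so $\widetilde{d}=0$ on $\widetilde{C}(T_{2,2k+1};\Delta_\Z)$; moreover, the $\chi$-action annihilates the first summand $\widetilde{C}_{[1]} = C_*(T_{2,2k+1};\Delta_\Z)\oplus 0 \oplus 0$, while $2\chi$ on $\widetilde{C}_{[0]}$ lands only in the first summand of $\widetilde{C}^\sharp_{[1]}$. Projecting onto the second summand therefore yields a canonical isomorphism
\[
H_*(\widetilde{C}^\sharp_{[1]}(T_{2,2k+1};\Delta_\Z))\;\xrightarrow{\ \cong\ }\;\widetilde{C}_{[1]}(T_{2,2k+1};\Delta_\Z)\;=\;C_*(T_{2,2k+1};\Delta_\Z)\;\cong\;\Z^k,
\]
under which $I^\sharp_{S_i}(\zeta^\sharp)$ is sent to $\widetilde{\lambda}_{S_i}(\zeta^1,0,0)$ (viewed after specialization $T=1$).

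At this point \Cref{lem:family-of-cob2} applies directly: the free $\Z[T^{\pm 1}]$-basis $\{\widetilde{\lambda}_{S_i}(\zeta^1,0,0)\}_{i=1}^k$ of $\widetilde{C}_{[1]}(T_{2,2k+1};\Delta_{\Z[T^{\pm 1}]})$ descends to a free $\Z$-basis of $\widetilde{C}_{[1]}(T_{2,2k+1};\Delta_\Z)$ upon specializing $T=1$, which proves the first assertion. The second assertion follows by tensoring the concluding linear dependence in \Cref{lem:family-of-cob2} with $\Z$ over $\Z[T^{\pm 1}]$ and transporting it through the isomorphism above.

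I do not anticipate a serious obstacle, as the argument is purely an odd-grading analogue of the proof of \Cref{cor:family-of-cob}. The one delicate point will be verifying the $\chi$-action conventions carefully enough to conclude that, at $T=1$, every cycle in $\widetilde{C}^\sharp_{[1]}$ is equivalent to one supported entirely in the second summand, and that boundaries originating from $\widetilde{C}^\sharp_{[0]}$ land only in the first summand; these two facts together are what produce the clean free-$\Z$ description of $H_*(\widetilde{C}^\sharp_{[1]})$ used above.
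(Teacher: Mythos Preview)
Your overall strategy is correct and is exactly the paper's approach: translate through \eqref{eq:equiv-diag-sharp} and \Cref{recov I sharp}, compute $H_*(\widetilde{C}^\sharp_{[1]})$ explicitly at $T=1$, identify it with $\widetilde{C}_{[1]}(T_{2,2k+1};\Delta_\Z)$ via the second summand, and then invoke \Cref{lem:family-of-cob2}. However, your computation of the $\chi$-action is backwards, and this is precisely the ``one delicate point'' you flagged.

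In the decomposition $\widetilde{C}_* = C_* \oplus C_{*-1} \oplus R_{(0)}$, the map $\chi$ is given by $\chi(\alpha,\beta,a) = (0,\alpha,0)$, not $(\beta,0,0)$; one checks this is forced by the anticommutation $\chi\widetilde{d} + \widetilde{d}\chi = 0$ using the explicit matrix form of $\widetilde{d}$. Consequently $\chi$ is \emph{injective} on $\widetilde{C}_{[1]} = C_* \oplus 0 \oplus 0$ and \emph{vanishes} on $\widetilde{C}_{[0]} = 0 \oplus C_{*-1} \oplus R$, the opposite of what you asserted. At $T=1$ (where $\widetilde{d}=0$) this yields
\[
\Ker\widetilde{d}^\sharp \cap \widetilde{C}^\sharp_{[1]} \;=\; 0 \oplus \widetilde{C}_{[1]},
\qquad
\widetilde{d}^\sharp\bigl(\widetilde{C}^\sharp_{[0]}\bigr) \;=\; 0,
\]
so that $H_*(\widetilde{C}^\sharp_{[1]})$ is exactly the second summand; this is the paper's computation. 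Under your stated reasoning, by contrast, all of $\widetilde{C}^\sharp_{[1]}$ would consist of cycles while the boundaries would be $2\cdot(\text{a copy of }C_*)$, giving $H_*(\widetilde{C}^\sharp_{[1]}) \cong \Z^k \oplus (\Z/2\Z)^k$; this contradicts \Cref{lem:T2q-sharp} and in particular prevents the projection to the second summand from being an isomorphism. Once the $\chi$-action is corrected, the rest of your argument goes through verbatim.
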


\begin{proof}
Note that
\[
\Ker \wt{d}^\sharp \cap \widetilde{C}^\sharp_{[1]}(T_{2,2k+1};\Delta_{\Z})  
= 0
\oplus \left(\wt{C}_{[1]}(T_{2,2k+1};\Delta_{\Z[T^{\pm 1}]} \otimes_{\Z[T^\pm 1]} \Z \right)
\]
and
\[
\wt{d}^\sharp (\wt{C}^\sharp_{[1]}) = 0.
\]
Moreover, we have $\wt{\lambda}^\sharp_S(\alpha, \beta) = (\wt{\lambda}_S(\alpha), \wt{\lambda}_S(\beta))$.
Therefore, the assertion directly follows from \Cref{lem:family-of-cob2} and 
the commutativity of (\ref{eq:equiv-diag-sharp}).
\end{proof}

\subsection{Constraints from h-filtration}
Now we apply \Cref{mainKF} to the study of $I^\sharp(T_{2,2k+1};\Z)$, and then we can conclude that cobordism maps $I^\sharp_S$ and $\Kh^{\low}_S$ from $U_1$ (resp.\ $T_{2,3}$) to $T_{2,2k+1}$ with $s_+ +g(S)=k$ (resp.\ $s_+ +g(S)=k-1$) are uniquely determined by the values $(s_+,g(S))$. 

We first recall the relation between homological gradings and absolute Floer $\Z/4$-gradings. 
In \cite[Section 8.1]{KM11u}, a $\Z/4$-grading on $\Kh(K)$ is defined as
\[
    q - h -b_0(K).
\]
Note that the $\Z/2$-reduction of the above relation is more useful as follows:
\begin{prop}
The $\Z/2$-reduction of the $\Z/4$-grading on $\Kh$ coincides with that of the mod 2 $h$-grading.
\end{prop}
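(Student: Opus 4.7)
The plan is to reduce the statement to the parity claim $q \equiv b_0(K) \pmod 2$ for every chain generator of $\CKh$, which we shall verify by a standard mod-$2$ grading computation combined with Seifert's algorithm.

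First I would observe that since the $\Z/4$-grading is defined as $q - h - b_0(K)$, its mod-$2$ reduction agrees with $h \pmod 2$ if and only if $q - b_0(K) \equiv 0 \pmod 2$, i.e.\ $q \equiv b_0(K) \pmod 2$. To establish the latter, I would work at the chain level. Fix a diagram $D$ of $K$ with $n_\pm$ positive/negative crossings and $n = n_+ + n_-$. Any chain generator of $\CKh(D)$ is specified by a resolution vector $v \in \{0,1\}^n$ together with a labeling of the circles of $D_v$ by $\bv_\pm$ (with $|\bv_\pm| = \pm 1$), and the standard grading formula reads
\[
q = Q - |v|_1 - n_+ + 2n_-,
\]
where $Q$ is the sum of the labels. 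Since $Q \equiv r(D_v) \pmod 2$, where $r(D_v)$ denotes the number of circles in $D_v$, this yields
\[
q \equiv r(D_v) + |v|_1 + n_+ \pmod 2.
\]

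Next I would argue that $r(D_v) + |v|_1 \pmod 2$ is independent of $v$: any single-crossing flip changes $|v|_1$ by $\pm 1$ and $r(D_v)$ by $\pm 1$ (as the flip either merges two circles into one or splits one into two), so their sum changes by an even integer. Evaluating at the oriented resolution $v_o$, the convention that positive crossings are $0$-smoothed and negative crossings $1$-smoothed in the oriented resolution gives $|v_o|_1 = n_-$ and $r(D_{v_o}) = s(D)$, the number of Seifert circles of $D$. Hence $q \equiv s(D) + n \pmod 2$.

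Finally, Seifert's algorithm produces an oriented (possibly disconnected) surface $\Sigma$ with $\partial \Sigma = K$ and $\chi(\Sigma) = s(D) - n$. Since any compact orientable surface with boundary satisfies $\chi(\Sigma) \equiv b_0(\partial \Sigma) \pmod 2$—indeed $\chi = 2c - 2g - b_0(\partial)$, where $c$ is the number of components and $g$ the total genus—we conclude that $s(D) - n \equiv b_0(K) \pmod 2$, and therefore $q \equiv s(D) + n \equiv s(D) - n \equiv b_0(K) \pmod 2$. The proof is essentially grading bookkeeping combined with a classical Euler-characteristic identity, and I do not foresee a substantive obstacle; the only subtlety warranting care is confirming the sign convention $|v_o|_1 = n_-$, which can be verified against the explicit Hopf link computation given earlier in the paper.
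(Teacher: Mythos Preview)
Your proof is correct and follows essentially the same approach as the paper: reduce to showing $q \equiv b_0(K) \pmod 2$, observe that $q \bmod 2$ is constant across the complex, evaluate at the oriented resolution, and identify the result with the Euler characteristic of the Seifert surface. Your write-up is more explicit (you spell out the invariance of $r(D_v)+|v|_1 \bmod 2$ and the identity $\chi(\Sigma)=s(D)-n$), but the argument is the same.
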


\begin{proof}
Let us prove $q - b_0(K) \overset{(2)}{\equiv} 0$.
Since the $q$-grading mod 2 is constant on each $V(D_v)$ and unchanged by the differential, it suffices to consider the case where $D_v$ is the orientation state. Then, $q$ mod 2 is equal to the Euler characteristic of the Seifert surface derived from $D$, which coincides with $b_0(K)$.
\end{proof}

When one wants to compare the $I^\sharp$-functor 
with the $\Kh$-functor,
the following degeneration is very effective.

\begin{lem}
\label{lem:collapse}
For any $k \in \Z_{\geq 0}$,
the spectral sequence $E^r(\CKh^\sharp(T_{2,2k+1}))$ degenerates at the $E^2$-stage. In particular, we have 
\[
F_{p}I^\sharp_{[0]}(T_{2,2k+1};\Z)
/F_{p+1}I^\sharp_{[0]}(T_{2,2k+1};\Z)
\cong 
\bigoplus_{q \in \Z} \Kh_{[0]}(T^*_{2,2k+1};\Z)^{p,q},
\]
where the isomorphism is derived from \Cref{lem:convergence}.
\end{lem}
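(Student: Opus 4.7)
The plan is to argue collapse at $E^2$ by matching the Smith normal forms of $E^2=\Kh(T^*_{2,2k+1};\Z)$ and the abutment $I^\sharp(T_{2,2k+1};\Z)$, and then to deduce that no nontrivial differential $d^r$ ($r\ge 2$) is possible. Specializing \Cref{cor:CKh-T_2q} to $(h,t)=(0,0)$ over $\Z$, each segment $A\xrightarrow{U} A$ with $U=2X$ contributes kernel $\Z$ (spanned by $X$) in odd homological degree and cokernel $\Z\oplus\Z/2$ in even homological degree. Summing over all $k$ segments together with the unknot-like summand gives
\[
\Kh(T^*_{2,2k+1};\Z) \;\cong\; \Z^{2k+2} \oplus (\Z/2)^k,
\]
which matches $I^\sharp(T_{2,2k+1};\Z)$ as computed in \Cref{lem:T2q-sharp}. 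Moreover, since the $\Z/2$-reduction of the $\Z/4$-grading coincides with the $h$-parity (as noted earlier in this section), both sides split consistently as $\Z^{k+2}\oplus(\Z/2)^k$ in degree $[0]$ and $\Z^k$ in degree $[1]$.

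For the degeneration I will use two monotone invariants of finitely generated abelian groups. First, $\rank(-)$ is additive over short exact sequences, hence preserved when passing to the associated graded of a filtration: $\rank(E^\infty_{[0]}) = \rank(I^\sharp_{[0]})$. Since $\rank(H(M,d)) = \rank(M) - 2\,\rank(\im d)$ for any differential $d$, the rank is non-increasing along the spectral sequence, and equality with $\rank(E^2_{[0]}) = k+2$ forces $\im d^r$ to be torsion for every $r\ge 2$. Second, for each prime $p$ the total $p$-primary length $\mathrm{len}_p(-)$ is non-increasing under homology (via the two-step splitting $\im d \hookrightarrow \ker d \twoheadrightarrow H$), while a filtration can only subdivide torsion, giving $\mathrm{len}_p(E^\infty_{[0]}) \ge \mathrm{len}_p(I^\sharp_{[0]})$. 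Combined with $\mathrm{len}_2(E^2_{[0]}) = k = \mathrm{len}_2(I^\sharp_{[0]})$, these inequalities collapse, so $\mathrm{len}_p(\im d^r)=0$ for each $p$ as well; together with the rank argument, this forces $d^r=0$ for all $r\ge 2$. The same reasoning handles the $[1]$-part.

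Consequently $E^\infty = E^2$, and the stated identification of $F_p I^\sharp_{[0]}/F_{p+1} I^\sharp_{[0]}$ with $\bigoplus_q \Kh_{[0]}^{p,q}(T^*_{2,2k+1};\Z)$ follows by restricting to the $[0]$-part and invoking the convergence statement \Cref{lem:convergence} from the appendix.

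The main obstacle is justifying that rank and torsion length really do constrain the spectral sequence this tightly: the rank bound is entirely standard, but the lower bound $\mathrm{len}_p(E^\infty) \ge \mathrm{len}_p(I^\sharp)$ requires a short direct check using the exact sequences $0\to F_{p+1}\to F_p\to F_p/F_{p+1}\to 0$ (a filtration may introduce but can never merge $p$-torsion). Once both inequalities are in place, the degeneration is forced purely by numerical equality.
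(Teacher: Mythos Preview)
Your proposal is correct and follows essentially the same strategy as the paper: compute $E^2 \cong \Kh(T^*_{2,2k+1};\Z)$ from \Cref{cor:CKh-T_2q}, compare with $I^\sharp(T_{2,2k+1};\Z)$ from \Cref{lem:T2q-sharp}, and use matching rank and torsion to force degeneration. The paper simply packages your rank/torsion monotonicity argument into the general appendix result \Cref{thm:degeneration} (supported by \Cref{lem:rank-equality,lem:tor-equality,lem:tor-infty}) and cites it in one line; you are effectively re-deriving that theorem inline, using $p$-primary length $\mathrm{len}_p$ in place of the paper's $|\Tor|$, which is equivalent here since all torsion is $2$-torsion. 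One small point of presentation: since the spectral-sequence differential has odd $\Z/4$-degree it exchanges the $[0]$ and $[1]$ parts, so the rank and torsion bookkeeping should be done on the full $E^r$ rather than on $E^r_{[0]}$ alone (as you implicitly acknowledge when you say the same reasoning handles $[1]$); once both graded pieces are matched simultaneously the conclusion $d^r=0$ follows exactly as you outline.
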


\begin{proof}
This immediately follows from \Cref{cor:CKh-T_2q}, \Cref{lem:T2q-sharp} and \Cref{thm:degeneration}.
\end{proof}

Now we show how the $h$-filtration affects the study of $I^\sharp_S$. (In particular, remark that the last assertion in the following lemma is much stronger than the corresponding assertion in \Cref{cor:family-of-cob}.)
\begin{thm}
\label{thm:h-filt-basis}
Let
$\{S_i \colon U_1 \to T_{2,2k+1}\}_{i=0}^k$
be a set of immersed oriented cobordisms, such that for each $i$,
\[
s_+(S_i)= i  \text{ and } \ g(S_i)= k-i.
\]
Then for each $0 \leq p \leq k$, the set $\{I^\sharp_{S_i}(u_+), I^\sharp_{S_i}(u_-)\}_{i=0}^p$ is a generating set of \[F_{-2p}I^\sharp_{[0]}(T_{2,2k+1};\Z)=F_{-2p-1}I^\sharp_{[0]}(T_{2,2k+1};\Z) 
\big(\cong \Z^{p+2} \oplus (\Z/2\Z)^{p} \big).\]
Moreover, 
if a given immersed cobordism $S\colon U_1 \to T_{2,2k+1}$ satisfies $s_+(S)+g(S)=k$,
then we have
\[
I^\sharp_S(u_+) = I^\sharp_{S_{s_+(S)}} (u_+)
\quad \text{and} \quad 
I^\sharp_S(u_-) = I^\sharp_{S_{s_+(S)}} (u_-).
\]
\end{thm}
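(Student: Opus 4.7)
The plan is to combine three ingredients: (a) the filtration bound of Theorem \ref{thm:induced-map} for oriented cobordisms, (b) the $E^2$-identification of the induced map with $\Kh^{\low}$, and (c) the structure of $\Kh(T^*_{2,2k+1})$ computed in Section \ref{sec:Kh-structure} combined with Corollary \ref{cor:family-of-cob}. First, since each $S_i$ is an oriented immersed cobordism, its normal Euler number vanishes, so Theorem \ref{thm:induced-map} implies that $\phi^\sharp_{S_i}$ has $h$-filtration order $\geq -2s_+(S_i) = -2i$; consequently $I^\sharp_{S_i}(u_\pm) \in F_{-2i} I^\sharp_{[0]}(T_{2,2k+1};\Z)$, and likewise $I^\sharp_S(u_\pm) \in F_{-2s_+(S)} I^\sharp_{[0]}$ for any oriented $S$ as in the hypothesis.

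For the generation statement, I would proceed by induction on $p$. By Lemma \ref{lem:collapse} and Corollary \ref{cor:CKh-T_2q}, the spectral sequence degenerates at $E^2$ and the odd-$h$-grading parts of $\Kh_{[0]}(T^*_{2,2k+1};\Z)$ vanish, yielding $F_{-2p} = F_{-2p-1}$ and $F_{-2p}/F_{-2p+2} \cong \Kh^{-2p}(T^*_{2,2k+1};\Z)$. By Theorem \ref{thm:induced-map}, the class of $I^\sharp_{S_p}(u_\pm)$ in this graded piece coincides with $\Kh^{\low}(S_p^*)(u_\pm^{Kh})$, and by Corollary \ref{cor:special-imm-cob-from-unknot} the pair $\{\Kh^{\low}(S_p^*)(u_+^{Kh}),\,\Kh^{\low}(S_p^*)(u_-^{Kh})\}$ generates $\Kh^{-2p}$ (which is $\Z^2$ for $p = 0$ and $A/\operatorname{Im}(U) \cong \Z \oplus \Z/2$ for $p \geq 1$). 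The inductive step is then immediate: given $x \in F_{-2p}$, subtract a suitable $\Z$-linear combination of $I^\sharp_{S_p}(u_\pm)$ to land in $F_{-2p+2}$ and apply the induction hypothesis; the base case $p=0$ follows because $F_0 = \Kh^0(T^*_{2,2k+1};\Z) \cong \Z^2$ and $\Kh^{\low}(S_0^*)$ is an isomorphism onto $\Kh^0$.

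For the ``moreover'' assertion, set $s = s_+(S)$. By Corollary \ref{cor:family-of-cob}, we may write
\[
y := I^\sharp_S(u_\pm) - I^\sharp_{S_s}(u_\pm) = \sum_{j > s} n_j I^\sharp_{S_j}(u_\pm), \qquad n_j \in \Z.
\]
Since $y \in F_{-2s}$, suppose for contradiction that $y \neq 0$ and let $j_0 > s$ be maximal with $n_{j_0} I^\sharp_{S_{j_0}}(u_\pm) \neq 0$. Projecting to $F_{-2j_0}/F_{-2j_0+2} \cong \Kh^{-2j_0}$ annihilates $y$ (because $F_{-2s} \subset F_{-2j_0+2}$) and also all terms with $j < j_0$, so $n_{j_0}[I^\sharp_{S_{j_0}}(u_\pm)] = 0$ in $\Kh^{-2j_0}$. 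In the $u_+$ case, $[I^\sharp_{S_{j_0}}(u_+)]$ is the $\Z$-free generator of $A/\operatorname{Im}(U)$, forcing $n_{j_0} = 0$ and contradicting maximality. In the $u_-$ case, $[I^\sharp_{S_{j_0}}(u_-)]$ is the $\Z/2$ generator, so $n_{j_0}$ is even; but since $I^\sharp_{S_{j_0}}(u_-)$ itself has order $2$ in $I^\sharp_{[0]}$ by Corollary \ref{cor:family-of-cob}, the term $n_{j_0} I^\sharp_{S_{j_0}}(u_-)$ vanishes, again contradicting maximality.

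The main obstacle will be the delicate matching of torsion orders in the $u_-$ case: the argument requires that the order of $[I^\sharp_{S_{j_0}}(u_-)]$ in the graded piece $\Kh^{-2j_0}$ (namely $2$) coincides with the order of $I^\sharp_{S_{j_0}}(u_-)$ itself in $I^\sharp_{[0]}$ (again $2$). This amounts to tracking the identification of $u_-^{Kh}$ with $X \in A$ and verifying that $\Kh^{\low}(S_p^*)(X) = [X] \in A/\operatorname{Im}(U)$ is the $\Z/2$-torsion generator, which follows by tracing through the proof of Corollary \ref{cor:special-imm-cob-from-unknot}.
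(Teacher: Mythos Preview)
Your approach differs from the paper's and has a circularity gap at the key step. In the generation argument you invoke Corollary~\ref{cor:special-imm-cob-from-unknot} to conclude that $\{\Kh^{\low}(S_p^*)(u_\pm^{Kh})\}$ generates $\Kh^{-2p}(T^*_{2,2k+1};\Z)$ for the \emph{arbitrary} cobordism $S_p$ given in the hypothesis. But that corollary is proved only for the specific movie of cobordisms constructed there from stacked $R2$ and crossing-change moves; the assertion that an arbitrary immersed cobordism with the given $(s_-,g)$ surjects onto $\Kh^{-2s_-}$ is exactly Theorem~\ref{thm:T2q}(1), and in the paper that theorem is deduced \emph{from} Theorem~\ref{thm:h-filt-basis}. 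The same circularity reappears in your ``moreover'' argument when you assert that $[I^\sharp_{S_{j_0}}(u_+)]$ is a $\Z$-free generator and $[I^\sharp_{S_{j_0}}(u_-)]$ is the $\Z/2$-generator of the graded piece.

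The paper avoids this by never using the $E^2$-identification with $\Kh^{\low}$ in this proof. It relies entirely on Corollary~\ref{cor:family-of-cob}, which holds for \emph{any} family $\{S_i\}$ and already gives: linear independence of $\{I^\sharp_{S_i}(u_+), I^\sharp_{S_0}(u_-)\}_{i=0}^k$ over $\Z$, and order exactly $2$ for each $I^\sharp_{S_i}(u_-)$ with $i\geq 1$. The nonvanishing of $m\cdot I^\sharp_{S_p}(u_+)$ and $I^\sharp_{S_p}(u_-)$ in $F_{-2p}/F_{-2p+2}$ is obtained by contradiction: if either lay in $F_{-2p+2}$, the induction hypothesis expresses it as a combination of $I^\sharp_{S_i}(u_\pm)$ with $i<p$; doubling kills all $u_-$ terms with $i\geq 1$, and the resulting identity contradicts the linear independence from Corollary~\ref{cor:family-of-cob}. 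Your route can be salvaged by first running the whole argument for the specific family of Corollary~\ref{cor:special-imm-cob-from-unknot} (for which your steps are valid), and then applying the ``moreover'' statement---now established for that specific family---to identify $I^\sharp_{S_i}(u_\pm)$ with the images under the specific cobordisms; but as written, the direct appeal to Corollary~\ref{cor:special-imm-cob-from-unknot} for general $S_p$ is unjustified.
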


\begin{proof}
Let us denote $F_{j}I^\sharp_{[0]} := F_{j}I^\sharp_{[0]}(T_{2,2k+1};\Z)$.
The equality $F_{-2p}I^\sharp_{[0]}=F_{-2p-1}I^\sharp_{[0]}$
follows from $F_{-2p-1}I^\sharp_{[0]}/F_{-2p}I^\sharp_{[0]} 
\cong \bigoplus_{q \in \Z} \Kh_{[0]}(T^*_{2,2k+1};\Z)^{-2p-1,q} = \{0\}$ (given by \Cref{lem:collapse}).
Moreover,
the inequality
\[
\Z \cdot 
\left\langle 
I^\sharp_{S_i}(u_+), I^\sharp_{S_i}(u_-) \ \middle| \ 
0 \leq i \leq p
\right\rangle
\subset F_{-2p}I^\sharp_{[0]}
\]
immediately follows from grading arguments with respect to the h-filtration.
Here we prove the opposite inequality by induction of $p$. 
For proving the base case, suppose that $\zeta \in F_0I^\sharp_{[0]}$.
Then, by \Cref{cor:family-of-cob}, the equality
\[
\zeta = \sum_{i=0}^k a_i I^\sharp_{S_i}(u_+) + \sum_{i=0}^k b_i I^\sharp_{S_i}(u_-)
\]
holds (as elements of $I^\sharp_{[0]}(T_{2,2k+1};\Z)$) for some $a_i,b_i \in \Z$. Moreover, from 
the isomorphisms
\[F_0I^\sharp_{[0]} \cong \bigoplus_{q \in \Z}\Kh_{[0]}(T_{2,2k+1};\Z)^{0,q} \cong \Z^2
\]
and the linear independence of $\{I^\sharp_{S_0}(u_+), I^\sharp_{S_0}(u_-)\}$, we also have
\[
n\zeta = c I^\sharp_{S_0}(u_+) + d I^\sharp_{S_0}(u_-)
\]
for some $n, c, d \in \Z$ with $n \neq 0$. Comparing them and using the facts that $\{I^\sharp_{S_i}(u_+), I^\sharp_{S_0}(u_-)\}_{i=0}^p$ are
linearly independent and
$\{I^\sharp_{S_i}(u_-)\}^k_{i=1}$ are 2-tortions,
we have $2n\zeta = 2n a_0 I^\sharp_{S_0}(u_+) + 2n b_0 I^\sharp_{S_0}(u_-)$.
Now, since $F_0I^\sharp_{[0]}$ is free, 
we can conclude that 
$\zeta =
a_0 I^\sharp_{S_0}(u_+) + b_0 I^\sharp_{S_0}(u_-) \in
\left\langle I^\sharp_{S_0}(u_+), I^\sharp_{S_0}(u_-) \right\rangle$.

As the induction step, for given $p>0$,  assume that the equalities
\[
\Z \cdot 
\left\langle 
I^\sharp_{S_i}(u_+), I^\sharp_{S_i}(u_-) \ \middle| \ 
0 \leq i \leq p-1
\right\rangle
= F_{-2p+2}I^\sharp_{[0]} = F_{-2p+1}I^\sharp_{[0]}
\]
holds. 
We first prove that the elements $m\cdot I^\sharp_{S_p}(u_+)$ and $ I^\sharp_{S_p}(u_-)$ ($m \neq 0$) are non-zero in the quotient group
\[
F_{-2p}I^\sharp_{[0]}/F_{-2p+1}I^\sharp_{[0]} \cong \bigoplus_{q \in \Z}\Kh_{[0]}(T_{2,2k+1};\Z)^{-2p,q} \cong \Z \oplus (\Z/2\Z).
\]
If not, then the assumption of induction implies
\[
m \cdot I^\sharp_{S_p}(u_+) = 
\sum_{i=0}^{p-1} a^+_i I^\sharp_{S_i}(u_+) + \sum_{i=0}^{p-1} b^+_i I^\sharp_{S_i}(u_-)
\quad \text{and} \quad
I^\sharp_{S_p}(u_-) = 
\sum_{i=0}^{p-1} a^-_i I^\sharp_{S_i}(u_+) + \sum_{i=0}^{p-1} b^-_i I^\sharp_{S_i}(u_-).
\]
Moreover, observations of
$2m \cdot I^\sharp_{S_p}(u_+)$ and 
$2 I^\sharp_{S_p}(u_-)$
reduce these to
\[
2m \cdot I^\sharp_{S_p}(u_+) = 
\sum_{i=0}^{p-1} 2a^+_i I^\sharp_{S_i}(u_+)
+ 2b^+_i I^\sharp_{S_0}(u_-)
\quad \text{and} \quad
I^\sharp_{S_p}(u_-) = 
\sum_{i=1}^{p-1} b^-_i I^\sharp_{S_i}(u_-),
\]
where the first equality contradicts to the linear independence of 
$\{I^\sharp_{S_i}(u_+), I^\sharp_{S_0}(u_-)\}_{i=0}^p$,
and the second equality contradicts to the fact that 
$\{I^\sharp_{S_i}(u_-)\}_{i=1}^k$ generates
$\Tor I^\sharp_{[0]}(T_{2,2k+1};\Z) \cong (\Z/2\Z)^k$
(both facts are proved in \Cref{cor:family-of-cob}).
As a consequence, we see that
\[\Tor (F_{-2p}I^\sharp_{[0]}/F_{-2p+1}I^\sharp_{[0]}) 
=  \Z \cdot \left\langle \left[I^\sharp_{S_p}(u_-)\right] \right\rangle\]
and 
\[n \cdot \left( (F_{-2p}I^\sharp_{[0]}/F_{-2p+1}I^\sharp_{[0]})/\Tor \right) 
=  \Z \cdot \left\langle \left[I^\sharp_{S_p}(u_+)\right] \right\rangle\]
for some $n \neq 0$.

Now, suppose that $\zeta \in F_{-2p}I^\sharp_{[0]}$.
If $\zeta$ is torsion, then the above arguments imply
\[
\zeta \in \left( \Z \cdot \left\langle I^\sharp_{S_p}(u_-) \right\rangle + F_{-2p+1}I^\sharp_{[0]} \right)
\subset 
\Z \cdot 
\left\langle 
I^\sharp_{S_i}(u_+), I^\sharp_{S_i}(u_-) \ \middle| \ 
0 \leq i \leq p
\right\rangle
.\]
If $\zeta$ is non-torsion,
then the above arguments imply
\[
n\zeta =
\sum_{i=0}^{p} a_i I^\sharp_{S_i}(u_+) + \sum_{i=0}^{p} b_i I^\sharp_{S_i}(u_-)
\]
for some $a_i, b_i \in \Z$ and $n \neq 0$.
On the other hand,
\Cref{cor:family-of-cob} also gives a linear combination
\[
\zeta = \sum_{i=0}^k c_i I^\sharp_{S_i}(u_+) + \sum_{i=0}^k d_i I^\sharp_{S_i}(u_-).
\]
Comparing these, we have
\[
2n\zeta = \sum_{i=0}^p 2n c_i I^\sharp_{S_i}(u_+)
+ 2n d_0 I^\sharp_{S_0}(u_-),
\]
which implies
$\zeta - \sum_{i=0}^p c_i I^\sharp_{S_i}(u_+)
- d_0 I^\sharp_{S_0}(u_-)\in 
\Tor F_p I^\sharp_{[0]} = 
\Z \cdot 
\left\langle 
I^\sharp_{S_i}(u_-) \ \middle| \ 
1 \leq i \leq p
\right\rangle.
$
This completes the proof of the induction step.

Finally, we prove the equality
$I^\sharp_S(u_+) = I^\sharp_{S_{s_+(S)}} (u_+)$.
(The negative sign version can also be proved similarly.)
Let $s_+ := s_+(S)$. Since 
$I^\sharp_S(u_+) \in F_{s_+}I^\sharp_{[0]}$, the above arguments and \Cref{cor:family-of-cob}
provide two linear combinations
\[
I^\sharp_S(u_+) - I ^\sharp_{S_{s_+}}(u_+) = \sum_{i=0}^{s_+} a_i I^\sharp_{S_i}(u_+) + \sum_{i=0}^{s_+} b_i I^\sharp_{S_i}(u_-)
\]
and
\[
I^\sharp_S(u_+) - I ^\sharp_{S_{s_+}}(u_+) = \sum_{i= s_+ + 1}^{k} c_i I^\sharp_{S_i}(u_+).
\]
Then, by 
the linear independence of 
$\{I^\sharp_{S_i}(u_+), I^\sharp_{S_0}(u_-)\}_{i=0}^p$,
and the fact that 
$\{I^\sharp_{S_i}(u_-)\}_{i=1}^k$ generates
$\Tor I^\sharp_{[0]}(T_{2,2k+1};\Z) \cong (\Z/2\Z)^k$,
we have $a_i=b_i = c_i = 0$. This completes the proof.
\end{proof}

Since $I^\sharp_*(U_1) = \Z \cdot \langle u_+, u_- \rangle$, the last assertion of \Cref{thm:h-filt-basis}
can be restated as the following rigidity theorem. 
\begin{cor}
\label{cor:uniqueness}
Suppose that immersed cobordisms
$S, S' \colon U_1 \to T_{2,2k+1}$
satisfy
\[
s_+(S)+g(S)=s_+(S')+g(S')=k, \quad 
s_+(S)= s_+(S') \quad  \text{and} \quad g(S)= g(S').
\]
then the two maps
$I^\sharp_S, I^\sharp_{S'} \colon 
I^\sharp(U_1;\Z) \to I^\sharp(T_{2,2k+1};\Z)$ are equal.
\end{cor}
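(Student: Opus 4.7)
The plan is to deduce \Cref{cor:uniqueness} as an essentially immediate consequence of the rigidity assertion in the last paragraph of \Cref{thm:h-filt-basis}. First I would fix, once and for all, a reference family of immersed oriented cobordisms $\{\bar S_i\colon U_1 \to T_{2,2k+1}\}_{i=0}^{k}$ with $s_+(\bar S_i)=i$ and $g(\bar S_i)=k-i$. Such a family is easy to produce concretely: starting from any genus-$k$ oriented embedded cobordism $U_1\to T_{2,2k+1}$ (which exists since the slice genus of $T_{2,2k+1}$ is $k$), one may convert $i$ handle additions into positive double points by local finger moves, yielding an immersed oriented cobordism with $s_+=i$ and $g=k-i$. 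The choice of this family is irrelevant, since \Cref{thm:h-filt-basis} is formulated for an arbitrary such family.

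Next I would apply the last clause of \Cref{thm:h-filt-basis} separately to $S$ and to $S'$. Both hypotheses $s_+(S)+g(S)=k$ and $s_+(S')+g(S')=k$ are present by assumption, so the theorem yields
\[
I^\sharp_S(u_\pm) \;=\; I^\sharp_{\bar S_{s_+(S)}}(u_\pm)
\qquad\text{and}\qquad
I^\sharp_{S'}(u_\pm) \;=\; I^\sharp_{\bar S_{s_+(S')}}(u_\pm).
\]
Since $s_+(S)=s_+(S')$ by the hypothesis of the corollary, the reference cobordism appearing on the right-hand sides is the same, so $I^\sharp_S(u_+)=I^\sharp_{S'}(u_+)$ and $I^\sharp_S(u_-)=I^\sharp_{S'}(u_-)$. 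Because $I^\sharp(U_1;\Z)$ is the free abelian group generated by $u_+$ and $u_-$, agreement on these two generators extends $\Z$-linearly to the desired equality of homomorphisms $I^\sharp_S=I^\sharp_{S'}$.

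There is no real obstacle in this argument: all of the substantive work — combining the degeneration of the spectral sequence at $E^2$ from \Cref{lem:collapse} with the equivariant computations of $\widetilde{\lambda}_S$ in \Cref{subsec:S-cpx-T2q}, and then translating into $I^\sharp$ via the commutativity of \eqref{eq:equiv-diag-sharp} — has already been packaged into \Cref{thm:h-filt-basis}. The only point deserving a brief remark is the existence of the reference family $\{\bar S_i\}$, which I would handle in a single sentence as above. The corollary itself is then a two-line deduction.
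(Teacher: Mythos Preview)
Your proposal is correct and matches the paper's approach exactly: the paper presents this corollary as an immediate restatement of the last assertion of \Cref{thm:h-filt-basis}, noting only that $I^\sharp_*(U_1) = \Z\cdot\langle u_+,u_-\rangle$, and does not even write out a formal proof. Your write-up simply unpacks this one-line deduction with a bit more care (including the harmless remark on the existence of a reference family).
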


Next, we give an analogous result of \Cref{thm:h-filt-basis}, where the unknot $U_1$ (resp.\ $u_+, u_-$) is replaced with the trefoil $T_{2,3}$ (resp.\ $\zeta^\sharp$). (Recall that $\zeta^\sharp$ is a generator of $I^\sharp_{[1]}(T_{2,3};\Z) \cong \Z$ appearing in \Cref{cor:family-of-cob2}.)
This is useful for characterizing each element whose $\Z/4$-grading is odd.
\begin{thm}
\label{thm:h-filt-basis-odd}
Let
$\{S_i \colon T_{2,3} \to T_{2,2k+1}\}_{i=1}^k$
be a set of immersed oriented cobordisms, such that for each $i$,
\[
s_+(S_i)= i -1  \text{ and } \ g(S_i)= k-i.
\]
Then for each $1 \leq p \leq k$, the set $\{I^\sharp_{S_i}(\zeta^\sharp)\}_{i=1}^p$ is a free basis for \[F_{-2p-1}I^\sharp_{[1]}(T_{2,2k+1};\Z)=F_{-2p-2}I^\sharp_{[1]}(T_{2,2k+1};\Z)\]
over $\Z$.
Moreover,
if a given immersed cobordism $S\colon T_{2,3} \to T_{2,2k+1}$ satisfies $s_+(S)+g(S)=k-1$,
then we have
\[
I^\sharp_S(\zeta^\sharp) = I^\sharp_{S_{s_+ + 1}}(\zeta^\sharp).
\]
\end{thm}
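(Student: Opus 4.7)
The plan is to mimic the strategy of \Cref{thm:h-filt-basis}, but the argument here is much simpler because $I^\sharp_{[1]}(T_{2,2k+1};\Z) \cong \Z^k$ is torsion-free by \Cref{lem:T2q-sharp}, so there is no need to separate torsion and free parts. The key inputs are (i) \Cref{cor:family-of-cob2}, which already gives that $\{I^\sharp_{S_i}(\zeta^\sharp)\}_{i=1}^k$ is a free $\Z$-basis of $I^\sharp_{[1]}(T_{2,2k+1};\Z)$ without reference to any filtration, and (ii) \Cref{thm:induced-map}, which refines this by controlling the $h$-filtration.

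First I would verify the equality $F_{-2p-1}I^\sharp_{[1]} = F_{-2p-2}I^\sharp_{[1]}$. By \Cref{lem:collapse}, the quotient is isomorphic to $\bigoplus_{q} \Kh_{[1]}(T^*_{2,2k+1};\Z)^{-2p-2,q}$. Since the $\Z/2$-reduction of the $\Z/4$-grading on $\Kh$ agrees with the mod $2$ reduction of the $h$-grading, the odd part $\Kh_{[1]}$ is concentrated in odd $h$-gradings; hence this even-$h$ subquotient vanishes.

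Next I would compute the $h$-filtration level of $I^\sharp_{S_i}(\zeta^\sharp)$. The generator $\zeta^\sharp \in I^\sharp_{[1]}(T_{2,3};\Z)$ lies in $F_{-3}$, as seen from the $\mathcal{S}$-complex description of $T_{2,3}$ in \Cref{subsec:S-cpx-T2q} together with Kronheimer--Mrowka's identification; concretely, $\zeta^\sharp$ corresponds to the lowest-$h$ generator in $\Kh(T^*_{2,3})$. Applying \Cref{thm:induced-map} to $S_i$, whose only double points are $s_+(S_i)=i-1$ positive ones and whose normal Euler number is $S_i\cdot S_i = 2(i-1)$ under Seifert framings, yields an $h$-filtration shift of exactly $-2(i-1) + (S_i\cdot S_i)/2 - 2(i-1) = -2(i-1)$ after combining with the chain-map order bound. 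Consequently $I^\sharp_{S_i}(\zeta^\sharp) \in F_{-3-2(i-1)}I^\sharp_{[1]} = F_{-2i-1}I^\sharp_{[1]}$. This immediately gives the inclusion
\[
\Z\cdot\langle I^\sharp_{S_i}(\zeta^\sharp) \mid 1 \le i \le p\rangle \subset F_{-2p-1}I^\sharp_{[1]}.
\]

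For the reverse inclusion and the basis assertion, I would argue by a rank comparison using the associated graded. By \Cref{lem:collapse},
\[
\operatorname{rank} F_{-2p-1}I^\sharp_{[1]}
= \sum_{r \ge 2p+1} \operatorname{rank} \Bigl(F_{-r}I^\sharp_{[1]}/F_{-r+1}I^\sharp_{[1]}\Bigr)
= \sum_{r\ \mathrm{odd},\ r \ge 2p+1} \operatorname{rank} \Kh_{[1]}^{-r,*}(T^*_{2,2k+1};\Z),
\]
which from \Cref{cor:CKh-T_2q} evaluates to $k-p+1$. On the other hand, \Cref{cor:family-of-cob2} tells us that $\{I^\sharp_{S_i}(\zeta^\sharp)\}_{i=1}^k$ is a free basis of the whole torsion-free group $I^\sharp_{[1]}(T_{2,2k+1};\Z)$, so the subset $\{I^\sharp_{S_i}(\zeta^\sharp)\}_{i=1}^{p}$ is linearly independent; being contained in $F_{-2p-1}I^\sharp_{[1]}$ and having exactly the matching rank $p$… wait, I should double-check the indexing. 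In fact the natural rank count gives $p$ elements in the free abelian group $F_{-2p-1}I^\sharp_{[1]}$, and saturation in the torsion-free ambient group $I^\sharp_{[1]}(T_{2,2k+1};\Z) \cong \Z^k$ is what upgrades a spanning rank-matching subset to a basis: if $\zeta \in F_{-2p-1}I^\sharp_{[1]}$, write $\zeta = \sum_{i=1}^{k} a_i I^\sharp_{S_i}(\zeta^\sharp)$ in the total basis and, by taking the image in the successive quotients $F_{-r}/F_{-r+1}$ for $r < 2p+1$ together with the vanishing established in the first paragraph, conclude that $a_i = 0$ for $i > p$.

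Finally, for the rigidity statement, let $S \colon T_{2,3} \to T_{2,2k+1}$ satisfy $s_+(S)+g(S)=k-1$ and set $j = s_+(S)+1$. The $h$-filtration computation shows $I^\sharp_S(\zeta^\sharp), I^\sharp_{S_j}(\zeta^\sharp) \in F_{-2j-1}I^\sharp_{[1]}$, so by the basis assertion just established,
\[
I^\sharp_S(\zeta^\sharp) - I^\sharp_{S_{j}}(\zeta^\sharp) \in \Z\cdot\langle I^\sharp_{S_{j+1}}(\zeta^\sharp),\ldots,I^\sharp_{S_k}(\zeta^\sharp)\rangle.
\]
On the other hand \Cref{cor:family-of-cob2} already tells us this same difference lies in $\Z\cdot\langle I^\sharp_{S_{j+1}}(\zeta^\sharp),\ldots,I^\sharp_{S_k}(\zeta^\sharp)\rangle$ as well; but the two membership statements together with the freeness of the full basis force the difference to be zero after comparing coefficients.

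The step I expect to be the main obstacle is the precise verification of the $h$-filtration shift $I^\sharp_{S_i}(\zeta^\sharp) \in F_{-2i-1}$, because it requires pinning down both the intrinsic $h$-level of $\zeta^\sharp$ in $I^\sharp(T_{2,3})$ via the Kronheimer--Mrowka identification and the order inequality from \Cref{thm:induced-map}; once this is done, the rest reduces to rank/freeness arithmetic that is considerably cleaner than in the even case.
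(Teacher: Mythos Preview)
Your strategy matches the paper's, and you correctly anticipate that the odd case is simpler because $I^\sharp_{[1]}(T_{2,2k+1};\Z)\cong\Z^k$ is torsion-free. However, three of your steps contain concrete errors.

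\textbf{Filtration shift.} Your intermediate formula is garbled: the $h$-order bound from \Cref{thm:induced-map} is $-2s_+ + \tfrac12(S\cdot S)$ with a single $-2s_+$ term, not the expression you wrote, and your value $S_i\cdot S_i=2(i-1)$ is incorrect. The cleanest route is to bypass $S\cdot S$ entirely and use \Cref{compatibility_w_instanton} directly: each positive double point contributes $\geq -2$ to the $h$-order and every other elementary piece contributes $\geq 0$, giving a total shift $\geq -2s_+(S_i)=-2(i-1)$. Combined with $\zeta^\sharp\in F_{-3}I^\sharp_{[1]}(T_{2,3})$ this yields $I^\sharp_{S_i}(\zeta^\sharp)\in F_{-2i-1}$, the conclusion you wanted. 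The paper simply asserts this ``follows from grading arguments.''

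\textbf{Reverse inclusion.} Your rank sum is indexed in the wrong direction: since $F_j\supset F_{j+1}$, the graded pieces building $F_{-2p-1}$ are $G_j$ with $j\geq -2p-1$, giving $\rank F_{-2p-1}I^\sharp_{[1]}=p$, not $k-p+1$. Moreover, ``taking images in $F_{-r}/F_{-r+1}$ for $r<2p+1$'' cannot detect the coefficients $a_i$ with $i>p$. Your saturation remark is the right salvage, but it must be stated precisely: since $\{I^\sharp_{S_i}(\zeta^\sharp)\}_{i=1}^k$ is a full $\Z$-basis (\Cref{cor:family-of-cob2}), the span $\langle I^\sharp_{S_i}(\zeta^\sharp)\rangle_{i=1}^p$ is a rank-$p$ direct summand of $\Z^k$; any subgroup of $\Z^k$ containing a rank-$p$ direct summand and having rank $\leq p$ must equal it, so $F_{-2p-1}I^\sharp_{[1]}=\langle I^\sharp_{S_i}(\zeta^\sharp)\rangle_{i=1}^p$. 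The paper instead argues by induction on $p$, using \Cref{cor:family-of-cob2} to show $[I^\sharp_{S_p}(\zeta^\sharp)]\neq 0$ in $G_{-2p-1}\cong\Z$ and then comparing an $n$-multiple relation with the full-basis expansion; your saturation route, once corrected, is a valid and somewhat slicker alternative.

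\textbf{Rigidity.} You wrote the same span twice. The basis assertion places $I^\sharp_S(\zeta^\sharp)-I^\sharp_{S_j}(\zeta^\sharp)$ in $\langle I^\sharp_{S_1}(\zeta^\sharp),\ldots,I^\sharp_{S_j}(\zeta^\sharp)\rangle$, whereas \Cref{cor:family-of-cob2} places it in $\langle I^\sharp_{S_{j+1}}(\zeta^\sharp),\ldots,I^\sharp_{S_k}(\zeta^\sharp)\rangle$; it is the intersection of these two complementary summands that is $\{0\}$.
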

The proof of \Cref{thm:h-filt-basis-odd}
is regarded as an easier version of \Cref{thm:h-filt-basis}, while we write it for the ease of checking.
\begin{proof}
Let us denote $F_{j}I^\sharp_{[1]} := F_{j}I^\sharp_{[1]}(T_{2,2k+1};\Z)$.
The equality $F_{-2p-1}I^\sharp_{[1]}
=F_{-2p-2}I^\natural_{[1]}$
follows from 
$F_{-2p-2}I^\sharp_{[1]}/F_{-2p-1}I^\sharp_{[1]} \cong \bigoplus_{q \in \Z} \Kh_{[1]}(T^*_{2,2k+1};\Z)^{-2p-2,q} = \{0\}$ (given by \Cref{lem:collapse}).
Moreover, the inequality
\[
\Z \cdot 
\left\langle 
I^\sharp_{S_{0}}(\zeta^\sharp), \ldots, I^\sharp_{S_p}(\zeta^\sharp)
\right\rangle
\subset F_{-2p-1}I^\sharp_{[1]}
\]
immediately follows from grading arguments with respect to the h-filtration.
Here we prove the opposite inequality by induction of $p$. 
In addition, we prove only the induction step. (Indeed, we can regard the equality $F_{-1}I^\sharp_{[1]}=\{0\}$ given by \Cref{lem:collapse} as the base case $p=0$.) Assume that the equalities
\[
\Z \cdot 
\left\langle 
I^\sharp_{S_{1}}(\zeta^\sharp), \ldots, 
I^\sharp_{S_{p-1}}(\zeta^\sharp)
\right\rangle
= F_{-2p+1}I^\sharp_{[1]} = F_{-2p}I^\sharp_{[1]}
\]
hold. Then, by \Cref{cor:family-of-cob2}, $I^\sharp_{S_p}(\zeta^\sharp)$ is non-trivial in 
\[
F_{-2p-1}I^\sharp_{[1]}/F_{-2p}I^\sharp_{[1]} \cong
\bigoplus_{q \in \Z} \Kh_{[1]}(T^*_{2,2k+1};\Z)^{-2p-1,q}
\cong \Z.
\]
This implies that any non-zero element $\zeta \in F_{-2p-1}I^\sharp_{[1]}$ can be written as
\[
n \zeta = \sum_{i=1}^p a_i I^\sharp_{S_i}(\zeta^\sharp) 
\]
for some integer $n \neq 0$. On the other hand,
\Cref{cor:family-of-cob2} also gives a linear combination
\[
\zeta = \sum_{i=1}^k b_i I^\sharp_{S_i}(\zeta^\sharp).
\]
Comparing these and use the fact that $F_pI^\sharp_{[1]}$, 
is free, we have 
$\zeta \in \Z \cdot 
\left\langle 
I^\sharp_{S_{1}}(\zeta^\sharp), \ldots, 
I^\sharp_{S_{p-1}}(\zeta^\sharp)
\right\rangle$, which proves the induction step.

Finally, we prove the last assertion.
Let $s_+ := s_+(S)$. Since $I^\sharp_{S}(\zeta^\sharp) \in F_{-2s_+ -3}I^\sharp_{[1]}$, the above arguments and \Cref{cor:family-of-cob2} provide two linear combinations
\[
I^\sharp_{S}(\zeta^\sharp) - I^\sharp_{S_{s_+ +1}}(\zeta^\sharp) = \sum_{i=1}^{s_+ + 1} a_i I^\sharp_{S_i}(\zeta^\sharp)
\quad \text{and} \quad
I^\sharp_{S}(\zeta^\sharp) - I^\sharp_{S_{s_+ +1}}(\zeta^\sharp) = \sum_{s_+ + 2}^{k} a_i I^\sharp_{S_i}(\zeta^\sharp).
\]
By \Cref{cor:family-of-cob2},
the elements $\{I^\sharp_{S_i}(\zeta^\sharp)\}_{i=1}^k$ are linearly independent, 
and hence $a_i = b_i = 0$.
\end{proof}
We also give an analogous result of \Cref{cor:uniqueness}.
(Note that $I^\sharp(T_{2,3};\Z) \cong \Z^4 \oplus (\Z/2\Z)$ is not generated only by $\zeta^\sharp$,
we need a little more discussion than the case of \Cref{cor:uniqueness-trefoil}.)

\begin{cor}
\label{cor:uniqueness-trefoil}
Suppose that immersed cobordisms
$S, S' \colon T_{2,3} \to T_{2,2k+1}$
satisfy
\[
s_+(S)+g(S)=s_+(S')+g(S')=k-1, \quad 
s_+(S)= s_+(S') \quad  \text{and} \quad g(S)= g(S').
\]
then the two maps
$I^\sharp_S, I^\sharp_{S'} \colon 
I^\sharp(T_{2,3};\Z) \to I^\sharp(T_{2,2k+1};\Z)$ are equal.
\end{cor}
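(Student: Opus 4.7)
The plan is to decompose $I^\sharp(T_{2,3};\Z)$ according to its $\Z/2\Z$-graded summands and verify equality of $I^\sharp_S$ and $I^\sharp_{S'}$ on each piece by reducing to the already-established rigidity results. First, on the odd-graded part $I^\sharp_{[1]}(T_{2,3};\Z)$, which by \Cref{lem:T2q-sharp} (with $k=1$) is the infinite cyclic group $\Z \cdot \langle \zeta^\sharp\rangle$, the last assertion of \Cref{thm:h-filt-basis-odd} yields
\[
I^\sharp_S(\zeta^\sharp) = I^\sharp_{S_{s_+(S)+1}}(\zeta^\sharp) = I^\sharp_{S_{s_+(S')+1}}(\zeta^\sharp) = I^\sharp_{S'}(\zeta^\sharp),
\]
since the right-hand side depends only on the integer $s_+(S)=s_+(S')$ and any fixed reference family $\{S_i\}$. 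This already handles the odd summand.

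For the even summand, I would exploit the surjectivity from the unknot: by \Cref{cor:family-of-cob}, a choice of cobordisms $\{S''_j\colon U_1 \to T_{2,3}\}_{j=0}^1$ with $s_+(S''_j)=j$ and $g(S''_j)=1-j$ yields a generating set $\{I^\sharp_{S''_j}(u_+), I^\sharp_{S''_j}(u_-)\}_{j=0,1}$ of $I^\sharp_{[0]}(T_{2,3};\Z)$. Stacking, the compositions $S \circ S''_j$ and $S' \circ S''_j$ are immersed oriented cobordisms $U_1 \to T_{2,2k+1}$ with
\[
s_+(S\circ S''_j) = s_+(S)+j = s_+(S'\circ S''_j), \qquad g(S\circ S''_j) = g(S)+1-j = g(S'\circ S''_j),
\]
and total $s_+ + g = (k-1)+1 = k$. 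Hence \Cref{cor:uniqueness} applies and gives $I^\sharp_{S\circ S''_j} = I^\sharp_{S'\circ S''_j}$ as maps $I^\sharp(U_1;\Z) \to I^\sharp(T_{2,2k+1};\Z)$. By the composition law for singular instanton cobordism maps, this reads $I^\sharp_S \circ I^\sharp_{S''_j} = I^\sharp_{S'} \circ I^\sharp_{S''_j}$, so $I^\sharp_S$ and $I^\sharp_{S'}$ agree on every element of the generating set $\{I^\sharp_{S''_j}(u_\pm)\}$ of $I^\sharp_{[0]}(T_{2,3};\Z)$, and therefore agree on all of $I^\sharp_{[0]}(T_{2,3};\Z)$.

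Combining the two cases gives $I^\sharp_S = I^\sharp_{S'}$ on $I^\sharp(T_{2,3};\Z)$, as desired. The only mildly nontrivial point is the bookkeeping in the composition step: one must ensure that $S \circ S''_j$ admits a decomposition into elementary pieces (so that the cobordism maps of \Cref{sec:computation} are defined), that $s_+$ and $g$ add under stacking, and that the composition law $I^\sharp_{S\circ S''_j} = I^\sharp_S\circ I^\sharp_{S''_j}$ holds for possibly immersed cobordisms. These are all standard and can be arranged by taking the given decompositions of $S$ and $S''_j$ and concatenating them. Once this is in hand, the argument reduces cleanly to invocations of \Cref{cor:uniqueness} and the last assertion of \Cref{thm:h-filt-basis-odd}.
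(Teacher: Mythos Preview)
Your proof is correct and follows essentially the same approach as the paper: split $I^\sharp(T_{2,3};\Z)$ into its odd and even $\Z/2\Z$-graded pieces, handle the odd part via the last assertion of \Cref{thm:h-filt-basis-odd}, and handle the even part by composing with cobordisms $U_1 \to T_{2,3}$ and invoking \Cref{cor:uniqueness}. The only cosmetic difference is that the paper cites \Cref{thm:h-filt-basis} for the generating set of $I^\sharp_{[0]}(T_{2,3};\Z)$, whereas you cite \Cref{cor:family-of-cob}; both are valid.
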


\begin{proof}
It follows from \Cref{thm:h-filt-basis} that
$I^\sharp(T_{2,3};\Z)$ is generated by 
$\{I^\sharp_{S_0}(u_\pm), I^\sharp_{S_1}(u_{\pm}), \zeta^\sharp \}$,
where $S_i$ ($i=0,1$) is an immersed oriented cobordism $U_1 \to T_{2,3}$ with $s_+(S_i)=i$ and $g(S_i)=1-i$.
Since $S \circ S_i$ and $S' \circ S_i$ 
satisfy $s_+ + g = k$ and
have the same values of $(s_+,g)$,
it is shown by \Cref{cor:uniqueness}, \Cref{thm:h-filt-basis-odd} and the functoriality of $I^\sharp$
that
\[
I^\sharp_{S}(I^\sharp_{S_i}(u_\pm))
= I^\sharp_{S}\circ I^\sharp_{S_i}(u_\pm)
= I^\sharp_{S \circ S_i}(u_\pm)
= I^\sharp_{S' \circ S_i}(u_\pm)
= I^\sharp_{S'}\circ I^\sharp_{S_i}(u_\pm)
= I^\sharp_{S'}(I^\sharp_{S_i}(u_\pm))
\]
for each sign and $i=0,1$, and
\[
I^\sharp_S(\zeta^\sharp) = I^\sharp_{S'}(\zeta^\sharp). 
\]
These complete the proof.
\end{proof}

Now, let us give a proof of \Cref{thm:T2q}. 
\begin{proof}[Proof of \Cref{thm:T2q}]
Here we show the proof of the assertion (1) in \Cref{thm:T2q}.
Let 
\[
G_pI^\sharp_{[0]}(T_{2,2k+1};\Z):=
F_{p}I^\sharp_{[0]}(T_{2,2k+1};\Z)
/F_{p+1}I^\sharp_{[0]}(T_{2,2k+1};\Z).
\]
Then, 
it follows from \Cref{mainKF}, \Cref{lem:collapse} and \Cref{lem:degeneration-map} that there exist isomorphisms $\gamma_1, \gamma_2$ such that the diagram
\[
\xymatrix@C=80pt{
G_{0}I^\sharp_{[0]}(U_1;\Z)
\ar[r]^-{G_0I^\sharp_{S^*}}
\ar[d]_-{\gamma_1}
& G_{-2s_-}I^\sharp_{[0]}(T_{2,2k+1};\Z)
\ar[d]^-{\gamma_2}\\
\Kh^0(U_1;\Z)
\ar[r]^-{\Kh^{\low}_S}
& 
\Kh^{-2s_-}(T_{2,2k+1}^*;\Z)
}
\]
is commutative up to over all sign. Moreover,
\Cref{thm:h-filt-basis} shows that 
\[
G_{-2s_-}I^\sharp_{[0]}(T_{2,2k+1};\Z) = \left\langle G_0I^\sharp_{S^*}(u_+), G_0I^\sharp_{S^*}(u_-) \right\rangle.
\]
Now, the commutativity of the above diagram completes the proof.

The assertion (2) in \Cref{thm:T2q} is proved in a similar way, where we use
\Cref{thm:h-filt-basis-odd}
in stead of 
\Cref{thm:h-filt-basis}.
\end{proof}

\subsection{Induced maps from concordances}
Here, we show several applications of  our observation
to the induced maps on $I^\sharp$ and $\Kh$ from concordances whose domain is $T_{2,2k+1}$. 
We start with the instanton side.
\begin{thm}
\label{thm:self-concordance-sharp}
For any $k \geq 0$,
any oriented self-concordance $C \colon T_{2,2k+1} \to T_{2,2k+1}$ induces the identity map 
\[
I^\sharp_{C} = \id_{I^\sharp(T_{2,2k+1})} \colon I^\sharp(T_{2,2k+1};\Z) 
\to I^\sharp(T_{2,2k+1};\Z).
\]
\end{thm}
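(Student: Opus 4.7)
The plan is to reduce the statement to the rigidity corollaries \Cref{cor:uniqueness} and \Cref{cor:uniqueness-trefoil} by evaluating $I^\sharp_C$ on a generating set built from cobordisms whose domains are $U_1$ and $T_{2,3}$. Concretely, I will fix families of immersed oriented cobordisms
\[
\{S_i \colon U_1 \to T_{2,2k+1}\}_{i=0}^{k}, \qquad \{S'_i \colon T_{2,3} \to T_{2,2k+1}\}_{i=1}^{k}
\]
with $(s_+(S_i),g(S_i))=(i,k-i)$ and $(s_+(S'_i),g(S'_i))=(i-1,k-i)$, and then exploit functoriality of $I^\sharp$ together with the observation that for any embedded oriented concordance $C$ one has $s_+(C)=s_-(C)=g(C)=0$, so that composition with $C$ preserves the pair $(s_+,g)$.

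For the even part, by \Cref{cor:family-of-cob} the set $\{I^\sharp_{S_i}(u_+),I^\sharp_{S_i}(u_-)\}_{i=0}^k$ generates $I^\sharp_{[0]}(T_{2,2k+1};\Z)$. Functoriality gives $I^\sharp_C\circ I^\sharp_{S_i}=I^\sharp_{C\circ S_i}$, and the composition $C\circ S_i\colon U_1\to T_{2,2k+1}$ has $s_+(C\circ S_i)=i$, $g(C\circ S_i)=k-i$, so that $s_++g=k$ and the pair $(s_+,g)$ matches that of $S_i$. Hence \Cref{cor:uniqueness} yields $I^\sharp_{C\circ S_i}=I^\sharp_{S_i}$, and in particular $I^\sharp_C$ fixes every element $I^\sharp_{S_i}(u_\pm)$.

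For the odd part, \Cref{cor:family-of-cob2} shows that $\{I^\sharp_{S'_i}(\zeta^\sharp)\}_{i=1}^k$ is a free basis of $I^\sharp_{[1]}(T_{2,2k+1};\Z)$. The same argument, now using \Cref{cor:uniqueness-trefoil} applied to the composite $C\circ S'_i$ (which still satisfies $s_++g=k-1$ with the same $(s_+,g)$ as $S'_i$), gives $I^\sharp_{C\circ S'_i}=I^\sharp_{S'_i}$, so that $I^\sharp_C$ fixes each $I^\sharp_{S'_i}(\zeta^\sharp)$.

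Combining the two cases, $I^\sharp_C$ is the identity on a generating set of $I^\sharp(T_{2,2k+1};\Z)$, hence on all of $I^\sharp(T_{2,2k+1};\Z)$. There is no genuine obstacle once the rigidity corollaries are in hand; the one point that deserves a quick check is that the rigidity statements really apply with $s_+(C\circ S_i)$ equal to $s_+(S_i)$ (and similarly for $S'_i$), which follows from $C$ being an embedded concordance and hence contributing no double points and no genus.
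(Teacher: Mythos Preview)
Your proof is correct and follows essentially the same route as the paper: fix the two families of immersed cobordisms, use that their images generate $I^\sharp(T_{2,2k+1};\Z)$, and apply the rigidity results \Cref{cor:uniqueness} and \Cref{cor:uniqueness-trefoil} together with functoriality. The only cosmetic difference is that the paper cites \Cref{thm:h-filt-basis} and \Cref{thm:h-filt-basis-odd} for the generating-set claim (and handles $k=0$ separately via \Cref{prop:unlink-properties}(3)), whereas you cite \Cref{cor:family-of-cob} and \Cref{cor:family-of-cob2}; both choices suffice.
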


\begin{proof}
The case $k=0$ is corresponding to the assertion (3) in
\Cref{prop:unlink-properties}, and hence
we may assume that $k>0$.
Choose sets 
\[
\{S_i \colon U_1 \to T_{2,2k+1}\}_{i=0}^k
\quad \text{and} \quad
\{S'_i \colon T_{2,3} \to T_{2,2k+1}\}_{i=1}^{k}
\]
of immersed cobordisms
with the properties 
\[
\big(s_+(S_i), g(S_i)\big)= (i, k-i)  \text{ and } 
\big(s_+(S'_i), g(S'_i)\big)= (i-1, k-i).
\]
By \Cref{thm:h-filt-basis} and \Cref{thm:h-filt-basis-odd}, the set 
$\{I^\sharp_{S_i}(u_+),I^\sharp_{S_i}(u_-)\}_{i=0}^k \cup \{I^\sharp_{S'_i}(\zeta^\sharp)\}_{i=1}^k$ 
generates $I^\sharp(T_{2,2k+1})$. Moreover,
\Cref{cor:uniqueness} and \Cref{cor:uniqueness-trefoil} give the equalities
\[
I^\sharp_{C}(I^\sharp_{S_i}(u_{\pm})) = 
I^\sharp_{C \circ S_i}(u_\pm) = I^\sharp_{S_i}(u_\pm)
\quad \text{and} \quad
I^\sharp_{C}(I^\sharp_{S'_i}(\zeta^\sharp)) = 
I^\sharp_{C \circ S'_i}(\zeta^\sharp) = I^\sharp_{S'_i}(\zeta^\sharp).
\]
for any $0 \leq i \leq k$ and each sign. These imply that the map $I^\sharp_C$ coincides with $\id_{I^\sharp(T_{2,q})}$.
\end{proof}

Then, an analogy of \Cref{thm:left-inverse} on the instanton side is proved as follows.

\begin{thm}\label{thm:left-inverse-sharp}
For any smooth knot concordance $C$ from a positive two-bridge torus knot, the induced map on $I^\sharp$ is injective, with a left inverse given by the reversal of \(C\).
\end{thm}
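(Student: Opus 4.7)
The plan is to deduce this result directly from \Cref{thm:self-concordance-sharp} combined with the functoriality of singular instanton knot Floer homology under composition of cobordisms.

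Let $C \colon T_{2,2k+1} \to K$ be a smooth oriented concordance, and let $\bar{C} \colon K \to T_{2,2k+1}$ denote the reversed concordance. First, I would observe that the composition $\bar{C} \circ C \colon T_{2,2k+1} \to T_{2,2k+1}$ is again an oriented concordance, since both $C$ and $\bar{C}$ are concordances (i.e.\ embedded annuli) and the composition of concordances is a concordance.

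Applying \Cref{thm:self-concordance-sharp} to this self-concordance, we conclude that
\[
I^\sharp_{\bar{C} \circ C} = \id_{I^\sharp(T_{2,2k+1};\Z)} \colon I^\sharp(T_{2,2k+1};\Z) \to I^\sharp(T_{2,2k+1};\Z).
\]
By the functoriality of $I^\sharp$ with respect to composition of link cobordisms, we also have $I^\sharp_{\bar{C} \circ C} = I^\sharp_{\bar{C}} \circ I^\sharp_C$. Combining these identities gives
\[
I^\sharp_{\bar{C}} \circ I^\sharp_C = \id_{I^\sharp(T_{2,2k+1};\Z)},
\]
which directly shows that $I^\sharp_C$ is injective and that $I^\sharp_{\bar{C}}$ is a left inverse.

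The main content is therefore entirely packed into \Cref{thm:self-concordance-sharp}; there is no additional obstacle to overcome here, since the reduction to the self-concordance case is formal. The only minor care needed is to confirm that ``positive two-bridge torus knot'' refers exactly to $T_{2,2k+1}$ in our sign convention (as opposed to its mirror $T^*_{2,2k+1}$ appearing on the Khovanov side of \Cref{thm:left-inverse}), so that \Cref{thm:self-concordance-sharp} applies verbatim.
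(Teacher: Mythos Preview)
Your proof is correct and follows essentially the same approach as the paper: both reduce to \Cref{thm:self-concordance-sharp} by composing $C$ with its reversal and applying functoriality of $I^\sharp$.
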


\begin{proof}
For given concordance $C \colon T_{2,2k+1} \to K$, let $\overline{C} \colon K \to T_{2,2k+1}$ be the reversal of $C$.
Applying \Cref{thm:self-concordance-sharp}
to $\overline{C} \circ C \colon T_{2,2k+1} \to T_{2,2k+1}$, we have
\[
I^\sharp_{\overline{C}} \circ I^\sharp_C
= I^\sharp_{\overline{C} \circ C} = \id_{I^\sharp(T_{2,2k+1};\Z)}.
\]
This completes the proof.
\end{proof}

Next, we discuss the Khovanov side. We first show
an analogy of \Cref{thm:self-concordance-sharp}.
\begin{thm}
\label{thm:self-concordance-Kh}
For any $k \geq 0$,
any oriented self-concordance $C \colon T^*_{2,2k+1} \to T^*_{2,2k+1}$ induces the identity map 
\[
\Kh_{C} = \id_{\Kh(T^*_{2,2k+1})} \colon \Kh(T^*_{2,2k+1};\Z) 
\to \Kh(T^*_{2,2k+1};\Z).
\]
\end{thm}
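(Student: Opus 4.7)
The plan is to deduce this Khovanov-side statement from its instanton counterpart \Cref{thm:self-concordance-sharp} by transferring along the Kronheimer--Mrowka spectral sequence, following essentially the same scheme as the proof of \Cref{thm:T2q}. Since $C$ is an embedded oriented concordance, it has no double points, so the immersed cobordism map $\Kh^{\low}(C)$ coincides with the standard Khovanov concordance map $\Kh(C)$. Thus it suffices to compute $\Kh(C)$ through the $E_2$-term of the spectral sequence for $T_{2,2k+1}$.

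Concretely, I would first apply \Cref{thm:induced-map} to the mirror $C^* \colon T_{2,2k+1} \to T_{2,2k+1}$, obtaining a doubly filtered chain map $\phi^\sharp_{C^*}$ on $\CKh^\sharp(T_{2,2k+1})$ whose homology-level map is $I^\sharp(C^*)$ and whose induced map on the $E_2$-page of the $h$-filtration equals $\pm \Kh(C)$. By \Cref{lem:collapse}, this spectral sequence degenerates at $E_2$, so \Cref{lem:degeneration-map} provides isomorphisms between the graded pieces of the $h$-filtration on $I^\sharp(T_{2,2k+1};\Z)$ and the corresponding homological-grading summands of $\Kh(T^*_{2,2k+1};\Z)$, intertwining the $E_2$-induced map with the associated graded of the homology-level map. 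Since $I^\sharp(C^*) = \id$ by \Cref{thm:self-concordance-sharp}, its associated graded is the identity on every filtration quotient, forcing $\Kh(C) = \pm \id$. The Khovanov analogue of \Cref{thm:left-inverse-sharp} would then follow immediately: for any concordance $C \colon T^*_{2,2k+1} \to K$, one has $\Kh(\overline{C}) \circ \Kh(C) = \pm \Kh(\overline{C} \circ C) = \pm \id$, so $\Kh(C)$ is injective with left inverse given by (up to sign) $\Kh(\overline{C})$.

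The hard part will be promoting $\Kh(C) = \pm \id$ to the strict equality in the statement. For this I would appeal to the strict functoriality established in \Cref{thm:isotopy-invariance}, where the sign of $\Kh^{\bal}(C) = \Kh(C)$ for an oriented cobordism is fixed canonically (concretely, by tracking the preserved image of the Lee cycle, so that the product concordance gives the identity). With this sign convention in place, the sign ambiguity in \Cref{thm:induced-map} becomes a single global sign, which must match the canonical one on the product concordance and is therefore $+1$ uniformly for all self-concordances $C$. The main obstacle is precisely this sign-matching: verifying that the sign coming from the chain-level comparison in \Cref{thm:induced-map} agrees with the Lee-cycle normalization used to pin down $\Kh^{\bal}$. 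This is a technical but essentially combinatorial check, analogous to the sign-tracking already performed in the proof of \Cref{thm:isotopy-invariance} for strict functoriality.
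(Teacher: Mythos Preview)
Your proposal is correct and follows essentially the same route as the paper: transfer \Cref{thm:self-concordance-sharp} through the spectral sequence using the degeneration at $E_2$ (\Cref{lem:collapse}) and \Cref{lem:degeneration-map}, obtaining $\Kh_C = \varepsilon \cdot \id$ for some $\varepsilon \in \{\pm 1\}$, then eliminate the sign.

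The only place you overcomplicate things is the final paragraph. You frame the sign determination as a ``hard part'' requiring a global matching of the sign in \Cref{thm:induced-map} with the Lee-cycle normalization across all concordances simultaneously. This is not how it works, and it is not necessary. The sign $\varepsilon$ in \Cref{thm:induced-map} depends on $C$, so there is no single global sign to pin down by checking the product concordance. What the paper does instead is much simpler: once you already know $\Kh_C = \varepsilon \cdot \id$, you test this identity on a single non-torsion element. With the sign convention of \cite{Sano:2020-b} (the same one underlying \Cref{thm:isotopy-invariance}), any oriented concordance sends a suitable non-torsion class (the Lee class) to itself. Since $-\id$ fails this on a non-torsion element, $\varepsilon = +1$. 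There is nothing to match back against the spectral sequence; the sign is resolved entirely on the Khovanov side after the $\pm\id$ conclusion is in hand.
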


\begin{proof}
By \cite[Theorem 1.1]{ISST1}, \Cref{lem:collapse} and 
\Cref{lem:degeneration-map}, there exists
an isomorphism $\gamma$ such that the diagram
\[
\xymatrix@C=80pt{
\bigoplus_{p\in\Z}G_{p}I^\sharp(T_{2,2k+1};\Z)
\ar[r]^-{\bigoplus G_{p}I^\sharp_{C^*}}
\ar[d]_-{\gamma}
& 
\bigoplus_{p \in \Z} G_{p}I^\sharp(T_{2,2k+1};\Z)
\ar[d]^-{\gamma}\\
 \Kh(T_{2,2k+1}^*;\Z)
\ar[r]^-{\varepsilon \Kh_C}
& 
\Kh(T_{2,2k+1}^*;\Z)
}
\]
is commutative for some $\varepsilon \in \{\pm 1\}$.
Moreover, \Cref{thm:self-concordance-sharp} shows
$\bigoplus G_p I^\sharp_{C^*} = \id$, and hence we have 
\[
 \Kh_C = \varepsilon (\gamma \circ \bigoplus G_p I^\sharp_{C^*} \circ \gamma^{-1})
 = \varepsilon (\gamma \circ \gamma^{-1})
= \varepsilon \id_{\Kh(T^*_{2,2k+1};\Z)}.
 \]
 Here, we are adjusting the signs of the oriented cobordism maps according to \cite{Sano:2020-b}, so that for any concordance, the induced map on $\Kh$ maps some non-torsion element to itself.
 This implies $\varepsilon = 1$.
\end{proof}

Now we prove \Cref{thm:left-inverse}. 
\begin{proof}[Proof of \Cref{thm:left-inverse}]
For given concordance $C \colon T^*_{2,2k+1} \to K$, 
let $\overline{C} \colon K \to T^*_{2,2k+1}$ be the reversal of $C$.
Then, the composition $\overline{C} \circ C \colon T^*_{2,2k+1}  \to T^*_{2,2k+1} $ is a self-concordance, and hence 
\Cref{thm:self-concordance-Kh} implies
\[
\Kh_{\overline{C}} \circ \Kh_{C} 
= \Kh_{\overline{C} \circ C} = \id_{\Kh(T^*_{2,2k+1};\Z)}.  
\]
This completes the proof. 
\end{proof}

\subsection{Analogous results on generalized 
Khovanov homology}

Finally, we show that \Cref{thm:self-concordance-Kh} 
and \Cref{thm:left-inverse} can be extended to any generalized Khovanov homology $\Kh_{h,t}$.

Hereafter, we assume that any ground ring $R$ is non-graded. In addition, since we only consider the immersed cobordism map of lowest homological degree $\phi^{\low}_S$, we simply denote it by $\phi_S$.

For a link diagram $D$, the homological and quantum gradings give a decomposition of the complex as $R$-modules,
\[
\CKh_{h,t}(D;R) = \bigoplus_{p,q \in \Z} C^{p,q} = \bigoplus_{p,q \in \Z} \CKh_{h,t}(D;R)^{p,q}
\]
where each summand $C^{p,q}$ is the free $R$-module generated by the enhanced states of bigrading $(p, q)$. Note that $C^{p,q}=0$ if $q \equiv |D| + 1$ (mod 2) and $d(C^{p,q}) \subset C^{p+1,q} \oplus C^{p+1, q+2} \oplus C^{p+1, q+4}$, where $|D|$ is the number of components of $D$.
Set
\[
F_i\CKh_{h,t}(D;R) := \bigoplus_{p \in \Z,  q \geq i } C^{p,q}.
\]
Then $\{ F_i\CKh_{h,t}(D;R) \}_{i \in \Z}$ gives a filtration on $\rCKh_{h,t}(D;R)$.
We call it {\it the quantum filtration}.
The following is easy to check.
\begin{lem}
For any based generic cobordism $S \colon D \to D'$
between link diagrams, the induced chain map
\[
\phi_S \colon \CKh_{h,t}(D;R)
\to \CKh_{h,t}(D';R)
\]
is filtered of order $\geq \chi(S)$ w.r.t the quantum filtration. 
\end{lem}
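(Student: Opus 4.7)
The plan is to decompose $S$ into elementary cobordisms---Reidemeister moves and elementary Morse moves (births, deaths, saddles)---so that $\phi_S$ factors as a composition of the corresponding elementary chain maps. Since a composition of filtered maps of orders $\geq a$ and $\geq b$ is filtered of order $\geq a+b$, and $\chi(S)$ equals the sum of the Euler characteristics of the pieces, it suffices to verify the bound $\geq \chi(\text{piece})$ for each type of move.

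The key observation is that in the graded specialization ($h=t=0$), each elementary map is homogeneous of quantum degree equal to its Euler characteristic; this is a standard fact from Khovanov's original construction and was already used in the $\deg\phi_S=(e/2,\chi-3e/2)$ formula cited earlier (with $e=0$ for generic embedded $S$). In the general Frobenius setting, the same formulas yield a chain map that differs from the graded one only by correction terms coming from the relations in $A_{h,t}$; these corrections involve $h$ or $t$, which under the convention $\deg h=\deg t=0$ only shift the quantum degree upward. Hence each elementary map is filtered of order $\geq \chi(\text{piece})$.

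Concretely I would check the four basic TQFT operations: $\iota(1)=1\in A$ has $q$-shift $+1$ (matching $\chi=1$ of a birth); $\epsilon(1)=0$, $\epsilon(X)=1\in R$ has $q$-shift $+1$ (matching $\chi=1$ of a death); for the multiplication $m$, the $q$-shifts are $-1$ on $\{1\otimes 1,\,1\otimes X,\,X\otimes 1\}$ and $+1,+3$ on the two terms of $m(X\otimes X)=hX+t$, all of which are $\geq -1=\chi(\text{saddle})$; the analogous check for $\Delta$ uses $\Delta(1)=X\otimes 1+1\otimes X-h\cdot 1\otimes 1$ and $\Delta(X)=X\otimes X+t\cdot 1\otimes 1$. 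The Bar-Natan Reidemeister chain homotopy equivalences are built from the same operations combined with identities, so the same principle gives filtration order $\geq 0=\chi(\text{Reidemeister})$. Composing yields the desired bound for $\phi_S$.

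The only mild obstacle is the bookkeeping of the $h,t$-correction terms and checking that they always shift quantum degree upward; this is essentially immediate from the relation $X^2=hX+t$, where the output terms $hX$ and $t\cdot 1$ have naive $q$-degree $-1$ and $+1$ respectively, both strictly higher than the input $q$-degree $-2$, so that the graded-case bound is not spoiled by the non-homogeneity of the Frobenius relation.
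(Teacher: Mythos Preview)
Your proposal is correct and is exactly the standard verification the paper has in mind: the paper does not give a proof at all, merely prefacing the lemma with ``The following is easy to check.'' Your decomposition into elementary pieces, together with the observation that the $h,t$-correction terms in $m(X\otimes X)=hX+t$, $\Delta(1)$, and $\Delta(X)$ only raise the quantum degree (since $\deg h=\deg t=0$), is precisely the intended argument.

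One small remark: earlier in the same subsection the paper declares that $\phi_S$ denotes $\phi^{\low}_S$, so in principle the lemma could be read as covering immersed cobordisms as well. However, the only applications (\Cref{thm:self-concordance-general} and \Cref{thm:left-inverse-general}) are to embedded concordances, and for those your argument is complete. If one wanted to include crossing changes, note that $f_0^-$ has quantum degree $-6$ while the crossing-change annulus has $\chi=0$, so the bound $\geq\chi(S)$ would fail; this confirms that the lemma is intended for embedded cobordisms, as your reading of ``generic cobordism'' and your treatment correctly assume.
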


Let $\{E^r_i(\CKh_{h,0}(D;R))\}$ denote the spectral sequence obtained from the quantum filtration.
    \footnote{
    Here, the Khovanov complex arise in the $E^0$-term of the spectral sequence obtained from the filtered chain complex, and hence the Khovanov homology arise in the $E^1$-term. Some authors use different conventions, for example in \cite{Ras10}, where Khovanov homology arise in the $E^2$-term of the spectral sequence obtained from the filtered Lee complex. Details are given in \Cref{app:homological-algebra}. 
}
(For the details of our notation, see \Cref{app:homological-algebra}.)
The following lemma immediately follows from the construction of $\CKh_{h,t}$.
\begin{lem}
\label{lem:BN-spectral}
The $R$-isomorphisms
\[
A_{h,t} \cong R^2 \cong A_{0,0}, \quad 1 \mapsto (1,0) \mapsto 1, \quad X \mapsto (0,1) \mapsto X.
\]
induce the chain isomorphism
\[
\eta \colon E^0_i(\CKh_{h,0}(D;R)) 
\overset{\cong}{\longrightarrow} 
\CKh_{0,0}(D;R)^{*, i}.
\]
Moreover, for any cobordism $C \colon D \to D'$
between link diagrams, we have the following commutative diagram:
\[
\xymatrix@C=80pt{
E_i^0(\CKh_{h,t}(D;R))
\ar[r]^-{E^0_i(\phi_S)}
\ar[d]_-{\eta}
& 
E_{i+\chi(S)}^0(\CKh_{h,t}(D';R))
\ar[d]^-{\eta}\\
\CKh_{0,0}(D;R)^{*, i}
\ar[r]^-{\phi_{S}}
& 
\CKh_{0,0}(D';R)^{*, i+\chi(S)}
}
\]
\end{lem}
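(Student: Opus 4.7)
The strategy rests on a single observation: $\CKh_{h,t}(D;R)$ and $\CKh_{0,0}(D;R)$ share the same underlying $R$-module, and the former is a filtered perturbation of the latter whose correction terms strictly raise the quantum filtration. Both parts of the lemma follow from this picture.

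I would first construct $\eta$ at the level of modules. As $R$-modules, both $\CKh_{h,t}(D;R)$ and $\CKh_{0,0}(D;R)$ are $\bigoplus_v A^{\otimes r(D_v)}$ with the standard Khovanov shifts, where $A = R\langle 1, X\rangle$. The identification $A_{h,t}\cong A_{0,0}$, $1\mapsto 1$, $X\mapsto X$, respects the quantum degree of each basis element, and so extends tensorially to a filtration-preserving $R$-isomorphism of the filtered chain groups. Passing to the associated graded at filtration index $i$ then yields a graded $R$-module isomorphism $\eta\colon E^0_i(\CKh_{h,t}(D;R))\to \CKh_{0,0}(D;R)^{*,i}$.

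Next I would verify that $\eta$ intertwines differentials on $E^0$. Writing the two Frobenius structures side by side,
\begin{align*}
m_{h,t}(X\otimes X) &= hX + t, & m_{0,0}(X\otimes X) &= 0,\\
\Delta_{h,t}(1) &= 1\otimes X + X\otimes 1 - h\cdot 1\otimes 1, & \Delta_{0,0}(1) &= 1\otimes X + X\otimes 1,\\
\Delta_{h,t}(X) &= X\otimes X + t\cdot 1\otimes 1, & \Delta_{0,0}(X) &= X\otimes X,
\end{align*}
every correction term differs from its ``leading'' counterpart by multiplication by $h$ or $t$, i.e.\ by an element of $R = F_0 R$, which strictly raises the quantum filtration (by $\geq 2$ for $h$-terms and $\geq 4$ for $t$-terms) of the basis element it multiplies. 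Hence modulo $F_{i+1}$ the differential $d_{h,t}$ reduces to $d_{0,0}$, so $E^0(d_{h,t})$ is identified with $d_{0,0}$ on quantum-graded piece $i$ under $\eta$.

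For the second assertion, I would apply the same principle to each elementary building block of $\phi_S = \phi^{\low}_S$. Morse maps are the Frobenius operations just treated; Reidemeister move maps from \cite{BarNatan:2004} are explicit compositions of these Frobenius operations; and the crossing change maps $F(\zeta_0)$ use the element $\zeta_0\in \CKh_{h,t}(H)$ whose description in Figure~\ref{fig:zeta01} is a fixed combination of basis vectors $1, X$ independent of $h, t$, hence preserved by $\eta$ on the associated graded. Each elementary piece is therefore a filtered perturbation of its $(0,0)$-counterpart with corrections in strictly higher filtration, and composing them gives $E^0_i(\phi_S) = \phi_S$ under $\eta$, as required.

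The main obstacle is not difficulty but bookkeeping: one must check the filtration behavior uniformly across the explicit formulas for Reidemeister-move maps and crossing-change maps, ensuring that no correction term accidentally preserves the filtration index rather than raising it. This reduces to inspecting the presence of $h$ or $t$ in the relevant formulas, which is a routine but case-by-case verification.
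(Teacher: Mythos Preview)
Your proposal is correct and is essentially a detailed unpacking of the paper's own proof, which consists of the single sentence ``immediately follows from the construction of $\CKh_{h,t}$.'' You have written out explicitly what that sentence means: the underlying $R$-modules agree, the $h$- and $t$-correction terms in the differential and in each elementary cobordism map strictly raise the quantum filtration, so on the associated graded everything reduces to the $(0,0)$-theory.
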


Hereafter, we focus on the special case $D = T^*_{2,2k+1}$. 
Denote by
$\Kh_{h,t}(T^*_{2,2k+1};R)^p$ the homological grading $p$ part of $\Kh_{h,t}(T^*_{2,2k+1};R)$, and
set
\[
F_i\Kh_{h,t}(T^*_{2,2k+1};R)^p
:= \left\{ [x]\in \Kh_{h,t}(T^*_{2,2k+1};R))^p \  \middle| \  dx=0, \ x \in F_i\CKh_{h,t}(T^*_{2,2k+1};R) \right\}.
\]
Then, there is a direct decomposition 
$\Kh_{h,t}(T^*_{2,2k+1};R)^p = \bigoplus_{q \in \Z} \Kh_{h,t}(T^*_{2,2k+1};R)^{p,q}$ compatible with
$\{F_i\Kh_{h,t}(T^*_{2,2k+1};R)^p\}_{i \in \Z}$
as follows.
(Note that such a decomposition does not naturally follows from the definition of $\Kh_{h,t}$.)
\begin{lem}
\label{lem:BN-T2q-filt}
There exists a direct decomposition
\[
\Kh_{h,t}(T^*_{2,2k+1};R)^p
= \bigoplus_{q \in \Z} 
\Kh_{h,t}(T^*_{2,2k+1};R)^{p,q}
\]
such that for any $i \in \Z$, the equality
\[
F_i\Kh_{h,t}(T^*_{2,2k+1};R)^p
= \bigoplus_{q \geq i}\Kh_{h,t}(T^*_{2,2k+1};R)^{p,q}
\]
holds as $R$-submodules of $\Kh_{h,t}(T^*_{2,2k+1};R)^p$. 
\end{lem}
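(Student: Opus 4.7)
The plan is to use the explicit chain-level simplification of \Cref{cor:CKh-T_2q} to transfer the desired splitting from a much smaller complex. The first step is to verify that the chain homotopy equivalence in \Cref{cor:CKh-T_2q}, obtained via delooping and Gaussian elimination (\Cref{prop:twist-tangle-retract}), persists in the ungraded filtered setting with both directions of quantum filtration order $\geq 0$---this is automatic since the operations used preserve the quantum grading in the bigraded case, hence become filtration-preserving upon forgetting the grading. Consequently, there is a filtered chain homotopy equivalence
\[
\CKh_{h,t}(T^*_{2,2k+1};R) \simeq E := \bigoplus_{i=1}^k t^{-2i-1}q^{-2k-4i-1}\!\bigl(A \xrightarrow{U} tq^2 A\bigr) \oplus q^{-2k+1}\underline{A},
\]
which induces a filtered $R$-module isomorphism $\Kh_{h,t}(T^*_{2,2k+1};R) \cong H(E)$, so it suffices to exhibit the splitting for $H(E)$.

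Since the direct sum decomposition of $E$ is compatible with both the homological grading and the quantum filtration, $H(E)$ decomposes as a direct sum of the homologies of the summands, with filtration equal to the direct sum of the filtrations on each piece. For the free summand $q^{-2k+1}\underline{A}$, the homology is $A$, and the natural $R$-module splitting $A = R \cdot 1 \oplus R \cdot X$ places pieces in quantum gradings $-2k+2$ and $-2k$. For each two-term summand $t^{-2i-1}q^{-2k-4i-1}(A \xrightarrow{U} tq^2 A)$, the homology consists of $\Ker U$ in homological grading $-2i-1$ and $\Coker U$ in grading $-2i$, each inheriting a two-step quantum filtration from $A$. The remaining task is to split each of these into two pieces compatibly with the inherited filtration.

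Using the matrix form
\[
U = \begin{pmatrix} -h & 2t \\ 2 & h \end{pmatrix}
\]
in the basis $\{1, X\}$ of $A$, the upper-filtration piece of $\Ker U$ is taken to be $\Ker U \cap R \cdot 1$, and a complement is constructed from kernel elements with prescribed $X$-coefficient (which lie in the annihilator of $h^2 + 4t$ in $R$); $\Coker U$ is split dually, with the upper piece being the image of $R \cdot 1$ and a complement coming from the class of $X$. The main obstacle will be verifying that these $R$-submodule complements exist and are well-defined for an arbitrary commutative ring $R$: since $U$ is only filtered (not graded) in the ungraded setting, the lower-order terms $h$ and $t$ may create interactions between the two quantum levels that obstruct naive splittings, and $R$ is not assumed to be a PID. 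Nevertheless, because $A$ has rank two over $R$ and $U$ is a $2 \times 2$ matrix with entries in $R$, the question reduces to a concrete linear-algebra computation governed by $h$, $t$, and $2$, which I expect can be carried out uniformly in $R$ via explicit formulas for the section maps.
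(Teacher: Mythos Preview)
Your reduction via \Cref{cor:CKh-T_2q} to the simplified complex $E$ is exactly the paper's first move, and the paper likewise argues that it suffices to split the quantum filtration on $H_*(E)$. Where the two diverge is in that final splitting. The paper asserts that the simplified differential satisfies $d(C^{p,q}) \subset C^{p+1,q+2}$ and reads off the decomposition immediately. You instead observe, correctly, that in the non-graded setting $U = 2X - h$ has matrix entries with $q$-shifts $0$, $+2$, and $+4$ (e.g.\ the entry $1 \mapsto 2X$ has shift $0$), so the differential is only filtered and one must split the two-step filtration on each $\Ker U$ and $\Coker U$ by hand. The paper's homogeneity claim seems to rely on indexing by the $q$-shift of each $A$-block rather than by the quantum grading of individual generators---but that block indexing is coarser than the actual quantum filtration, so the stated conclusion does not follow directly from it. In this sense your route is the more honest one.

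To complete your argument: for $\Ker U$, note that its projection to the $X$-coordinate is a submodule of $R$, hence free when $R$ is a PID, so the short exact sequence $0 \to \Ker U \cap R\cdot 1 \to \Ker U \to (\text{projection}) \to 0$ splits. For $\Coker U$, seek $\alpha \in R$ with $2(\alpha + X)$ and $h(\alpha + X)$ both in $\operatorname{im}(U)$; the identities $2X = U(1) + h$ and $hX = U(X) - 2t$ reduce this to a pair of congruences modulo the ideal $\{r : (r,0) \in \operatorname{im}(U)\}$, which one can solve explicitly over a PID (for instance over $\Z$: when $h = 2h'$ is even take $\alpha = -h'$, and when $h$ is odd the filtration step already exhausts $\Coker U$). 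Your worry about arbitrary commutative $R$ is justified---the splittings above genuinely use the PID hypothesis---but that is the only case invoked downstream (see the parenthetical before \Cref{thm:self-concordance-general}).
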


\begin{proof} 
Let $(C,d)$ be the chain complex in the left hand side of \Cref{cor:CKh-T_2q} and 
\[
\Phi \colon C \to \CKh_{h,t}(T^*_{2,2k+1})
\]
be a chain homotopy equivalence map given in \Cref{cor:CKh-T_2q}.
Then, for proving the lemma, it suffices to give a direct decomposition $H_p(C) = \bigoplus_{q \in \Z} H^{p,q}$ satisfying $F_iH_p(C) = \bigoplus_{q \geq i}H^{p,q}$. Indeed, 
since $\Phi$ is graded and filtered with respect to  homological grading and quantum filtration respectively,
the induced map $\Phi_* \colon H_*(C) \to \Kh_{h,t}(T^*_{2,2k+1})$ satisfies
\[
F_i\Kh_{h,t}(T^*_{2,2k+1})^p
= \Phi_*(F_iH_p(C))
= \bigoplus_{q \geq i} \Phi_*(H^{p,q}),
\]
and hence $\Kh_{h,t}(T^*_{2,2k+1})^{p,q} := \Phi_*(H^{p,q})$ gives the desired decomposition of 
$\Kh(T^*_{2,2k+1})^p$.

Here we note that for the chain complex $(C,d)$
satisfies the relation $d(C^{p,q}) \subset C^{p+1,q+2}$ for any $p,q \in \Z$. This implies that
\[
F_iH_p(C) = \frac{\ker d \cap (\bigoplus_{q \geq i }C^{p,q})}{d(\bigoplus_{q \in \Z}C^{p-1, q})}
= \bigoplus_{q \geq i} \frac{\ker d \cap C^{p,q}}{d(C^{p-1,q-2})}
\]
for any $i \in \Z$. Therefore, $H^{p,q} :=  \frac{\ker d \cap C^{p,q}}{d(C^{p-1,q-2})}$ gives the desired decomposition of $C$.
\end{proof}
Let $G_i\Kh_{h,t}(T^*_{2,2k+1};R)^p :=
F_i\Kh_{h,t}(T^*_{2,2k+1};R)^p/ F_{i-1} \Kh_{h,t}(T^*_{2,2k+1};R)^p$.
\Cref{lem:BN-T2q-filt} shows that the map
\[
\Kh_{h,t}(T^*_{2,2k+1};R)^{p,q} \to
G_q \Kh_{h,t}(T^*_{2,2k+1};R)^p,
\quad \xi \to [\xi]
\]
is an isomorphism for any $p,q \in \Z$, which is combined into an isomorphism
\[
\Kh_{h,t}(T^*_{2,2k+1};R) \overset{\cong}{\longrightarrow}
E^\infty(\CKh_{h,t}(T^*_{2,2k+1};R)).
\]
This isomorphism is canonical in the following sense:
\begin{cor}
\label{cor:BN-limit}
For any self-concordance $C \colon T^*_{2,2k+1} \to T^*_{2,2k+1}$,
the diagram
\[
\xymatrix@C=80pt{
\Kh_{h,t}(T^*_{2,2k+1};R)
\ar[r]^-{(\phi_{C})_*}
\ar[d]_-{\cong}
& 
\Kh_{h,t}(T^*_{2,2k+1};R)
\ar[d]^-{\cong}\\
E^\infty(\CKh_{h,t}(T^*_{2,2k+1};R))
\ar[r]^-{E^\infty(\phi_C)}
& 
E^\infty(\CKh_{h,t}(T^*_{2,2k+1};R))
}
\]
where the vertical maps are the isomorphism induced from \Cref{lem:BN-T2q-filt}.
\end{cor}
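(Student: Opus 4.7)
The plan is to establish the stronger statement that $(\phi_C)_* = \id$ on $\Kh_{h,t}(T^*_{2,2k+1}; R)$; once this is known, both horizontal arrows of the diagram reduce to the identity under the vertical isomorphisms, and commutativity is automatic. The approach has two stages: first, use the quantum filtration spectral sequence together with \Cref{thm:self-concordance-Kh} to show $E^\infty(\phi_C) = \id$; second, promote this to $(\phi_C)_* = \id$ on $\Kh_{h,t}$ using the explicit structure of $\CKh_{h,t}(T^*_{2,2k+1})$ from \Cref{cor:CKh-T_2q} underlying the splitting of \Cref{lem:BN-T2q-filt}.

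For the first stage, \Cref{lem:BN-spectral} identifies the $E^1$-page of the quantum spectral sequence for $\CKh_{h,t}(T^*_{2,2k+1})$ with $\Kh_{0,0}(T^*_{2,2k+1})$, and identifies $E^1(\phi_C)$ with the ordinary Khovanov cobordism map. By \Cref{thm:self-concordance-Kh} this map is the identity, so $E^1(\phi_C) = \id$; iterating (since taking homology of an identity morphism gives the identity), we obtain $E^r(\phi_C) = \id$ for every $r \geq 1$, and in particular $E^\infty(\phi_C) = \id$. For the second stage, I would decompose $(\phi_C)_*$ under the splitting of \Cref{lem:BN-T2q-filt} as $\id + N$, where $N$ is strictly filtered with respect to the quantum filtration (so that $N$ vanishes on the associated graded). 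The key input is that $\CKh_{h,t}(T^*_{2,2k+1})$ is chain homotopy equivalent to a direct sum of simple pieces, namely mapping cones of the $U$-map together with the rank-two free summand $q^{-2k+1} A$, so each bigraded piece $\Kh_{h,t}^{p,q}$ has rank at most one. This sparsity, together with the composition law applied to the reversal $\bar{C}$ and the left-inverse statement of \Cref{thm:left-inverse} at the unreduced Khovanov level, should pin down the image of each distinguished generator and force $N = 0$.

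The main obstacle is the second stage: ruling out the nilpotent perturbation $N$. A generic filtered endomorphism of $\Kh_{h,t}$ with identity associated-graded is $\id$ plus a potentially non-trivial strictly filtered term, so additional input is needed to eliminate it. I expect the argument to combine (i) the rigidity of distinguished generators of the free part of $\Kh_{h,t}(T^*_{2,2k+1})$ through generalized Lee-type cycles, which are preserved up to sign by any oriented cobordism, and (ii) the gap structure of the bigrading visible from \Cref{cor:CKh-T_2q}, which severely restricts possible nilpotent components on the torsion part by rank and degree considerations on the $U$-map summands.
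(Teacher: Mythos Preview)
Your proposal vastly overshoots the target. The paper's proof is a two-line definitional check: for $\xi \in \Kh_{h,t}(T^*_{2,2k+1};R)^{p,q}$, both composites around the square send $\xi$ to $[(\phi_C)_*(\xi)]$ in $G_q\Kh_{h,t}(T^*_{2,2k+1};R)^p$, just by unraveling the definition of $E^\infty(\phi_C) = \bigoplus_q G_q(\phi_C)_*$ and of the vertical isomorphism $\xi \mapsto [\xi]$. There is no spectral-sequence collapse argument, no invocation of \Cref{thm:self-concordance-Kh}, and no need to establish $(\phi_C)_* = \id$.

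By contrast, you are attempting to prove $(\phi_C)_* = \id$ on $\Kh_{h,t}$, which is exactly \Cref{thm:self-concordance-general}, the \emph{next} result in the paper --- and that theorem \emph{uses} this corollary as its final step. So you have inverted the logical dependencies: the paper first records the cheap commutativity of \Cref{cor:BN-limit}, then in \Cref{thm:self-concordance-general} combines it with the separately established fact $E^\infty(\phi_C) = \id$ (obtained from \Cref{lem:BN-spectral}, the universal coefficient theorem, and \Cref{thm:self-concordance-Kh}) to conclude $(\phi_C)_* = \id$, since commutativity together with $E^\infty(\phi_C) = \id$ forces the vertical isomorphism to intertwine $(\phi_C)_*$ with the identity. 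What you identify as ``the main obstacle'' --- ruling out the strictly filtered nilpotent part $N$ --- is precisely the step the paper resolves \emph{by invoking} this corollary, not within its proof. Your proposed patches involving Lee-type generators and bigrading gaps are neither developed nor needed here.
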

    
\begin{proof}
It follows from the definition of $E^\infty(\phi_C)$ that for any $\xi \in \Kh_{h,t}(T^*_{2,2k+1};R)^{p,q}$,
the equalities
\[
E^\infty(\phi_C)([\xi]) = G_q (\phi_C)_*([\xi])
= [(\phi_C)_*(\xi)]
\]
hold as elements of $G_q\Kh_{h,t}(T^*_{2,2k+1};R)^p$.
This completes the proof.
\end{proof}
Now, we have extensions of \Cref{thm:self-concordance-Kh} and \Cref{thm:left-inverse}
to all generalized Khovanov homologies 
(with PID coefficient).
\begin{thm}
\label{thm:self-concordance-general}
For any $k \geq 0$, $R$ and $h,t \in R$,
any oriented self-concordance $C \colon T^*_{2,2k+1} \to T^*_{2,2k+1}$ induces the identity map 
\[
\Kh_{C} = \id_{\Kh_{h,t}(T^*_{2,2k+1};R)} \colon \Kh_{h,t}(T^*_{2,2k+1};R) 
\to \Kh_{h,t}(T^*_{2,2k+1};R).
\]
\end{thm}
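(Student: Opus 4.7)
The approach is to lift Theorem \ref{thm:self-concordance-Kh} from the standard Khovanov homology ($R=\Z$, $h=t=0$) to all generalized Khovanov homologies over a PID by running the spectral sequence induced by the quantum filtration on $\CKh_{h,t}(T^*_{2,2k+1};R)$. The point is that this spectral sequence has $E^1$-page canonically isomorphic to the ordinary Khovanov homology $\Kh_{0,0}(T^*_{2,2k+1};R)$, so the plan is to import the known concordance-invariance statement at the $E^1$-level and propagate it upward.

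In more detail, I would argue in four reductions. First, by \Cref{cor:BN-limit}, it suffices to show that $E^\infty(\phi_C)=\id$ on $E^\infty(\CKh_{h,t}(T^*_{2,2k+1};R))$; since the quantum filtration on any finite diagram is bounded, the spectral sequence collapses in finitely many steps, so $E^\infty(\phi_C)$ equals $E^r(\phi_C)$ for $r$ large enough. Second, the elementary inductive fact $E^{r+1}(f) = H(E^r(f),d^r)$ shows that if $E^r(f)=\id$ then $E^{r+1}(f)=\id$, reducing the problem to proving $E^1(\phi_C)=\id$. Third, by \Cref{lem:BN-spectral} the chain isomorphism $\eta$ identifies the $E^0$-page with $\CKh_{0,0}(T^*_{2,2k+1};R)$ naturally under cobordism maps; since the Euler characteristic of a self-concordance vanishes, the associated $E^1$-map coincides with the standard Khovanov cobordism map $\Kh_{0,0}(\phi_C;R)$. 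Fourth, it remains to show $\Kh_{0,0}(\phi_C;R)=\id$ for a PID $R$, with Theorem \ref{thm:self-concordance-Kh} supplying the case $R=\Z$.

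The main obstacle is the last step. Using $\CKh_{0,0}(T^*_{2,2k+1};R) = \CKh_{0,0}(T^*_{2,2k+1};\Z)\otimes_\Z R$ and $\phi_C^R = \phi_C^\Z \otimes_\Z \id_R$, naturality of the universal coefficient short exact sequence forces $\Kh_{0,0}(\phi_C;R)$ to restrict to the identity on $\Kh_{0,0}(T^*_{2,2k+1};\Z)\otimes R$ and to induce the identity on $\Tor(\Kh_{0,0}(T^*_{2,2k+1};\Z)_{*-1}, R)$; however, naturality alone does not pin the middle map down to the identity. To close this gap, I would upgrade Theorem \ref{thm:self-concordance-Kh} from a statement on homology to the chain-level assertion that $\phi_C^\Z - \id$ is null-homotopic on $\CKh_{0,0}(T^*_{2,2k+1};\Z)$; once this is established, tensoring the null-homotopy with $R$ immediately yields the required identity over $R$. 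This chain-level upgrade should follow from a direct analysis of chain endomorphisms of the explicit complex given in \Cref{cor:CKh-T_2q} (whose summands have the simple form $A \xrightarrow{U} A$ with $U = 2X - h$), combined with the canonical Lee-class sign normalization from \cite{Sano:2020-b}, which fixes the ambiguity in the cobordism map and makes the $\Z$-level result compatible with arbitrary base change.
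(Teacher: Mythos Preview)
Your reductions (1)--(3) match the paper's argument exactly: reduce to $E^\infty(\phi_C)=\id$ via \Cref{cor:BN-limit}, descend inductively to $E^1$ via \Cref{lem:spectral-morphism}, and identify $E^1(\phi_C)$ with the ordinary Khovanov cobordism map over $R$ via \Cref{lem:BN-spectral}. The divergence is at step (4), where you correctly recognize that naturality of the universal-coefficient sequence alone does not determine the middle map.

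Your proposed fix --- upgrading \Cref{thm:self-concordance-Kh} to a chain-level null-homotopy of $\phi_C^{\Z}-\id$ and then tensoring with $R$ --- is workable in principle but is more laborious than necessary, and you do not actually carry it out: one would have to transfer $\phi_C$ across the homotopy equivalence of \Cref{cor:CKh-T_2q} and then verify by hand that every bigraded chain endomorphism of the reduced complex inducing the identity on homology is homotopic to the identity (this does hold, after a short case analysis on each two-term summand $A\xrightarrow{2X}A$, but it is an extra computation). The paper bypasses this entirely and stays at the homology level: from the explicit description in \Cref{cor:CKh-T_2q} one reads off that for every bidegree $(p,q)$ the group $\Kh(T^*_{2,2k+1};\Z)^{p,q}\oplus\Kh(T^*_{2,2k+1};\Z)^{p+1,q}$ is isomorphic to $\Z$, $\Z/2\Z$, or $\Z\oplus\Z$. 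In each case either the left summand vanishes or the right summand is torsion-free, so one of the two outer terms of the universal-coefficient short exact sequence is zero; the surviving outer map is then an isomorphism, and commutativity with the identity maps on the outer terms forces the middle vertical map to be the identity. This uses only the already-computed bigraded structure of $\Kh(T^*_{2,2k+1};\Z)$ and requires no chain-level analysis.
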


\begin{proof}
We first consider the following commutative diagram between the short exact sequences derived from the universal coefficient theorem;
\[
\xymatrix@C=60pt{
\Kh(T^*_{2,2k+1};\Z)^{p,q} \otimes R
\ar[r]^-{\iota}
\ar[d]_-{(\phi_{C})_* \otimes 1 = \id}
& 
\Kh(T^*_{2,2k+1}; R)^{p,q}
\ar[r]^-{\pi}
\ar[d]^-{(\phi_{C})_* }
& 
\Tor_1 (\Kh(T^*_{2,2k+1}; \Z)^{p+1,q}, R)
\ar[d]^-{\Tor_1(\phi_{C})_* = \id}
\\
\Kh(T^*_{2,2k+1};\Z)^{p,q} \otimes R
\ar[r]^-{\iota}
& 
\Kh(T^*_{2,2k+1}; R)^{p,q}
\ar[r]^-{\pi}
& 
\Tor_1 (\Kh(T^*_{2,2k+1}; \Z)^{p+1,q}, R)
}
\]
where the coincidence of two vertical maps with the identities follows from \Cref{thm:self-concordance-Kh}.
Then, it can be seen from \Cref{cor:CKh-T_2q}
that
\[
\Kh(T^*_{2,2k+1};\Z)^{p,q} \oplus
\Kh(T^*_{2,2k+1};\Z)^{p+1,q}
\ \cong \  \Z, \  \Z/2\Z \  \text{or} \  \Z \oplus \Z
\]
for any $p,q \in \Z$. This implies that either $\iota$ or $\pi$ is the zero-map, and consequently
the middle vertical map $(\phi_C)_*$ in the above diagram is also equal to the identity.

Next, by \Cref{lem:BN-spectral}, we have the commutative diagram:
\[
\label{eq:BN-diag}
\begin{split}
\xymatrix@C=80pt{
E^1(\CKh_{h,t}(T^*_{2,2k+1};R))
\ar[r]^-{E^1(\phi_C)}
\ar[d]_-{\eta_*}
& 
E^1(\CKh_{h,t}(T^*_{2,2k+1};R))
\ar[d]^-{\eta_*}\\
\Kh(T^*_{2,2k+1};R)
\ar[r]^-{(\phi_{C})_*= \id}
& 
\Kh(T^*_{2,2k+1};R)
}
\end{split}
\]
This shows $E^1(\phi_C) = \id$.  
Moreover, it follows from \Cref{lem:spectral-morphism} that 
\[
E^r(\phi_C) =\id 
\quad \Rightarrow \quad
E^{r+1}(\phi_C) = \id
\]
for any $r \geq 0$.
As a consequence, we have $E^\infty(\phi_C) = \id$, and now 
the commutative diagram in \Cref{cor:BN-limit}
completes the proof.
\end{proof}

\begin{thm}
\label{thm:left-inverse-general}
For any smooth knot concordance $C \colon T^*_{2,2k+1} \to K$, the induced map
\[
\Kh_C \colon \Kh_{h,t}(T^*_{2,2k+1};R) \to 
\Kh_{h,t}(K;R)
\]
is injective, with a left inverse given by the reversal of \(C\).
\end{thm}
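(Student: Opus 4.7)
The plan is to reduce \Cref{thm:left-inverse-general} to the self-concordance statement in \Cref{thm:self-concordance-general}, mirroring the proof of \Cref{thm:left-inverse} but in the generalized setting. Given a smooth knot concordance $C \colon T^*_{2,2k+1} \to K$, I would form the reversed concordance $\bar C \colon K \to T^*_{2,2k+1}$ and consider the composition $\bar C \circ C \colon T^*_{2,2k+1} \to T^*_{2,2k+1}$, which is a smooth oriented self-concordance of $T^*_{2,2k+1}$.

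By functoriality of the (unreduced, oriented, balanced-degree) cobordism map on generalized Khovanov homology $\Kh_{h,t}(-;R)$, which holds over any coefficient ring $R$ with any parameters $h,t \in R$ by the oriented case of \Cref{thm:isotopy-invariance}, we have
\[
\Kh_{\bar C} \circ \Kh_{C} \;=\; \Kh_{\bar C \circ C} \colon \Kh_{h,t}(T^*_{2,2k+1};R) \to \Kh_{h,t}(T^*_{2,2k+1};R).
\]
Applying \Cref{thm:self-concordance-general} to the self-concordance $\bar C \circ C$, the right-hand side equals $\id_{\Kh_{h,t}(T^*_{2,2k+1};R)}$. This exhibits $\Kh_{\bar C}$ as a left inverse of $\Kh_{C}$ and in particular forces $\Kh_{C}$ to be injective.

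There is no real obstacle here beyond making sure the ingredients are in place: the key work has already been done in proving \Cref{thm:self-concordance-general}, which itself relied on lifting the integral statement \Cref{thm:self-concordance-Kh} through the universal coefficient theorem and the quantum-filtration spectral sequence. The only thing to verify is that \Cref{thm:self-concordance-general} is stated for oriented self-concordances with the sign-normalized $\Kh^{\bal}$-map, which is precisely the functorial version used here, so that the equation $\Kh_{\bar C} \circ \Kh_{C} = \Kh_{\bar C \circ C}$ holds on the nose (not merely up to sign). Thus the proof consists of a single line of composition after these preparations.
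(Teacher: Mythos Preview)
Your proposal is correct and follows exactly the paper's approach: reduce to the self-concordance case by composing with the reversal, apply functoriality, and invoke \Cref{thm:self-concordance-general} in place of \Cref{thm:self-concordance-Kh}. The paper's proof is literally the one-line version of what you wrote.
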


\begin{proof}
The proof is similar to the proof of \Cref{thm:left-inverse}, where we use \Cref{thm:self-concordance-general} instead of \Cref{thm:self-concordance-Kh}.
\end{proof}

\appendix
\section{Homological algebra}
\label{app:homological-algebra}

We shall describe several propositions that we need to compare instanton cobordism maps with cobordism maps in Khovanov theory. Some of them are well-known in the general theory of spectral sequences of filtered complexes. Since our setting is a little bit unusual, i.e. we are having periodic gradings and doubly filtered on the cube complexes, we basically give the proofs of each proposition. 
For reference, see \cite{mccleary2001user} for example.

\subsection{Filtered complexes}

For a given principal ideal domain algebra $R$ over $\Z$,
the {\it filtered complexes} considered in this paper are $\Z/4$-graded chain complexes $(C,d)$ over $R$ such that $C$ is freely and finitely generated and admits a direct sum decomposition of the form
\[
C=\bigoplus_{i \geq i_0}C_i,
\]
where:
\begin{itemize}
    \item $d(C_i) \subset \bigoplus_{j \geq i} C_j$, and
    \item $C_i=\{0\}$ for all $i$ greater than some $i_1$.
\end{itemize}
For each $p \geq i_0$, set $F_pC:= \bigoplus_{i \geq p}C_i$.
Then we have the following sequence of subcomplexes:
\[
C=F_{i_0} \supset F_{i_0+1}C \supset \cdots 
\supset F_{i_1}C \supset F_{i_1+1}C =\{0\}.
\]
We also have a filtration on the homology $H_*(C)$, defined by
\[
F_pH_*(C) : = \{ [x]\in H_*(C) \mid dx=0, \ x \in F_pC \}.
\]
We set $G_pH_*(C) := F_pH_*(C)/F_{p+1}H_*(C)$, called the {\it associated graded pieces}.  

\subsection{Spectral sequence}
For a filtered complex $(C,d)$, we define {\it the $E^r$-complex} 
by
\[
E^r_p(C)
:= \frac{\{x \in F_pC \mid dx \in F_{p+r}C \}}{F_{p+1}C+d(F_{p-r+1}C)}
\]
and
\[
d^r_p\colon E^r_p(C) \to E^r_{p+r}(C), \quad [x]^r_p \mapsto [dx]^r_{p+r}.
\]
We call the sequence $\{(E^r(C),d^r):=(\bigoplus E^r_p(C),d^r_p)\}_{r \geq 0}$ {\it the spectral sequence} of $(C,d)$.

For the homology $H_*(E^r_p) := \Ker d^r_p / \im d^r_{p-r}$ also, we define the differential 
\[
\bd^r_p \colon H_*(E^r_p(C)) \to H_*(E^r_{p+r+1}(C)),
\quad
\bd^r_p([[x]^r_p]):= [[y]^r_{p+r+1}],
\]
where $y \in F_{p+r+1}C$ is a cycle satisfying $dx=y+dz$
for some $z \in F_{p+1}C$.
For any $r \geq 0$ and $p \geq i_0$, the map
\begin{align}
\label{eq:spectral-homology}
E^{r+1}_p(C) \to H_*(E^r_p(C)), \quad [x]^{r+1}_p \mapsto [[x]^r_p]
\end{align}
is a chain isomorphism.
In particular, we have the inequality $\rank_R E^r_p(C) \geq \rank_R E^{r+1}_p(C)$ for any $r \geq 0$ and $p \geq i_0$.
If the equality holds, then the following lemmas also hold.
\begin{lem}
\label{lem:rank-equality}
If
the equality $\rank_R E^r_p(C) = \rank_R E^{r+1}_p(C)$ holds,
then we have 
\[
\im d^r_{p-r} \subset \Tor E^r_{p}(C)
\quad \text{and} \quad
\Tor H_*(E^r_p(C)) = \Tor (\Ker d^r_p)/ \im d^r_{p-r}.
\]
In particular, if $\Tor E^r_p(C)$ is finite, then the inequality 
$|\Tor E^r_p(C)| \geq |\Tor E^{r+1}_p(C)|$ holds.
\end{lem}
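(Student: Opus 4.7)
The plan is to exploit the structure theorem for finitely generated modules over a PID, which lets us compare ranks and torsion parts of the various subquotients appearing in the spectral sequence. Throughout, note that since $C$ is finitely generated free, every $E^r_p(C)$, $\Ker d^r_p$, $\im d^r_p$, and $H_*(E^r_p(C))$ is finitely generated over $R$.

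First I would compute ranks. The short exact sequence
\[
0 \to \Ker d^r_p \to E^r_p(C) \xrightarrow{d^r_p} \im d^r_p \to 0
\]
gives $\rank E^r_p(C) = \rank \Ker d^r_p + \rank \im d^r_p$, and the quotient description $H_*(E^r_p(C)) = \Ker d^r_p / \im d^r_{p-r}$ gives $\rank H_*(E^r_p(C)) = \rank \Ker d^r_p - \rank \im d^r_{p-r}$. Using the chain isomorphism \eqref{eq:spectral-homology} to identify $E^{r+1}_p(C) \cong H_*(E^r_p(C))$, the hypothesis becomes $\rank E^r_p(C) = \rank H_*(E^r_p(C))$, which combines with the two displayed equalities to give $\rank \im d^r_p + \rank \im d^r_{p-r} = 0$. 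Since ranks are non-negative, both $\im d^r_p$ and $\im d^r_{p-r}$ are torsion modules, proving $\im d^r_{p-r} \subset \Tor E^r_p(C)$.

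Next I would identify the torsion of $H_*(E^r_p(C))$. Setting $M = \Ker d^r_p$ and $N = \im d^r_{p-r}$, we have just shown $N \subset \Tor M$. Since $R$ is a PID, $M / \Tor M$ is free, hence torsion-free, so the short exact sequence
\[
0 \to \Tor M / N \to M / N \to M / \Tor M \to 0
\]
identifies $\Tor(M/N) = \Tor M / N$, giving $\Tor H_*(E^r_p(C)) = \Tor(\Ker d^r_p) / \im d^r_{p-r}$, as desired.

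Finally, the cardinality inequality follows immediately: $\Tor(\Ker d^r_p)$ is a submodule of $\Tor E^r_p(C)$, so $|\Tor(\Ker d^r_p)| \leq |\Tor E^r_p(C)|$, and quotienting by $\im d^r_{p-r}$ can only decrease the order, yielding
\[
|\Tor E^{r+1}_p(C)| = |\Tor(\Ker d^r_p) / \im d^r_{p-r}| \leq |\Tor(\Ker d^r_p)| \leq |\Tor E^r_p(C)|.
\]
There is no real obstacle here; the only step requiring any care is the identification of torsion in Step 2, where the hypothesis that $N \subset \Tor M$ (established in Step 1) is essential for the torsion-part computation to commute with the quotient.
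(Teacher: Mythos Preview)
Your proof is correct and follows essentially the same strategy as the paper: both exploit rank additivity for finitely generated modules over a PID to force $\rank_R(\im d^r_{p-r})=0$, then deduce the torsion identification and the cardinality bound. The only cosmetic differences are that you phrase Step~2 via the short exact sequence $0\to \Tor M/N\to M/N\to M/\Tor M\to 0$ (with $M/\Tor M$ free) rather than the paper's element chase, and that along the way you also note $\im d^r_p$ is torsion, which the paper does not use.
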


\begin{proof}
Let $Q:=\Frac(R)$ be the field of fractions. Then, for any quotient $R$-module $C = B/A$ of finitely generated $R$-modules $A, B$, we have
the isomorphism
\begin{align*}
C\otimes_R Q \cong (B \otimes_R Q)/ (A \otimes_R Q)
\end{align*}
as $Q$-vector spaces. This implies
\begin{align}
\label{eq:quotient}
\rank_R C = \rank_R B - \rank_R A,
\end{align} 
and we have
\[
\rank_R E^{r+1}_p(C) = \rank_R H_*(E^r_p(C))
= \rank_R (\ker d^r_p) -\rank_R (\im d^r_{p-r}) \leq \rank_R E^r_p(C).
\]
Thus, the assumption $\rank_R E^r_p(C) = \rank_R E^{r+1}_p(C)$ shows 
\[
\rank_R (\Ker d^r_p) = \rank_R E^r_p(C)
\quad \text{and} \quad
\rank_R (\im d^r_{p-r}) = 0.
\]
In particular, The second equality dirctly shows 
$\im d^r_{p-r} \subset \Tor E^r_p(C)$. 

We next show that 
$\Tor H_*(E^r_p (C) ) 
= \Tor (\Ker d^r_p) / \im d^r_{p-r}$.
If an element $x \in \Ker d^r_p$ gives a torsion homology class $[x] \in \Tor H_*(E^r_p(C))$, then there exists an element $a \in R$ with $a x \in \im d^r_{p-r}$.
Here it is already shown that $\im d^r_{p-r} \subset \Tor E^r_p$, and hence we have $x \in \Tor (\Ker d^r_p)$.
This implies that the desired equality holds.

Finally, if $\Tor E^r_p(C)$ is finite, then the isomorphism $E^{r+1}_p(C) \cong H_*(E^r_p(C))$ shows that
\[
|\Tor E^{r+1}_p(C)| = 
|\Tor H_*(E^r_p (C) )| = 
\frac{|\Tor (\Ker d^r_p)|}{|\im d^r_{p-r}|} \leq |\Tor E^r_p(C)|. \qedhere
\]
\end{proof}
\begin{lem}
\label{lem:tor-equality}
Suppose that $\Tor E^r_p(C)$ is finite. If the equalities $\rank_R E^r_p(C) = \rank_R E^{r+1}_p(C)$ and $|\Tor E^r_p(C)| = |\Tor E^{r+1}_p(C)|$ hold, 
then we have $d^r_{p-r} = 0$.
\end{lem}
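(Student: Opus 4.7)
The plan is to reduce the statement to a pigeonhole cardinality argument, using Lemma~A.4 as the main input. The rank-equality hypothesis is already what Lemma~A.4 needs, so I immediately get both the inclusion $\im d^r_{p-r} \subset \Tor E^r_p(C)$ and the identification
\[
\Tor H_*(E^r_p(C)) = \Tor(\Ker d^r_p)/\im d^r_{p-r}.
\]
Combining this with the canonical $R$-linear isomorphism $E^{r+1}_p(C) \cong H_*(E^r_p(C))$ from (A.1), which preserves torsion cardinality, gives the key identity
\[
|\Tor E^{r+1}_p(C)| \cdot |\im d^r_{p-r}| = |\Tor(\Ker d^r_p)|.
\]

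Next I would compare $|\Tor(\Ker d^r_p)|$ with $|\Tor E^r_p(C)|$. Since $\Ker d^r_p$ is a submodule of $E^r_p(C)$, its torsion subgroup embeds into $\Tor E^r_p(C)$, so $|\Tor(\Ker d^r_p)| \leq |\Tor E^r_p(C)|$. Plugging into the identity above and using the second hypothesis $|\Tor E^r_p(C)| = |\Tor E^{r+1}_p(C)|$ yields
\[
|\Tor E^r_p(C)| \cdot |\im d^r_{p-r}| \leq |\Tor E^r_p(C)|.
\]
Since $\Tor E^r_p(C)$ is finite (and its cardinality is at least $1$), we may cancel to conclude $|\im d^r_{p-r}| \leq 1$. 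As $\im d^r_{p-r}$ always contains the zero element, this forces $\im d^r_{p-r} = 0$, i.e.\ $d^r_{p-r} = 0$.

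There is essentially no obstacle: once Lemma~A.4 is applied, the rest is a one-line pigeonhole count. The only point requiring a bit of care is checking that the isomorphism $E^{r+1}_p(C) \cong H_*(E^r_p(C))$ in (A.1) is indeed $R$-linear, so that it matches torsion subgroups on the nose; this is visible from the explicit formula $[x]^{r+1}_p \mapsto [[x]^r_p]$.
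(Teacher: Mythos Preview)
Your proof is correct and follows essentially the same route as the paper: both invoke \Cref{lem:rank-equality} to obtain $\Tor H_*(E^r_p(C)) = \Tor(\Ker d^r_p)/\im d^r_{p-r}$, combine with the isomorphism \eqref{eq:spectral-homology} and the torsion-equality hypothesis, and then use $|\Tor(\Ker d^r_p)| \leq |\Tor E^r_p(C)|$ to force $|\im d^r_{p-r}| = 1$. You spell out this last inequality explicitly, whereas the paper leaves it implicit (it is the content of the final displayed inequality in the proof of \Cref{lem:rank-equality}).
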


\begin{proof}
By \Cref{lem:rank-equality} and the assumptions, we have
\[
\frac{|\Tor (\Ker d^r_p)|}{|\im d^r_{p-r}|} = |\Tor E^r_p(C)|.
\]
This implies that $|\im d^r_{p-r}| = 1$, or equivalently
$d^r_{p-r} = 0$.
\end{proof}

Next, we discuss the {\it degeneration} of a spectral sequence.
For given $r_0 \geq 0$,
we say that a spectral sequence $E^r(C)$ {\it degenerates at $r_0$} (or {\it at the $E^{r_0}$-stage}) if $d^r_p = 0$ for any $r\geq r_0$ and $p \geq i_0$. 
Then, we can obtain an isomorphism between the $E^{r_0}$-page and the associated graded pieces as follows.
\begin{lem}\label{lem:convergence}
If $E^r(C)$ degenerates at $r_0$,
then the map 
 \[
 G_pH_*(C) \to E^{r_0}_p(C), \quad [[x]] \mapsto [x]^r_p
 \]
 is a well-defined isomorphism.
\end{lem}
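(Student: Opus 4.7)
The plan is to exhibit both $G_pH_*(C)$ and $E^{r_0}_p(C)$ as the same stabilized module $E^\infty_p(C)$. The filtration is bounded, since $C_i = 0$ outside the range $[i_0, i_1]$, so $F_qC = 0$ for $q > i_1$ and $F_qC = C$ for $q \leq i_0$. Consequently, for any $r$ with $p + r > i_1$ and $p - r + 1 \leq i_0$, the defining formula for $E^r_p(C)$ becomes, after restricting the denominator to the subset actually lying in the numerator,
\[
E^\infty_p(C) := \frac{F_pC \cap \ker d}{(F_{p+1}C \cap \ker d) + (F_pC \cap dC)}.
\]
Indeed, writing an element of the denominator as $y + dz$ with $y \in F_{p+1}C$ and $z \in F_{p-r+1}C = C$, the constraints $y + dz \in F_pC \cap \ker d$ force $dz \in F_pC$ and $dy = 0$. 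The degeneration hypothesis $d^r = 0$ for all $r \geq r_0$, combined with the isomorphism \eqref{eq:spectral-homology}, gives a natural identification $E^{r+1}_p(C) \cong H_*(E^r_p(C)) = E^r_p(C)$ via $[x]^{r+1}_p \mapsto [x]^r_p$; iterating yields $E^{r_0}_p(C) \cong E^\infty_p(C)$, realized on representatives.

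For the other side, I would compute $G_pH_*(C)$ directly from the definitions. An element of $F_pH_*(C)$ has a representative $x \in F_pC \cap \ker d$, and lies in $F_{p+1}H_*(C)$ iff $x = y + dz$ for some $y \in F_{p+1}C \cap \ker d$ and $z \in C$; intersecting with $F_pC$ forces $dz \in F_pC$, so that
\[
G_pH_*(C) = \frac{F_pC \cap \ker d}{(F_{p+1}C \cap \ker d) + (F_pC \cap dC)} = E^\infty_p(C).
\]
Composing with the identification $E^\infty_p(C) \cong E^{r_0}_p(C)$ above produces exactly the assignment $[[x]] \mapsto [x]^{r_0}_p$, which is therefore automatically well-defined and bijective. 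No serious obstacle is anticipated; the only care required is verifying that the stabilization isomorphisms act by representatives, which is transparent from the explicit formula in \eqref{eq:spectral-homology}.
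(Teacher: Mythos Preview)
Your proof is correct and takes a more conceptual route than the paper's. The paper argues directly: for well-definedness it takes two cycles $x,x'$ with $[[x]]=[[x']]$ and iteratively improves the filtration level of the chain $z$ in $x-x'=y+dz$ by invoking $d^{p-k}_k=0$ at each step until $z\in F_{p-r_0+1}C$; for surjectivity it similarly improves a representative of $[x]^{r_0}_p$ until it becomes an honest cycle. Your argument instead factors through the stable module $E^\infty_p(C)$: you observe that boundedness of the filtration forces $E^R_p(C)$ to equal $(F_pC\cap\ker d)/\bigl((F_{p+1}C\cap\ker d)+(F_pC\cap dC)\bigr)$ for large $R$, that this is exactly $G_pH_*(C)$ by a direct computation, and that the chain of isomorphisms~\eqref{eq:spectral-homology} (which under $d^r=0$ read $[x]^{r+1}_p\mapsto[x]^r_p$) identifies $E^R_p$ with $E^{r_0}_p$ on representatives. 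The paper's iterative improvements are precisely what these isomorphisms encode, so the content is the same; your packaging has the advantage of separating the general fact $G_pH_*(C)\cong E^\infty_p(C)$ (valid for any bounded filtration) from the degeneration-specific fact $E^{r_0}_p\cong E^\infty_p$, making the role of the hypothesis more transparent.
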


\begin{proof}
For proving the well-definedness, suppose that 
$[[x]] = [[x']]$ in $G_pH_*(C)$ for cycles $x,x' \in F_pC$. Then there exist a cycle $y \in F_{p+1}C$
and a chain $z \in C$ such that 
$x-x'=y+ dz$.

Let $k$ be the maximal integer satisfying $F_kC \ni z$.  
If $k \geq p-r_0+1$, then 
$[x]^{r_0}_p-[x']^{r_0}_p=[y+ dz]^{r_0}_p = 0$ in $E^{r_0}_p(C)$. Suppose $k \leq p -r_0$. Since $dz = x - x' -y \in F_pC$, we see that
\[
[dz]^{p-k}_p = d^{p-k}_k([z]^{p-k}_k) \in E^{p-k}_p(C).
\]
Here, since $p-k \geq r_0$, we have $d^{p-k}_k=0$, and hence 
$[dz]^{p-k}_p=0$. This implies an equality
\[
dz = y' + d z',
\]
where $y' \in F_{p+1}C$ and $z' \in F_{k+1}C$.
In particular, we have $x-x' = (y+y') +dz'$.
Applying this argument repeatedly, we have chains $y'' \in F_{p+1}C$ and $z'' \in F_{p-r_0+1}C$ with $x-x' = y'' + dz''$, which proves $[x]^{r_0}_p=[x']^{r_0}_p$.

It is easy to check that the map is an injective homomorphism.
We prove the surjectivity. Let $x \in F_pC$ be a chain with $dx \in F_{p+r_0}C$. Then, since $d^{r_0}_p=0$, we have
\[
[dx]^{r_0}_{p+r_0} = d^{r_0}_p([x]^{r_0}_p) = 0 \in E^{r_0}_{p+r_0}(C).
\]
This implies an equality $dx = y + dz$, where 
$y \in F_{p+r_0+1}C$ and $z \in F_{(p+r_0)-r_0+1}C = F_{p+1}C$.
In particular, $x' := x - z \in F_pC$ is a chain with $dx' \in F_{p+r_0+1}C$ and $[x]^{r_0}_p - [x']^{r_0}_p = [z]^{r_0}_p = 0$ in $E^{r_0}_p(C)$. Applying this argument repeatedly, we have a chain $x'' \in F_pC$ with $dx'' \in F_{i_1+1}C=\{0\}$ and 
$[x'']^r_p =[x]^r_p$.
Now we see that $[[x'']] \in G_pH_*(C)$ is mapped to 
$[x]^r_p \in E^r_p$.
\end{proof}

In our setting, 
any spectral sequence degenerates at $i_1-i_0+1$.
Moreover, if $E^r(C)$ degenerates at $r_0$,
then $E^r_p$ is canonically isomorphic to $G_pH_*(C)$ for any $r \geq r_0$. In light of these facts,  
$G_pH_*(C)$ is often denoted by $E^\infty_p(C)$
and
$E^\infty(C):=\bigoplus_{p \geq i_0} E^\infty_p(C)$
is called {\it the limit of the spectral sequence}.  

If $R = \bbF$ is a field, then we have an (uncanonical) isomorphism 
$E^\infty(C) \cong H_*(C)$.
Indeed, there exist a basis $\{v_i\}_{i=1}^n$ for $H_*(C)$ 
and a sequence $0 \leq n_{i_1} \leq n_{i_1-1} \cdots \leq n_{i_0}=n$
such that $\{v_i\}_{i=1}^{n_p}$ is a basis for $F_pH_*(C)$, and this gives a basis $\{[v_i]\}_{n_{p+1} \leq i \leq n_{p}}$ for
$E^\infty_p(C)=G_pH_*(C)=F_pH_*(C)/F_{p+1}H_*(C)$.
Note that this isomorphism is highly dependent on the choice of the basis $\{v_i\}$.
For a general principal ideal domain $R$, we have the following two lemmas.
\begin{lem}
\label{lem:rank-infty}
The equality $\rank_R E^\infty(C) = \rank_R H_*(C)$ holds.
\end{lem}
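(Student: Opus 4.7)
The plan is to reduce the statement to a telescoping rank computation along the filtration of $H_*(C)$. First I would observe that since the filtration on $C$ is bounded ($F_{i_0}C=C$ and $F_{i_1+1}C=\{0\}$), the induced filtration on $H_*(C)$ satisfies $F_{i_0}H_*(C) = H_*(C)$ and $F_{i_1+1}H_*(C) = \{0\}$, so we have a finite chain
\[
H_*(C)=F_{i_0}H_*(C) \supset F_{i_0+1}H_*(C) \supset \cdots \supset F_{i_1+1}H_*(C)=\{0\}.
\]

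Next, I would invoke the degeneration of the spectral sequence at some finite stage $r_0 \leq i_1-i_0+1$, so that \Cref{lem:convergence} yields a canonical isomorphism $E^\infty_p(C) \cong G_pH_*(C)$ for every $p$. By definition $E^\infty(C) = \bigoplus_{p \geq i_0} E^\infty_p(C)$, so the claim reduces to showing
\[
\rank_R H_*(C) = \sum_{p = i_0}^{i_1} \rank_R G_pH_*(C).
\]

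To establish this identity, I would apply equation~\eqref{eq:quotient} (the rank additivity for quotients of finitely generated $R$-modules, valid because $R$ is a PID) to each short exact sequence
\[
0 \to F_{p+1}H_*(C) \to F_pH_*(C) \to G_pH_*(C) \to 0,
\]
obtaining $\rank_R F_pH_*(C) = \rank_R F_{p+1}H_*(C) + \rank_R G_pH_*(C)$. Telescoping this relation from $p=i_1$ down to $p=i_0$, using that the top term $F_{i_1+1}H_*(C)$ vanishes and the bottom term $F_{i_0}H_*(C)$ is all of $H_*(C)$, produces the desired equality.

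The argument is essentially formal once \Cref{lem:convergence} is in hand, so I do not anticipate any serious obstacle; the only subtlety to keep in mind is confirming that $F_{i_1+1}H_*(C)=\{0\}$, which follows because every class in this subgroup is represented by a cycle lying in $F_{i_1+1}C=\{0\}$. The finiteness and boundedness of the filtration ensure that the telescoping sum is finite, so no convergence issues arise.
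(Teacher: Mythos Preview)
Your proposal is correct and follows essentially the same route as the paper: both arguments use the identification $E^\infty_p(C)=G_pH_*(C)$, apply the rank additivity \eqref{eq:quotient} to each quotient $G_pH_*(C)=F_pH_*(C)/F_{p+1}H_*(C)$, and telescope along the bounded filtration. The only cosmetic difference is that the paper treats $E^\infty_p(C):=G_pH_*(C)$ as a definition (stated just before the lemma), so it does not need to invoke \Cref{lem:convergence} explicitly.
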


\begin{proof}
By the equality (\ref{eq:quotient}), we have
\begin{align*}
\rank_R E^\infty (C) &= \sum_{i_0 \leq p \leq i_1}
\rank_R G_pH_*(C)\\[2mm]
&= \sum_{i_0 \leq p \leq i_1} \rank_R F_pH_*(C)
- \sum_{i_0 \leq p \leq i_1} \rank_R F_{p+1}H_*(C)\\[2mm]
&= \rank_R F_{i_0}H_*(C)- \rank_R F_{i_1 + 1}H_*(C) \\[2mm]
&= \rank_R H_*(C).
\qedhere
\end{align*}
\end{proof}

\begin{lem}
\label{lem:tor-infty}
If $\Tor H_*(C)$ is finite, then
the inequality $|\Tor E^\infty(C)| \geq |\Tor H_*(C)|$ holds.
Moreover, if $|\Tor E^\infty(C)| = |\Tor H_*(C)|$ holds,
then for any $p \geq i_0$, any torsion element of $G_pH(C_*)$ is represented by a torsion element of $H_*(C)$.
\end{lem}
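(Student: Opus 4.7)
The plan is to reduce to a standard fact about torsion in short exact sequences over a PID, then apply it inductively along the filtration $\{F_pH_*(C)\}$.

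First I would prove the following lemma: for any short exact sequence $0 \to A \to B \to C \to 0$ of finitely generated modules over a PID $R$ with $\Tor B$ finite, one has $|\Tor B| \leq |\Tor A| \cdot |\Tor C|$, with equality if and only if the natural map $\Tor B \to \Tor C$ is surjective. The proof is straightforward: since $A$ injects into $B$, we have $\Tor A = \Tor B \cap A$, so the inclusion $\Tor A \hookrightarrow \Tor B$ followed by the quotient map gives a short exact sequence $0 \to \Tor A \to \Tor B \to \Tor B / \Tor A \to 0$, and the third term injects into $\Tor C$. This yields $|\Tor B| = |\Tor A| \cdot |\Tor B/\Tor A| \leq |\Tor A| \cdot |\Tor C|$, with equality exactly when $\Tor B \twoheadrightarrow \Tor C$.

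Next, I would apply this inductively to the filtration, using the short exact sequences
\[
0 \to F_{p+1}H_*(C) \to F_pH_*(C) \to G_pH_*(C) \to 0
\]
starting from $p = i_1$ (where $F_{i_1+1}H_*(C) = 0$) and descending to $p = i_0$. Since $H_*(C) = F_{i_0}H_*(C)$ and the filtration terminates, iterating the inequality gives
\[
|\Tor H_*(C)| \leq \prod_{p = i_0}^{i_1} |\Tor G_pH_*(C)| = |\Tor E^\infty(C)|,
\]
which proves the first assertion. (Note that finiteness of $\Tor H_*(C)$ forces $\Tor F_pH_*(C)$ and $\Tor G_pH_*(C)$ to be finite as well, since they sit in the above short exact sequences.)

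For the second assertion, suppose equality $|\Tor E^\infty(C)| = |\Tor H_*(C)|$ holds. Then each intermediate inequality in the inductive chain must also be an equality, so for every $p \geq i_0$ the natural map $\Tor F_pH_*(C) \to \Tor G_pH_*(C)$ is surjective. Given a torsion element $\xi \in G_pH_*(C)$, we may thus lift it to a torsion element $\tilde\xi \in F_pH_*(C) \subset H_*(C)$; since torsion in $F_pH_*(C)$ coincides with torsion in $H_*(C)$ restricted to $F_pH_*(C)$, this $\tilde\xi$ is a torsion representative in $H_*(C)$, as required. I expect no real obstacle here: once the short exact sequence lemma is in place, both parts follow by a straightforward telescoping induction along the (finite) filtration, and the only thing one must check carefully is that the auxiliary torsion subgroups appearing in the induction remain finite, which is automatic.
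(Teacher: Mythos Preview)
Your argument is correct and considerably cleaner than the paper's. The paper instead builds an explicit (set-theoretic, not $R$-linear) injection $f\colon \Tor H_*(C)\to \Tor E^\infty(C)$ by partitioning $\Tor H_*(C)$ according to the maximal filtration level $\Mdeg(x)=\max\{i\mid x\in F_iH_*(C)\}$, ordering each piece, and then defining $f$ recursively so that the leading component of $f(x)$ is $[x]\in G_{\Mdeg(x)}H_*(C)$ while higher components record the ``error'' $x-x_{i\sigma_i(k)}$ pushed to deeper filtration levels. Injectivity gives the inequality; bijectivity in the equality case then yields the second assertion by inspection of the construction. Your short-exact-sequence lemma plus telescoping achieves the same thing with no bookkeeping and also makes the equality case transparent: every intermediate inequality is forced to be an equality, hence each $\Tor F_pH_*(C)\to\Tor G_pH_*(C)$ is surjective, which is exactly the second claim. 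The only thing the paper's construction buys is an explicit combinatorial description of the injection, which is never used elsewhere.

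One small imprecision: your parenthetical that ``finiteness of $\Tor H_*(C)$ forces $\Tor G_pH_*(C)$ to be finite'' is not true over a general PID (e.g.\ $R=\Q[x]$, $F_p=R$, $F_{p+1}=xR$, so $\Tor F_p=0$ but $\Tor G_p=\Q[x]/(x)$ is infinite). This does not damage the argument: your short-exact-sequence inequality $|\Tor B|\le|\Tor A|\cdot|\Tor C|$ only assumes $\Tor B$ finite, and each $\Tor F_pH_*(C)\subset\Tor H_*(C)$ is finite, so the telescoping still yields $|\Tor H_*(C)|\le\prod_p|\Tor G_pH_*(C)|$ as cardinals. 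If some $\Tor G_p$ is infinite then so is $|\Tor E^\infty(C)|$ and the inequality is trivial; otherwise the product equals $|\Tor E^\infty(C)|$. In the equality case $|\Tor E^\infty(C)|=|\Tor H_*(C)|<\infty$ forces each $\Tor G_p$ finite, and then your equality analysis goes through. Just rephrase the parenthetical accordingly.
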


\begin{proof}
To prove the lemma, we construct an injective map $f \colon \Tor H_*(C) \to \Tor E^\infty(C)$.
First, consider the map
\[
\Mdeg \colon \Tor H_*(C) \to \{i_0 \leq i \leq i_1\}, \quad
x \mapsto \max\{i \mid x \in F_iH_*(C) \}.
\]
and set $\calG_i := \Mdeg^{-1}(i)$ for each $i_0 \leq i \leq i_1$, which gives a partition
\[
\Tor H_*(C) = \bigsqcup_{i_0 \leq i \leq i_1} \calG_i.
\]
In addition, we fix an order of the elements $x_{i1}, x_{i2}, \ldots, x_{in_i}$ of $\calG_i$. 
Then, for each $i_0 \leq i, j \leq i_1$, we first define a map 
\[
f_{ij} \colon \calG_i \to \Tor G_jH_*(C)
\]
inductively. For $i=i_1$, since $G_{i_1}H_*(C)=F_{i_1}H_*(C)$,
we can define $f_{i_1j}$ by
\[
f_{i_1 j} (x) := 
\begin{cases}
0 \in G_jH_*(C) &(i_0 \leq j < i_1)\\[2mm]
x \in G_{i_1}H_*(C) &(j = i_1)
\end{cases}.
\]
Next, assume that $f_{s j}$ is defined for any $i <s \leq i_1$ and $i_0 \leq j \leq i_1$, and define $f_{ij}$ for any $i_0 \leq j \leq i_1$. Define maps
\[
\sigma_i \colon \{1, 2, \ldots, n_i \} \to \{1, 2, \ldots, n_i \}, \quad \sigma_i(k) := \min \{l \mid [x_{il}] = [x_{ik}] \text{ in } G_iH_*(C)  \}
\]
and
\[
\tau_i \colon \{1,2, \ldots, n_i \} \to
\{ i_0 \leq i \leq i_1\}, \quad
\tau_i(k) := \Mdeg(x_{ik}-x_{i\sigma_i(k)}).
\]
(Note that $\sigma_i(k) \leq k$ and $\tau_i(k)>i$.)
Then the map $f_{ij}$ is defined by
\[
f_{i j} (x_{ik}) := 
\begin{cases}
0 \in G_jH_*(C) &(i_0 \leq j < i)\\[2mm]
[x_{ik}] \in G_iH_*(C) &(j = i)\\[2mm]
f_{\tau_i(k) j}(x_{ik} - x_{i \sigma_i(k)}) & (i<j\leq i_1)
\end{cases}.
\]
(Note that $[x_{ik}] \neq 0$ in $G_iH_*(C)$.)
Now we define 
\[
f_i := \bigoplus_{i_0 \leq j \leq i_1} f_{ij}
\colon \calG_i \to \bigoplus_{i_0 \leq j \leq i_1} 
\Tor G_iH_*(C) = \Tor E^\infty(C)
\]
and
\[
f := \bigsqcup_{i_0 \leq i \leq i_1} f_i 
\colon \Tor H_*(C) = \bigsqcup_{i_0 \leq i \leq i_1} \calG_i \to \Tor E^\infty(C).
\]
Let us prove that $f$ is injective.
For given $x,x' \in \Tor H_*(C)$, suppose that $f(x)=f(x')$.
Here we note that for any $x \in \Tor H_*(C)$ with $\Mdeg(x)=i$, we have 
\[
f(x)=(f_{i, i_0}(x), f_{i, i_0 +1}(x), \ldots, f_{i, i_1}(x))
\in \bigoplus_{i_0 \leq j \leq i_1} 
\Tor G_iH_*(C) = \Tor E^\infty(C)
\]
and
\[
\min \{j \mid f_{ij}(x) \neq 0 \} = i = \Mdeg(x).
\]
These imply that $\Mdeg(x) = \Mdeg(x')$.
Thus, we may assume that
$x=x_{ik}$ and $x' = x_{il}$ for some $i$ and $1 \leq k, l \leq n_i$. 

Here we prove that $f(x_{ik})=f(x_{il})$ implies $x_{ik}=x_{il}$ by induction of $i$.
If $i=i_1$, then 
$f(x_{i_1k})=f(x_{i_1l})$ implies the equalities
\[
x_{i_1 k} = f_{i_1i_1}(x_{i_1 k}) = f_{i_1 i_1}(x_{i_1 l}) = x_{i_1 l}.
\]
Next, for given $i$, assume that the assertion holds for any $s$ with $i < s \leq i_1$, and prove the assertion for $i$. By the equality $f(x_{ik})= f(x_{il})$, we see that
\[
[x_{ik}] = f_{ii}(x_{ik}) = f_{ii}(x_{il}) = [x_{il}]
\]
as elements of $G_iH_*(C)$. Then, the definition of $\sigma_i$ shows 
$\sigma_i(k) = \sigma_i(l)$, which denoted by $m$. Hence we have
\[
f_{\tau_i(k)j}(x_{ik}-x_{im})
= f_{ij}(x_{ik}) = f_{ij}(x_{il})  = f_{\tau_i(l)j}(x_{il}-x_{im})
\]
for any $j$ with $i < j \leq i_1$.
Here, since $\Mdeg(x_{ik}-x_{im})>i$ and 
$\Mdeg(x_{il}-x_{im}) > i$, we also have
\[
f_{\tau_i(k)j}(x_{ik}-x_{im})
= 0  = f_{\tau_i(l)j}(x_{il}-x_{im})
\]
for any $j$ with $i_0 \leq j \leq i$. Consequently, we have $f(x_{ik}-x_{im}) = f(x_{il}-x_{im})$.
By the assumption of induction, this equality implies
$x_{ik} -x_{im} = x_{il} - x_{im}$, and hence 
$x_{ik} = x_{il}$. This proves 
$|\Tor E^\infty(C)| \geq |\Tor H_*(C)|$.

Finally, we prove the last-half assertion.
If $|\Tor E^\infty(C)| = |\Tor H_*(C)|$, then the above map $f$ must be bijective.
Here, note that for any element $y$ of $f(\Tor H_*(C))$,
each entry of $y$ (w.r.t.\ the direct decomposition $\Tor E^\infty(C) = \bigoplus_{i_0 \leq j \leq i_1} \Tor G_jH_*(C)$)
is represented by a torsion element of $H_*(C)$.
Since any element $y_p$ of $G_pH_*(C)$ is contained in $E^\infty(C)$ as the element $(y_{i_0},y_{i_0 +1}, \ldots, y_{i_1})$ with $y_j = \delta_{jp} y_p$ (where $\delta_{jp}$ denotes the Kronecker delta), this completes the proof.
\end{proof}

Now, we have the following sufficient condition for the degeneration. 
\begin{thm}
\label{thm:degeneration}
For given $r_0 \geq 0$, if $\Tor E^{r_0}(C)$ is finite and the equalities
\[
\rank_R E^{r_0}(C) = \rank_R H_*(C)
\quad \text{and} \quad
|\Tor E^{r_0}(C)| = |\Tor H_*(C)|
\]
hold, then $E^r(C)$ degenerates at $r_0$. 
\end{thm}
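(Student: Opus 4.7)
The plan is to chain the previously established lemmas so as to force both the rank and the torsion of $E^r(C)$ to be constant on each graded piece from page $r_0$ onward, and then apply Lemma A.5 pointwise in $p$. I will carry this out in the following order.

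First, I will record the general fact that $\rank_R E^{r+1}_p(C) \leq \rank_R E^r_p(C)$ for every $r$ and every $p$. This is immediate from the canonical isomorphism $E^{r+1}_p(C) \cong H_*(E^r_p(C))$ and the computation in the proof of Lemma A.4, giving
\[
\rank_R E^{r+1}_p(C) \;=\; \rank_R(\Ker d^r_p) - \rank_R(\im d^r_{p-r}) \;\leq\; \rank_R E^r_p(C).
\]
Summing over $p$ and combining with Lemma A.7, I obtain
\[
\rank_R E^{r_0}(C) \;\geq\; \rank_R E^{r_0+1}(C) \;\geq\; \cdots \;\geq\; \rank_R E^\infty(C) \;=\; \rank_R H_*(C).
\]
Since our hypothesis forces the left and right ends to coincide, every inequality in this chain is an equality, and because $E^r(C) = \bigoplus_p E^r_p(C)$, the equality at each page actually holds at every graded piece:
\[
\rank_R E^r_p(C) \;=\; \rank_R E^{r+1}_p(C) \qquad (r \geq r_0, \; p \geq i_0).
\]

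Next, I will propagate the same sort of argument for torsion. By the last sentence of Lemma A.4, the per-$p$ rank equalities established above imply the finiteness and inequality
\[
|\Tor E^r_p(C)| \;\geq\; |\Tor E^{r+1}_p(C)| \qquad (r \geq r_0).
\]
Since the filtration is bounded, the spectral sequence terminates after finitely many pages, so there is some $R \geq r_0$ with $E^R(C) = E^\infty(C)$. Taking the product over $p$ and using Lemma A.8,
\[
|\Tor E^{r_0}(C)| \;\geq\; |\Tor E^{r_0+1}(C)| \;\geq\; \cdots \;\geq\; |\Tor E^R(C)| \;=\; |\Tor E^\infty(C)| \;\geq\; |\Tor H_*(C)|.
\]
The hypothesis collapses this chain to equalities, and again the direct-sum decomposition forces equality at each graded piece:
\[
|\Tor E^r_p(C)| \;=\; |\Tor E^{r+1}_p(C)| \qquad (r \geq r_0, \; p \geq i_0).
\]

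Finally, with both the rank equality and the torsion equality in place for every $r \geq r_0$ and every $p$, Lemma A.5 applied at position $p+r$ yields $d^r_p = 0$ for all such $r$ and $p$. This is exactly the statement that $E^r(C)$ degenerates at $r_0$, completing the proof. The main technical point is the bookkeeping in the last two paragraphs: I need the chain of per-$p$ inequalities to be compatible with the totals so that equality on the total forces equality pointwise, which works because $E^r(C)$ splits as a finite direct sum of the $E^r_p(C)$ and both rank and torsion-order behave additively (multiplicatively for torsion) under direct sums. No other obstacle arises, since all nontrivial input is already contained in Lemmas A.4, A.5, A.7, and A.8.
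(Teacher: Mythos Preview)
Your proof is correct and follows essentially the same route as the paper's own argument: establish the chain of rank inequalities using \Cref{lem:rank-infty} to force per-$p$ rank equality, then use \Cref{lem:rank-equality} and \Cref{lem:tor-infty} to run the analogous torsion argument, and finally invoke \Cref{lem:tor-equality} pointwise to kill $d^r_p$. The paper's proof is organized identically, so there is nothing to add.
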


\begin{proof}
We prove that $E^{r}_p(C)$ is free and $d^r_p=0$ for each $r \geq r_0$ and $p \geq i_0$ by induction of $r$.
Here, we first note that for any $r \geq r_0$, 
the first assumption and \Cref{lem:rank-infty} show
\[
\rank_R E^{r_0}(C) \geq \rank_R E^r(C) \geq
\rank_R E^{r+1}(C) \geq \rank_R E^\infty(C) = \rank_R H_*(C) = \rank_R E^{r_0}(C),
\]
which imply $\rank_R E^r(C) = \rank_R E^{r+1}(C)$, and hence we have
\begin{align*}
\rank_R E^{r+1}_p(C) \leq \rank_R E^r_p(C)
&= \rank_R E^r(C) - \sum_{i \neq p} \rank_R E^r_i(C)\\[2mm]
&= \rank_R E^{r+1}(C) - \sum_{i \neq p} \rank_R E^r_i(C)\\[2mm]
&\leq \rank_R E^{r+1}(C) - \sum_{i \neq p} \rank_R E^{r+1}_i(C) = \rank_R E^{r+1}_p(C).
\end{align*}
These show $\rank_R E^{r}_p(C) = \rank_R E^{r+1}_p(C)$
for any $r \geq r_0$ and $p \geq i_0$.
Then, by \Cref{lem:rank-equality}, \Cref{lem:tor-infty} and the second assumption,
we have
\[
|\Tor E^{r_0}(C)| \geq |\Tor E^r(C)| \geq |\Tor E^{r+1}(C)|
\geq |\Tor E^\infty(C)| \geq |\Tor H_*(C)| = |\Tor E^{r_0}(C)|,
\]
and hence $|\Tor E^r(C)| = |\Tor E^{r+1}(C)|$.
(Here we note that the equalities 
\[
\left|\Tor E^r(C) \right| = \left|\bigoplus_{p \geq i_0} \Tor E^r_p(C) \right|
= \prod_{p \geq i_0} \left|\Tor E^r_p(C) \right|
\]
hold.)
Now, we see that for any $p \geq i_0$, 
\begin{align*}
|\Tor E^{r+1}_{p+r}(C)| \leq |\Tor E^r_{p+r}(C)|
&= \frac{|\Tor E^r(C)|}{\prod_{i \neq p+r} |\Tor E^r_i(C)|}\\[2mm]
&= \frac{|\Tor E^{r+1}(C)|}{\prod_{i \neq p+r} |\Tor E^r_i(C)|}\\[2mm]
&\leq \frac{|\Tor E^{r+1}(C)|}{\prod_{i \neq p+r} |\Tor E^{r+1}_i(C)|} 
= |\Tor E^{r+1}_{p+r}(C)|,
\end{align*}
and hence $|\Tor E^r_{p+r}(C)| = |\Tor E^{r+1}_{p+r}(C)|$.
By combining the above computations with \Cref{lem:tor-equality}, we have $d^r_p = 0$ for any $r \geq r_0$ and $p \geq i_0$.
\end{proof}

\subsection{Filtered maps}

Let $(C,d), (C',d')$ be filtered complexes.
Then $f \colon C \to C'$ is a {\it degree $k$ filtered map}
if $f(F_pC) \subset F_{p+k}C'$ for any $p \geq i_0$. 
Since the induced map $f_* \colon H_*(C) \to H_*(C')$ satisfies
$f_*(F_pH_*(C)) \subset F_{p+k}H_*(C')$, we have an induced map
\[
G_pf_* \colon G_pH_*(C) \to G_{p+k}H_*(C'),
\quad [[x]] \mapsto [f_*([x])].
\]
If two degree $k$ filtered chain maps $f,g$ are chain homotopic, then obviously $G_pf_*=G_pg_*$ holds for any $p \geq i_0$. 

Next, we discuss induced maps on $E^r$-complexes.
For $l \geq 0$,
two degree $k$ filtered chain maps $f,g \colon C \to C'$
are {\it chain homotopic in degree $\geq -l$} if there exists a degree $k-l$ filtered chain homotopy $\Phi \colon C \to C'$ from $f$ to $g$. 
(Regarding to the $\Z/4$-grading, we require that chain maps preserve the grading and chain homotopies shift the grading by 1.)
Two filtered complexes are {\it filtered chain homotopy equivalent in degree $\geq -l$} if there exists a degree $0$ chain homotopy equivalence map whose chain homotopies have degree $-l$.
(If $l=0$, then we simply say that two filtered complexes are filtered chain homotopy equivalent.)
Here we enumerate basic lemmas about filtered chain maps.

\begin{lem} \label{lem:spectral-morphism}
    For any degree $k$ filtered chain map $f \colon C \to C'$,
    the map 
    \[
    f^r_p \colon E^r_p(C) \to E^r_{p+k}(C'), [x]^r_p \mapsto [f(x)]^r_p
    \]
    is a well-defined chain map, and the diagram
        \begin{align} \label{eq:chain-map-diag}
    \begin{split}
    \xymatrix{
        E^{r+1}_p(C)
        \ar[r]^-{f^{r+1}_p}
        \ar[d]_-{\cong}
        & E^{r+1}_{p+k}(C')
        \ar[d]^-{\cong}\\
        H_*((E^r_p(C))
        \ar[r]^-{(f^r_p)_*}
        & H_*(E^r_{p+k}(C'))
    }
    \end{split}
    \end{align}
    is commutative for any $r \geq 0$ and $p \geq i_0$, where the vertical maps are given by $(\ref{eq:spectral-homology})$.
    Moreover, if degree $k$ filtered chain maps $f,g \colon C \to C'$ are chain homotopic in degree $\geq -l$, then $f^{l}=\bigoplus_{p \geq i_0} f^{l}_p$ and $g^{l}=\bigoplus_{p \geq i_0} g^{l}_p$ are chain homotopic, and $f^r_p=g^r_p$ for any $r>l$ and $p \geq i_0$.
\end{lem}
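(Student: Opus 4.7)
The plan is to handle the two halves of the lemma separately, both by direct computation, with the second half reducing to constructing an explicit induced chain homotopy on $E^l$-complexes.

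For the first half, I would begin by verifying that $f^r_p$ is well-defined. Given $x \in F_pC$ with $dx \in F_{p+r}C$, the filtered property gives $f(x) \in F_{p+k}C'$, and then $d'f(x) = f(dx) \in F_{p+k+r}C'$, so $f(x)$ represents a class in $E^r_{p+k}(C')$. To see independence of the representative, if $x_1 \in F_{p+1}C$ then $f(x_1) \in F_{p+k+1}C'$, and if $x_2 = dy$ for some $y \in F_{p-r+1}C$ then $f(x_2) = d'f(y)$ with $f(y) \in F_{p+k-r+1}C'$, so both vanish in $E^r_{p+k}(C')$. The chain map property follows from the single identity $d^r_{p+k}f^r_p([x]^r_p) = [d'f(x)]^r_{p+k+r} = [f(dx)]^r_{p+k+r} = f^r_{p+r}d^r_p([x]^r_p)$. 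For the commutativity of diagram (A.2), I would simply chase the class $[x]^{r+1}_p$ around both sides and observe that both paths land on $[[f(x)]^r_{p+k}]$.

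For the second half, suppose $f - g = d'\Phi + \Phi d$ where $\Phi$ has filtration degree $k - l$. The key step is to define an induced map $\Phi^l_p \colon E^l_p(C) \to E^l_{p+k-l}(C')$ by $[x]^l_p \mapsto [\Phi(x)]^l_{p+k-l}$ and verify it is well-defined. For $x \in F_pC$ with $dx \in F_{p+l}C$, we have $\Phi(x) \in F_{p+k-l}C'$ and $d'\Phi(x) = (f-g)(x) - \Phi(dx) \in F_{p+k}C'$, using that $f,g$ have degree $k$ and $\Phi$ has degree $k-l$. A similar check handles the ambiguity modulo $F_{p+1}C + d(F_{p-l+1}C)$. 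Then the chain homotopy identity $f^l_p - g^l_p = d^l_{p+k-l}\Phi^l_p + \Phi^l_{p+l}d^l_p$ is immediate from $d'\Phi + \Phi d = f - g$ at the level of representatives.

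Finally, for $r > l$, I would argue by induction on $r$. The base case $r = l+1$ follows by applying the commutative diagram (A.2) once: since $f^l$ and $g^l$ are chain homotopic, the induced maps $(f^l_p)_*$ and $(g^l_p)_*$ on $H_*(E^l_p(C))$ coincide, and the isomorphism with $E^{l+1}_p$ transports this to $f^{l+1}_p = g^{l+1}_p$. The inductive step is even easier: once $f^r_p = g^r_p$ on the nose, the induced maps on homology are trivially equal, and (A.2) promotes this to $f^{r+1}_p = g^{r+1}_p$. The only part that requires any genuine care is the well-definedness of $\Phi^l_p$ and the bookkeeping of filtration shifts; everything else is formal.
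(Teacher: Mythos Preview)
Your proposal is correct and follows essentially the same approach as the paper: the paper dismisses the first half as ``easy to check,'' then constructs the same induced chain homotopy $\Phi^l_p$ on the $E^l$-page (with the well-definedness check being the only nontrivial computation, exactly as you note), and finally invokes the commutative diagram (\ref{eq:chain-map-diag}) to obtain $f^r_p = g^r_p$ for $r > l$. Your inductive phrasing of the last step is slightly more explicit than the paper's one-line reference to the diagram, but the content is identical.
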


\begin{proof}
It is easy to check that $f^r_p$ is a well-defined chain map
and the diagram $(\ref{eq:chain-map-diag})$.
Suppose that $f,g$ are chain homotopic in degree $\geq -l$.
Then we have a degree $k-l$ filtered map $\Phi \colon C \to C'$
satisfying $f-g = \Phi \circ d + d' \circ \Phi$.
Here we prove that for any $p \geq i_0$, the map 
\[
\Phi^{l}_p \colon E^{l}_p(C) \to E^{l}_{p+k-l}(C'), \quad
[x]^{l}_{p} \mapsto [\Phi(x)]^{l}_{p+k-l}
\]
is well-defined. To prove this, take $x,x' \in F_pC$ satisfying
$dx,dx' \in F_{p+l}C$ and $x-x' = y + dz$ for some $y \in F_{p+1}C$ and $z \in F_{p-l+1}C$. Then we see that
$\Phi(x),\Phi(x') \in F_{p+k-l}C$, and
\begin{align*}
    \Phi(x) - \Phi(x') 
    &= \Phi(y) + \Phi\circ d(z)\\[2mm]
    &= \Phi(y) +f(z) - g(z) - d' (\Phi(z))
    \in F_{p+k-l+1}C' + d'(F_{p+k-2l+1}C').
\end{align*}
Now we show that $\Phi^{l}_p$ is a chain homotopy $f^{l} \Rightarrow g^{l}$. To see this, 
for any $p \geq i_0$, take a chain $x \in F_pC$ satisfying $dx \in F_{p+l}C$. Then we see
\begin{align*}
    f^{l}_p([x]^l_p) - g^{l}_p([x]^l_p) =  [f(x) -g (x)]^{l}_{p+k}
    &= [\Phi \circ d(x) + d' \circ \Phi (x)]^{l}_{p+k}\\[2mm]
    &= \Phi^{l}_{p+l}([dx]^{l}_{p+l}) + d'^{l}_{p+k-l}([\Phi(x)]^{l}_{p+k-l})\\[2mm]
    &= (\Phi^{l}_{p+l}\circ d^{l}_p+ d'^{l}_{p+k-l}\circ \Phi^{l}_{p})
    ([x]^{l}_p). 
\end{align*}
The last assertion in \Cref{lem:spectral-morphism}
follows from the diagram $(\ref{eq:chain-map-diag})$.
\end{proof}

\begin{lem}
\label{lem:spectral-functor}
The following assertions hold;
\begin{itemize}
    \item[{\rm(i)}]
    For any filtered complex $C$, $r \geq 0$ and $p \geq i_0$, we have $(\id_C)^r_p = \id_{E^r_p(C)}.$
   \item[{\rm(ii)}]
    Let $f \colon C \to C'$ and $g \colon C' \to C''$ be
    degree $k$ and $k'$ filtered chain maps respectively.
    Then, $g \circ f \colon C \to C''$ is a degree $(k+k')$ filtered chain map, and $(g \circ f)^r_p = g^r_{p+k} \circ f^r_{p}$ for any $r \geq 0$ and $p \geq i_0$.
    \end{itemize}
\end{lem}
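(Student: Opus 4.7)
The plan is to verify both assertions of \cref{lem:spectral-functor} by unwinding the definition of the induced map on $E^r_p$ given in \cref{lem:spectral-morphism}, namely $f^r_p([x]^r_p) = [f(x)]^r_{p+k}$. All the well-definedness issues have already been handled in \cref{lem:spectral-morphism}, so what remains is essentially bookkeeping.

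For assertion (i), I would simply note that $\id_C$ is a degree $0$ filtered chain map, and for any $[x]^r_p \in E^r_p(C)$ the definition immediately gives
\[
(\id_C)^r_p([x]^r_p) = [\id_C(x)]^r_p = [x]^r_p,
\]
so $(\id_C)^r_p = \id_{E^r_p(C)}$. No obstacle here.

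For assertion (ii), the first step is to check that the composition is a filtered chain map of the asserted degree: given $f(F_pC) \subset F_{p+k}C'$ and $g(F_qC') \subset F_{q+k'}C''$, we get $(g\circ f)(F_pC) \subset g(F_{p+k}C') \subset F_{p+k+k'}C''$, and chain-map compatibility is clear. This ensures, by \cref{lem:spectral-morphism}, that $(g\circ f)^r_p \colon E^r_p(C) \to E^r_{p+k+k'}(C'')$ is a well-defined chain map. The identity of induced maps then follows from a direct computation: for any representative $x \in F_pC$ with $dx \in F_{p+r}C$,
\[
(g \circ f)^r_p([x]^r_p)
= [g(f(x))]^r_{p+k+k'}
= g^r_{p+k}\bigl([f(x)]^r_{p+k}\bigr)
= g^r_{p+k}\circ f^r_p([x]^r_p).
\]

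There is no substantive obstacle: both assertions are formal consequences of the definition $f^r_p([x]^r_p) := [f(x)]^r_{p+k}$ together with the well-definedness already established in \cref{lem:spectral-morphism}. The only minor care needed is to record that $f(x)$ lies in the correct filtration level and that $d'f(x) = f(dx)$ lies in $F_{p+k+r}C'$, so that $[f(x)]^r_{p+k}$ makes sense as an element of $E^r_{p+k}(C')$ before one iterates with $g$; this is immediate from $f$ being a chain map of filtration degree $k$.
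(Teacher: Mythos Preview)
Your proposal is correct and matches the paper's approach: the paper simply states ``The proof is elementary,'' and what you have written is exactly the elementary unwinding of the definition $f^r_p([x]^r_p) = [f(x)]^r_{p+k}$ that the paper leaves to the reader.
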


\begin{proof}
The proof is elementary.
\end{proof}

\begin{lem}
\label{lem:degeneration-map}
If $E^r(C)$ degenerates at $r_0$, then the diagram
    \begin{align} \label{eq:convergence-diag}
    \begin{split}
    \xymatrix{
        G_pH_*(C)
        \ar[r]^-{G_pf_*}
        \ar[d]_-{\cong}
        & G_{p+k}H_*(C')
        \ar[d]^-{\cong}\\
        E^{r_0}_p(C)
        \ar[r]^-{f^{r_0}_p}
        & E^{r_0}_{p+k}(C')
    }
    \end{split}
    \end{align}
    is commutative for $p \geq i_0$, where the vertical maps are given by \Cref{lem:convergence}.
\end{lem}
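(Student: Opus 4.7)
The plan is to give a direct diagram chase based on the explicit formulas for all four maps involved. The key point is that both vertical isomorphisms from \Cref{lem:convergence} and the horizontal maps on the $E^{r_0}$-page are defined by the same ``evaluation'' procedure, so the commutativity ought to be tautological once everything is unwound.

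First, I would observe that the hypothesis ``$E^r(C)$ degenerates at $r_0$'' must be strengthened (or read) to apply to both $C$ and $C'$ in order for the right-hand vertical map in \eqref{eq:convergence-diag} to be the isomorphism of \Cref{lem:convergence}; in the applications of this lemma (to $T^*_{2,2k+1}$ combined with \Cref{lem:collapse}) both spectral sequences indeed degenerate, so this is harmless. Next, I would recall the explicit formulas: the vertical maps send $[[x]] \mapsto [x]^{r_0}_p$ for a representative cycle $x \in F_p C$ (and similarly on the $C'$ side), the top map is $G_pf_*([[x]]) = [[f(x)]]$, and by \Cref{lem:spectral-morphism} the bottom map is $f^{r_0}_p([x]^{r_0}_p) = [f(x)]^{r_0}_{p+k}$.

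Given any $[[x]] \in G_pH_*(C)$ represented by a cycle $x \in F_pC$, chasing it through the two routes yields
\[
f^{r_0}_p \circ (\text{left vertical})([[x]]) = f^{r_0}_p([x]^{r_0}_p) = [f(x)]^{r_0}_{p+k},
\]
while
\[
(\text{right vertical}) \circ G_pf_*([[x]]) = (\text{right vertical})([[f(x)]]) = [f(x)]^{r_0}_{p+k},
\]
where in the second line I use that $f(x) \in F_{p+k}C'$ is again a cycle, so it is a legitimate representative for an element of $G_{p+k}H_*(C')$. Thus both compositions agree.

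There is no serious obstacle here: the only mild subtlety is checking that the left-hand vertical map is well-defined on the class $[[x]]$ (which is already guaranteed by \Cref{lem:convergence}) and that $f$ sends a representative cycle in $F_pC$ to a representative cycle in $F_{p+k}C'$ (immediate since $f$ is a filtered chain map of degree $k$). Hence the proof reduces to a one-line verification after the definitions are spelled out.
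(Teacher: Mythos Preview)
Your proof is correct and follows exactly the same route as the paper: a direct diagram chase showing that both compositions send $[[x]]$ to $[f(x)]^{r_0}_{p+k}$. Your observation that the right-hand vertical isomorphism requires degeneration of $E^r(C')$ as well is a valid point the paper's statement leaves implicit; as you note, in every application both spectral sequences degenerate.
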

\begin{proof}
For any cycle $x \in F_pC$, we see that
\begin{align*}    
[[x]] \in G_pH_*(C) 
&\mapsto G_pf_*([x])=[f_*([x])] =[[f(x)]] \in G_{p+k}H_*(C') \\[2mm]
&\mapsto [f(x)]^r_{p+k} \in E^r_{p+k}(C')
\end{align*}
and
\begin{align*}
[[x]] \in G_pH_*(C) 
& \mapsto [x]^r_p \in E^r_{p}(C) \\[2mm]
&\mapsto f^r_p([x]^r_{p}) = [f(x)]^r_{p+k} \in E^r_{p+k}(C').
\qedhere
\end{align*}
\end{proof}

\begin{lem}
\label{lem:degeneration-equiv}
For given $r_0>0$, if two filtered chain complexes $C, C'$ are filtered chain homotopy equivalent, then  $E^r(C)$ degenerates at $r_0$ if and only if
$E^r(C')$ degenerates at $r_0$.
\end{lem}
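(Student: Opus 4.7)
The plan is to reduce the statement to a direct consequence of the functoriality results already recorded in \Cref{lem:spectral-morphism,lem:spectral-functor}. By assumption there exist degree $0$ filtered chain maps $f\colon C\to C'$ and $g\colon C'\to C$ such that $gf - \id_C$ and $fg - \id_{C'}$ are null-homotopic via degree $0$ filtered chain homotopies, i.e.\ $gf$ and $\id_C$ are chain homotopic in degree $\geq 0$, and similarly for $fg$ and $\id_{C'}$. Applying the last assertion of \Cref{lem:spectral-morphism} with $l=0$, we obtain
\[
(gf)^r_p = (\id_C)^r_p \quad\text{and}\quad (fg)^r_p = (\id_{C'})^r_p
\qquad \text{for every } r>0 \text{ and } p\geq i_0.
\]
Combining this with \Cref{lem:spectral-functor}, which gives $(\id_C)^r_p = \id_{E^r_p(C)}$ and $(gf)^r_p = g^r_p \circ f^r_p$, we conclude that $g^r_p \circ f^r_p = \id_{E^r_p(C)}$ and $f^r_p \circ g^r_p = \id_{E^r_p(C')}$ for all $r>0$ and $p\geq i_0$.

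Thus for every $r\geq 1$ the map $f^r = \bigoplus_p f^r_p$ is an isomorphism of chain complexes intertwining the differentials $d^r$ and $d'^r$, so that $d^r_p = 0$ for all $p$ if and only if $d'^r_{p} = 0$ for all $p$. Since by hypothesis $r_0 > 0$, the condition that $E^r(C)$ degenerates at $r_0$ (i.e.\ $d^r_p = 0$ for all $r\geq r_0$ and all $p$) involves only indices $r\geq 1$, and is therefore equivalent to the corresponding condition for $C'$.

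The only subtle point worth flagging is the necessity of the hypothesis $r_0 > 0$. \Cref{lem:spectral-morphism} gives equality of induced maps on $E^r$ only for $r > l$, and in our setting the relevant homotopies have degree shift $l=0$; consequently the argument yields isomorphisms $f^r_p$ only for $r\geq 1$, not for $r=0$. This is the expected obstacle, since filtered chain homotopy equivalent complexes generally have non-isomorphic $E^0$-complexes, so degeneration at $r_0=0$ cannot in principle be transported across the equivalence. Happily, the statement of the lemma excludes this case, so no further work is needed.
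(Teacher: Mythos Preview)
Your proof is correct and follows essentially the same route as the paper's: both use \Cref{lem:spectral-morphism} and \Cref{lem:spectral-functor} to conclude that the induced maps $f^r_p$ are isomorphisms of chain complexes for all $r>0$, whence the vanishing of $d^r$ transfers across. Your discussion of why the hypothesis $r_0>0$ is needed is a nice addition that the paper leaves implicit.
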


\begin{proof}
Since $C,C'$ can be exchanged, we only need to prove that if $E^r(C)$ degenerates at $r_0$, then $E^r(C')$ also degenerates at $r_0$. 

Let $f \colon C \to C'$ be a filtered chain homotopy equivalence map and $g$ be the homotopy inverse of $f$.
Then, it follows from \Cref{lem:spectral-morphism} and \Cref{lem:spectral-functor} that for any $r \geq r_0>0$,
we have
\[
g^r_p \circ f^r_p = (g \circ f)^r_p = \id_{E^r_p(C)}
\quad \text{and} \quad
f^r_p \circ g^r_p = (f\circ g)^r_p = \id_{E^r_p(C')}.
\]
This shows that $f^r_p \colon E^r_p(C) \to E^r_p(C')$
is a chain isomorphism. Now, $d^r_p=0$ implies $d'^r_p = 0$, which completes the proof. 
\end{proof}

\subsection{Tensor product}
For given two filtered complex $(C,d)$ and $(C',d')$, we define a filtration on the tensor product $(C\otimes C', d^{\otimes} = d\otimes 1 + (-1)^{\gr} \otimes d')$ by 
\[
(C \otimes C')_i := \bigoplus_{j_1+j_2 = i} C_{j_1} \otimes C_{j_2}
\]
for any $i \geq i_0+i'_0$. (Note that $\gr$ is the $\Z/4$-grading on $C$ and independent of the filtration level $i$.)
In addition, we can also consider the tensor product of their $E^r$-complexes 
$(E^r(C)\otimes E^r(C'), (d^r)^{\otimes}= d^r\otimes 1 + (-1)^{\gr}\otimes d'^r$,
which admits a direct sum decomposition
\[
E^r(C)\otimes E^r(C')
= \bigoplus_{p \geq i_0 + i'_0} (E^r(C)\otimes E^r(C'))_p,
\]
where
\[
(E^r(C)\otimes E^r(C'))_p := \bigoplus_{j_1+j_2=p}E^r_{j_1}(C)\otimes E^r_{j_2}(C')
\]
and $(d^r)^{\otimes}_p := (d^r)^{\otimes}|_{(E^r(C)\otimes E^r(C'))_p}$ satisfies $\im (d^r)^{\otimes}_p \subset (E^r(C)\otimes E^r(C'))_{p+r}$. 

\begin{lem} \label{lem:tensor-spectral}
    The map
    \[
    T^r_p\colon
    (E^r(C) \otimes E^r(C'))_p \to E^r_p(C \otimes C'),
    \quad \sum_{j_1 + j_2 = p}[x_{j_1}]^r_{j_1} \otimes [y_{j_2}]^r_{j_2}
    \mapsto \sum_{j_1 + j_2 = p}
    \left[ x_{j_1}\otimes y_{j_2}\right]^r_p
    \]
    is a well-defined chain map. Moreover, the diagram
    \begin{align} \label{eq:tensor-diag}
    \begin{split}
    \xymatrix{
        (E^{r+1}(C)\otimes E^{r+1}(C') )_p
        \ar[r]^-{T^{r+1}_p}
        \ar[d]
        & E^{r+1}_p(C\otimes C')
        \ar[d]^-{\cong}\\
        H_*((E^r(C)\otimes E^r(C'))_p)
        \ar[r]^-{(T^r_p)_*}
        & H_*(E^r_p(C \otimes C'))
    }
    \end{split}
    \end{align}
    is commutative, where the right-hand vertical map is given by $(\ref{eq:spectral-homology})$, and the left-hand vertical map is given by 
    \[
    \xymatrix@R=1pt{
    (E^{r+1}(C)\otimes E^{r+1}(C') )_p \ar[r]^-{\cong}&
    {\displaystyle \bigoplus_{j_1+j_2=p} H_*(E^r_{j_1}(C)) \otimes H_*(E^r_{j_2}(C'))} \ar[r] &
    H_*((E^r(C)\otimes E^r(C'))_p) \\
    \rotatebox{90}{$\in$} &
    \rotatebox{90}{$\in$} &
    \rotatebox{90}{$\in$} \\
    {\displaystyle \sum_{j_1 + j_2 = p} [x_{j_1}]^{r+1}_{j_1}\otimes[y_{j_2}]^{r+1}_{j_2}}
    \ar@{|->}[r] &
    {\displaystyle \sum_{j_1 + j_2 = p}
    [[x_{j_1}]^{r}_{j_1}]\otimes[[y_{j_2}]^r_{j_2}]}
    \ar@{|->}[r] &
    {\displaystyle \sum_{j_1 + j_2 = p}
    [[x_{j_1}]^{r}_{j_1}\otimes[y_{j_2}]^r_{j_2}]}
    }
    \]
\end{lem}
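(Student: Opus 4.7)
\textbf{Proof proposal for \Cref{lem:tensor-spectral}.} The plan is to verify each of the three assertions directly from the definitions: well-definedness of $T^r_p$ on representatives, the chain-map property, and commutativity of \eqref{eq:tensor-diag}. The main bookkeeping is to track how filtration shifts combine under tensor product, and the only real subtlety is to absorb the coboundary ambiguity in one tensor factor by a genuine $d^\otimes$-coboundary in $C \otimes C'$.

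First I would check that $T^r_p$ is well-defined. Take a representative $x_{j_1} \in F_{j_1}C$ with $d x_{j_1} \in F_{j_1+r}C$ and $y_{j_2} \in F_{j_2}C'$ with $d' y_{j_2} \in F_{j_2+r}C'$. Then $x_{j_1} \otimes y_{j_2} \in F_p(C \otimes C')$ with $p = j_1 + j_2$, and the Leibniz rule
\[
d^{\otimes}(x_{j_1}\otimes y_{j_2}) = d x_{j_1}\otimes y_{j_2} + (-1)^{\gr(x_{j_1})} x_{j_1}\otimes d' y_{j_2}
\]
lands in $F_{j_1+r}C\otimes F_{j_2}C' + F_{j_1}C\otimes F_{j_2+r}C' \subset F_{p+r}(C\otimes C')$. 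So $[x_{j_1}\otimes y_{j_2}]^r_p$ is a valid class in $E^r_p(C\otimes C')$. For independence of the choice of representative, replace $x_{j_1}$ by $x_{j_1} + w + dz$ with $w\in F_{j_1+1}C$ and $z\in F_{j_1-r+1}C$. The term $w\otimes y_{j_2}$ lies in $F_{p+1}(C\otimes C')$, and
\[
dz \otimes y_{j_2} = d^{\otimes}(z\otimes y_{j_2}) - (-1)^{\gr(z)} z\otimes d' y_{j_2},
\]
where $z\otimes y_{j_2}\in F_{p-r+1}(C\otimes C')$ and $z\otimes d' y_{j_2}\in F_{p-r+1}C\otimes F_{j_2+r}C' \subset F_{p+1}(C\otimes C')$. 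The change therefore lies in $F_{p+1}(C\otimes C') + d^{\otimes}(F_{p-r+1}(C\otimes C'))$, and the same argument applies symmetrically to $y_{j_2}$. Thus $T^r_p$ is well-defined, and $R$-linearity is immediate.

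Next I would verify that $T^r_p$ is a chain map, which is again a direct calculation using the Leibniz rule: applying $d^r_{p+r}\circ T^r_p$ to $[x_{j_1}]^r_{j_1}\otimes [y_{j_2}]^r_{j_2}$ gives $[dx_{j_1}\otimes y_{j_2} + (-1)^{\gr(x_{j_1})} x_{j_1}\otimes d' y_{j_2}]^r_{p+r}$, which is exactly $T^r_{p+r}((d^r)^{\otimes}_p([x_{j_1}]^r_{j_1}\otimes [y_{j_2}]^r_{j_2}))$, after noting that the sign conventions in $(d^r)^{\otimes}_p = d^r\otimes 1 + (-1)^{\gr}\otimes d'^r$ match those in $d^{\otimes}$ on the chain level.

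Finally I would check commutativity of \eqref{eq:tensor-diag}. For a class $\sum [x_{j_1}]^{r+1}_{j_1}\otimes [y_{j_2}]^{r+1}_{j_2}$ the top-then-right route produces $[[\,\textstyle\sum x_{j_1}\otimes y_{j_2}\,]^r_p]$ under the isomorphism $E^{r+1}_p(C\otimes C')\cong H_*(E^r_p(C\otimes C'))$ from \eqref{eq:spectral-homology}. The left-then-bottom route produces $[\,\textstyle\sum T^r_p([x_{j_1}]^r_{j_1}\otimes[y_{j_2}]^r_{j_2})\,] = [\,\textstyle\sum [x_{j_1}\otimes y_{j_2}]^r_p\,]$. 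By $R$-linearity of classes in $E^r_p$, these agree, and the diagram commutes.

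The only step that requires real care is the well-definedness verification, because it mixes the two filtration shifts coming from the coboundary ambiguities in each factor with the tensor filtration, and the argument hinges on the observation that $z\otimes d' y_{j_2}$ sits in filtration $j_1 - r + 1 + j_2 + r = p+1$ even though neither tensor factor individually does; once this is seen, the remainder is essentially formal.
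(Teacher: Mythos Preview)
Your proof is correct and follows essentially the same approach as the paper's: the key step in both is rewriting $dz\otimes y_{j_2} = d^{\otimes}(z\otimes y_{j_2}) - (-1)^{\gr(z)} z\otimes d'y_{j_2}$ and observing that the correction term lands in $F_{p+1}(C\otimes C')$ because the filtration levels $j_1-r+1$ and $j_2+r$ add to $p+1$. One small notational slip: in the chain-map verification you write $d^r_{p+r}\circ T^r_p$, but the differential you mean is $(d^\otimes)^r_p\colon E^r_p(C\otimes C')\to E^r_{p+r}(C\otimes C')$, i.e.\ subscript $p$ rather than $p+r$; the computation itself is fine.
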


\begin{rem}
The left-hand vertical map in the diagram $(\ref{eq:tensor-diag})$ 
is the composition of the tensor product of the chain isomorphisms given by $(\ref{eq:spectral-homology})$
and an injective map appearing in the K\"{u}nneth formula.
\end{rem}

\begin{proof}
To prove the well-definedness of $T^r_p$, take $x'_{j_1} \in F_{j_1}C$ with $dx'_{j_1} \in F_{j_1+r}C$ and $[x'_{j_1}]^{r}_{j_1}=[x_{j_1}]^{r}_{j_1}$.
Then we have chains $u \in F_{j_1+1}C$ and $v \in F_{j_1-r+1}C$
satisfying $x'_{j_1} - x_{j_1} = u + dv$.
Now we see
\begin{align*}
    x'_{j_1}\otimes y_{j_2} - x_{j_1}\otimes y_{j_2}
    &= u\otimes y_{j_2} + dv\otimes y_{j_2}\\[2mm]
    &= (u \otimes y_{j_2} - (-1)^{\gr(v)} v \otimes d'y_{j_2})
    + d^{\otimes}(v\otimes y_{j_2})\\[2mm]
    &\in F_{p+1}(C\otimes C') + d^{\otimes}(F_{p-r+1}(C\otimes C')).
\end{align*}
Hence $[x'_{j_1}\otimes y_{j_2}]^r_p = [x_{j_1}\otimes y_{j_2}]^r_p$. similarly, we see that
any choice of a representative of $[y_{j_2}]$ does not affect
the image either. 

We next show $T^r_p \circ (d^r)^\otimes_p = (d^{\otimes})^r_p \circ T^r_p$. Indeed, we see
\begin{align*}    
T^r_p \circ (d^r)^\otimes_p([x_{j_1}]^r_{j_1} \otimes [y_{j_2}]^r_{j_2}) 
&= T^r_p \left(d^r([x_{j_1}]^r_{j_1})\otimes [y_{j_2}]^r_{j_2} 
+ (-1)^{\gr(x_{j_1})} [x_{j_1}]^r_{j_1}\otimes d'^r([y_{j_2}]^r_{j_2})\right)\\[2mm]
&= T^r_p \left([dx_{j_1}]^r_{j_1-r}\otimes [y_{j_2}]^r_{j_2} 
+ (-1)^{\gr(x_{j_1})} [x_{j_1}]^r_{j_1}\otimes [d'y_{j_2}]^r_{j_2-r}\right)\\[2mm]
&= [dx_{j_1}\otimes y_{j_2} 
+ (-1)^{\gr(x_{j_1})} x_{j_1}\otimes d'y_{j_2} ]^r_{p-r} 
\end{align*}
and
\begin{align*}
(d^{\otimes})^r_p \circ T^r_p 
([x_{j_1}]^r_{j_1} \otimes [y_{j_2}]^r_{j_2})
&= (d^{\otimes})^r_p ([x_{j_1} \otimes y_{j_2}]^r_p)\\[2mm]
&= [dx_{j_1}\otimes y_{j_2} 
+ (-1)^{\gr(x_{j_1})} x_{j_1}\otimes d'y_{j_2} ]^r_{p-r}.
\end{align*}
The proof of the commutativity of $(\ref{eq:tensor-diag})$
is also straightforward.
\end{proof}

\begin{lem}
    $T^0 = \bigoplus_{p \geq i_0+i_1} T^0_p$ is a chain isomorphism.
    Moreover, if either $E^1(C)$ or $E^1(C')$ is free over $R$,
    then $T^1 = \bigoplus_{p \geq i_0+i_1} T^1_p$ is also a chain isomorphism.
\end{lem}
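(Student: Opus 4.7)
The plan is to handle the two assertions in turn by identifying the $E^0$ and $E^1$ pages of the tensor complex with the tensor products of the corresponding pages of $C$ and $C'$.

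For the $T^0$ statement, I will first note that the filtration gives $E^0_p(C) = F_pC/F_{p+1}C = C_p$ canonically, and similarly for $C'$ and $C\otimes C'$. Since $F_p(C\otimes C') = \bigoplus_{i\geq p}\bigoplus_{j_1+j_2=i} C_{j_1}\otimes C'_{j_2}$, one obtains $E^0_p(C\otimes C') = \bigoplus_{j_1+j_2=p} C_{j_1}\otimes C'_{j_2} = (E^0(C)\otimes E^0(C'))_p$, and $T^0_p$ becomes the tautological identification. The only content is to check that the differentials agree: because $d$ and $d'$ preserve the respective filtrations, the induced $E^0$-differentials preserve filtration grading, and a direct unpacking from the definition of $d^{\otimes}=d\otimes 1+(-1)^{\gr}\otimes d'$ yields $(d^\otimes)^0 = d^0\otimes 1 + (-1)^{\gr}\otimes d'^0$, which is exactly the tensor-product differential on $(E^0(C)\otimes E^0(C'))_p$.

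For $T^1$, my plan is to apply the commutative diagram \eqref{eq:tensor-diag} of \Cref{lem:tensor-spectral} with $r=0$. The right vertical in that diagram is an isomorphism by \eqref{eq:spectral-homology}, and $(T^0_p)_*$ is an isomorphism by the previous step; hence it suffices to check that the left vertical is an isomorphism. That map factors as the tensor of the canonical identifications $E^1_j(-)\cong H_*(E^0_j(-))$ followed by the K\"unneth homomorphism
\[
\kappa\colon \bigoplus_{j_1+j_2=p} H_*(E^0_{j_1}(C))\otimes H_*(E^0_{j_2}(C'))\to H_*((E^0(C)\otimes E^0(C'))_p).
\]
Here I will invoke the K\"unneth theorem over the PID $R$: since $C$ and $C'$ are finitely generated and free as $R$-modules, each $C_i$ is a submodule of a free module and hence itself free, so the chain complexes $E^0(C)$ and $E^0(C')$ consist of free $R$-modules. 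The K\"unneth sequence then fits $\kappa$ into a split short exact sequence whose cokernel is a sum of $\Tor$ terms among $E^1(C)$ and $E^1(C')$; the hypothesis that one of these two is free over $R$ kills every such $\Tor$ term and makes $\kappa$ an isomorphism, completing the diagram chase.

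The anticipated obstacle is essentially bookkeeping rather than any deep difficulty: I must verify that the K\"unneth splitting is compatible with the explicit formula given for the left vertical map in \Cref{lem:tensor-spectral}, namely $[x]^1_{j_1}\otimes [y]^1_{j_2}\mapsto [[x]^0_{j_1}\otimes [y]^0_{j_2}]$, so that my chase on \eqref{eq:tensor-diag} really forces $T^1_p$ to be bijective. This is a standard naturality check for the algebraic K\"unneth map once the freeness of each $C_i$ is in hand, but it is the only step that is not purely formal.
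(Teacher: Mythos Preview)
Your proposal is correct and follows essentially the same route as the paper: identify $E^0_p$ with the graded pieces so that $T^0$ becomes the identity, then use the commutative square \eqref{eq:tensor-diag} at $r=0$ together with the K\"unneth theorem (the vanishing of $\Tor^R_1(E^1(C),E^1(C'))$ under the freeness hypothesis) to conclude that $T^1$ is an isomorphism. Your extra remarks about freeness of each $C_i$ and compatibility of differentials are correct and merely make explicit what the paper leaves implicit; note also that the chain-map property of $T^r_p$ was already established in \Cref{lem:tensor-spectral}, so you need not re-verify it for $T^0$.
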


\begin{proof}
Note that $E^0_p(D) = D_p$ for any filtered complex $D$,
and hence $T^0_p$ coincides with the identity.
Next, if either $E^1(C)$ or $E^1(C')$ is free over $R$,
then the left-hand vertical map in $(\ref{eq:tensor-diag})$
is an isomorphism since 
$\Tor^R_1(H_*(E^0(C)),H_*(E^0(C'))) =
\Tor^R_1(E^1(C),E^1(C'))=0$, while
it follows from the above argument that
the bottom map $(T^0_p)_*$ is also an isomorhism. 
Now, the commutativity of $(\ref{eq:tensor-diag})$
completes the proof.
\end{proof}

Next, we discuss the behavior of filtered maps under tensor products.
\begin{lem} \label{lem:tensor-map}
Let $f^{(')} \colon C^{(')} \to D^{(')}$ be a degree $k^{(')}$ filtered chain map. Then, $f \otimes f' \colon C \otimes C' \to D \otimes D'$ is a degree $(k+k')$ filtered chain map. Moreover, if two degree $k^{(')}$ filtered chain maps $f^{(')},g^{(')} \colon C^{(')} \to D^{(')}$ are chain homotopic in degree $\geq -l^{(')}$, then $f \otimes f'$ and $g \otimes g'$ are chain homotopic in degree $\geq \min\{-l,-l'\}$. 
\end{lem}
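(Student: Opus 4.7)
The first assertion is mostly bookkeeping. Recall that $F_p(C\otimes C') = \bigoplus_{j_1+j_2\geq p} C_{j_1}\otimes C'_{j_2}$. For any generator $x\otimes x'$ with $x\in C_{j_1}$, $x'\in C'_{j_2}$ and $j_1+j_2\geq p$, the map $f\otimes f'$ sends it to $f(x)\otimes f'(x')\in D_{j_1+k}\otimes D'_{j_2+k'}$, and $(j_1+k)+(j_2+k')\geq p+k+k'$, so $f\otimes f'$ is degree $(k+k')$ filtered. That it is a chain map follows from the standard computation with the Koszul-signed tensor differential $d^\otimes = d\otimes 1 + (-1)^{\gr}\otimes d'$, using that $f,f'$ are individually chain maps.

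For the homotopy assertion, let $\Phi\colon C\to D$ (resp.\ $\Phi'\colon C'\to D'$) be a degree $k-l$ (resp.\ $k'-l'$) filtered chain homotopy with $d\Phi + \Phi d = f-g$ (resp.\ $d'\Phi' + \Phi'd' = f'-g'$). I propose the homotopy
\[
H := \Phi\otimes f' \;+\; g\otimes \Phi'\colon C\otimes C' \longrightarrow D\otimes D'.
\]
A direct computation with the Koszul sign rule $(\alpha\otimes\beta)(x\otimes x') = (-1)^{|\beta||x|}\alpha(x)\otimes\beta(x')$ gives
\[
(d^\otimes H + Hd^\otimes)(x\otimes x') = (d\Phi+\Phi d)(x)\otimes f'(x') + g(x)\otimes (d'\Phi'+\Phi'd')(x'),
\]
since the remaining cross-terms cancel using that $f'$ and $g$ are chain maps. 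This simplifies to $(f-g)(x)\otimes f'(x') + g(x)\otimes (f'-g')(x') = (f\otimes f' - g\otimes g')(x\otimes x')$.

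It remains to check the filtration degree of $H$. Since $\Phi$ has filtration degree $k-l$ and $f'$ has degree $k'$, the summand $\Phi\otimes f'$ is degree $(k+k')-l$ filtered by the first assertion applied with $(k,k')$ replaced by $(k-l, k')$; similarly $g\otimes \Phi'$ is degree $(k+k')-l'$ filtered. Consequently $H$ is degree $(k+k')-\max\{l,l'\} = (k+k')+\min\{-l,-l'\}$ filtered, which is exactly the statement that $f\otimes f'$ and $g\otimes g'$ are chain homotopic in degree $\geq \min\{-l,-l'\}$.

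The only subtle point is getting the Koszul signs right in the homotopy identity for $H$; the two candidate summands $\Phi\otimes f'$ and $g\otimes\Phi'$ are forced by the requirement that the cross terms involving $d'f'$ (or $d g$) cancel correctly, and the signs work out without additional factors because both $f',g$ are chain maps of grading degree $0$. Once this sign check is carried out, the rest of the proof is immediate from the first part of the lemma.
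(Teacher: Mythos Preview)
Your proof is correct and follows essentially the same approach as the paper: the paper's homotopy $\Phi\otimes f' + (-1)^{\gr}\,g\otimes\Phi'$ is exactly your $H=\Phi\otimes f' + g\otimes\Phi'$ once the Koszul sign rule is unpacked (since $|\Phi'|=1$), and the filtration-degree count is identical. One minor wording issue: in the first paragraph you write $f(x)\in D_{j_1+k}$, but a degree-$k$ filtered map only guarantees $f(x)\in F_{j_1+k}D=\bigoplus_{i\geq j_1+k}D_i$; this does not affect the argument since $F_{j_1+k}D\otimes F_{j_2+k'}D'\subset F_{p+k+k'}(D\otimes D')$ anyway.
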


\begin{proof}
This immediately follows from the fact that
$F_{p}(C\otimes C')= \sum_{j_1+j_2=p}F_{j_1}C \otimes F_{j_2}C'$ for any $p \geq i_0 + i'_0$.
(Note that if chain homotopies $\Phi^{(')} \colon f^{(')} \Rightarrow g^{(')}$ are given, then $\Phi\otimes f'+(-1)^{\gr} g \otimes \Phi'$ is a chain homotopy $f\otimes f' \Rightarrow g \otimes g'$, where $\gr$ is the $\Z/4$-grading on $C \otimes C'$.)
\end{proof}

As corollaries of \Cref{lem:tensor-map}, we have the following two lemmas.

\begin{lem} \label{lem:tensor-equivalence}
If $C$ and $C'$ are filtered chain homotopy equivalent to $D$ and $D'$ in degree $\geq -l$ respectively, then 
$C \otimes C'$ is chain homotopy equivalent to $D \otimes D'$ in degree $\geq -l$.
\end{lem}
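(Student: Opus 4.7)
The plan is to derive \Cref{lem:tensor-equivalence} as a direct consequence of \Cref{lem:tensor-map}, so there is not much to do beyond assembling the pieces. By hypothesis, there exist degree $0$ filtered chain maps $f\colon C\to D$, $g\colon D\to C$ together with degree $-l$ filtered chain homotopies realizing $g\circ f\simeq \id_C$ and $f\circ g\simeq \id_D$, and similarly primed data $f',g'$ with degree $-l$ homotopies $g'\circ f'\simeq \id_{C'}$ and $f'\circ g'\simeq \id_{D'}$.

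First I would apply the first part of \Cref{lem:tensor-map} to the pairs $(f,f')$ and $(g,g')$ to conclude that $f\otimes f'\colon C\otimes C'\to D\otimes D'$ and $g\otimes g'\colon D\otimes D'\to C\otimes C'$ are degree $0$ filtered chain maps. Next, observing the elementary identities
\[
(g\otimes g')\circ(f\otimes f') = (g\circ f)\otimes(g'\circ f'), \qquad (f\otimes f')\circ(g\otimes g')=(f\circ g)\otimes(f'\circ g'),
\]
I would invoke the second part of \Cref{lem:tensor-map}: since $g\circ f\simeq \id_C$ in degree $\geq -l$ and $g'\circ f'\simeq \id_{C'}$ in degree $\geq -l$, the tensor products $(g\circ f)\otimes(g'\circ f')$ and $\id_C\otimes\id_{C'}=\id_{C\otimes C'}$ are chain homotopic in degree $\geq \min\{-l,-l\}=-l$; analogously on the $D\otimes D'$ side.

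Putting these together shows that $f\otimes f'$ is a degree $0$ chain homotopy equivalence whose inverse is $g\otimes g'$ and whose homotopies are degree $-l$ filtered maps, which is exactly the statement that $C\otimes C'$ and $D\otimes D'$ are filtered chain homotopy equivalent in degree $\geq -l$. I do not anticipate any genuine obstacle here: the only thing that needs care is remembering to cite \Cref{lem:tensor-map} twice (once for the fact that tensoring filtered chain maps is a filtered chain map, once for the fact that tensoring homotopic pairs yields homotopic tensors), and to check the two algebraic identities above so that the tensored composites match the composites of tensors.
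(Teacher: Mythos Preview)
Your proposal is correct and matches the paper's approach exactly: the paper states this lemma as a direct corollary of \Cref{lem:tensor-map} without further proof, and your argument spells out precisely the two applications of that lemma needed to assemble the tensor equivalence.
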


\begin{lem} \label{lem:tensor-spectral-map}
Let $f^{(')} \colon C^{(')} \to D^{(')}$ be a degree $k^{(')}$ filtered chain map. Then, the diagram
    \begin{align*}
    \begin{split}
    \xymatrix{
        (E^{r}(C)\otimes E^{r}(C') )_p
        \ar[r]^-{f^r\otimes f'^r}
        \ar[d]^-{T^{r}_p}
        &(E^{r}(D)\otimes E^{r}(D') )_{p+k+k'}
        \ar[d]^-{T'^{r}_p}\\
        E^{r}_p(C\otimes C')
        \ar[r]^-{(f\otimes f')^r_p}
        & E^{r}_{p+k+k'}(D\otimes D')
    }
    \end{split}
    \end{align*}
is commutative for any $r \geq 0$ and $p \geq i_0+i'_0$,
where the vertical maps are given in \Cref{lem:tensor-spectral}.
\end{lem}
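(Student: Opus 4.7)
The proof will be a direct verification on generators, relying on the well-definedness of all the maps (already established in \Cref{lem:spectral-morphism,lem:tensor-spectral}) together with the definition of the tensor product of chain maps. The strategy is to chase an arbitrary simple tensor $[x_{j_1}]^r_{j_1}\otimes [y_{j_2}]^r_{j_2}$ (with $j_1+j_2=p$) around both sides of the square and confirm the two outputs are literally equal as classes in $E^r_{p+k+k'}(D\otimes D')$; since all maps in sight are $R$-linear, this suffices.

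First I would compute the ``right then down'' route. By the definition of $f^r\otimes f'^r$ combined with the formula for $f^r$ in \Cref{lem:spectral-morphism}, the image of a simple tensor is $[f(x_{j_1})]^r_{j_1+k}\otimes [f'(y_{j_2})]^r_{j_2+k'}$, which then lies in $(E^r(D)\otimes E^r(D'))_{p+k+k'}$ by additivity of filtration degrees. Applying $T'^r_p$ from \Cref{lem:tensor-spectral} gives
\[
\bigl[f(x_{j_1})\otimes f'(y_{j_2})\bigr]^r_{p+k+k'}.
\]

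Next I would compute the ``down then right'' route. Applying $T^r_p$ first produces $[x_{j_1}\otimes y_{j_2}]^r_p$, and then $(f\otimes f')^r_p$, defined via the usual formula $[z]^r_p\mapsto [(f\otimes f')(z)]^r_{p+k+k'}$, outputs
\[
\bigl[(f\otimes f')(x_{j_1}\otimes y_{j_2})\bigr]^r_{p+k+k'} = \bigl[f(x_{j_1})\otimes f'(y_{j_2})\bigr]^r_{p+k+k'},
\]
using only the definition of the tensor product of maps.

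The two expressions coincide on the nose, so the diagram commutes. There is no real obstacle here: the content of the lemma is essentially bookkeeping, and the only thing one must actually be careful with is that $f\otimes f'$ really is a degree $(k+k')$ filtered chain map, so that the map $(f\otimes f')^r_p$ makes sense; this is exactly \Cref{lem:tensor-map}. Likewise, the well-definedness of each $f^r$, $f'^r$, $T^r_p$, and $T'^r_p$ on representatives is already handled in the cited lemmas, so no independent verification is needed here.
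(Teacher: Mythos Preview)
Your proof is correct. The paper actually gives no proof for this lemma at all (it is stated immediately before the bibliography with no proof environment), so your direct verification on simple tensors is precisely the routine check the authors are implicitly leaving to the reader, and your identification of \Cref{lem:tensor-map} as the one prerequisite needed for $(f\otimes f')^r_p$ to make sense is exactly right.
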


\printbibliography
\printaddresses

\end{document}